\renewcommand\nomgroup[1]{%
  \item[\em
  \ifstrequal{#1}{A}{Geometric setting}{%
  \ifstrequal{#1}{B}{Function spaces}{%
  \ifstrequal{#1}{C}{Duality pairings}{%
  \ifstrequal{#1}{D}{Trace operators}{%
  \ifstrequal{#1}{E}{Other operators}{}}}}}
]}
\newtheorem{theorem}{Theorem}[section]
\newtheorem{lemma}[theorem]{Lemma}
\newtheorem{proposition}[theorem]{Proposition}
\newtheorem{corollary}[theorem]{Corollary}
\theoremstyle{definition}
\newtheorem{example}[theorem]{Example}
\theoremstyle{remark}
\newtheorem{remark}[theorem]{Remark}
\newcommand{\imagi}{\imath} % imaginary unit
\numberwithin{equation}{section}
\newlength{\dhatheight}
\newcommand{\doublehat}[1]{%
    \settoheight{\dhatheight}{\ensuremath{\hat{#1}}}%
    \addtolength{\dhatheight}{-0.2ex}%
    \hat{\vphantom{\rule{1pt}{\dhatheight}}%
    \smash{\hat{#1}}}}
\newcommand{\wdoublehat}[1]{%
    \settoheight{\dhatheight}{\ensuremath{\widehat{#1}}}%
    \addtolength{\dhatheight}{-0.35ex}%
    \widehat{\vphantom{\rule{1pt}{\dhatheight}}%
    \smash{\widehat{#1}}}}
  \DeclareSymbolFont{stix@largesymbols}{LS2}{stixex}{m}{n}
  \DeclareMathDelimiter{\lBrace}{\mathopen} {stix@largesymbols}{"E8}%
                                            {stix@largesymbols}{"0E}
  \DeclareMathDelimiter{\rBrace}{\mathclose}{stix@largesymbols}{"E9}%
                                            {stix@largesymbols}{"0F}
\DeclareFontFamily{U}{mathx}{\hyphenchar\font45}
\DeclareFontShape{U}{mathx}{m}{n}{
      <5> <6> <7> <8> <9> <10>
      <10.95> <12> <14.4> <17.28> <20.74> <24.88>
      mathx10
      }{}
\DeclareSymbolFont{mathx}{U}{mathx}{m}{n}
\DeclareMathAccent{\widecheck}{0}{mathx}{"71}
\DeclareMathOperator{\Realpart}{Re}
\renewcommand{\Re}{\Realpart}
\DeclareMathOperator{\Imagpart}{Im}
\renewcommand{\Im}{\Imagpart}
\newcommand{\nc}{\newcommand}
\nc{\mrm}{\mathrm}
\nc{\mH}{\mrm{H}}
\nc{\mL}{\mrm{L}}
\nc{\mbH}{\mathbb{H}}
\nc{\mbX}{\mathbb{X}}
\nc{\fru}{\mathfrak{u}}
\nc{\frv}{\mathfrak{v}}
\nc{\frw}{\mathfrak{w}}
\nc{\RR}{\mathbb{R}}
\nc{\CC}{\mathbb{C}}
\nc{\bx}{\boldsymbol{x}}
\nc{\by}{\boldsymbol{y}}
\nc{\bn}{\boldsymbol{n}}
\nc{\bu}{\boldsymbol{u}}
\nc{\bv}{\boldsymbol{v}}
\nc{\bp}{\boldsymbol{p}}
\nc{\bq}{\boldsymbol{q}}
\nc{\bvphi}{\boldsymbol{\varphi}}
\nc{\ub}{\textsc{ub}}
\nc{\inc}{\textup{inc}}
\nc{\loc}{\textup{loc}}
\nc{\dir}{\textsc{d}}
\nc{\neu}{\textsc{n}}
\nc{\lbr}{\lbrack}
\nc{\rbr}{\rbrack}
\nc{\comp}{\mathsf{c}}
\nc{\Tr}{\mathsf{T}}
\nc{\Id}{\mathsf{Id}}
\nc{\XSigmaGamma}{\mbX(\Omega_\Sigma,\Gamma)} %{\mbX(\Sigma,\Gamma)}
\nc{\XSigmaGammaM}{\mbX_{\mathsf{M}}(\Omega_\Sigma,\Gamma)} %{\mbX_{\mathsf{M}}(\Sigma,\Gamma)}
\begin{document}

% \title[short text for running head]{full title}
\title[Multi-domain FEM-BEM coupling for acoustic scattering]{Multi-domain FEM-BEM coupling for acoustic scattering}

%    Only \author and \address are required; other information is
%    optional.  Remove any unused author tags.

%    author one information
% \author[short version for running head]{name for top of paper}
\author{Marcella Bonazzoli} \address{Inria, UMA, ENSTA Paris, Institut Polytechnique de Paris, 91120 Palaiseau, France}
%\curraddr{}
\email{marcella.bonazzoli@inria.fr}
%\thanks{This work was supported by the French National Research Agency (ANR)
% in the framework of the project NonlocalDD, ANR-15-CE23-0017-01.}

%    author two information
\author{Xavier Claeys} \address{Sorbonne Universit\'e, CNRS, Universit\'e de Paris, Inria, Laboratoire Jacques-Louis Lions, F-75005 Paris, France}
%\curraddr{}
\email{xavier.claeys@sorbonne-universite.fr} \thanks{This work was supported by the French National Research Agency (ANR) in the framework of the project NonlocalDD, ANR-15-CE23-0017-01.}

\keywords{FEM-BEM coupling, boundary integral equations, cross-points, multi-trace formulations, acoustic scattering, Helmholtz equation}
\subjclass[2020]{31B10, 35C15, 65N30, 65N38}
% 31 Potential theory: 31B10 Integral representations, integral operators, integral equations methods in higher dimensions
% 35 Partial differential equations: 35C15 Integral representations of solutions to PDEs
% 65 Numerical analysis: 
% 65N30 Finite element, Rayleigh-Ritz and Galerkin methods for boundary value problems involving PDEs
% 65N38 Boundary element methods for boundary value problems involving PDEs

\date{}

\dedicatory{}

%    Abstract is required.
\begin{abstract}
We model time-harmonic acoustic scattering by an object composed of piece-wise homogeneous parts and an arbitrarily heterogeneous part. We propose and analyze new formulations that couple, adopting a Costabel-type approach, boundary integral equations for the homogeneous subdomains with volume variational formulations for the heterogeneous subdomain. This is an extension of the Costabel FEM-BEM coupling to a multi-domain configuration, with  cross-points allowed, i.e.~points where three or more subdomains are adjacent. While generally just the exterior unbounded subdomain is treated with the BEM, here we wish to exploit the advantages of BEM whenever it is applicable, that is, for all the homogeneous parts of the scattering object. Our formulation is based on the multi-trace formalism, which initially was introduced for acoustic scattering by piece-wise homogeneous objects. Instead, here we allow the wavenumber to vary arbitrarily in a part of the domain.
We prove that the bilinear form associated with the proposed formulation satisfies a G\r{a}rding coercivity inequality, which ensures stability of the variational problem if it is uniquely solvable. We identify conditions for injectivity and construct modified versions immune to spurious resonances.    
\end{abstract}

\maketitle

\section{Introduction}

\noindent 
The efficient simulation of wave propagation problems in time-harmonic regime remains a computational challenge that is still the subject of  intensive research effort. Propagation media are generally heterogeneous, which is reflected by arbitrarily varying coefficients in the equations. Classical numerical methods to perform simulations in heterogeneous media usually rely on volume-type discretization schemes such as finite elements.
In many situations of practical relevance, material coefficients are piece-wise constant in certain parts of the computational domain, and this feature can be exploited to reformulate the problem by means of boundary integral operators as an equation defined only on the boundary, called Boundary Integral Equation (BIE). Indeed, boundary element methods, which are discretization schemes for BIEs, yield a significant reduction in the number of unknowns, higher accuracy at least away from the boundary, and better robustness to high frequency compared with finite elements. In addition, boundary integral operators can naturally deal with unbounded domains, provided that the boundary is bounded.

This is the general idea of Finite Element Method - Boundary Element Method (FEM-BEM) coupling, which aims at taking advantage of the versatility of the finite element method and the computational efficiency of the boundary element method. There already exists a well established literature on the numerical analysis of FEM-BEM coupling, in particular for time-harmonic acoustic problems, with several possible FEM-BEM strategies including the Johnson-N\'ed\'elec coupling \cite{MR583487}, the Bielak-McCamy coupling \cite{MR700668} or the symmetric Costabel coupling \cite{Costabel:lectures:1987,Costabel:symm:1987} (see e.g.~\cite{AFF:fembem:2013} for an overview of the three approaches). Another possible strategy relies on substructuring domain decomposition and FETI-BETI methods \cite{LangerSteinbach2005CBF,BendaliBoubendirEtAl2007FDD,MR2962738,CaudronAntoineEtAl2020OWC}. 
In the present contribution, we wish to focus on the Costabel coupling, which appears interesting from a numerical analysis perspective because it naturally leads to G\r{a}rding coercivity estimates. 

\begin{figure}
  %\centering \includegraphics[width=0.6\linewidth]{domains.pdf}
  \centering \includegraphics[width=0.45\linewidth]{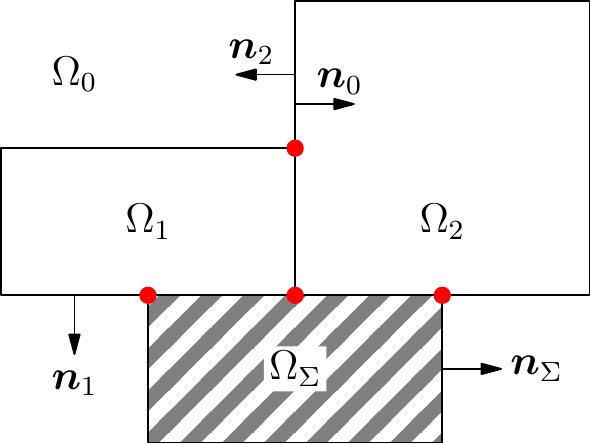}
\caption{Example of geometric setting: composite medium, with $\Omega_\Sigma$ arbitrarily heterogeneous. Cross-points (red dots) are allowed.}
\label{fig:geo}
\end{figure}
Except for those related to domain decomposition, many of the contributions dedicated to FEM-BEM coupling consider a simple geometric configuration where the computational domain is subdivided into two parts separated by a single interface: one interior heterogeneous part and one exterior homogeneous part. Multi-domain configurations with more than two subdomains are also of interest, and often involve the presence of \emph{cross-points}, i.e.~points where three subdomains or more are adjacent (see for instance the red points in Figure~\ref{fig:geo}). From a numerical standpoint, as was clearly shown in \cite[\S 4]{MR4106670} by detailed numerical examples, a careless treatment of cross-points may lead to a lack of consistency of standard linear solvers such as GMRes. At the continuous level, the presence of cross-points is problematic because in that case the interface shared by one subdomain with another can have a boundary (made of cross-points). So, the operators giving the restriction to the interface (between Dirichlet or Neumann trace spaces on the subdomain boundary) are not continuous, see e.g.~\cite[§6.2]{MR3202533}. 
This prevents writing in a proper function space framework the most natural multi-domain formulations that would use restriction operators. To avoid these, in the present contribution, we design and analyze new multi-domain FEM-BEM formulations by means of the Multi-Trace Formalism (MTF), which was introduced in \cite{ClHi:MTF:2013,MR3202533,ClHi:impenetrable:2015} for piece-wise constant coefficients. Indeed, MTF allows for a clean treatment of cross-points from the perspective of function spaces, and proves here to be perfectly fitted to the Costabel coupling. These new formulations satisfy G\r{a}rding inequalities, which, in case of injectivity, imply stability and quasi-optimal convergence results of conforming discretization methods. 

Unfortunately, like the classical Costabel coupling, also its multi-domain versions may be affected by the spurious resonances phenomenon, that is, the associated operator may be not injective, whereas the corresponding transmission problem is always well-posed. Therefore, we identify conditions for injectivity, and then we construct modified versions immune to spurious resonances. This is a generalization to multi-domain configurations of the strategy studied in \cite{HiMe:sfembem:2006} for the two-domain case. 

This article is organized as follows. First, we present the acoustic scattering transmission problem in Section~\ref{sec:setting}, we recall the definitions of trace spaces and operators in Section~\ref{TraceSpaceSingleDomain}, and classical results of potential and boundary integral operator theory in Section~\ref{sec:BIEreview}. Then in Section~\ref{sec:spaces} we introduce a functional setting suited for the multi-domain configuration. After revisiting the classical Costabel coupling in Section~\ref{sec:2costabel} for two subdomains, in Section~\ref{sec:STF} we propose a first multi-domain coupling formulation, called single-trace FEM-BEM formulation, followed by a combined field version in Section~\ref{sec:STFcomb} that is immune to spurious resonances. The single-trace FEM-BEM formulation is preparatory to the more flexible multi-trace FEM-BEM formulation, which is derived and analyzed in Section~\ref{sec:MTF}. Finally, a multi-trace combined field FEM-BEM formulation is designed in Section~\ref{sec:MTFcomb}. 

% {A}{Geometric setting}
% {B}{Function spaces}
% {C}{Duality pairings}
% {D}{Trace operators}
% {E}{Other operators}

\nomenclature[A, 01]{$\Omega_j$}{Subdomains of $\RR^d$ with homogeneous medium (with $\Omega_0$ unbounded)}
\nomenclature[A, 02]{$n$}{Number of bounded homogeneous subdomains}
\nomenclature[A, 03]{$\Omega_\Sigma$}{Subdomain of $\RR^d$ with heterogeneous medium}
\nomenclature[A, 04]{$\Gamma_j$}{Homogeneous subdomain boundary $\partial \Omega_j$}
\nomenclature[A, 05]{$\Sigma$}{Heterogeneous subdomain boundary $\partial \Omega_\Sigma$}
\nomenclature[A, 06]{$\Gamma$}{The skeleton, that is the union of subdomain interfaces, see \eqref{eq:boundaries}}
\nomenclature[A, 07]{$\kappa_j$}{Wavenumber in $\Omega_j$ (positive constant)}
\nomenclature[A, 08]{$\kappa_\Sigma$}{Wavenumber in $\Omega_\Sigma$ (positive function)}

\nomenclature[D, 09]{$\gamma^{\Omega}_{\dir}, \gamma^{\Omega}_{\neu}$}{Interior Dirichlet and Neumann trace operators on $\partial \Omega$, denoted $\gamma^{j}_{\dir}, \gamma^{j}_{\neu}$ for $\Omega \equiv \Omega_j$}
\nomenclature[D, 10]{$\gamma^{\Omega}_{\dir,c},\gamma^{\Omega}_{\neu,c}$}{Exterior Dirichlet and Neumann trace operators on $\partial \Omega$, denoted $\gamma^{j}_{\dir,c},\gamma^{j}_{\neu,c}$ for $\Omega \equiv \Omega_j$}
\nomenclature[D, 11]{$\gamma^{\Omega}, \gamma^{\Omega}_{c}$}{Interior and exterior pairs of Dirichlet and Neumann trace operators on $\partial \Omega$, denoted $\gamma^{j}, \gamma^{j}_{c}$ for $\Omega \equiv \Omega_j$}
\nomenclature[B, 12]{$\mbH(\partial\Omega)$}{Space of pairs of Dirichlet and Neumann traces on $\partial \Omega$, see \eqref{DefPairOfTraces}}
\nomenclature[C, 13]{$\langle \,\cdot\,, \,\cdot\, \rangle_{\partial\Omega}$}{Duality pairing between Dirichlet and Neumann traces on $\partial\Omega$}
\nomenclature[C, 14]{$\lbr \,\cdot\,, \,\cdot\, \rbr_{\partial\Omega}$}{Self-duality pairing on $\mbH(\partial\Omega)$, see \eqref{eq:skewpairingloc}}
\nomenclature[D, 16]{$\lbr \gamma^{\Omega}\rbr$}{Jump of the interior and exterior trace operators across $\partial\Omega$, see \eqref{eq:jumpaverage}}
\nomenclature[D, 17]{$\{ \gamma^{\Omega}\}$}{Average of the interior and exterior trace operators across $\partial\Omega$, see \eqref{eq:jumpaverage}}
\nomenclature[E, 18]{$\mathsf{SL}_\kappa^\Omega$}{Single layer potential on $\partial\Omega$ ($\kappa$ constant wavenumber), see \eqref{eq:SLpot}, denoted $\mathsf{SL}_\kappa^j$ for $\Omega \equiv \Omega_j$}
\nomenclature[E, 19]{$\mathsf{DL}_\kappa^\Omega$}{Double layer potential on $\partial\Omega$ ($\kappa$ constant wavenumber), see \eqref{eq:DLpot}, denoted $\mathsf{DL}_\kappa^j$ for $\Omega \equiv \Omega_j$}
\nomenclature[E, 20]{$\mathsf{G}_\kappa^\Omega$}{Total potential on $\partial\Omega$ ($\kappa$ constant wavenumber), see \eqref{eq:totalpot}, denoted $\mathsf{G}_\kappa^j$ for $\Omega \equiv \Omega_j$}
\nomenclature[E, 21]{$\mathsf{A}_\kappa^\Omega$}{$2\times 2$ matrix of boundary integral operators (double layer, single layer, hypersingular and adjoint double layer operators), see \eqref{DefPMCHWTOp}, denoted $\mathsf{A}_\kappa^j$ for $\Omega \equiv \Omega_j$}

\nomenclature[E, 21a]{$\mathsf{A}$}{Block diagonal operator $\mathsf{A} \coloneqq \mathrm{diag}(\mathsf{A}^{0}_{\kappa_0},\dots,\mathsf{A}^{n}_{\kappa_n})$}
\nomenclature[B, 22]{$\mbH(\Gamma)$}{Multi-trace space: $\mbH(\Gamma) \coloneqq \mbH(\Gamma_0) \times \cdots \times \mbH(\Gamma_n)$, see \eqref{MultiTraceSpace}}
\nomenclature[D, 23]{$\gamma$}{Global trace operator defined in \eqref{GlobalTraceOperator}}
\nomenclature[C, 24]{$\lbr \,\cdot\,, \,\cdot\, \rbr_{\Gamma}$}{Self-duality pairing on $\mbH(\Gamma)$, see \eqref{eq:pairingHGamma}}
\nomenclature[B, 25]{$\mbX(\Gamma)$}{Single-trace space (subspace of $\mbH(\Gamma)$), see \eqref{SingleTraceSpace}}
\nomenclature[D, 26]{$\Tr, \Tr_\dir, \Tr_\neu$}{Traces on $\Sigma$ induced by a tuple in $\mbX(\Gamma)$, see Proposition~\ref{DefinitionTraceOnSigma}}
\nomenclature[B, 27]{$\widetilde{\mbX}(\Gamma)$}{Single-trace space with additional components on $\Sigma$, see \eqref{eq:defXtilde}}
\nomenclature[B, 28]{$\XSigmaGamma$}{Subspace of $\mH^{1}(\Omega_{\Sigma})\times\mbX(\Gamma)$ with Dirichlet conditions on $\Sigma$, see \eqref{eq:XsigmaGamma}}
\nomenclature[B, 29]{$\XSigmaGammaM$}{Subspace of $\mH^{1}(\Omega_{\Sigma})\times\mbX(\Gamma)$ with generalized Robin conditions on $\Sigma$, see \eqref{DefEspaceRobin}}

\nomenclature[E, 30]{$a_\Sigma$}{Helmholtz bilinear form on $\Omega_\Sigma$, see \eqref{eq:HelmOmegaSigma}}
\nomenclature[E, 31]{$F_\Sigma$}{Linear form for the source term on $\Omega_\Sigma$, see \eqref{eq:HelmOmegaSigma}}

\nomenclature[E, 50]{$\theta$}{Operator $\theta(v,q) \coloneqq (-v,q)$, for $(v,q) \in \mbH(\partial \Omega)$}
\nomenclature[E, 51]{$\Theta$}{Operator $\Theta(\frv) \coloneqq (\theta(\frv_0),\dots,\theta(\frv_n))$, for $\frv = (\frv_0,\dots,\frv_n)\in \mbH(\Gamma)$}

\nomenclature[B, 52]{$\widehat{\mbH}(\Gamma)$}{Multi-trace space with Neumann traces on $\Sigma$ and no components on $\Gamma_0$, see \eqref{eq:MTspaceNeu}}
\nomenclature[B, 53]{$\widecheck{\mbH}(\Gamma)$}{Multi-trace space with  Dirichlet traces on $\Sigma$ and no components on $\Gamma_0$}
\nomenclature[C, 54]{$\llbracket \,\cdot\,, \,\cdot\, \rrbracket$}{Duality pairing between $\widehat{\mbH}(\Gamma)$ and $\widecheck{\mbH}(\Gamma)$, see \eqref{eq:pairingHatCheck}}
\nomenclature[B, 55]{$\wdoublehat{\mbH}(\Gamma)$}{Multi-trace space with both Dirichlet and Neumann traces on $\Sigma$, and no components on $\Gamma_0$, see \eqref{eq:MTspaceFullSigma}}
\nomenclature[C, 56]{$\lBrace \,\cdot\,, \,\cdot\, \rBrace$}{Self-duality pairing on $\wdoublehat{\mbH}(\Gamma)$, see \eqref{eq:pairingFullSigma}}
\nomenclature[E, 21b]{$\wdoublehat{\mathsf{A}}$}{Full block operator, see \eqref{eq:defAhathat}}

\printnomenclature[1.8cm]

\section{The transmission problem}
\label{sec:setting}

\noindent 
We start by presenting the problem under study. We consider a non-overlapping domain decomposition
\begin{equation}\label{SubdomainPartition}
  \RR^d = \bigcup_{j=0}^n \overline\Omega_j \, \cup \,\overline\Omega_\Sigma,
\end{equation}
where each subdomain will only be assumed Lipschitz regular \cite[Def.~3.28]{McLean:book:2000} and connected, and all subdomains except  $\Omega_0$ are bounded.
In addition, $\RR^d\setminus\overline{\Omega}_\Sigma$ will also be assumed connected (so that $\Omega_\Sigma$ does not contain any hole). An example of such a configuration is given in Figure~\ref{fig:geo}. We emphasize that in such a geometrical setting the presence of cross-points (red points in Figure~\ref{fig:geo}) is allowed.

We consider a propagation medium whose effective wavenumber, described by a function $\kappa\colon\RR^d \to \RR_+$, varies in accordance with the subdomain decomposition in \eqref{SubdomainPartition}: we assume that
\begin{equation*}
    \kappa(\bx) = \kappa_j\;\;\forall\bx\in \Omega_j, \,j=0,\dots, n, \quad
    \text{with}\; \kappa_j\in(0,+\infty),
\end{equation*}
while in the subdomain $\Omega_\Sigma$ the wavenumber is not assumed constant and may vary: $\kappa(\bx) = \kappa_{\Sigma}(\bx)$, with $\kappa_{\Sigma}(\bx)>0, \;\forall\bx\in\Omega_\Sigma$.

Let the incident field $U_\inc\in \mH^{1}_{\mrm{loc}}(\RR^d)$ satisfy $\Delta U_{\inc}+\kappa_0^2U_{\inc} = 0$ in $\RR^d$, where $\mH^{1}_{\mrm{loc}}(\RR^d)$ is the set of functions whose restriction to any compact set $\omega \subset \RR^d$ belongs to $\mH^{1}(\omega)$. Let the source term $f\in \mL^2(\RR^d)$ be supported in $\Omega_{\Sigma}$. We are interested in solving the following problem modelling an acoustic wave propagating in a heterogeneous medium
\begin{equation}\label{InitBVP}
  \begin{aligned}
    & \text{Find}\; U\in \mH^1_{\loc}(\RR^d)\;\text{such that}\\
    & \Delta U + \kappa(\bx)^2 U = -f\quad \text{in}\;\RR^d\\
    & U-U_{\inc}\;\text{is $\kappa_0$-outgoing radiating.}
  \end{aligned}
\end{equation}
In this problem, the third condition is the classical Sommerfeld radiation condition, see e.g.~\cite[§2.6.5]{Nedelec:book:2001}: any function $V$ is said to be $k$-outgoing radiating if
$\lim_{\rho\to \infty} \int_{\partial B_\rho} \lvert \partial_\rho V - \imagi k V \rvert ^2 \,d\sigma_\rho = 0$, 
where $\imagi = \sqrt{-1}$, $B_\rho$ is the ball centered at the origin of radius $\rho$ and $\partial_\rho$ denotes the radial derivative. 
By standard results of scattering theory, Problem \eqref{InitBVP} admits a unique solution, see e.g.~\cite[Theorem~8.7]{ColtonKress2013IAE}.

To solve such a problem, a standard numerical approach would rely on finite elements. 
The computational efficiency could be improved by taking advantage of the piece-wise constant material characteristics in the subdomains $\Omega_j$. 
In the present contribution, we wish to develop a multi-domain FEM-BEM coupling strategy, where the wave equation is treated by means of boundary integral operators in those parts of the computational domain where material characteristics are constant.
Compared to most of the existing literature on FEM-BEM coupling, an important novelty in the present contribution lies in providing a rigorous analysis also in the presence of cross-points. 

Let us introduce notations for boundaries and interfaces:
\begin{equation}\label{eq:boundaries}
  \Gamma_j \coloneqq\partial\Omega_j, \, j=0,\dots,n, \quad \Sigma\coloneqq\partial\Omega_{\Sigma},\quad \Gamma\coloneqq\cup_{j=0}^{n}\Gamma_j \;\, \text{(the ``skeleton'')}.
\end{equation}
Note that $\Sigma \subset \Gamma$ because each point of $\Sigma$ belongs also to some $\Gamma_j$, $j=0,\dots,n$.
The first step toward a multi-domain FEM-BEM formulation of problem \eqref{InitBVP} consists in decomposing the wave equation according to \eqref{SubdomainPartition}, and imposing transmission conditions at interfaces:
\begin{equation}\label{eq:bvp}
  \begin{aligned}
    & \Delta U + \kappa_{\Sigma}^2(\bx)U = -f\quad \text{in}\;\Omega_{\Sigma}\\
    & \Delta U + \kappa_{j}^2 U = 0\quad \text{in}\;\Omega_{j}\\[8pt]
    & U\vert_{\Gamma_j} - U\vert_{\Gamma_k} = 0\\
    & \partial_{\bn_j}U\vert_{\Gamma_j} + \partial_{\bn_k}U\vert_{\Gamma_k} = 0\\[8pt]
    & U\vert_{\Gamma_j} - U\vert_{\Sigma} = 0\\
    & \partial_{\bn_j}U\vert_{\Gamma_j} + \partial_{\bn_\Sigma}U\vert_{\Sigma} = 0\\[8pt]
    & U-U_{\inc}\;\text{is $\kappa_0$-outgoing radiating}.
  \end{aligned}
\end{equation}
Here, all traces are taken from the interior of subdomains, and $\bn_j, j=0\dots n$ (resp.~$\bn_{\Sigma}$) are the unit normal vector fields on $\Gamma_j$ directed toward the exterior of $\Omega_j$ (resp.~$\Omega_\Sigma$).  
 Neumann traces are defined by $\partial_{\bn_j}U\vert_{\Gamma_j}\coloneqq\bn_{j}\cdot\nabla U\vert_{\Gamma_j}$
(resp. $\partial_{\bn_{\Sigma}}U\vert_{\Sigma}\coloneqq\bn_{\Sigma}\cdot\nabla U\vert_{\Sigma}$).

\section{Trace spaces and operators}
\label{TraceSpaceSingleDomain}

\noindent 
Discussing transmission conditions requires paying thorough attention to function spaces, trace spaces and operators. In all this section, $\Omega$ refers to a generic Lipschitz domain that is either bounded or such that $\RR^d\setminus\overline{\Omega}$ is bounded, and $\bn_\Omega$ is the unit normal vector field on $\partial\Omega$ systematically directed toward the exterior of $\Omega$. 

First of all, we use classical notations for the following elementary function spaces of volume functions:
\begin{equation}\label{SobolevSpaces}
  \begin{aligned}
    \mH^1(\Omega)
    & \coloneqq \set{ V \in \mL^2(\Omega) |
      \nabla V \in \mL^2(\Omega) },\\
    \mH(\textup{div},\Omega)
    & \coloneqq \set{ \bv \in
    \mL^2(\Omega)^d | \textup{div}(\bv) \in
    \mL^2(\Omega) },\\
    \mH^1(\Delta,\Omega) 
    &  \coloneqq \set{ V \in
      \mH^1(\Omega) | \Delta V \in \mL^2(\Omega) }.
  \end{aligned}
\end{equation}
They are equipped with their canonical norms
$\Vert V \Vert_{\mH^1(\Omega)}^2 \coloneqq \Vert V
\Vert_{\mL^2(\Omega)}^2 + \Vert \nabla V
\Vert_{\mL^2(\Omega)}^2$, 
$\Vert \bv\Vert_{\mH(\textup{div},\Omega)}^2 \coloneqq \Vert
\bv \Vert_{\mL^2(\Omega)}^2 + \Vert
\textup{div}(\bv) \Vert_{\mL^2(\Omega)}^2$, and
$\lVert V \rVert_{\mH^1(\Delta,\Omega)}^2 \coloneqq \lVert V
\rVert_{\mH^1(\Omega)}^2 + \lVert \Delta V
\rVert_{\mL^2(\Omega)}^2$. With these norms, the spaces \eqref{SobolevSpaces} admit a Hilbert structure.
If $\mH(\Omega)$ is any of the spaces above, we set
$\mH_\loc(\overline \Omega) \coloneqq 
\{V \mid \varphi V \in \mH(\Omega) \; \forall \varphi \in \mathscr{C}^\infty_{\comp}(\RR^d)\}$, 
where $\mathscr{C}^\infty_{\comp}(\RR^d)$ is the space of $\mathscr{C}^\infty$ functions with compact support.

We introduce the \emph{interior Dirichlet trace operator}  $\gamma_{\dir}^{\Omega}$
and the \emph{interior Neumann trace operator} $\gamma_{\neu}^{\Omega}$,
defined for smooth functions $ \varphi\in\mathscr{C}^{\infty}(\RR^d)$ by 
\begin{equation*}%\label{TraceOperators}
 \gamma_{\dir}^{\Omega}(\varphi)\coloneqq\varphi\vert_{\partial\Omega},
 \quad \gamma_{\neu}^{\Omega}(\varphi)\coloneqq\bn_{\Omega}\cdot\nabla \varphi\vert_{\partial\Omega}.
\end{equation*}
These definitions are extended by density and continuity to trace operators $\gamma_{\dir}^{\Omega}: \mH^{1}_{\loc}(\overline{\Omega})\to \mH^{1/2}(\partial\Omega)$, $\gamma_{\neu}^{\Omega}\colon \mH^{1}_{\loc}(\Delta,\overline{\Omega})\to \mH^{-1/2}(\partial\Omega)$,  
where the Dirichlet trace space $\mH^{1/2}(\partial\Omega)$ is defined as the completion of $\{\varphi\vert_{\partial\Omega}, \varphi\in\mathscr{C}^{\infty}(\RR^d)\}$ with respect to the Slobodeckii norm (see e.g.~\cite[Chap.~2]{McLean:book:2000})
\begin{equation*}
  \Vert \varphi\Vert_{\mH^{1/2}(\partial\Omega)}^{2}\coloneqq\int_{\partial\Omega\times\partial\Omega}
  \frac{\vert \varphi(\bx) -  \varphi(\by)\vert^{2}}{\vert \bx-\by\vert^{d}} d\sigma(\bx,\by),
\end{equation*}
and the Neumann trace space $\mH^{-1/2}(\partial\Omega)$ is the dual space of $\mH^{1/2}(\partial\Omega)$.
The corresponding duality pairing will be denoted by 
$\langle p,v\rangle_{\partial\Omega} \equiv \langle v,p\rangle_{\partial\Omega} \coloneqq p(v)$ for $v\in \mH^{1/2}(\partial\Omega)$ and $p\in \mH^{-1/2}(\partial\Omega)$,
and we shall take
\begin{equation*}
  \Vert p\Vert_{\mH^{-1/2}(\partial\Omega)}\coloneqq\sup_{v\in \mH^{1/2}(\partial\Omega)\setminus\{0\}}
  \frac{\vert \langle  p,v\rangle_{\partial\Omega}\vert}{\Vert v\Vert_{\mH^{1/2}(\partial\Omega)}}
\end{equation*}
as norm for the Neumann trace space.
We also introduce operators and spaces for \emph{pairs} of Dirichlet and Neumann traces, defined by $\gamma^{\Omega}(V)\coloneqq(\gamma_{\dir}^{\Omega}(V),\gamma_{\neu}^{\Omega}(V))$
and 
\begin{equation}\label{DefPairOfTraces}
  \begin{aligned}
    & \gamma^{\Omega}\coloneqq (\gamma_{\dir}^{\Omega},\gamma_{\neu}^{\Omega})\colon \mH^{1}(\Delta,\overline{\Omega})\to \mbH(\partial\Omega)
    \quad \text{where}\\
    & \mbH(\partial\Omega)\coloneqq \mH^{1/2}(\partial\Omega)\times \mH^{-1/2}(\partial\Omega).
  \end{aligned}
\end{equation}
In contrast with Dirichlet and Neumann trace operators $\gamma_{\dir}^{\Omega},\gamma_{\neu}^{\Omega}$, the trace operator $\gamma^{\Omega}$ is not really standard, but we shall often use it for compact notation in our analysis. 
The space of pairs of Dirichlet-Neumann traces $\mbH(\partial\Omega)$ will be equipped with the Cartesian product norm $\Vert (v,q)\Vert_{\mbH(\partial\Omega)}^{2}\coloneqq
\Vert v\Vert_{\mH^{1/2}(\partial\Omega)}^{2}+\Vert q\Vert_{\mH^{-1/2}(\partial\Omega)}^{2}$. It is put in duality with itself through the following skew-symmetric bilinear pairing
\begin{equation}\label{eq:skewpairingloc}
  \lbr (u,p), (v,q)\rbr_{\partial\Omega}\coloneqq\langle u,q\rangle_{\partial\Omega} -  \langle p,v\rangle_{\partial\Omega}
\end{equation}
for all $(u,p),(v,q)\in \mbH(\partial\Omega)$. We underline that no complex conjugation comes into play in this definition. 
Note that throughout the paper Dirichlet traces are denoted by $u, v, w$ and Neumann traces by $p, q, r$, while capital letters like $U, V$ are used to indicate scalar functions on volume domains, and small bold letters like $\bv, \bp, \bq$ are used for vector fields. In this section and the following one, we use gothic symbols $\fru, \frv, \frw$ to denote pairs of Dirichlet-Neumann traces, that is elements of $\mbH(\partial\Omega)$. We have the inequality
$\vert  \lbr \fru,\frv\rbr_{\partial\Omega}\vert\leq \Vert \fru\Vert_{\mbH(\partial\Omega)} \Vert \frv\Vert_{\mbH(\partial\Omega)}$ for all $\fru,\frv\in \mbH(\partial\Omega)$. 

Setting $\theta(v, q) \coloneqq (-v, q)$, we state simple identities that will be used several times in the following: for all $\fru=(u,p)$, $\frv=(v,q) \in \mbH(\partial\Omega)$
\begin{align}
  & \lbr \fru,\theta(\frv)\rbr_{\partial\Omega} =
  \langle u,q\rangle_{\partial\Omega}+\langle p,v\rangle_{\partial\Omega}
  \label{eq:thetaIdentitybis3},\\
  &  \lbr \fru,\theta(\frv)\rbr_{\partial\Omega} 
  -  \lbr \fru, \frv\rbr_{\partial\Omega} = 
  2 \langle p,v\rangle_{\partial\Omega},\label{eq:thetaIdentity}\\
  & \lbr \fru,\theta(\frv)\rbr_{\partial\Omega} 
  + \lbr \fru, \frv\rbr_{\partial\Omega} = 
  2\langle u,q\rangle_{\partial\Omega}.\label{eq:thetaIdentitybis}
\end{align}

Together with the operators $\gamma^{\Omega}_{\dir}, \gamma^{\Omega}_{\neu}, \gamma^{\Omega}$, for which traces are taken from the interior of the domain $\Omega$, similar operators can be defined for traces taken from the \emph{exterior} of $\Omega$, and will be denoted by
\begin{equation*}
  \begin{aligned}
    & \gamma^{\Omega}_{\dir,c}\colon \mH^{1}_{\loc}(\RR^d\setminus\Omega)\to \mH^{1/2}(\partial\Omega),\\
    & \gamma^{\Omega}_{\neu,c}\colon \mH^{1}_{\loc}(\Delta,\RR^d\setminus\Omega)\to \mH^{-1/2}(\partial\Omega),\\
    & \gamma^{\Omega}_{c}\coloneqq(\gamma^{\Omega}_{\dir,c},\gamma^{\Omega}_{\neu,c})\colon\mH^{1}_{\loc}(\Delta,\RR^d\setminus\Omega)\to \mbH(\partial\Omega).
  \end{aligned}
\end{equation*}
When considering the trace operator $\gamma^{\Omega}_{\neu,c}$, the normal vector is still directed toward the exterior of $\Omega$.
Finally, we will also need \emph{jump and average traces}: 
\begin{equation}
\label{eq:jumpaverage}
\lbr \gamma^{\Omega}\rbr\coloneqq \gamma^{\Omega} - \gamma^{\Omega}_c, \qquad
\{\gamma^{\Omega}\}\coloneqq (\gamma^{\Omega}+\gamma^{\Omega}_c)/2.
\end{equation}

In the context of the multi-domain configuration \eqref{eq:boundaries}, for the sake of brevity, we shall write $\gamma^{j}_{\dir}$ (resp.~$\gamma^{j}_{\neu},\gamma^{j}, \gamma^{j}_{\dir,c},\gamma^{j}_{\neu,c},\gamma^{j}_{c}$) 
instead of $\gamma^{\Omega_{j}}_{\dir}$ (resp.~$\gamma^{\Omega_{j}}_{\neu},\gamma^{\Omega_{j}},\gamma^{\Omega_{j}}_{\dir,c}, \gamma^{\Omega_{j}}_{\neu,c},\gamma^{\Omega_{j}}_{c}$). We shall adopt a similar convention for traces on $\Sigma$, writing $\gamma^{\Sigma}_{*}$ instead of
 $\gamma^{\Omega_{\Sigma}}_{*}$ with $* = \dir,\neu$ and so on.

\section{Review of potential and boundary integral operators}
\label{sec:BIEreview}

\noindent 
In this section, we recall, using compact notation, classical results about boundary integral formulations for the Helmholtz equation in Lipschitz domains. For more details and proofs see for instance \cite[Chap.~3]{SaSw:book:2011}.
As in the previous section, here $\Omega$ denotes a generic Lipschitz domain, which is either bounded or the complement of a bounded domain. 

Let the function $\mathcal{G}_\kappa\colon \RR^d\setminus\{0\}\to \CC$ be the $\kappa$-outgoing radiating fundamental solution or Green kernel for the Helmholtz operator $-\Delta - \kappa^2$, for a given constant wavenumber $\kappa \in(0,+\infty)$. In particular for $\RR^d = \RR^3$ we have $\mathcal{G}_\kappa(\bx) = \exp(i\kappa\vert \bx\vert)/(4\pi\vert\bx\vert)$.
For any $\bx\in \RR^d\setminus \partial\Omega$, and any $\frv = (v,q)\in \mbH(\partial\Omega)$, define \emph{potential operators}\footnote{Note that the choice of sign in the double layer potential differs from the one usually adopted in the literature, in order to maintain symmetry in the definition of $\mathsf{G}_\kappa^\Omega$ (and consequently in the representation formula).} 
\begin{align}
\mathsf{SL}_\kappa^\Omega(q)(\bx) &\coloneqq \int_{\partial \Omega}   q(\by) \; \mathcal{G}_\kappa(\bx-\by) \,d \sigma(\by),\label{eq:SLpot} \displaybreak[0] \\
\mathsf{DL}_\kappa^\Omega(v)(\bx) &\coloneqq  \int_{\partial \Omega}   v(\by) \; \bn_\Omega(\by) \cdot
(\nabla \mathcal{G}_\kappa)(\bx-\by) \,d \sigma(\by) \label{eq:DLpot}\\
&= - \int_{\partial \Omega}   v(\by) \; \bn_\Omega(\by) \cdot \nabla_{\by}(\mathcal{G}_\kappa(\bx-\by)) \,d \sigma(\by), \notag \displaybreak[0] \\
\mathsf{G}_\kappa^{\Omega} (\frv)(\bx) &\coloneqq \mathsf{DL}_\kappa^{\Omega}(v)(\bx) + \mathsf{SL}_\kappa^{\Omega}(q)(\bx), \label{eq:totalpot}
\end{align}
where the first two operators are called single and double layer potentials. 
The total potential $\mathsf{G}_\kappa^\Omega$ maps continuously $\mbH(\partial \Omega)$ into\footnote{Here we consider that 
$V \in \mH^1_\loc(\Delta, \overline\Omega) \times
\mH^1_\loc(\Delta,\RR^d \backslash\Omega)$ 
if and only if 
$V|_{\Omega} \in \mH^1_\loc(\Delta, \overline\Omega)$ and $V|_{\RR^d \backslash \overline\Omega} \in \mH^1_\loc(\Delta,  \RR^d \backslash\Omega)$.} 
$\mH^1_\loc(\Delta, \overline\Omega) \times \mH^1_\loc(\Delta,  \RR^d \backslash\Omega)$
(see \cite[Thm.~3.1.16]{SaSw:book:2011}), so that the traces of $\mathsf{G}_\kappa^\Omega(\frv)$ are properly defined. 
This operator can be used to write a representation formula for the solution to the homogeneous Helmholtz equation in terms of the Dirichlet and Neumann traces of the solution (see \cite[Thm.~3.1.6]{SaSw:book:2011}): 
\begin{proposition}[Representation formulas]
Let $U \in \mH^1_\loc(\overline\Omega)$ satisfy $-\Delta U - \kappa^2U = 0$ in $\Omega$.
If $\Omega$ is unbounded, assume in addition that $U$ is $\kappa$-outgoing radiating. Then we have the representation formula
\begin{equation}
\label{eq:intRep}
\mathsf{G}_\kappa^{\Omega} (\gamma^\Omega(U))(\bx)= 1_{\Omega}(\bx)U(\bx).\quad\quad
\end{equation} 
Similarly, let $V \in \mH^1_\loc(\RR^d \backslash \Omega)$ satisfy $-\Delta V - \kappa^2V = 0$ in $\RR^d \backslash \overline \Omega$, as well as the Sommerfeld radiation condition if $\Omega$ is bounded.
Then we have 
\begin{equation}
\label{eq:extRep}
\mathsf{G}_\kappa^{\Omega} (\gamma_c^{\Omega}(V))(\bx)= - 1_{\RR^{d}\setminus\overline{\Omega}}(\bx)V(\bx).
\end{equation} 
\end{proposition}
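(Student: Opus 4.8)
The plan is to prove the two formulas by the standard Green-identity argument, localized via cutoff functions to handle the (possibly) unbounded domain. I would start with the bounded case of \eqref{eq:intRep}, where $\Omega$ is bounded and $U\in\mH^1(\Delta,\Omega)$ solves $-\Delta U-\kappa^2 U=0$. Fix $\bx\notin\partial\Omega$. Green's second identity applied to $U$ and $\mathcal{G}_\kappa(\bx-\cdot)$ on $\Omega$ (minus a small ball $B_\varepsilon(\bx)$ if $\bx\in\Omega$, then letting $\varepsilon\to 0$ using the known singularity of $\mathcal{G}_\kappa$) yields
\[
\int_{\partial\Omega}\bigl(U(\by)\,\bn_\Omega\!\cdot\!\nabla_{\by}\mathcal{G}_\kappa(\bx-\by)-\mathcal{G}_\kappa(\bx-\by)\,\bn_\Omega\!\cdot\!\nabla U(\by)\bigr)\,d\sigma(\by)= -\,1_\Omega(\bx)\,U(\bx).
\]
Recognizing the sign convention adopted in \eqref{eq:DLpot} (the double layer here carries the opposite sign to the usual one), the left-hand side is exactly $-\bigl(\mathsf{DL}_\kappa^\Omega(\gamma_\dir^\Omega U)(\bx)+\mathsf{SL}_\kappa^\Omega(\gamma_\neu^\Omega U)(\bx)\bigr)=-\mathsf{G}_\kappa^\Omega(\gamma^\Omega(U))(\bx)$, which gives \eqref{eq:intRep}. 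One technical point: Green's identity for functions merely in $\mH^1(\Delta,\Omega)$ on a Lipschitz domain is the integration-by-parts formula that \emph{defines} the Neumann trace $\gamma_\neu^\Omega$ in the first place, so strictly one approximates $U$ by smooth functions in the $\mH^1(\Delta,\Omega)$-norm and passes to the limit, using continuity of $\gamma_\dir^\Omega,\gamma_\neu^\Omega$ and of the potential operators.

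For the unbounded case of \eqref{eq:intRep} (that is, $\Omega=\Omega_0$ or any exterior domain, $U$ $\kappa$-outgoing radiating), I would intersect $\Omega$ with a large ball $B_\rho$, apply the bounded-case identity on $\Omega\cap B_\rho$, and then show that the contribution of the artificial boundary $\partial B_\rho$ tends to $0$ as $\rho\to\infty$. This is the usual computation: writing the $\partial B_\rho$ integrand as $U(\partial_\rho\mathcal{G}_\kappa-i\kappa\mathcal{G}_\kappa)-\mathcal{G}_\kappa(\partial_\rho U-i\kappa U)$, one uses the Sommerfeld condition on $U$, the analogous decay of $\mathcal{G}_\kappa$ and $\partial_\rho\mathcal{G}_\kappa-i\kappa\mathcal{G}_\kappa$, and the a priori bound $\int_{\partial B_\rho}|U|^2\,d\sigma=O(1)$ that follows from the radiation condition (Rellich-type lemma); Cauchy--Schwarz then kills the cross terms. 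The interior formula \eqref{eq:extRep} is obtained by the same argument applied to $V$ on $\RR^d\setminus\overline\Omega$: one only has to track that the outward normal of $\RR^d\setminus\overline\Omega$ along $\partial\Omega$ is $-\bn_\Omega$, which flips the global sign and accounts for the $-1_{\RR^d\setminus\overline\Omega}(\bx)$ on the right; when $\Omega$ is bounded, $\RR^d\setminus\overline\Omega$ is unbounded and the $\partial B_\rho$ term is again discarded via the radiation condition, whereas when $\Omega$ is unbounded the complement is bounded and no truncation is needed.

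The main obstacle is not conceptual but bookkeeping: getting every sign right simultaneously for the nonstandard double-layer convention in \eqref{eq:DLpot}, the orientation of $\bn_\Omega$ shared (with opposite meaning) by $\Omega$ and its complement, and the placement of the indicator functions $1_\Omega$ versus $-1_{\RR^d\setminus\overline\Omega}$. I would therefore treat the interior formula carefully once, derive the exterior one from it by the substitution $\Omega\leftrightarrow\RR^d\setminus\overline\Omega$ together with $\bn\mapsto-\bn$, and double-check on the free-space case $U=\mathcal{G}_\kappa(\cdot-\bx_0)$ that both formulas reproduce the right values. The passage to the $\mH^1(\Delta,\cdot)$ setting and the $\rho\to\infty$ limit are routine given the continuity of the potential operators quoted from \cite[Thm.~3.1.16]{SaSw:book:2011} and standard radiation-condition estimates, so I would state them briefly rather than expand them in full.
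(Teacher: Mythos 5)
The paper does not prove this proposition itself --- it is quoted as a classical result from \cite[Thm.~3.1.6]{SaSw:book:2011} --- and your Green-identity argument (excise a small ball at the singularity, truncate unbounded domains by $B_\rho$ and discard the $\partial B_\rho$ term via the Sommerfeld condition, approximate $\mH^1(\Delta,\cdot)$ functions by smooth ones) is exactly the standard proof behind that citation. Your sign bookkeeping is also consistent with the paper's nonstandard convention in \eqref{eq:DLpot}: the identity $\int_{\partial\Omega}U\,\bn_\Omega\cdot\nabla_{\by}\mathcal{G}_\kappa(\bx-\by)\,d\sigma = -\mathsf{DL}_\kappa^\Omega(\gamma_\dir^\Omega U)(\bx)$ makes your displayed Green identity equivalent to \eqref{eq:intRep}, and the substitution $\Omega\leftrightarrow\RR^d\setminus\overline\Omega$ with $\bn\mapsto-\bn$ correctly produces the minus sign in \eqref{eq:extRep}.
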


\smallskip
\noindent Here, $1_{\Omega}$ (resp.~$1_{\RR^{d}\setminus\overline{\Omega}}$) is the characteristic function of $\Omega$ (resp.~$\RR^{d}\setminus\overline{\Omega}$). In addition to the representation formulas above, the potential operator $\mathsf{G}_\kappa^{\Omega}$ satisfies the so-called \emph{jump relations} \cite[Thm.~3.3.1]{SaSw:book:2011}, which describe the relationship between interior and exterior traces of $\mathsf{G}_\kappa^{\Omega}$. Here we express these relations through the following synthetic identity 
\begin{equation}
\label{eq:jumprels}
[\gamma^{\Omega}] \circ \mathsf{G}_\kappa^{\Omega} = \Id,
\end{equation} 
where $\Id$ is the identity map on $\mbH(\partial\Omega)$ and the jump $[\,\cdot\,]$ is defined in \eqref{eq:jumpaverage}. 

Any $U = \mathsf{G}_\kappa^{\Omega}(\fru)$ for $\fru\in \mbH(\partial \Omega)$ is a $\kappa$-outgoing radiating  solution to the homogeneous Helmholtz equation in $\Omega$ with wavenumber $\kappa$, hence we can apply to it the representation formula~\eqref{eq:intRep}. Taking the interior traces of
this formula leads to $\gamma^\Omega \circ \mathsf{G}_\kappa^\Omega(\gamma^\Omega\circ\mathsf{G}_\kappa^{\Omega}(\fru)) =
\gamma^\Omega\circ\mathsf{G}_\kappa^{\Omega}(\fru)$, and since $\fru$ was chosen arbitrarily in $\mbH(\partial \Omega)$, this finally rewrites 
\begin{equation}
\label{eq:CaldId}
(\gamma^\Omega \circ \mathsf{G}_\kappa^\Omega)^2 = (\gamma^\Omega \circ \mathsf{G}_\kappa^\Omega)
\end{equation}
which is a synthetic form of the four classical \emph{interior Cald\'eron identities}.
The operator $\gamma^\Omega \circ \mathsf{G}_\kappa^\Omega$ is a continuous projector, called the \emph{interior Calder\'on projector} of $\Omega$.  This actually provides a characterization of traces of solutions to the homogeneous Helmholtz equation, which are called Cauchy data (see \cite[§3.6]{SaSw:book:2011}): 
\begin{proposition}[Definition and characterization of Cauchy data]
\label{pr:chCauchy}
We define the space of \emph{Cauchy data} of $\Omega$
\begin{equation}
  \begin{aligned}
    \mathcal{C}_\kappa(\Omega) \coloneqq 
    & \{\gamma^\Omega(U)\in\mbH(\partial\Omega) \;|\;
     U \in \mH^1_\loc(\overline\Omega),
    -\Delta U - \kappa^2U = 0 \text{ in } \Omega,\\
    & \text{ and $U$ is $\kappa$-outgoing radiating if $\Omega$ is unbounded}\;\}.
  \end{aligned}
\end{equation}
The range of the {interior Calder\'on projector} $\gamma^\Omega \circ \mathsf{G}_\kappa^\Omega$ coincides with $\mathcal{C}_\kappa(\Omega)$. More precisely, for any $\fru \in \mbH(\partial \Omega)$ we have
$\gamma^\Omega \circ \mathsf{G}_\kappa^\Omega (\fru) = \fru
\iff\fru \in \mathcal{C}_\kappa(\Omega)$.
\end{proposition}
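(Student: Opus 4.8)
The plan is to establish the claimed equivalence by proving the two implications separately, and then to read off the statement about the range as an immediate consequence. The only non\-trivial inputs are the mapping properties of $\mathsf{G}_\kappa^\Omega$, the representation formula \eqref{eq:intRep}, and the interior Calderón identity \eqref{eq:CaldId}, all of which are available from the discussion above; well-posedness of $\gamma^\Omega\circ\mathsf{G}_\kappa^\Omega$ on $\mbH(\partial\Omega)$ has also already been recorded.

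For the implication $\gamma^\Omega\circ\mathsf{G}_\kappa^\Omega(\fru)=\fru \Rightarrow \fru\in\mathcal{C}_\kappa(\Omega)$, I would set $U\coloneqq\mathsf{G}_\kappa^\Omega(\fru)|_\Omega$ and check that $U$ is admissible in the definition of $\mathcal{C}_\kappa(\Omega)$: membership in $\mH^1_\loc(\overline\Omega)$ follows from the mapping property $\mathsf{G}_\kappa^\Omega\colon\mbH(\partial\Omega)\to\mH^1_\loc(\Delta,\overline\Omega)\times\mH^1_\loc(\Delta,\RR^d\setminus\Omega)$; the homogeneous Helmholtz equation in $\Omega$ holds because $\mathcal{G}_\kappa$ is a fundamental solution of $-\Delta-\kappa^2$ and $\bx\mapsto\mathsf{G}_\kappa^\Omega(\fru)(\bx)$ is smooth away from $\partial\Omega$; and, when $\Omega$ is unbounded, the $\kappa$-outgoing radiation condition is inherited from that of $\mathcal{G}_\kappa$. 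Consequently $\fru=\gamma^\Omega\circ\mathsf{G}_\kappa^\Omega(\fru)=\gamma^\Omega(U)\in\mathcal{C}_\kappa(\Omega)$.

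For the converse, given $\fru=\gamma^\Omega(U)$ with $U$ as in the definition of $\mathcal{C}_\kappa(\Omega)$, I would apply the representation formula \eqref{eq:intRep} to obtain $\mathsf{G}_\kappa^\Omega(\gamma^\Omega(U))=1_\Omega\,U$, and then take interior traces on both sides; since $\gamma^\Omega$ only involves the restriction to $\Omega$, this gives $\gamma^\Omega\circ\mathsf{G}_\kappa^\Omega(\fru)=\gamma^\Omega(U)=\fru$. The statement on the range then follows: if $\frw=\gamma^\Omega\circ\mathsf{G}_\kappa^\Omega(\fru)$ for some $\fru\in\mbH(\partial\Omega)$, then $\gamma^\Omega\circ\mathsf{G}_\kappa^\Omega(\frw)=(\gamma^\Omega\circ\mathsf{G}_\kappa^\Omega)^2(\fru)=\gamma^\Omega\circ\mathsf{G}_\kappa^\Omega(\fru)=\frw$ by \eqref{eq:CaldId}, hence $\frw\in\mathcal{C}_\kappa(\Omega)$ by the first implication; conversely, any $\fru\in\mathcal{C}_\kappa(\Omega)$ is a fixed point by the second implication and thus lies in the range.

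I do not expect a genuine obstacle: once the cited facts are in place the argument is essentially bookkeeping. The one point deserving care is verifying, in the first implication, that the restriction of the potential to $\Omega$ satisfies \emph{both} the homogeneous Helmholtz equation and (when $\Omega$ is unbounded) the radiation condition, that is, that the quoted mapping and jump properties of $\mathsf{G}_\kappa^\Omega$ are invoked with the correct interpretation of the local Sobolev spaces, including the behaviour at infinity.
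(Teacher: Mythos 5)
Your argument is correct and is essentially the one the paper itself relies on: the proposition is quoted from the literature, but the surrounding text derives \eqref{eq:CaldId} by exactly your converse step (apply \eqref{eq:intRep} to $U=\mathsf{G}_\kappa^\Omega(\fru)$, which is an admissible radiating Helmholtz solution, and take interior traces), and your forward implication plus the projector identity then yield the range statement in the same way. No gaps.
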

\noindent Analogous results, obtained by taking exterior traces of the representation
formula~\eqref{eq:extRep}, hold for exterior Cauchy data.  

Applying traces to potential operators yields \emph{boundary integral operators}: in our compact notation we will use 
\begin{equation}\label{DefPMCHWTOp}
\mathsf{A}_\kappa^\Omega \coloneqq \{\gamma^\Omega\} \circ \mathsf{G}_\kappa^\Omega, 
\end{equation}
where the average $\{\cdot\}$ is defined in \eqref{eq:jumpaverage}. 
The operator $\mathsf{A}_\kappa^\Omega$ continuously maps
$\mbH(\partial\Omega)$ into $\mbH(\partial\Omega)$.  It consists in a
$2\times 2$ matrix of boundary integral operators (double layer,
single layer, hypersingular and adjoint double layer operators, see
e.g.~\cite[\S 3.6]{SaSw:book:2011}). In this article, we shall not need to refer individually to any of its entries.  Simple consequences of the jump
relations~\eqref{eq:jumprels} are
\begin{align}
& \gamma^\Omega \circ \mathsf{G}_\kappa^\Omega = \mathsf{A}_\kappa^\Omega + \Id/2, \label{eq:Akj12} \\
& \gamma_c^\Omega \circ \mathsf{G}_\kappa^\Omega = \mathsf{A}_\kappa^\Omega - \Id/2. \label{eq:Akj12bis} 
\end{align}
So, identity \eqref{eq:CaldId} implies $(\mathsf{A}_\kappa^\Omega)^{2} = \Id/4$. The operator $\mathsf{A}_\kappa^\Omega$, for $\Omega = \Omega_j$, $j=0,\dots,n$, will play a pivotal role in our analysis.  We now
recall a few properties of $\mathsf{A}_\kappa^\Omega$, which are well established in the literature. First, this operator satisfies a generalized G\r{a}rding inequality:
\begin{proposition}[Generalized G\r{a}rding inequality]
\label{pr:gardingAj}
Recall the operator $\theta(v, q) \coloneqq (-v, q)$. There exist a compact operator $\mathcal{K}\colon \mbH(\partial \Omega) \to \mbH(\partial \Omega)$ and a constant $\alpha>0$ such that for all $ \fru \in \mbH(\partial \Omega)$ we have
\[
\Re \left \{ [(\mathsf{A}_\kappa^\Omega + \mathcal{K})\fru, \theta(\overline{\fru})]_{\partial\Omega} \right \}
\ge \alpha \lVert \fru \rVert_{\mbH(\partial \Omega)}^2.
\] 
\end{proposition}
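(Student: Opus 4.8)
The plan is to prove the generalized Gårding inequality by reducing it to known coercivity properties of the single layer and hypersingular operators. The key observation is that, by \eqref{eq:thetaIdentitybis3}, the skew pairing with $\theta$ turns into the \emph{symmetric} bilinear form $[\fru,\theta(\overline{\frv})]_{\partial\Omega} = \langle u,\overline q\rangle_{\partial\Omega} + \langle p,\overline v\rangle_{\partial\Omega}$, so that for $\mathsf{A}_\kappa^\Omega = \begin{pmatrix} -\mathsf{K}_\kappa & \mathsf{V}_\kappa \\ \mathsf{W}_\kappa & \mathsf{K}_\kappa' \end{pmatrix}$ (in the block structure of double layer, single layer, hypersingular, adjoint double layer) the quantity $[(\mathsf{A}_\kappa^\Omega)\fru,\theta(\overline{\fru})]_{\partial\Omega}$ expands into $\langle -\mathsf{K}_\kappa u + \mathsf{V}_\kappa p,\overline p\rangle + \langle \mathsf{W}_\kappa u + \mathsf{K}_\kappa' p,\overline v\rangle$ — wait, more carefully one should track which trace couples with which; the point is that the \emph{off-diagonal} contributions involving $\mathsf{K}_\kappa$ and $\mathsf{K}_\kappa'$ partially cancel or can be absorbed, while the \emph{diagonal} contributions produce $\langle \mathsf{V}_\kappa p, \overline p\rangle_{\partial\Omega}$ and $\langle \mathsf{W}_\kappa v, \overline v\rangle_{\partial\Omega}$, for which the classical results apply.

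First I would recall the standard facts (see \cite[Chap.~3]{SaSw:book:2011}): the single layer operator $\mathsf{V}_\kappa\colon \mH^{-1/2}(\partial\Omega)\to\mH^{1/2}(\partial\Omega)$ satisfies $\Re\langle \mathsf{V}_\kappa p,\overline p\rangle_{\partial\Omega} \ge c\|p\|_{\mH^{-1/2}}^2 - \|p\|^2_{*}$ for some compact-norm lower-order term, and the hypersingular operator $\mathsf{W}_\kappa\colon\mH^{1/2}(\partial\Omega)\to\mH^{-1/2}(\partial\Omega)$ satisfies $\Re\langle \mathsf{W}_\kappa v,\overline v\rangle_{\partial\Omega}\ge c\|v\|_{\mH^{1/2}}^2 - \|v\|^2_{*}$; equivalently each of $\mathsf{V}_\kappa$ and $\mathsf{W}_\kappa$ differs by a compact operator from a self-adjoint positive (coercive) operator. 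These are precisely the statements that make the Costabel coupling work. Then I would write out $[\mathsf{A}_\kappa^\Omega\fru,\theta(\overline\fru)]_{\partial\Omega}$ using \eqref{eq:thetaIdentitybis3}, observe that the cross terms built from $\mathsf{K}_\kappa$ and $\mathsf{K}_\kappa'$ are, up to sign, of the form $\langle \mathsf{K}_\kappa u,\overline p\rangle - \langle \mathsf{K}_\kappa' p,\overline u\rangle$ plus conjugates — and here the key identity is that $\mathsf{K}_\kappa'$ is the adjoint (transpose, without conjugation) of $\mathsf{K}_\kappa$ with respect to $\langle\cdot,\cdot\rangle_{\partial\Omega}$, so these terms cancel exactly (or reduce to a skew-Hermitian piece whose real part vanishes). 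What survives is $\Re\langle \mathsf{V}_\kappa p,\overline p\rangle_{\partial\Omega} + \Re\langle \mathsf{W}_\kappa v,\overline v\rangle_{\partial\Omega}$, which by the two recalled coercivity estimates is bounded below by $\alpha(\|v\|_{\mH^{1/2}}^2 + \|p\|_{\mH^{-1/2}}^2) = \alpha\|\fru\|_{\mbH(\partial\Omega)}^2$ modulo a term $-\|\fru\|_*^2$ controlled by a compact operator $\mathcal{K}$ on $\mbH(\partial\Omega)$; absorbing that term into $\mathcal{K}$ (using $\Re\{[\mathcal{K}\fru,\theta(\overline\fru)]\} \gtrsim -\|\fru\|^2_*$ with the appropriate choice) gives the claim.

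The main obstacle I expect is the \emph{bookkeeping of signs and adjoints}: one must be careful that the nonstandard sign convention for $\mathsf{DL}_\kappa^\Omega$ announced in the footnote changes the off-diagonal entries of $\mathsf{A}_\kappa^\Omega$ relative to the usual references, and one must verify that the $\theta$-twisted pairing is exactly what makes the $\mathsf{K}$-terms drop out rather than obstruct coercivity — i.e.\ that $\theta$ is the correct "sign-flip" that symmetrizes $\mathsf{A}_\kappa^\Omega$ into something with positive real part modulo compact. A clean way to handle this, avoiding entrywise computation, is to argue at the level of the potential: for $U = \mathsf{G}_\kappa^\Omega(\fru)$ one has (by Green's identity in $\Omega$ and in a large ball minus $\overline\Omega$, together with the jump relation \eqref{eq:jumprels} and the radiation condition) an expression of $\Re\{[\mathsf{A}_\kappa^\Omega\fru,\theta(\overline\fru)]_{\partial\Omega}\}$ in terms of $\int |\nabla U|^2 - \kappa^2\int|U|^2$ over the relevant regions plus a nonnegative radiation-flux term; the indefinite $-\kappa^2\int|U|^2$ part is lower-order (compact), and the $\int|\nabla U|^2$ part, combined with trace estimates, yields the Gårding lower bound. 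I would present whichever of these two routes (entrywise via $\mathsf{V}_\kappa,\mathsf{W}_\kappa$, or potential-theoretic via Green's identity) keeps the sign tracking most transparent, and relegate the compact perturbation $\mathcal{K}$ to absorbing all $\mH^{s}$-norm terms with $s$ below the principal order.
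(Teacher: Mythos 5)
Your proposal is essentially sound, but the two routes you hedge between are not equally easy to close, and the paper's actual proof (Proposition~\ref{pr:gardingAjAppendix}) is your second, potential-theoretic route with one decisive extra step that you omit: it first reduces to $\kappa=\imagi$ by a compact perturbation of $\mathsf{A}_\kappa^\Omega$ (via \cite[Lemma 3.9.8]{SaSw:book:2011}). That reduction is what makes the Green's-identity computation clean: with $\Delta\psi=\psi$ for $\psi=\mathsf{G}_\imagi^\Omega(\fru)$ one gets $\Re\{[\mathsf{A}_\imagi^\Omega\fru,\theta(\overline{\fru})]_{\partial\Omega}\}=\Vert\psi\Vert^2_{\mH^1(\Omega)}+\Vert\psi\Vert^2_{\mH^1(\RR^d\setminus\Omega)}$ exactly — there is no indefinite $-\kappa^2\int|\psi|^2$ term, the potential decays exponentially so the exterior integral converges on the unbounded component without truncation, and no radiation-flux term appears; the lower bound $\alpha\Vert\fru\Vert^2_{\mbH(\partial\Omega)}$ then follows from trace continuity and the jump relation $\fru=[\gamma^\Omega]\psi$. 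As you literally describe this route — Green's identity for real $\kappa$ in a large ball, producing $\int|\nabla U|^2-\kappa^2\int|U|^2$ plus a flux term — it has a genuine gap: over the unbounded region those two volume integrals do not converge separately as $\rho\to\infty$ for real $\kappa$, so the claim that "the indefinite part is lower-order (compact)" cannot be executed as stated; the $\kappa=\imagi$ reduction is precisely the device that removes this obstacle. Your first route (entrywise, citing coercivity modulo compact of $\mathsf{V}_\kappa$ and $\mathsf{W}_\kappa$) is a legitimate alternative not taken by the paper and would also work, with one correction: the cross terms built from $\mathsf{K}_\kappa$ and its transpose $\mathsf{K}_\kappa'$ do not cancel exactly in the real part, because the sesquilinear real part introduces $\overline{\mathsf{K}_\kappa}=\mathsf{K}_{-\kappa}\neq\mathsf{K}_\kappa$; what survives is $\Re\{\langle(\mathsf{K}_\kappa-\mathsf{K}_{-\kappa})\overline{u},p\rangle_{\partial\Omega}\}$, which is controlled by a compact operator (smooth kernel) and hence absorbable into $\mathcal{K}$ — harmless, but it should be said. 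Either route, properly completed, proves the proposition; the paper's choice buys an entirely self-contained computation that never invokes the separate coercivity results for $\mathsf{V}_\kappa$ and $\mathsf{W}_\kappa$.
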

\noindent
Although well known (see for example \cite[Thm.~3.9]{Pet:STF:1989}), the proof of this result is instructive, so we include it in Proposition~\ref{pr:gardingAjAppendix} in the appendix. 
Next, remarkable \emph{symmetry properties} were proved in \cite[Lemma 3.6--3.7]{ClHi:impenetrable:2015}: for any $ \fru, \frv \in \mbH(\partial \Omega)$
we have 
\begin{equation*}
%\label{eq:symmA}
      [\mathsf{A}_\kappa^{\Omega}(\fru), \frv]_{\partial\Omega} =
      [\mathsf{A}_\kappa^{\Omega}(\frv), \fru]_{\partial\Omega}.
\end{equation*}
Finally, we recall a useful result about the sign of the imaginary part of the quadratic form $\fru\mapsto [\mathsf{A}_\kappa^{\Omega}(\fru),\overline{\fru}]_{\partial\Omega}$:
\begin{proposition}\label{RadiationConditionConsequence}
  Assume that either $\Omega\subset \RR^d$ is bounded or $\RR^d\setminus \Omega$ is bounded.
  Then for all $\fru\in \mbH(\partial\Omega)$, we have
  $\Im \{[\mathsf{A}_\kappa^{\Omega}(\fru),\overline{\fru}]_{\partial\Omega}\}\geq 0$.
\end{proposition}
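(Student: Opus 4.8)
The plan is to realise the quadratic form through the Cauchy data of a genuine Helmholtz solution and to read off the sign from an energy-flux (Rellich-type) argument at infinity, in the spirit of the classical proof that the exterior Calder\'on operator has a sign-definite imaginary part. Fix $\fru\in\mbH(\partial\Omega)$ and set $U\coloneqq\mathsf{G}_\kappa^{\Omega}(\fru)$; by the construction and mapping properties of $\mathsf{G}_\kappa^{\Omega}$ (recalled just before \eqref{eq:CaldId}), $U$ satisfies $-\Delta U-\kappa^2U=0$ in $\Omega$ and in $\RR^d\setminus\overline\Omega$, is $\kappa$-outgoing radiating on whichever of these two open sets is unbounded, and has well-defined Dirichlet and Neumann traces on $\partial\Omega$ from both sides. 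Writing $\frv\coloneqq\gamma^{\Omega}(U)$ and $\frw\coloneqq\gamma^{\Omega}_{c}(U)$, the jump relation \eqref{eq:jumprels} and the definition \eqref{DefPMCHWTOp} give $\fru=\frv-\frw$ and $\mathsf{A}_\kappa^{\Omega}(\fru)=\tfrac12(\frv+\frw)$.

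The next step is a purely algebraic reduction. Expanding $\lbr\mathsf{A}_\kappa^{\Omega}(\fru),\overline{\fru}\rbr_{\partial\Omega}=\tfrac12\,\lbr\frv+\frw,\overline{\frv}-\overline{\frw}\rbr_{\partial\Omega}$ by bilinearity and using the skew-symmetry of $\lbr\cdot,\cdot\rbr_{\partial\Omega}$ together with the conjugation identity $\overline{\lbr\mathfrak{a},\mathfrak{b}\rbr_{\partial\Omega}}=\lbr\overline{\mathfrak{a}},\overline{\mathfrak{b}}\rbr_{\partial\Omega}$ (both immediate from \eqref{eq:skewpairingloc} and the definition of the conjugate functional), the two mixed terms add up to $-2\Re\lbr\frv,\overline{\frw}\rbr_{\partial\Omega}\in\RR$ and hence do not contribute to the imaginary part. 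Furthermore, for any $\mathfrak{a}=(a,b)\in\mbH(\partial\Omega)$ one has $\lbr\mathfrak{a},\overline{\mathfrak{a}}\rbr_{\partial\Omega}=\langle a,\overline b\rangle_{\partial\Omega}-\langle b,\overline a\rangle_{\partial\Omega}=-2\imagi\,\Im\langle b,\overline a\rangle_{\partial\Omega}$, which is purely imaginary. Applying this to $\frv$ and to $\frw$ leaves
\[
\Im\bigl\{\lbr\mathsf{A}_\kappa^{\Omega}(\fru),\overline{\fru}\rbr_{\partial\Omega}\bigr\}
=-\,\Im\langle\gamma^{\Omega}_{\neu}(U),\overline{\gamma^{\Omega}_{\dir}(U)}\rangle_{\partial\Omega}
+\Im\langle\gamma^{\Omega}_{\neu,c}(U),\overline{\gamma^{\Omega}_{\dir,c}(U)}\rangle_{\partial\Omega}.
\]

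It remains to show that the right-hand side is nonnegative, which is an energy-flux computation. Pick $R>0$ with $\partial\Omega\subset B_R$ and apply the generalized Green first identity for $-\Delta-\kappa^2$ to $U$ separately on $\Omega$ (truncated to $\Omega\cap B_R$ if $\Omega$ is unbounded) and on $\RR^d\setminus\overline\Omega$ (truncated to $(\RR^d\setminus\overline\Omega)\cap B_R$ if the complement is unbounded). Each identity has vanishing imaginary part because $\int(\lvert\nabla U\rvert^2-\kappa^2\lvert U\rvert^2)$ is real; its $\partial\Omega$-part is $+\langle\gamma^{\Omega}_{\neu}(U),\overline{\gamma^{\Omega}_{\dir}(U)}\rangle_{\partial\Omega}$ in the first identity and $-\langle\gamma^{\Omega}_{\neu,c}(U),\overline{\gamma^{\Omega}_{\dir,c}(U)}\rangle_{\partial\Omega}$ in the second (the two outward unit normals on $\partial\Omega$ being opposite), and the only other contribution — present in exactly the identity written on the unbounded region — is $\int_{\partial B_R}\partial_\rho U\,\overline U\,d\sigma_R$. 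Taking imaginary parts and substituting into the previous display gives $\Im\bigl\{\lbr\mathsf{A}_\kappa^{\Omega}(\fru),\overline{\fru}\rbr_{\partial\Omega}\bigr\}=\Im\int_{\partial B_R}\partial_\rho U\,\overline U\,d\sigma_R$, a quantity in particular independent of $R$. Finally, expanding $\int_{\partial B_R}\lvert\partial_\rho U-\imagi\kappa U\rvert^2\,d\sigma_R$ yields
\[
2\kappa\,\Im\!\int_{\partial B_R}\!\partial_\rho U\,\overline U\,d\sigma_R
=\int_{\partial B_R}\!\bigl(\lvert\partial_\rho U\rvert^2+\kappa^2\lvert U\rvert^2\bigr)\,d\sigma_R
-\int_{\partial B_R}\!\lvert\partial_\rho U-\imagi\kappa U\rvert^2\,d\sigma_R ;
\]
the left-hand side is $R$-independent and the last integral tends to $0$ as $R\to\infty$ by the Sommerfeld condition, so the first integral on the right converges to a nonnegative limit, whence $\Im\int_{\partial B_R}\partial_\rho U\,\overline U\,d\sigma_R\geq0$ and the statement follows.

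The main obstacle is the sign bookkeeping: tracking the two opposite normal orientations on $\partial\Omega$ when Green's identity is used from the interior and from the exterior of $\Omega$, and handling both admissible geometries ($\Omega$ bounded and $\RR^d\setminus\Omega$ bounded), in which the roles of the ``interior'' and ``exterior'' regions — and of the bounded one among them — swap. A minor technical point is that Green's identity must be invoked in its generalized form valid on $\mH^1(\Delta,\cdot)$ with duality pairings of traces, which is exactly the regularity class of $U=\mathsf{G}_\kappa^{\Omega}(\fru)$; modulo this, the remainder is the standard Rellich/Sommerfeld limiting argument.
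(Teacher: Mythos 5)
Your argument is correct and follows essentially the same route as the paper's appendix proof (Proposition~\ref{RadiationConditionConsequence2}): represent $\fru$ as the jump of $U=\mathsf{G}_\kappa^\Omega(\fru)$, observe that the mixed interior/exterior terms are real and the two diagonal terms purely imaginary, kill the contribution of the bounded region by Green's formula, and extract the sign of the unbounded-region contribution from the Sommerfeld condition. The only (cosmetic) difference is that you treat the two geometric cases uniformly, whereas the paper proves the unbounded-$\Omega$ case and then transfers the bounded case via the identity $\mathsf{A}_\kappa^{\Omega}=-\theta\circ\mathsf{A}_\kappa^{\Omega^c}\circ\theta$.
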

\noindent The proof of this result can be deduced for example from the positivity of the capacity operator stated in \cite[Thm.~5.3.5]{Nedelec:book:2001}. However, since we are not able to find a definitive proof in the current literature, we provide it in Proposition~\ref{RadiationConditionConsequence2} in the appendix.  

Once again, in the context of the multi-domain configuration \eqref{eq:boundaries}, we shall write $\mathsf{SL}_\kappa^{j}$, $\mathsf{DL}_\kappa^{j}$, $\mathsf{G}_\kappa^{j}$, $\mathsf{A}_\kappa^{j}$ (resp.~$\mathsf{SL}_\kappa^{\Sigma}$, $\mathsf{DL}_\kappa^{\Sigma}$, $\mathsf{G}_\kappa^{\Sigma}$, $\mathsf{A}_\kappa^{\Sigma}$) instead of $\mathsf{SL}_\kappa^{\Omega_j}$, $\mathsf{DL}_\kappa^{\Omega_j}$, $\mathsf{G}_\kappa^{\Omega_{j}}$, $\mathsf{A}_\kappa^{\Omega_{j}}$ (resp.~$\mathsf{SL}_\kappa^{\Omega_\Sigma}$, $\mathsf{DL}_\kappa^{\Omega_\Sigma}$, $\mathsf{G}_\kappa^{\Omega_{\Sigma}}$, $\mathsf{A}_\kappa^{\Omega_{\Sigma}}$).

%% Applying Proposition \ref{RadiationConditionConsequence} in the particular case $\Omega = \Omega_j$, we see that:
%% \begin{itemize}
%% \item
%% $\Im \{[\mathsf{A}_\kappa^{0}(\fru),\overline{\fru}]_{\Gamma_{0}}\} = 0
%% \iff\mathsf{G}_\kappa^{0}(\fru) = 0$ in $\Omega_0$,\\[-2.5pt]
%% \item
%% \textcolor{blue}{$\Im \{[\mathsf{A}_\kappa^{j}(\fru),\overline{\fru}]_{\Gamma_{j}}\} = 0
%%   \iff\mathsf{G}_\kappa^{j}(\fru) = 0$ in $\RR^{d}\setminus\overline{\Omega}_j$ for $j\neq 0$.}
%% {vrai seulement si le complementaire de $\Omega_j$ est connexe...}
%% \end{itemize}

\section{Trace spaces for multi-domain scattering}
\label{sec:spaces}

\noindent Based on previous contributions about multi-trace formalism \cite{ClHi:MTF:2013,ClHi:impenetrable:2015}, we introduce function spaces specific to multi-domain configurations. A natural trace space on the skeleton $\Gamma$ \eqref{eq:boundaries} is the \emph{multi-trace space} defined as the Cartesian product of local trace spaces on the homogeneous subdomains boundary:
\begin{equation}\label{MultiTraceSpace}
  \mbH(\Gamma) \coloneqq \mbH(\Gamma_0) \times \cdots \times \mbH(\Gamma_n),
\end{equation}
recalling that in \eqref{DefPairOfTraces} we have set $\mbH(\Gamma_j)\coloneqq\mH^{1/2}(\Gamma_j)\times \mH^{-1/2}(\Gamma_j)$ (note that no components on $\Sigma$ are involved in $\mbH(\Gamma)$).
The multi-trace space above is equipped with the Cartesian product norm defined by 
\begin{equation*}
  \Vert \frv\Vert_{\mbH(\Gamma)}^{2} \coloneqq 
  \sum_{j=0}^n \Vert \frv_j\Vert_{\mbH(\Gamma_j)}^{2}, \quad 
  \text{for }  \frv = (\frv_0,\dots,\frv_n)\in\mbH(\Gamma).
\end{equation*}
Throughout the paper we use gothic symbols $\fru, \frv, \frw$ to denote tuples of Dirichlet-Neumann traces, with a subscript indicating the pair of traces on a certain subdomain boundary. The trace operators $\gamma^{j}$ local to subdomains can be bundled to
form a \emph{global trace operator} on the skeleton $\Gamma$
\begin{equation}\label{GlobalTraceOperator}
    \gamma (U) \coloneqq (\gamma^{0}(U),\dots, \gamma^{n}(U)),
\end{equation}
which naturally maps continuously onto the multi-trace space $\gamma\colon\mH^{1}(\Delta,\Omega_0)\times
\dots\times\mH^{1}(\Delta,\Omega_n)\to  \mbH(\Gamma)$. Moreover, the multi-trace space \eqref{MultiTraceSpace} is naturally
equipped with the non-degenerate bilinear pairing $\lbr\cdot \, , \, \cdot\rbr_{\Gamma}\colon\mbH(\Gamma)\times\mbH(\Gamma)\to \CC$
defined by 
% \begin{equation*}
%     \begin{aligned}
%       & \lbr \fru,\frv\rbr_{\Gamma}\coloneqq\lbr \fru_0,\frv_0\rbr_{\Gamma_0} + \dots + \lbr \fru_n,\frv_n\rbr_{\Gamma_n}\\[5pt]
%       & \text{for any }  \fru = (\fru_0,\dots,\fru_n)\in\mbH(\Gamma),\\
%       & \textcolor{white}{for any }\, \frv = (\frv_0,\dots,\frv_n)\in\mbH(\Gamma).
%     \end{aligned}      
% \end{equation*}
\begin{equation}\label{eq:pairingHGamma}
    \lbr \fru,\frv\rbr_{\Gamma} 
    \coloneqq \sum_{j=0}^n \lbr \fru_j,\frv_j\rbr_{\Gamma_j}, 
    \quad \text{for } \fru=(\fru_0,\dots,\fru_n), \frv=(\frv_0,\dots,\frv_n) \in\mbH(\Gamma).    
\end{equation}
We also need to introduce a subspace of \eqref{MultiTraceSpace} consisting of tuples of traces that comply with Dirichlet and Neumann transmission conditions through each interface $\Gamma_j\cap\Gamma_k$: the so-called \emph{single-trace space} $\mbX(\Gamma)\subset \mbH(\Gamma)$
is a closed subspace of $\mbH(\Gamma)$ defined as follows  
\begin{equation}\label{SingleTraceSpace}
  \begin{aligned}
    \mbX(\Gamma) \coloneqq \{ & (u_j,p_j)_{j=0,\dots,n}\in \mbH(\Gamma) \mid
    \;\exists V\in \mH^{1}(\RR^d),\;\bq\in \mH(\mrm{div},\RR^d)\;\\
    & \text{such that } u_j = V\vert_{\Gamma_j}\;\text{and}\;p_j = \bn_j\cdot\bq\vert_{\Gamma_j}\;\forall j=0,\dots,n\}.
  \end{aligned}
\end{equation}
In contrast to other articles about multi-trace formalism such as \cite{MR3202533,ClHi:MTF:2013}, Definition~\eqref{SingleTraceSpace} for $\mbX(\Gamma)$ stems from the decomposition $\RR^{d}\setminus\Omega_{\Sigma} = \cup_{j=0}^n\overline{\Omega}_j$, which is \textit{not} a partition of the full space $\RR^d$, i.e.~the subdomain $\Omega_\Sigma$ is assumed non-empty here. Because of this, the single-trace space $\mbX(\Gamma)$ obeys a \emph{modified} polarity identity involving a residual term localized on $\Sigma$, the boundary of the heterogeneous subdomain $\Omega_\Sigma$, see \eqref{eq:boundaries}. This property, stated in the following proposition, will play a crucial role in our analysis. 
\begin{proposition}[Modified polarity identity]\label{DefinitionTraceOnSigma}
  For any $\fru = (u_j,p_j)_{j=0,\dots,n}\in \mbX(\Gamma)$ stemming from the traces $u_j = V\vert_{\Gamma_j}$ and $p_j = \bn_j\cdot \bq\vert_{\Gamma_j}$ of some $V\in \mH^{1}(\RR^d)$, $\bq\in \mH(\mrm{div},\RR^d)$, define 
  \begin{equation}\label{eq:defT}
    \Tr(\fru) \coloneqq (V\vert_{\Sigma},\bn_{\Sigma}\cdot\bq\vert_{\Sigma}).
  \end{equation}
  Then $\Tr(\fru)$ does not depend on the particular liftings $V,\bq$, and the formula above defines a continuous and surjective operator $\Tr\colon\mbX(\Gamma)\to \mbH(\Sigma)$ satisfying the \emph{modified polarity identity}
  \begin{equation}\label{PolarityOperatorTr}
    \lbr \fru,\frv\rbr_{\Gamma} = -\lbr\Tr(\fru),\Tr(\frv)\rbr_{\Sigma}\quad \forall \, \fru,\frv\in\mbX(\Gamma).
  \end{equation}  
\end{proposition}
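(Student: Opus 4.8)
The plan is to prove the three assertions of Proposition~\ref{DefinitionTraceOnSigma} in order: well-definedness of $\Tr$, its continuity and surjectivity, and the modified polarity identity~\eqref{PolarityOperatorTr}. For well-definedness, suppose $V_1, V_2 \in \mH^1(\RR^d)$ and $\bq_1, \bq_2 \in \mH(\mrm{div},\RR^d)$ both represent the same tuple $\fru \in \mbX(\Gamma)$, meaning $V_1\vert_{\Gamma_j} = V_2\vert_{\Gamma_j}$ and $\bn_j\cdot\bq_1\vert_{\Gamma_j} = \bn_j\cdot\bq_2\vert_{\Gamma_j}$ for all $j$. Setting $W = V_1 - V_2$ and $\bw = \bq_1 - \bq_2$, the Dirichlet trace of $W$ and the normal trace of $\bw$ vanish on every $\Gamma_j$, hence on $\Gamma = \cup_j \Gamma_j$, and in particular on $\Sigma \subset \Gamma$. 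Since $\Sigma = \partial\Omega_\Sigma$ and $\Omega_\Sigma$ is a Lipschitz domain, the classical trace theory (an $\mH^1(\RR^d)$ function whose trace vanishes on $\partial\Omega_\Sigma$ from one side, an $\mH(\mrm{div})$ field whose normal component vanishes there) gives $W\vert_\Sigma = 0$ and $\bn_\Sigma\cdot\bw\vert_\Sigma = 0$, so $\Tr(\fru)$ is independent of the lifting. The key point that makes this work cleanly is exactly the hypothesis that $\Sigma \subset \Gamma$, so controlling traces on the whole skeleton controls them on $\Sigma$.

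For continuity, I would invoke an open-mapping/bounded-right-inverse argument: the map sending $(V,\bq) \in \mH^1(\RR^d)\times\mH(\mrm{div},\RR^d)$ to its tuple of skeleton traces in $\mbH(\Gamma)$ is bounded and, by definition~\eqref{SingleTraceSpace}, surjective onto $\mbX(\Gamma)$; a standard argument produces, for each $\fru \in \mbX(\Gamma)$, liftings with $\Vert V\Vert_{\mH^1(\RR^d)} + \Vert\bq\Vert_{\mH(\mrm{div},\RR^d)} \lesssim \Vert\fru\Vert_{\mbH(\Gamma)}$. Composing with the bounded trace operators $\mH^1(\RR^d)\to\mH^{1/2}(\Sigma)$ and $\mH(\mrm{div},\RR^d)\to\mH^{-1/2}(\Sigma)$ gives $\Vert\Tr(\fru)\Vert_{\mbH(\Sigma)} \lesssim \Vert\fru\Vert_{\mbH(\Gamma)}$. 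For surjectivity onto $\mbH(\Sigma)$: given $(w,r) \in \mbH(\Sigma) = \mH^{1/2}(\Sigma)\times\mH^{-1/2}(\Sigma)$, lift $w$ to some $V\in\mH^1(\RR^d)$ and $r$ to some $\bq\in\mH(\mrm{div},\RR^d)$ (using that $\Omega_\Sigma$ and its complement are Lipschitz, so such global liftings exist), and take the resulting skeleton traces; this tuple lies in $\mbX(\Gamma)$ by construction and maps to $(w,r)$.

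The main identity~\eqref{PolarityOperatorTr} is where the real work lies, and I expect it to be the principal obstacle. The strategy is integration by parts. Fix $\fru = (u_j,p_j)_j$ and $\frv = (v_j,q_j)_j$ in $\mbX(\Gamma)$ with liftings $V,\bq$ and $V',\bq'$ respectively. Unfolding the definitions, $\lbr\fru,\frv\rbr_\Gamma = \sum_{j=0}^n \big(\langle u_j, q_j\rangle_{\Gamma_j} - \langle p_j, v_j\rangle_{\Gamma_j}\big)$. By Green's formula on each $\Omega_j$ (a bounded or exterior Lipschitz domain), $\langle \gamma_\dir^j V, \bn_j\cdot\bq'\vert_{\Gamma_j}\rangle_{\Gamma_j} = \int_{\Omega_j}(\nabla V\cdot\bq' + V\,\mrm{div}\,\bq')$, and similarly for the other term; summing over $j$, the two volume integrals cancel against each other term-by-term on $\RR^d\setminus\overline{\Omega_\Sigma} = \cup_j\overline{\Omega_j}$, leaving only $-\int_{\Omega_\Sigma}(\nabla V\cdot\bq' + V\,\mrm{div}\,\bq')$ as the "missing piece" relative to integrating over all of $\RR^d$ (where, $V$ and $\bq'$ being globally $\mH^1$ and $\mH(\mrm{div})$, that full integral would vanish by the global Green identity). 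Applying Green's formula on $\Omega_\Sigma$ once more turns that remaining volume integral into $-\langle V\vert_\Sigma, \bn_\Sigma\cdot\bq'\vert_\Sigma\rangle_\Sigma = -\langle \Tr(\fru)_\dir, \Tr(\frv)_\neu\rangle_\Sigma$. Doing the same with the roles of $(V,\bq)$ and $(V',\bq')$ swapped for the $-\langle p_j,v_j\rangle$ sum produces $+\langle \Tr(\frv)_\dir, \Tr(\fru)_\neu\rangle_\Sigma$, and assembling gives $\lbr\fru,\frv\rbr_\Gamma = -\big(\langle\Tr(\fru)_\dir,\Tr(\frv)_\neu\rangle_\Sigma - \langle\Tr(\fru)_\neu,\Tr(\frv)_\dir\rangle_\Sigma\big) = -\lbr\Tr(\fru),\Tr(\frv)\rbr_\Sigma$, as claimed. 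The delicate points to handle carefully are the orientation of normals (the $\bn_j$ point out of $\Omega_j$, which is consistent across shared interfaces and agrees with $\bn_\Sigma$ being outward from $\Omega_\Sigma$, i.e.\ inward to the $\Omega_j$ touching $\Sigma$, so a sign must be tracked), and justifying that the pairings on pieces of $\Gamma_j$ glue correctly — which is precisely what membership in $\mbX(\Gamma)$, via the global liftings $V\in\mH^1(\RR^d)$ and $\bq\in\mH(\mrm{div},\RR^d)$, guarantees, so that no ill-defined restriction-to-a-subinterface operator is ever needed.
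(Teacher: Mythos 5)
Your proof is correct, and it reconstructs the standard argument: the paper itself does not prove this proposition but defers to \cite[Prop.~3.1 and 3.2]{ClHi:impenetrable:2015}, where the polarity identity is obtained exactly as you do it --- Green's formula on each $\Omega_j$, the vanishing of the global Green identity on $\RR^d$ (so that $\sum_j\int_{\Omega_j}(\bq'\cdot\nabla V+V\,\mrm{div}\,\bq')=-\int_{\Omega_\Sigma}(\cdots)$), and one more application of Green's formula on $\Omega_\Sigma$, the minus sign being precisely the footprint of the missing subdomain $\Omega_\Sigma$. Two small points of rigor: in the well-definedness step, ``the normal trace vanishes on every $\Gamma_j$, hence on $\Sigma\subset\Gamma$'' cannot be read as a pointwise restriction of an $\mH^{-1/2}$ object; it should be justified by the identity $\langle\bn_\Sigma\cdot\bw,\varphi\rangle_\Sigma=-\sum_j\langle\bn_j\cdot\bw,\varphi\rangle_{\Gamma_j}$, which is the same Green's-formula computation you use for the main identity. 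Likewise ``the two volume integrals cancel against each other term-by-term'' is a misstatement, but the mechanism you invoke immediately after (the full-space Green identity vanishes) is the correct one, and the computation that follows is right.
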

\noindent 
This proposition was established in \cite[Prop.~3.1 and Prop.~3.2]{ClHi:impenetrable:2015}, where $\Omega_\Sigma$ represented an impenetrable part of the propagation medium.  The operator $\Tr$ should be understood as a trace operator on $\Sigma$. Subsequently, we shall decompose the operator $\Tr$ into Dirichlet and Neumann components, setting $\Tr(\fru) =(\Tr_{\dir}(\fru), \Tr_{\neu}(\fru) )$, with $\Tr_{\dir}\colon\mbX(\Gamma)\to \mH^{1/2}(\Sigma)$ and $\Tr_{\neu}\colon\mbX(\Gamma)\to \mH^{-1/2}(\Sigma)$ continuous. The modified polarity identity leads to a variational characterization of $\mbX(\Gamma)$:
\begin{lemma}[Variational characterization of $\mbX(\Gamma)$]\label{LemmaModifiedPolarity}
  For any $\fru\in \mbH(\Gamma)$, we have $\fru\in \mbX(\Gamma)$ if and only if
  $\lbr \fru,\frv\rbr_{\Gamma} = 0$ for all $\frv\in \mbX(\Gamma)$ satisfying $\Tr(\frv) =0$. 
\end{lemma}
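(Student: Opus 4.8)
\smallskip\noindent\emph{Proof sketch.}
Write $\mbX_{0}(\Gamma)\coloneqq\{\frv\in\mbX(\Gamma)\mid\Tr(\frv)=0\}$, a closed subspace since $\Tr$ is continuous. The forward implication is immediate: for $\fru\in\mbX(\Gamma)$ and $\frv\in\mbX_{0}(\Gamma)$ the modified polarity identity~\eqref{PolarityOperatorTr} gives $\lbr\fru,\frv\rbr_{\Gamma}=-\lbr\Tr(\fru),\Tr(\frv)\rbr_{\Sigma}=0$, because $\Tr(\frv)=0$. For the converse, the plan is to realise $\mbX(\Gamma)$, via the operator $\Tr$, as (a copy of) the single-trace space attached to the \emph{genuine} partition of $\RR^{d}$ obtained by adjoining $\Omega_{\Sigma}$ to $\Omega_{0},\dots,\Omega_{n}$, and then to use the fact that such a space equals its own polar.

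Two preliminary observations. First, $\lbr\,\cdot\,,\,\cdot\,\rbr_{\Gamma}$ is a perfect pairing on $\mbH(\Gamma)$, i.e.\ it identifies $\mbH(\Gamma)$ with its topological dual: on each factor $\mbH(\partial\Omega)=\mH^{1/2}\times\mH^{-1/2}$ the form $(u,p),(v,q)\mapsto\langle u,q\rangle-\langle p,v\rangle$ does so by reflexivity of $\mH^{\pm1/2}$, and the same then holds for $\lbr\,\cdot\,,\,\cdot\,\rbr_{\Sigma}$ on $\mbH(\Sigma)$ and for $\lbr\,\cdot\,,\,\cdot\,\rbr_{\Gamma}+\lbr\,\cdot\,,\,\cdot\,\rbr_{\Sigma}$ on $\mbH(\Gamma)\times\mbH(\Sigma)$. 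Second, set $\widetilde{\mbX}(\Gamma)\coloneqq\{(\fru,\Tr(\fru))\mid\fru\in\mbX(\Gamma)\}\subset\mbH(\Gamma)\times\mbH(\Sigma)$, the graph of $\Tr$; equivalently, the tuples $((u_{j},p_{j})_{j},(w,r))$ admitting common liftings $V\in\mH^{1}(\RR^{d})$, $\bq\in\mH(\mrm{div},\RR^{d})$ with $u_{j}=V\vert_{\Gamma_{j}}$, $p_{j}=\bn_{j}\cdot\bq\vert_{\Gamma_{j}}$, $w=V\vert_{\Sigma}$, $r=\bn_{\Sigma}\cdot\bq\vert_{\Sigma}$. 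This is a closed subspace that coincides with its own polar for $\lbr\,\cdot\,,\,\cdot\,\rbr_{\Gamma}+\lbr\,\cdot\,,\,\cdot\,\rbr_{\Sigma}$: isotropy follows by applying the divergence theorem in each subdomain and summing, the boundary contributions cancelling because $\mrm{div}(V\bq)=\nabla V\cdot\bq+V\,\mrm{div}\,\bq$ has zero integral over $\RR^{d}$ (note this isotropy is nothing but~\eqref{PolarityOperatorTr} itself); maximality is the classical gluing argument for multi-trace spaces, resting on surjectivity of the Dirichlet and Neumann trace operators (cf.\ \cite{MR3202533,ClHi:impenetrable:2015}).

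Now let $\fru\in\mbH(\Gamma)$ satisfy $\lbr\fru,\frv\rbr_{\Gamma}=0$ for all $\frv\in\mbX_{0}(\Gamma)$. Since $\Tr$ maps $\mbX(\Gamma)$ onto $\mbH(\Sigma)$, the rule $\Tr(\frv)\mapsto-\lbr\fru,\frv\rbr_{\Gamma}$ defines a linear functional on $\mbH(\Sigma)$: it is well defined because $\Tr(\frv)=\Tr(\frv')$ forces $\frv-\frv'\in\mbX_{0}(\Gamma)$, whence the values agree, and it is bounded because $\Tr$ has a bounded right inverse (open mapping theorem). By the perfect pairing on $\mbH(\Sigma)$ there is $\mathfrak{s}\in\mbH(\Sigma)$ with $\lbr\mathfrak{s},\Tr(\frv)\rbr_{\Sigma}=-\lbr\fru,\frv\rbr_{\Gamma}$ for all $\frv\in\mbX(\Gamma)$, i.e.\ $(\fru,\mathfrak{s})$ lies in the polar of $\widetilde{\mbX}(\Gamma)$; hence $(\fru,\mathfrak{s})\in\widetilde{\mbX}(\Gamma)$, and in particular $\fru\in\mbX(\Gamma)$.

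The only non-routine ingredient is the maximality half of the self-polarity of $\widetilde{\mbX}(\Gamma)$: that a tuple of local traces obeying the weak transmission conditions across every interface can be glued into global $\mH^{1}(\RR^{d})$ and $\mH(\mrm{div},\RR^{d})$ fields, the standard partition-of-unity/lifting construction, which again uses surjectivity of the trace operators; the remaining points (the duality identifications, the bounded right inverse of $\Tr$) are routine. These structural facts really are needed: the modified polarity identity together with surjectivity of $\Tr$ do not by themselves yield the converse — simple finite-dimensional symplectic models satisfy both hypotheses while the analogue of the conclusion fails — so the lifting structure of $\mbX(\Gamma)$ cannot be dispensed with.
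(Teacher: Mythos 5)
Your proof is correct, but it takes a genuinely different route from the paper's. For the converse, the paper argues directly: it picks local liftings $U_j\in \mH^{1}(\Omega_j)$, $\bp_j\in\mH(\mrm{div},\Omega_j)$ of $\fru$ (using surjectivity of the local trace maps), and shows that the piecewise-defined fields glue into $\mH^{1}(\RR^d\setminus\overline{\Omega}_\Sigma)$ and $\mH(\mrm{div},\RR^d\setminus\overline{\Omega}_\Sigma)$ by establishing the distributional bound $\lvert\int_{\RR^d\setminus\Omega_\Sigma} U\,\mrm{div}(\bvphi)\,d\bx\rvert\le C\Vert\bvphi\Vert_{\mL^2}$ for test fields $\bvphi$ vanishing on $\overline{\Omega}_\Sigma$ --- the key step being that $\frv=(0,\bn_j\cdot\bvphi\vert_{\Gamma_j})_j$ lies in $\mbX(\Gamma)$ with $\Tr(\frv)=0$, so the hypothesis kills exactly the interface terms produced by subdomain-wise integration by parts. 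You instead reduce the lemma to the self-polarity (Lagrangian property) of the single-trace space $\widetilde{\mbX}(\Gamma)$ attached to the \emph{genuine} partition $\RR^d=\cup_j\overline{\Omega}_j\cup\overline{\Omega}_\Sigma$, manufacturing the missing $\Sigma$-component $\mathfrak{s}$ via the quotient/open-mapping argument and the perfect pairing on $\mbH(\Sigma)$; this is clean and non-circular, and it makes the symplectic structure of the statement explicit (your closing remark that polarity plus surjectivity of $\Tr$ alone cannot suffice is a fair and useful observation). The trade-off is that your argument outsources the one genuinely hard step --- the maximality half of self-polarity, i.e.\ the gluing of weakly matching local traces into global $\mH^{1}$ and $\mH(\mrm{div})$ fields --- to the cited literature, whereas the paper's proof \emph{is} that gluing argument, carried out by hand and only on $\RR^d\setminus\overline{\Omega}_\Sigma$ where it is needed. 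Both proofs implicitly share one small final step worth making explicit: having obtained fields on $\RR^d\setminus\overline{\Omega}_\Sigma$ (or a tuple in $\widetilde{\mbX}(\Gamma)$), one extends across $\Sigma$ to liftings on all of $\RR^d$ as required by the definition \eqref{SingleTraceSpace}.
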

\begin{proof}
  First, as a direct application of  Proposition~\ref{DefinitionTraceOnSigma}, for any $\fru\in \mbX(\Gamma)$ and any $\frv\in \mbX(\Gamma)$ with $\Tr(\frv) = 0$, we have  $\lbr \fru,\frv\rbr_{\Gamma} = -\lbr \Tr(\fru),0\rbr_{\Sigma} = 0$.

  Reciprocally, take an arbitrary $\fru\in \mbH(\Gamma)$, and assume that $\lbr \fru,\frv\rbr_{\Gamma} =0$ for all $\frv\in \mbX(\Gamma)$ satisfying $\Tr(\frv) = 0$. Consider $U_j\in \mH^{1}(\Omega_j)$, $\bp_j\in \mH(\mrm{div},\Omega_j)$ such that $\fru = (U_j\vert_{\Gamma_j},\bn_j\cdot\bp_j \vert_{\Gamma_j})_{j=0,\dots,n}$, and define $U \in \mL^{2}(\RR^d\setminus\Omega_\Sigma)$ and $\bp\in \mL^{2}(\RR^d\setminus\Omega_\Sigma)^{d}$ by $U\vert_{\Omega_j} \coloneqq U_j$ and $\bp\vert_{\Omega_j} \coloneqq \bp_j$.

  We need to prove that $U\in \mH^{1}(\RR^d\setminus\overline{\Omega}_\Sigma)$ and $\bp\in\mH(\mrm{div},\RR^{d}\setminus\overline{\Omega}_\Sigma)$ to conclude. We prove the result only for $U$, since the proof proceeds in a completely analogous manner for $\bp$. It suffices to show the existence of $C>0$ such that
  \begin{equation*}
    \biggl \lvert \int_{\RR^{d}\setminus\Omega_{\Sigma}} U\,\mrm{div}(\bvphi) \, d\bx \biggr \rvert \leq C\Vert \bvphi\Vert_{\mL^{2}(\RR^{d})}
    \quad\forall \bvphi\in \mathscr{C}^{\infty}_{\comp}(\RR^d\setminus\overline{\Omega}_{\Sigma})^d, 
  \end{equation*}
  where
  $\mathscr{C}^{\infty}_{\comp}(\RR^d\setminus\overline{\Omega}_{\Sigma})\coloneqq\{
  V\in \mathscr{C}^{\infty}(\RR^d) \mid \,\mrm{supp}(V)\;
  \text{bounded},\;V = 0\;\text{in}\;\Omega_{\Sigma}\}$. Pick
  $\bvphi\in
  \mathscr{C}^{\infty}_{\comp}(\RR^d\setminus\overline{\Omega}_{\Sigma})^d$
  arbitrary and set $\frv =
  (0,\bn_{j}\cdot\bvphi\vert_{\Gamma_j})_{j=0,\dots, n}$. By
  construction we have $\frv\in\mbX(\Gamma)$ and $\Tr(\frv) = 0$,
  since $\bn_{\Sigma}\cdot\bvphi\vert_{\Sigma} = 0$. Next, decomposing
  the integral according to $\RR^d\setminus\Omega_{\Sigma} =
  \overline{\Omega}_0\cup\dots \cup \overline{\Omega}_{n}$, and using
  the identity $\lbr \fru,\frv\rbr_{\Gamma} = 0$, we have $
  \int_{\RR^{d}\setminus\Omega_{\Sigma}} U\,\mrm{div}(\bvphi)\,d\bx =
  \sum_{j=0}^{n}\int_{\Omega_j} U_j\,\mrm{div}(\bvphi) \,d\bx =
  -\sum_{j=0}^{n}\int_{\Omega_j} \bvphi\cdot\nabla U_j \,d\bx$, which
  leads to the conclusion.
\end{proof}

\noindent 
Let us point out that any tuple $( u_j,p_j)_{j=0,\dots,n}\in
\mbX(\Gamma)$ satisfies $u_j = u_k$ and $p_j = -p_k$ on
$\Gamma_j\cap\Gamma_k$.  This observation and
Lemma~\ref{LemmaModifiedPolarity} lead to alternative ways of writing
the transmission conditions:
\begin{lemma}[Characterizations of transmission conditions]\label{ReformulationTransmissionConditions}
  For any $U\in \mL^{2}_{\loc}(\RR^{d})$ such that
  $U\vert_{\Omega_\Sigma}\in \mH_{\loc}^1(\Delta,\overline{\Omega}_\Sigma)$ and
  $U\vert_{\Omega_j}\in \mH_{\loc}^1(\Delta,\overline{\Omega}_j), j=0\dots,n $,
  we have that $U$ satisfies the transmission conditions of Problem
  \eqref{eq:bvp}, that is, $U\in \mH_{\loc}^1(\Delta,\RR^d)$ if and
  only if
  \begin{equation}
    \label{eq:ReformulationTransmissionConditions}
    \gamma(U)\in \mbX(\Gamma)\quad \text{and}\quad \Tr(\gamma(U)) = \gamma^{\Sigma}(U), 
  \end{equation}  
  or equivalently
  \begin{equation}
    \label{eq:ReformulationTransmissionConditionsbis}
    \lbr \gamma(U), \frv\rbr_{\Gamma} + \lbr \gamma^\Sigma (U), \Tr(\frv) \rbr_\Sigma = 0 \quad 
    \text{for all }\frv\in \mbX(\Gamma). 
  \end{equation}  
\end{lemma}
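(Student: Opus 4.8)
The plan is to prove the chain of equivalences $U\in\mH^{1}_{\loc}(\Delta,\RR^{d})\iff\eqref{eq:ReformulationTransmissionConditions}\iff\eqref{eq:ReformulationTransmissionConditionsbis}$, taking for granted the classical fact (to which the ``that is'' in the statement alludes) that a function which is piecewise in $\mH^{1}_{\loc}(\Delta,\cdot)$ over the Lipschitz pieces $\Omega_{0},\dots,\Omega_{n},\Omega_{\Sigma}$ belongs to $\mH^{1}_{\loc}(\Delta,\RR^{d})$ precisely when its Dirichlet traces and its signed Neumann traces match across every interface, i.e.\ when it satisfies the transmission conditions of \eqref{eq:bvp}. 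First I would dispose of the purely algebraic equivalence $\eqref{eq:ReformulationTransmissionConditions}\iff\eqref{eq:ReformulationTransmissionConditionsbis}$: if $\gamma(U)\in\mbX(\Gamma)$ and $\Tr(\gamma(U))=\gamma^{\Sigma}(U)$, then the modified polarity identity \eqref{PolarityOperatorTr} gives $\lbr\gamma(U),\frv\rbr_{\Gamma}=-\lbr\Tr(\gamma(U)),\Tr(\frv)\rbr_{\Sigma}=-\lbr\gamma^{\Sigma}(U),\Tr(\frv)\rbr_{\Sigma}$ for every $\frv\in\mbX(\Gamma)$, which is exactly \eqref{eq:ReformulationTransmissionConditionsbis}; conversely, under \eqref{eq:ReformulationTransmissionConditionsbis}, restricting to $\frv\in\mbX(\Gamma)$ with $\Tr(\frv)=0$ gives $\lbr\gamma(U),\frv\rbr_{\Gamma}=0$, so $\gamma(U)\in\mbX(\Gamma)$ by Lemma~\ref{LemmaModifiedPolarity}, and then \eqref{PolarityOperatorTr} recasts \eqref{eq:ReformulationTransmissionConditionsbis} as $\lbr\gamma^{\Sigma}(U)-\Tr(\gamma(U)),\Tr(\frv)\rbr_{\Sigma}=0$ for all $\frv\in\mbX(\Gamma)$; since $\Tr$ is surjective onto $\mbH(\Sigma)$ (Proposition~\ref{DefinitionTraceOnSigma}) and the pairing $\lbr\cdot,\cdot\rbr_{\Sigma}$ is non-degenerate, this forces $\Tr(\gamma(U))=\gamma^{\Sigma}(U)$.

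For the implication $U\in\mH^{1}_{\loc}(\Delta,\RR^{d})\Rightarrow\eqref{eq:ReformulationTransmissionConditions}$ I would build global liftings. The set $\Gamma\cup\overline{\Omega}_{\Sigma}$ being bounded, fix $\chi\in\mathscr{C}^{\infty}_{\comp}(\RR^{d})$ with $\chi\equiv1$ on a neighbourhood of it. Then $V\coloneqq\chi U\in\mH^{1}(\RR^{d})$, and since $\Delta U\in\mL^{2}_{\loc}(\RR^{d})$ the field $\nabla U$ has divergence in $\mL^{2}_{\loc}$, so $\bq\coloneqq\chi\nabla U\in\mH(\mrm{div},\RR^{d})$. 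Because $\chi\equiv1$ near $\Gamma$, one gets $V\vert_{\Gamma_{j}}=\gamma^{j}_{\dir}(U)$ and $\bn_{j}\cdot\bq\vert_{\Gamma_{j}}=\gamma^{j}_{\neu}(U)$ for all $j$, hence $\gamma(U)\in\mbX(\Gamma)$; and because $\chi\equiv1$ near $\Sigma$, with $U\in\mH^{1}_{\loc}(\RR^{d})$ and $\nabla U$ of local $\mH(\mrm{div})$ regularity, the traces of $V$ and $\bq$ on $\Sigma$ are single-valued and coincide with the interior traces of $U$ from $\Omega_{\Sigma}$, so $\Tr(\gamma(U))=(V\vert_{\Sigma},\bn_{\Sigma}\cdot\bq\vert_{\Sigma})=\gamma^{\Sigma}(U)$.

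The reverse implication $\eqref{eq:ReformulationTransmissionConditionsbis}\Rightarrow U\in\mH^{1}_{\loc}(\Delta,\RR^{d})$ is the step I expect to be the main obstacle; it runs parallel to the argument in Lemma~\ref{LemmaModifiedPolarity}. To show $U\in\mH^{1}_{\loc}(\RR^{d})$: for arbitrary $\bvphi\in\mathscr{C}^{\infty}_{\comp}(\RR^{d})^{d}$, Green's formula on each subdomain (the unbounded $\Omega_{0}$ posing no difficulty since $\bvphi$ is compactly supported) gives $\int_{\RR^{d}}U\,\mrm{div}(\bvphi)\,d\bx=-\int_{\RR^{d}}\nabla U\cdot\bvphi\,d\bx+\sum_{j=0}^{n}\langle\gamma^{j}_{\dir}U,\bn_{j}\cdot\bvphi\rangle_{\Gamma_{j}}+\langle\gamma^{\Sigma}_{\dir}U,\bn_{\Sigma}\cdot\bvphi\rangle_{\Sigma}$ with $\nabla U$ the piecewise gradient; plugging the test tuple $\frv\coloneqq(0,\bn_{j}\cdot\bvphi\vert_{\Gamma_{j}})_{j=0,\dots,n}\in\mbX(\Gamma)$ (liftings $V=0$, $\bq=\bvphi$), for which $\Tr(\frv)=(0,\bn_{\Sigma}\cdot\bvphi\vert_{\Sigma})$, into \eqref{eq:ReformulationTransmissionConditionsbis} shows the interface terms sum to zero, so the piecewise gradient is the distributional one and lies in $\mL^{2}_{\loc}(\RR^{d})$. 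With $U\in\mH^{1}_{\loc}(\RR^{d})$ in hand I would repeat the bookkeeping with Green's second identity and the tuple $\frv\coloneqq(\psi\vert_{\Gamma_{j}},0)_{j=0,\dots,n}\in\mbX(\Gamma)$ (liftings $V=\psi$, $\bq=0$), $\psi\in\mathscr{C}^{\infty}_{\comp}(\RR^{d})$, once more using \eqref{eq:ReformulationTransmissionConditionsbis} to kill the interface contributions, thereby identifying the distributional Laplacian of $U$ with the piecewise one, an $\mL^{2}_{\loc}$ function, so that $U\in\mH^{1}_{\loc}(\Delta,\RR^{d})$. The only real pitfalls are keeping consistent the sign conventions of the skew pairing \eqref{eq:skewpairingloc} and of the outward normals, and handling the unbounded subdomain, both dealt with routinely by exploiting compact support of the test functions.
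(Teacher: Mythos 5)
Your proposal is correct, and on the core of the lemma --- the equivalence between \eqref{eq:ReformulationTransmissionConditions} and \eqref{eq:ReformulationTransmissionConditionsbis} --- it coincides with the paper's proof step for step: polarity identity \eqref{PolarityOperatorTr} for the direct implication, Lemma~\ref{LemmaModifiedPolarity} applied to test tuples with $\Tr(\frv)=0$ to recover $\gamma(U)\in\mbX(\Gamma)$, then surjectivity of $\Tr$ to force $\Tr(\gamma(U))=\gamma^{\Sigma}(U)$.

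Where you diverge is on the remaining equivalence with $U\in\mH^{1}_{\loc}(\Delta,\RR^{d})$. The paper dispatches this in a single sentence, appealing to the observation that single-trace tuples have matching Dirichlet and opposite Neumann components across each $\Gamma_j\cap\Gamma_k$, combined with the definitions of $\Tr$ in \eqref{eq:defT} and of $\gamma$ in \eqref{GlobalTraceOperator} --- i.e.\ it treats the identification of ``transmission conditions'' with membership of $\gamma(U)$ in $\mbX(\Gamma)$ as essentially definitional. You instead make this fully explicit: the cutoff liftings $V=\chi U$, $\bq=\chi\nabla U$ give the forward implication directly from the definition \eqref{SingleTraceSpace}, and the reverse implication is proved by piecewise Green's formulas against the two families of test tuples $(0,\bn_j\cdot\bvphi\vert_{\Gamma_j})_j$ and $(\psi\vert_{\Gamma_j},0)_j$, mirroring the mechanism of Lemma~\ref{LemmaModifiedPolarity}. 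Both arguments are sound (for the Laplacian step it is cleanest to use Green's first identity once $U\in\mH^{1}_{\loc}(\RR^{d})$ is established, so that only the Neumann interface terms need to be killed). What your version buys is self-containedness: it avoids invoking the ``classical fact'' about piecewise Sobolev functions with matching interface traces, which is precisely the kind of statement that is delicate to formulate in the presence of cross-points, since restrictions to individual interfaces are not continuous trace maps. The cost is length; the paper's one-liner is adequate because the gluing content is already packaged inside the definition of $\mbX(\Gamma)$ via global liftings.
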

\begin{proof}
  For characterization \eqref{eq:ReformulationTransmissionConditions}, it is enough to  combine the observation above with the definitions of $\Tr$ in \eqref{eq:defT} and of the global trace operator \eqref{GlobalTraceOperator}. 

  Now, we prove that \eqref{eq:ReformulationTransmissionConditionsbis} is a variational reformulation of \eqref{eq:ReformulationTransmissionConditions}. A direct application of the modified polarity identity \eqref{PolarityOperatorTr} shows that \eqref{eq:ReformulationTransmissionConditions} implies \eqref{eq:ReformulationTransmissionConditionsbis}. 
  Conversely, suppose that \eqref{eq:ReformulationTransmissionConditionsbis} holds true. In particular, if we take $\frv\in \mbX(\Gamma)$ with $\Tr(\frv)=0$, then $\lbr \gamma(U), \frv\rbr_{\Gamma} = 0$ for all $\frv\in \mbX(\Gamma)$ with $\Tr(\frv)=0$. According to Lemma \ref{LemmaModifiedPolarity}, this implies that $\gamma(U)\in\mbX(\Gamma)$. Moreover, considering now a generic $\frv\in \mbX(\Gamma)$ and applying the polarity identity \eqref{PolarityOperatorTr} to the first term of \eqref{eq:ReformulationTransmissionConditionsbis}, we get 
  \[ 
    - \lbr \Tr(\gamma(U)), \Tr(\frv)\rbr_{\Sigma} + \lbr \gamma^\Sigma (U), \Tr(\frv) \rbr_\Sigma = 0 \quad 
    \text{for all }\frv\in \mbX(\Gamma),
  \]
  that yields $\Tr(\gamma(U)) = \gamma^{\Sigma}(U)$ because $\Tr$ surjectively maps $\mbX(\Gamma)$ onto $\mbH(\Sigma)$. 
\end{proof}
This characterization of transmission conditions motivates the introduction of a variant of the single-trace space involving the additional subdomain boundary $\Sigma$: 
\begin{equation}
  \label{eq:defXtilde}
  \widetilde{\mbX}(\Gamma)\coloneqq\set{ (\fru,\Tr(\fru)) | \fru\in\mbX(\Gamma)}, 
\end{equation}
which stems from the decomposition of the full space $\RR^{d} = \cup_{j=0}^{n}  \, \overline{\Omega}_j \cup \overline{\Omega}_{\Sigma}$ as in \cite{ClHi:MTF:2013}. 
With this space we can rephrase once more: $U$ satisfies the transmission conditions of Problem \eqref{eq:bvp}
if and only if $(\gamma(U),\gamma^{\Sigma}(U))\in \widetilde{\mbX}(\Gamma)$.  
\begin{remark}
\label{rem:construct}
A crucial procedure to construct an element of $\widetilde{\mathbb{X}}(\Gamma)$ is the following. Given $j$ and a function $V \in \mathrm{H}^1_\textup{loc}(\Delta, \mathbb{R}^d \backslash \Omega_j)$, we set $\frv_k = \gamma^k(V)$ for $k \ne j$, $\frv_j = \gamma^j_c(V)$ and $\frv_\Sigma = \Tr(\frv) = \gamma^\Sigma(V)$. Then $(\frv_0, \dots, \frv_n, \frv_\Sigma) \in \widetilde{\mathbb{X}}(\Gamma)$.
\end{remark}

We conclude this section by introducing a variational space adapted to the presence of heterogeneities in $\Omega_{\Sigma}$, namely
\begin{equation}
\label{eq:XsigmaGamma}
  \XSigmaGamma\coloneqq \set{(U,\fru)\in \mH^{1}(\Omega_{\Sigma})\times\mbX(\Gamma) | \gamma^{\Sigma}_{\dir}(U)=\Tr_{\dir}(\fru)}, 
\end{equation}
i.e.~we impose that on $\Sigma$ the Dirichlet trace of a ``heterogeneous'' component $U$ defined in $\Omega_{\Sigma}$ matches the Dirichlet trace $\Tr_{\dir}(\fru)$ of a single-trace tuple $\fru$ defined on the skeleton $\Gamma$. This is clearly a closed subspace of $\mH^{1}(\Omega_{\Sigma})\times\mbX(\Gamma)$
for the inherited Cartesian product norm given by $(U,\fru)\mapsto (\Vert U\Vert_{\mH^{1}(\Omega_{\Sigma})}^2+\Vert \fru\Vert_{\mbH(\Gamma)}^2)^{1/2}$.

\section{Review of the classical Costabel coupling}
\label{sec:2costabel}

\noindent We revisit the classical Costabel symmetric coupling \cite[§7]{Costabel:lectures:1987}\cite{Costabel:symm:1987}, writing the formulation in the compact notation introduced in the previous sections. This will also allow the reader to get more acquainted with our notation.

\begin{figure}
\centering
\includegraphics[width=0.35\linewidth]{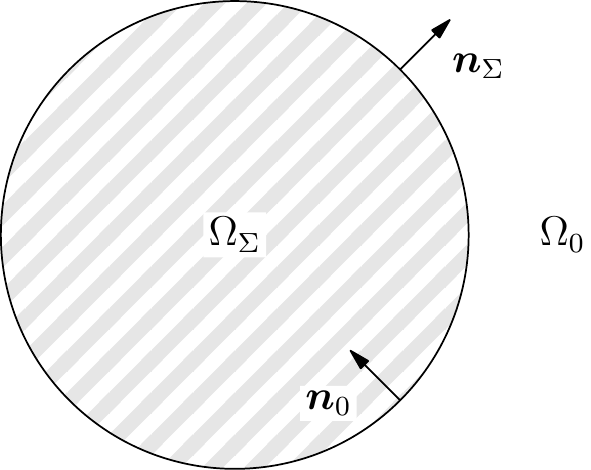}
\caption{Geometric setting for the classical Costabel coupling.}
\label{fig:costabel_geo}
\end{figure}
The classical Costabel coupling gives a symmetric variational formulation of the transmission problem~\eqref{eq:bvp} in the case $n=0$, i.e.~$\RR^d = \overline\Omega_0 \cup \overline\Omega_\Sigma$, $\Gamma = \Gamma_0 = \Sigma$ (see Figure~\ref{fig:costabel_geo}),
which combines direct boundary integral equations\footnote{A boundary integral equation is of direct type if its unknowns are Dirichlet/Neumann traces of the solution to the related boundary value problem.} for $\Omega_0$ with a volume variational formulation for $\Omega_\Sigma$.
Note that in our presentation, in contrast to what is usually done in the literature, for $\Omega_0$ we take its own outward-pointing normal vector $\bn_0$. This choice is more suitable in view of the extension to the multi-domain case. In the two-subdomain case of the present section we have $\mbX(\Gamma) = \mbH(\Gamma) = \mbH(\Gamma_0) = \mbH(\Sigma)$ and 
\begin{equation}\label{eq:XSigmaGamma2subs}
  \XSigmaGamma=\{ (V, (\gamma^{\Sigma}_{\dir}(V),q) ) \mid
V\in\mH^{1}(\Omega_{\Sigma}), \, q\in\mH^{-1/2}(\Sigma)\}  
\end{equation} 
so $\XSigmaGamma$ is naturally isomorphic to $\mH^{1}(\Omega_{\Sigma})\times \mH^{-1/2}(\Sigma)$, which is the space where the Costabel coupling is usually posed. 

Now consider $U\in \mH^{1}_{\loc}(\RR^d)$ solution to the transmission problem \eqref{eq:bvp}. We are going to reformulate this problem equivalently in terms of the pair
\begin{equation}\label{eq:VarSpaceandDefu}
  (U\vert_{\Omega_{\Sigma}},\fru)\in \XSigmaGamma, \quad
    \text{where}\;\;\fru = \gamma^{0}(U).
\end{equation}
Thus, the Dirichlet transmission condition $\gamma^{\Sigma}_{\dir}(U) = \gamma^{0}_{\dir}(U)$ shall be enforced strongly through the choice of $\XSigmaGamma$ as variational space (recall its definition in \eqref{eq:XsigmaGamma}). 
To reformulate \eqref{eq:bvp} variationally, we first deal with the Helmholtz equation satisfied by $U$ in $\Omega_\Sigma$. Pick an arbitrary test pair $(V,\frv)\in \XSigmaGamma$ and, after multiplying the equation by $V$, apply Green's formula in $\Omega_\Sigma$. This leads to a variational identity involving a boundary term:
\begin{equation}\label{eq:HelmOmegaSigma}
  \begin{array}{l}
    a_{\Sigma}(U,V) - \langle \gamma_{\neu}^{\Sigma}(U),
    \gamma_{\dir}^{\Sigma}(V)\rangle_{\Sigma} = F_{\Sigma}(V)\\[10pt]
    \text{where}\;\;a_{\Sigma}(U,V)\coloneqq\int_{\Omega_{\Sigma}}
    (\nabla U\cdot\nabla V-\kappa_{\Sigma}^2(\bx)UV) \,d\bx\\
    \hspace{1.45cm} F_{\Sigma}(V)\coloneqq\int_{\Omega_{\Sigma}}fV\,d\bx.
  \end{array}
\end{equation}
Next, to rewrite the boundary term, we observe that $\gamma_{\dir}^{\Sigma}(V) = \Tr_{\dir}(\frv)$ because $(V,\frv)\in \XSigmaGamma$, and $\gamma^{\Sigma}_{\neu}(U) = \Tr_{\neu}(\fru)$ by the Neumann transmission condition and \eqref{eq:VarSpaceandDefu}.  
Hence, recalling the operator $\theta(v,q) \coloneqq (-v,q)$, we apply identity~\eqref{eq:thetaIdentity}, together with the polarity property~\eqref{PolarityOperatorTr} using $\fru,\frv\in \mbX(\Gamma)$, so that we obtain 
\begin{equation}\label{rewriteBoundaryTermSigma}
  \begin{aligned}
    -\langle \gamma_{\neu}^{\Sigma}(U),
    \gamma_{\dir}^{\Sigma}(V)\rangle_{\Sigma}
    & = -\langle \Tr_{\neu}(\fru),\Tr_{\dir}(\frv)\rangle_{\Sigma}\\
    & = -\lbr \Tr(\fru),\theta(\Tr(\frv))\rbr_{\Sigma}/2 + \lbr \Tr(\fru),\Tr(\frv)\rbr_{\Sigma}/2 \\
    & = +\lbr \fru,\theta(\frv)\rbr_{\Gamma}/2 + \lbr \Tr(\fru),\Tr(\frv)\rbr_{\Sigma}/2. 
  \end{aligned}
\end{equation}
%Avant: Since $\fru,\frv\in\mbX(\Gamma)$, we have $-\langle \Tr_{\neu}(\fru),\Tr_{\dir}(\frv)\rangle_{\Sigma} = \lbr \fru,\theta(\frv)\rbr_{\Gamma}/2 - \lbr \fru, \frv\rbr_{\Gamma}/2$. 
Therefore, Equation~\eqref{eq:HelmOmegaSigma} becomes
\begin{equation}\label{VarIdCostabel1}
  a_{\Sigma}(U,V) + \lbr \fru,\theta(\frv)\rbr_{\Gamma}/2
  +  \lbr \Tr(\fru),\Tr(\frv)\rbr_{\Sigma}/2 = F_{\Sigma}(V).
\end{equation}
Now, we wish to exploit boundary integral operators in $\Omega_0$. 
Since $U_{\inc}$ solves the homogeneous Helmholtz equation with wavenumber $\kappa_0$ in $\Omega_{\Sigma} = \RR^d \backslash \overline \Omega_0$ and $\gamma^{0}(U_{\inc}) = \gamma^{0}_c(U_{\inc})$, the ``exterior'' representation formula~\eqref{eq:extRep} is applicable to $U_\inc$ in $\Omega_0$ and yields 
$\gamma^0\mathsf{G}_{\kappa_0}^{0} (\gamma^{0}(U_\inc)) = \gamma^0\mathsf{G}_{\kappa_0}^{0}(\gamma^{0}_c(U_\inc))=0$. 
As $U-U_\inc$ solves the homogeneous Helmholtz equation in $\Omega_0$ and satisfies the associated $\kappa_0$-radiation condition, the representation formula~\eqref{eq:intRep} is applicable to $U-U_\inc$ in $\Omega_0$ and yields 
$\gamma^0(U-U_{\inc})= \gamma^0\mathsf{G}_{\kappa_0}^0 (\gamma^0(U-U_\inc)) =
\gamma^0\mathsf{G}_{\kappa_0}^0 (\gamma^0(U))$. 
Then, making use of \eqref{eq:Akj12} and $\fru = \gamma^{0}(U)$, we get
\begin{equation}\label{ExteriorCalderonSec6}
\fru/2 = \mathsf{A}_{\kappa_0}^{0}(\fru) + \gamma^{0}(U_{\inc}).
\end{equation}
This is a reformulation of the Helmholtz equation satisfied by $U$ in $\Omega_0$ based on both Dirichlet and Neumann traces of the representation formula. Note that, in contrast to the present Costabel coupling, the Johnson-N\'ed\'elec coupling would involve just the Dirichlet one. Plugging \eqref{ExteriorCalderonSec6} into \eqref{VarIdCostabel1}, we finally obtain the variational formulation of the \emph{Costabel symmetric coupling} posed in $\XSigmaGamma$:
\begin{empheq}[box=\fbox]{equation}\label{VarIdCostabel2}
  \begin{aligned}
    & \text{find } (U,\fru)\in \XSigmaGamma\;\; \text{such that}\\
    & a_{\Sigma}(U,V) +\lbr \mathsf{A}_{\kappa_0}^{0}(\fru),\theta(\frv)\rbr_{\Gamma}
    +  \lbr \Tr(\fru),\Tr(\frv)\rbr_{\Sigma}/2\\
    & = F_{\Sigma}(V) - \lbr \gamma^{0}(U_{\inc}),\theta(\frv)\rbr_{\Gamma}
    \quad \forall\, (V,\frv)\in \XSigmaGamma.
  \end{aligned}
\end{empheq}
% \begin{empheq}[box=\fbox]{equation}\label{VarIdCostabel2}
%   \begin{aligned}
%     & \text{find } (U,\fru)\in \XSigmaGamma\;\; \text{such that}\\
%     & a_{\Sigma}(U,V) +\lbr \mathsf{A}_{\kappa_0}^{0}(\fru),\theta(\frv)\rbr_{\Gamma}
%     - \frac{1}{2}\lbr \fru, \frv\rbr_{\Gamma}\\
%     & = F_{\Sigma}(V) - \lbr \gamma^{0}(U_{\inc}),\theta(\frv)\rbr_{\Gamma}
%     \quad \forall\, (V,\frv)\in \XSigmaGamma.
%   \end{aligned}
% \end{empheq}
Note that all the four classical boundary integral operators, which are the components of the block operator $\mathsf{A}_{\kappa_0}^{0}$ (see \eqref{DefPMCHWTOp}), are involved in the Costabel coupling.
  In this two-subdomain configuration, where $\Gamma = \Gamma_0 = \Sigma$ and $\bn_0 = - \bn_\Sigma$, we have $\Tr((u,p)) = (u,-p)$ (see definition~\eqref{eq:defT} of $\Tr$), so that the term $+\lbr \Tr(\fru),\Tr(\frv)\rbr_{\Sigma}/2$ can be simplified as $- \lbr \fru,\frv \rbr_{\Sigma}/2$. 
Moreover, by the observation in \eqref{eq:XSigmaGamma2subs} and recalling the definition of $\theta$, formulation~\eqref{VarIdCostabel2} can be written more explicitly as:
\begin{equation*}
  \begin{aligned}
    & \text{find } U\in\mH^{1}(\Omega_{\Sigma}), \, p\in\mH^{-1/2}(\Sigma)\;\; \text{such that}\\
    & \int_{\Omega_{\Sigma}}
    (\nabla U\cdot\nabla V-\kappa_{\Sigma}^2(\bx)UV) \,d\bx +\lbr \mathsf{A}_{\kappa_0}^{0}( (\gamma_{\dir}^{\Sigma}U, p) ), (-\gamma_{\dir}^{\Sigma}V, q)\rbr_{\Sigma}
    -  \lbr (\gamma_{\dir}^{\Sigma}U, p), (\gamma_{\dir}^{\Sigma}V, q) \rbr_{\Sigma}/2\\
    & = \int_{\Omega_{\Sigma}}fV\,d\bx \,- \lbr \gamma^{0}(U_{\inc}), (-\gamma_{\dir}^{\Sigma}V, q) \rbr_{\Sigma}
    \qquad \forall\, V\in\mH^{1}(\Omega_{\Sigma}), \, q\in\mH^{-1/2}(\Sigma).
  \end{aligned}
\end{equation*} 

Now, let $\mathsf{a}_\textup{C} \colon \XSigmaGamma\times\XSigmaGamma \to \CC$ designate the bilinear form on the left-hand side of \eqref{VarIdCostabel2}. The bilinear form $a_{\Sigma}(\cdot,\cdot)$ satisfies a G\r{a}rding inequality, as well as $\lbr \mathsf{A}_{\kappa_0}^{0}(\cdot),\theta(\cdot)\rbr_{\Gamma}$ (see Proposition~\ref{pr:gardingAj}). Hence, since $\Re \{\lbr \Tr(\frv),\Tr(\overline{\frv})\rbr_{\Sigma}\} = 0$, we conclude, as in \cite{HiMe:sfembem:2006}, that $\mathsf{a}_\textup{C}(\cdot,\cdot)$ satisfies a \emph{G\r{a}rding inequality}: there exist a compact bilinear form $\mathcal{K} \colon \XSigmaGamma\times \XSigmaGamma \to \CC$ and a constant $\beta>0$ such that
\begin{equation*}
\Re \{\,\mathsf{a}_\textup{C} \bigl(\,(V,\frv),\overline{(V,\frv)}\, \bigr)  + 
    \mathcal{K}\bigl(\,(V,\frv),\overline{(V,\frv)}\,\bigr)\,\}
    \,\ge\, \beta (\lVert V \rVert_{\mH^1(\Omega_\Sigma)}^2 + \lVert \frv \rVert_{\mbH(\Gamma)}^2)
\end{equation*}
for all $(V,\frv) \in \XSigmaGamma$.  As a consequence, the operator induced by $\mathsf{a}_\textup{C}$ is of Fredholm type with index $0$ (see \cite[Theorem 2.33]{McLean:book:2000}), i.e.~it is bijective if and only if it is injective.

The classical Costabel coupling may be affected by the \emph{spurious resonances} phenomenon, that is, the formulation fails to possess a unique solution for the wavenumbers $\kappa_0$ whose square is an interior Dirichlet eigenvalue of $-\Delta$ on $\Omega_\Sigma$,
i.e.~for $\kappa_0$ belonging to
\begin{equation*}
    \mathfrak{S}(\Delta,\Omega_\Sigma) \coloneqq 
    \set{\kappa \in\CC | \exists \, W \in \mH^1_{0}(\Omega_\Sigma)\backslash\{0\} \text{ such that } 
    -\Delta W = \kappa^2 W\;\text{in}\;\Omega_\Sigma}.
\end{equation*}
\begin{example}[Spurious resonances]
\label{ex:resCostabel}
Let $\kappa_0 \in \mathfrak{S}(\Delta,\Omega_\Sigma)$ and $W \in\mH^1(\Omega_\Sigma)\backslash\{0\}$ such that $-\Delta W =\kappa_0^2 W$ in $\Omega_\Sigma$ and $W=0$ on $\Sigma$. 
In particular $\gamma_{\dir}^{\Sigma}(W) = \gamma^{0}_{\dir,c}(W) = 0$. Then, setting $U=0$ and $\fru = \gamma^0_c(W)$, we have $(U,\fru)\in \XSigmaGamma$. 
Moreover, by the ``exterior'' representation formula \eqref{eq:extRep} we have $\mathsf{G}_{\kappa_0}^0(\gamma^0_c(W)) = 0$ in $\Omega_0$, and together with \eqref{eq:Akj12} we obtain $\mathsf{A}_{\kappa_0}^{0}(\gamma^0_c(W)) = 0-\gamma^0_c(W)/2$.   
Therefore, by the polarity property~\eqref{PolarityOperatorTr} and identity~\eqref{eq:thetaIdentitybis} 
\begin{equation*}
  \begin{aligned}
    & a_{\Sigma}(U,V) +\lbr \mathsf{A}_{\kappa_0}^{0}(\fru),\theta(\frv)\rbr_{\Gamma}
    + \lbr \Tr(\fru),\Tr(\frv)\rbr_{\Sigma}/2
    \\
    & \quad = 0 + \lbr \mathsf{A}_{\kappa_0}^{0}(\gamma^0_c(W)),\theta(\frv)\rbr_{\Gamma}
    - \lbr \gamma^0_c(W), \frv\rbr_{\Gamma}/2\\
    & \quad = -\lbr \gamma^0_c(W),\theta(\frv)\rbr_{\Gamma}/2
    - \lbr \gamma^0_c(W), \frv\rbr_{\Gamma}/2
    = -\langle \gamma^{0}_{\dir,c}(W),q\rangle_{\Gamma} =0
  \end{aligned}
\end{equation*}
for all $(V,\frv)\in \XSigmaGamma$, with $\frv = (v,q)$. This indicates that $(U,\fru)$ is a non-trivial solution to \eqref{VarIdCostabel2} with homogeneous right-hand side $F_{\Sigma}\equiv0$, $U_\inc=0$. 
\end{example}

It turns out that $\kappa_0 \in \mathfrak{S}(\Delta,\Omega_\Sigma)$ is also a necessary condition for the presence of spurious resonances. To prove this, we need the following equivalence result between the Costabel coupling formulation \eqref{VarIdCostabel2} and the transmission problem \eqref{eq:bvp} with $n=0$.
\begin{proposition}[Equivalence]\label{EquivalenceCouplageCostabeln=0}
If $\widetilde{U} \in \mH^1_\loc(\RR^d)$ solves \eqref{eq:bvp} with $n=0$, then the pair 
$(U,\fru) = (\widetilde{U}|_{\Omega_\Sigma},\gamma^{0}(\widetilde{U}))$ 
solves \eqref{VarIdCostabel2}. Reciprocally, if $(U,\fru)\in \XSigmaGamma$ solves \eqref{VarIdCostabel2}, then the solution to \eqref{eq:bvp} with $n=0$ is given by
\begin{equation}
  \label{eq:UtildeC}
  \begin{aligned}
    \widetilde{U} (\bx) & \coloneqq U(\bx) & \text{ for } \bx \in \Omega_\Sigma, \\
    \widetilde{U} (\bx) & \coloneqq (\mathsf{G}_{\kappa_0}^0(\fru) +U_\inc) (\bx) & \text{ for } \bx \in \Omega_0.
  \end{aligned}
\end{equation}
\end{proposition}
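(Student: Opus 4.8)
The plan is to establish the two implications separately, using in both directions the interplay between the representation formulas of Section~\ref{sec:BIEreview}, the transmission condition reformulations of Lemma~\ref{ReformulationTransmissionConditions}, and the identities \eqref{eq:thetaIdentity}--\eqref{eq:thetaIdentitybis} together with the modified polarity identity \eqref{PolarityOperatorTr}. The forward direction is essentially a recapitulation of the derivation leading to \eqref{VarIdCostabel2}: given $\widetilde U$ solving \eqref{eq:bvp} with $n=0$, set $U = \widetilde U|_{\Omega_\Sigma}$ and $\fru = \gamma^0(\widetilde U)$. First I would note that $(U,\fru)\in\XSigmaGamma$ because the Dirichlet transmission condition $\gamma^\Sigma_{\dir}(\widetilde U)=\gamma^0_{\dir}(\widetilde U)$ gives exactly the constraint defining $\XSigmaGamma$ (and here $\mbX(\Gamma)=\mbH(\Gamma_0)$, so membership in $\mbX(\Gamma)$ is automatic). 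Then, testing the Helmholtz equation in $\Omega_\Sigma$ against an arbitrary $(V,\frv)\in\XSigmaGamma$ and applying Green's formula yields \eqref{eq:HelmOmegaSigma}; rewriting the boundary term via the Neumann transmission condition $\gamma^\Sigma_{\neu}(\widetilde U)=\Tr_{\neu}(\fru)$ and the chain of equalities in \eqref{rewriteBoundaryTermSigma} gives \eqref{VarIdCostabel1}; finally substituting the exterior Calder\'on relation \eqref{ExteriorCalderonSec6}, which holds because $\widetilde U - U_\inc$ is $\kappa_0$-outgoing radiating and solves the homogeneous Helmholtz equation in $\Omega_0$, produces exactly \eqref{VarIdCostabel2}. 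This direction is just careful bookkeeping.

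For the converse, suppose $(U,\fru)\in\XSigmaGamma$ solves \eqref{VarIdCostabel2} and define $\widetilde U$ by \eqref{eq:UtildeC}. The goal is to show $\widetilde U\in\mH^1_\loc(\RR^d)$ and that it satisfies all equations of \eqref{eq:bvp}. By construction $\widetilde U|_{\Omega_0}=\mathsf{G}^0_{\kappa_0}(\fru)+U_\inc$ solves $\Delta\widetilde U+\kappa_0^2\widetilde U=0$ in $\Omega_0$ (the potential solves the homogeneous equation and $U_\inc$ does too) and $\widetilde U-U_\inc=\mathsf{G}^0_{\kappa_0}(\fru)$ is $\kappa_0$-outgoing radiating by \cite[Thm.~3.1.16]{SaSw:book:2011}. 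The step requiring care is to recover the equation in $\Omega_\Sigma$ together with the two transmission conditions. I would first show that $\gamma^0(\widetilde U|_{\Omega_0})=\fru$: applying \eqref{eq:Akj12} gives $\gamma^0\mathsf{G}^0_{\kappa_0}(\fru)=\mathsf{A}^0_{\kappa_0}(\fru)+\fru/2$, and $\gamma^0(U_\inc)$ adds on, so $\gamma^0(\widetilde U|_{\Omega_0})=\mathsf{A}^0_{\kappa_0}(\fru)+\fru/2+\gamma^0(U_\inc)$; this equals $\fru$ precisely when \eqref{ExteriorCalderonSec6} holds. The point is that \eqref{ExteriorCalderonSec6} is \emph{not} assumed here — it must be extracted from the variational formulation \eqref{VarIdCostabel2}. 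To do that I would run the derivation of \eqref{VarIdCostabel2} in reverse: starting from \eqref{VarIdCostabel2}, undo the substitution of \eqref{ExteriorCalderonSec6} is not possible directly, so instead I would reason as follows. Choose test pairs $(V,\frv)$ with $V=0$; then \eqref{VarIdCostabel2} reduces to $\lbr\mathsf{A}^0_{\kappa_0}(\fru),\theta(\frv)\rbr_\Gamma+\lbr\Tr(\fru),\Tr(\frv)\rbr_\Sigma/2 = -\lbr\gamma^0(U_\inc),\theta(\frv)\rbr_\Gamma$ for all $\frv\in\mbH(\Gamma_0)$. Using the polarity identity to turn the $\Tr$ term back into $\lbr\fru,\frv\rbr_\Gamma/2$ and then \eqref{eq:thetaIdentitybis} to combine with the $\theta$-pairing, this should collapse to $\lbr\mathsf{A}^0_{\kappa_0}(\fru)+\gamma^0(U_\inc)-\fru/2,\,\theta(\frv)\rbr_\Gamma$ plus a symmetric piece; by non-degeneracy of the pairing on $\mbH(\Gamma_0)$, and by letting $\frv$ range over all of $\mbH(\Gamma_0)$ (equivalently using that $\theta$ is a bijection), one deduces exactly \eqref{ExteriorCalderonSec6}. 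Hence $\gamma^0(\widetilde U|_{\Omega_0})=\fru=\gamma^0(U)$ for the $\Omega_\Sigma$-side via $\gamma^\Sigma_{\dir}$... more precisely, since $\bn_0=-\bn_\Sigma$ here, $\gamma^0(\widetilde U|_{\Omega_0})=\fru$ together with $\gamma^\Sigma_{\dir}(U)=\Tr_{\dir}(\fru)=\fru_{\dir}$ yields the Dirichlet transmission condition, and combined with the remaining (general $V$) part of \eqref{VarIdCostabel2}, which reproduces \eqref{eq:HelmOmegaSigma} in weak form, yields both the Helmholtz equation in $\Omega_\Sigma$ and the Neumann transmission condition $\gamma^\Sigma_{\neu}(U)=\Tr_{\neu}(\fru)$. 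Then by Lemma~\ref{ReformulationTransmissionConditions} the glued function $\widetilde U$ lies in $\mH^1_\loc(\Delta,\RR^d)$ and solves \eqref{eq:bvp}.

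\textbf{Main obstacle.} The delicate point is the reverse extraction of the Calder\'on relation \eqref{ExteriorCalderonSec6} and of the Neumann transmission condition from the single variational identity \eqref{VarIdCostabel2}: one has only one bilinear equation but must recover several pieces of information (the equation in $\Omega_\Sigma$, the Neumann jump on $\Sigma$, and the exterior Calder\'on identity). The mechanism that makes this work is that the test space $\XSigmaGamma$ is rich enough — one can independently vary $V\in\mH^1(\Omega_\Sigma)$ and $q\in\mH^{-1/2}(\Sigma)$ (using the explicit description \eqref{eq:XSigmaGamma2subs}) — and that the boundary pairing $\lbr\cdot,\cdot\rbr_\Gamma$ is non-degenerate on $\mbH(\Gamma_0)$, with $\theta$ a bijection of that space. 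Carefully splitting the test function, and tracking the sign conventions coming from $\bn_0=-\bn_\Sigma$ and the definition $\Tr((u,p))=(u,-p)$, is where the real work lies; the rest is the same computation as the forward direction read backwards.
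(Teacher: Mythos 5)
The forward direction and the overall strategy for the converse (define $\widetilde U$ by \eqref{eq:UtildeC}, check the radiation condition and the Helmholtz equations, then recover the transmission conditions from the variational identity) match the paper. But the step you yourself flag as delicate is exactly where your argument breaks. You claim that taking $V=0$ reduces \eqref{VarIdCostabel2} to an identity holding ``for all $\frv\in\mbH(\Gamma_0)$'', and that non-degeneracy of $\lbr\cdot,\cdot\rbr_{\Gamma}$ then yields the full Calder\'on relation \eqref{ExteriorCalderonSec6}. This is false: by \eqref{eq:XSigmaGamma2subs} a test pair has $\frv=(\gamma_{\dir}^{\Sigma}V,q)$, so $V=0$ forces the Dirichlet component of $\frv$ to vanish, and $\frv$ only ranges over $\{0\}\times\mH^{-1/2}(\Sigma)$. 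Testing against such $\frv$ pairs only with the \emph{Dirichlet} component of $\mathsf{A}_{\kappa_0}^{0}(\fru)+\gamma^{0}(U_{\inc})-\fru/2$; you recover the Dirichlet half of \eqref{ExteriorCalderonSec6} (hence the Dirichlet transmission condition), but not the Neumann half, and therefore not the claimed identity $\gamma^{0}(\widetilde U|_{\Omega_0})=\fru$. The Neumann half cannot be disentangled this way: any test with $\gamma_{\dir}^{\Sigma}V\neq 0$ reintroduces the volume term $a_{\Sigma}(U,V)$, which carries $\gamma_{\neu}^{\Sigma}(U)$ after integration by parts.

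The paper's proof avoids this by never extracting \eqref{ExteriorCalderonSec6} separately. It first obtains the Helmholtz equation in $\Omega_{\Sigma}$ from tests $(V,0)$ with $V\in\mH^1_0(\Omega_\Sigma)$, then integrates by parts for general $V$, substitutes the trace identity $\gamma^{0}(\widetilde U)=\mathsf{A}_{\kappa_0}^{0}(\fru)+\fru/2+\gamma^{0}(U_{\inc})$ (which holds by the jump relations alone, no Calder\'on identity needed), and collapses everything into the single relation $\lbr\gamma^{\Sigma}(\widetilde U)+\theta\circ\gamma^{0}(\widetilde U),\frv\rbr_{\Gamma}=0$. There $\frv=(\gamma_{\dir}^{\Sigma}V,q)$ genuinely does range over all of $\mbH(\Gamma_0)$, because $\gamma_{\dir}^{\Sigma}$ is surjective and $V$, $q$ vary independently, so non-degeneracy applies and both transmission conditions follow at once. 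To repair your argument you must either restrict the $V=0$ step to the Dirichlet component only and recover the Neumann information from the general-$V$ identity, or adopt the combined argument.
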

\begin{proof}
  The first implication stems from the derivation of
  \eqref{VarIdCostabel2}, so we only need to examine the other
  implication.  First of all, $(\widetilde{U}-U_\inc)|_{\Omega_0} =
  \mathsf{G}_{\kappa_0}^0 (\fru)$ is $\kappa_0$-outgoing radiating in
  $\Omega_0$, see e.g.~\cite[Theorem 3.2]{CoKr:bookIEM:1983}.  Second,
  $\widetilde{U}$ satisfies the Helmholtz equation in $\Omega_0$ since
  it is satisfied by $U_\inc$ by definition and also by the
  potentials, see e.g.~\cite[§2.4]{CoKr:bookIEM:1983}.  If we take
  $(V,0)\in \mH_0^{1}(\Omega_\Sigma)\times\{0\} \subset
  \XSigmaGamma$ as test function in \eqref{VarIdCostabel2}, we
  obtain $a_\Sigma(U,V) = a_\Sigma(\widetilde{U},V) = F_\Sigma(V)$, so
  $\widetilde{U}$ satisfies Helmholtz equation also in
  $\Omega_\Sigma$, and there only remains to prove that
  $\widetilde{U}$ complies with the transmission conditions of
  \eqref{eq:bvp} through $\Gamma \equiv \Sigma$.

  \smallskip 
  Now, considering a generic $(V,\frv)\in\XSigmaGamma$ where
  $V\in \mH^1(\Omega_{\Sigma})$ (not necessarily
  $V\in\mH_0^1(\Omega_\Sigma)$), and integrating by parts, we obtain
  \begin{equation}
    \label{eq:equiv1}
    a_\Sigma(\widetilde{U},V) 
    - \braket{\gamma_{\neu}^\Sigma \widetilde{U}, \gamma_{\dir}^\Sigma V}_\Gamma = 
    F_\Sigma(V) \quad
    \forall \, V \in \mH^1(\Omega_\Sigma).
  \end{equation}
  By \eqref{eq:UtildeC} and \eqref{eq:Akj12}, we have
  \begin{equation}\label{eq:gamma0utilde}
    \gamma^{0}(\widetilde{U}) = \mathsf{A}_{\kappa_0}^{0}(\fru) +
  \fru/2 + \gamma^{0}(U_{\inc}).
  \end{equation}
  Then, plugging \eqref{eq:equiv1} and \eqref{eq:gamma0utilde} into \eqref{VarIdCostabel2} leads to
  \begin{equation*}
    \begin{aligned}
      &  \braket{\gamma_{\neu}^\Sigma \widetilde{U}, \gamma_{\dir}^\Sigma V}_\Gamma
      + \lbr \mathsf{A}_{\kappa_0}^{0}(\fru),\theta(\frv)\rbr_{\Gamma}
      + \lbr \Tr(\fru),\Tr(\frv)\rbr_{\Sigma}/2
      = - \lbr \gamma^{0}(U_{\inc}),\theta(\frv)\rbr_{\Gamma}\\
      &  \braket{\gamma_{\neu}^\Sigma \widetilde{U}, \gamma_{\dir}^\Sigma V}_\Gamma
      + \lbr \gamma^{0}(\widetilde{U}) - \fru/2,\theta(\frv)\rbr_{\Gamma}
      + \lbr \Tr(\fru),\Tr(\frv)\rbr_{\Sigma}/2 = 0
    \end{aligned}
  \end{equation*}
  that is, by the polarity property~\eqref{PolarityOperatorTr} and
  identity~\eqref{eq:thetaIdentitybis} writing $\fru=(u,p)$,
  $\frv=(v,q)$,  
  \begin{equation*}
    \begin{aligned}
    & \braket{\gamma_{\neu}^\Sigma \widetilde{U}, \gamma_{\dir}^\Sigma V}_\Gamma
    + \lbr \gamma^{0}(\widetilde{U}),\theta(\frv)\rbr_{\Gamma}
    = \lbr \fru, \theta(\frv) \rbr_{\Gamma}/2
    + \lbr \fru, \frv \rbr_{\Gamma}/2\\
    & \braket{\gamma_{\neu}^\Sigma \widetilde{U}, \gamma_{\dir}^\Sigma V}_\Gamma
    - \langle u,q\rangle_{\Gamma}
    - \lbr \theta\circ\gamma^{0}(\widetilde{U}),\frv\rbr_{\Gamma}
    = 0. 
    \end{aligned}
  \end{equation*}
  Since $(U,\fru)\in \XSigmaGamma$ we have $u =
  \Tr_{\dir}(\fru) = \gamma^{\Sigma}_{\dir}(U) =
  \gamma^{\Sigma}_{\dir}(\widetilde{U})$.  Similarly, for the test
  pair we have $(V,\frv)\in \XSigmaGamma$, hence
  $\gamma^{\Sigma}_{\dir}(V) = \Tr_{\dir}(\frv) = v$.  As a
  consequence, $\braket{\gamma_{\neu}^\Sigma \widetilde{U},
    \gamma_{\dir}^\Sigma V}_\Gamma - \langle u,q\rangle_{\Gamma} =
  - \lbr \gamma^\Sigma(\widetilde{U}), \frv\rbr_\Gamma$ and, finally, we
  obtain $\lbr \gamma^\Sigma(\widetilde{U})+\theta\circ\gamma^{0}
  (\widetilde{U}),\frv\rbr_{\Gamma}= 0$ for all $\frv = (v,q)\in
  \mH^{1/2}(\Gamma)\times \mH^{-1/2}(\Gamma)$. This implies that
  $\gamma^\Sigma(\widetilde{U})=-\theta\circ\gamma^{0}(\widetilde{U})$,
  which also rewrites $\gamma^{0}_{\dir}(\widetilde{U}) =
  \gamma^{\Sigma}_{\dir}(\widetilde{U})$ and
  $\gamma^{0}_{\neu}(\widetilde{U})
  =-\gamma^{\Sigma}_{\neu}(\widetilde{U})$.
%%   the last line above becomes
%%   \begin{equation*}
%%     \begin{aligned}
%%       \lbr \gamma^{0}(\widetilde{U}),\theta(\frv)\rbr_{\Gamma}
%%       & = \langle \gamma^{\Sigma}_{\dir}(\widetilde{U}), q\rangle_{\Gamma}
%%       - \langle \gamma_{\neu}^\Sigma \widetilde{U},v \rangle_\Gamma\\
%%       \langle\gamma^{0}_{\dir}(\widetilde{U}),q\rangle_{\Gamma} + 
%%       \langle\gamma^{0}_{\neu}(\widetilde{U}),v\rangle_{\Gamma}
%%       & = \langle \gamma^{\Sigma}_{\dir}(\widetilde{U}), q\rangle_{\Gamma}
%%       - \langle \gamma_{\neu}^\Sigma \widetilde{U},v\rangle_\Gamma.
%%       \end{aligned}
%%   \end{equation*}
%% Since this identity holds for any $(V,\frv)\in \XSigmaGamma$, hence for any $\frv = (v,q)\in \mH^{1/2}(\Gamma)\times \mH^{-1/2}(\Gamma)$, we conclude that $\gamma^{0}_{\dir}(\widetilde{U}) = \gamma^{\Sigma}_{\dir}(\widetilde{U})$
%% and  $\gamma^{0}_{\neu}(\widetilde{U}) = -\gamma^{\Sigma}_{\neu}(\widetilde{U})$. 
\end{proof}

\begin{corollary}[Injectivity condition]
\label{cor:injCondCostabel}
Let $(U,\fru) \in \XSigmaGamma$, solve \eqref{VarIdCostabel2} with $F_{\Sigma}\equiv0$ and $U_\inc=0$.
Then $U=0$. If $\kappa_0 \notin \mathfrak{S}(\Delta,\Omega_\Sigma)$ we also have $\fru=0$ necessarily.
\end{corollary}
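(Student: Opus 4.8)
The plan is to combine the equivalence result of Proposition~\ref{EquivalenceCouplageCostabeln=0} with uniqueness for the transmission problem. Let $(U,\fru)\in\XSigmaGamma$ solve \eqref{VarIdCostabel2} with $F_{\Sigma}\equiv 0$ and $U_\inc=0$. By Proposition~\ref{EquivalenceCouplageCostabeln=0}, the function $\widetilde{U}$ defined in \eqref{eq:UtildeC} solves \eqref{eq:bvp} with $n=0$, $f=0$ and $U_\inc=0$; through Lemma~\ref{ReformulationTransmissionConditions} this is problem \eqref{InitBVP} with vanishing data, whose unique solution is $\widetilde{U}\equiv 0$. In particular $U=\widetilde{U}|_{\Omega_\Sigma}=0$, which proves the first assertion.

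Now assume moreover that $\kappa_0\notin\mathfrak{S}(\Delta,\Omega_\Sigma)$. I would first observe that, since $(U,\fru)\in\XSigmaGamma$ with $U=0$, the Dirichlet component of $\fru$ vanishes: writing $\fru=(u,p)$ and recalling that in the two-subdomain case $\Tr_{\dir}(\fru)=u$, the constraint $\gamma^{\Sigma}_{\dir}(U)=\Tr_{\dir}(\fru)$ built into $\XSigmaGamma$ forces $u=0$. Next, $\widetilde{U}\equiv 0$ together with \eqref{eq:UtildeC} gives $\mathsf{G}_{\kappa_0}^{0}(\fru)=\widetilde{U}|_{\Omega_0}-U_\inc=0$ in $\Omega_0$. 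Setting $W\coloneqq\mathsf{G}_{\kappa_0}^{0}(\fru)|_{\Omega_\Sigma}$, the jump relation \eqref{eq:jumprels} reads $\fru=[\gamma^{0}]\mathsf{G}_{\kappa_0}^{0}(\fru)=-\gamma^{0}_{c}(W)$, the interior trace vanishing because $\mathsf{G}_{\kappa_0}^{0}(\fru)=0$ in $\Omega_0$; hence $\gamma^{\Sigma}_{\dir}(W)=\gamma^{0}_{\dir,c}(W)=-u=0$, using that the Dirichlet trace does not depend on the orientation of the normal.

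Finally, $W\in\mH^1(\Omega_\Sigma)$ by the mapping properties of $\mathsf{G}_{\kappa_0}^{0}$ recalled in Section~\ref{sec:BIEreview} ($\Omega_\Sigma$ being bounded when $n=0$), it satisfies the homogeneous Helmholtz equation $-\Delta W-\kappa_0^2 W=0$ in $\Omega_\Sigma$ because layer potentials solve it away from the boundary, and it has vanishing Dirichlet trace on $\Sigma$; thus $W\in\mH^1_{0}(\Omega_\Sigma)$ and $-\Delta W=\kappa_0^2 W$. Since $\kappa_0\notin\mathfrak{S}(\Delta,\Omega_\Sigma)$, this forces $W=0$. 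Therefore $\mathsf{G}_{\kappa_0}^{0}(\fru)$ vanishes on the whole of $\RR^d\setminus\Gamma_0$, and applying \eqref{eq:jumprels} one last time gives $\fru=[\gamma^{0}]\mathsf{G}_{\kappa_0}^{0}(\fru)=0$.

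The step I expect to require the most care — rather than being a genuine obstacle — is keeping track of the orientation conventions on the single interface $\Gamma_0=\Sigma$: since $\bn_0=-\bn_\Sigma$, the exterior-to-$\Omega_0$ traces $\gamma^{0}_{c}$ and the interior-to-$\Omega_\Sigma$ traces $\gamma^{\Sigma}$ agree in their Dirichlet parts but differ by a sign in their Neumann parts, and one must make sure that only the Dirichlet identification is invoked (as it is above), so that in the end no sign subtlety actually enters.
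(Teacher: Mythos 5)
Your proof is correct and follows essentially the same route as the paper: use the equivalence with the transmission problem and its well-posedness to get $\widetilde{U}=0$, hence $U=0$ and $u=\Tr_{\dir}(\fru)=0$, and then exploit $\mathsf{G}_{\kappa_0}^{0}(\fru)=0$ in $\Omega_0$ together with $\kappa_0\notin\mathfrak{S}(\Delta,\Omega_\Sigma)$ to conclude $\fru=0$. The only (harmless) difference is that the paper finishes by citing the injectivity of $\gamma_{\dir}^{0}\mathsf{SL}_{\kappa_0}^{0}$, whereas you re-derive that fact directly via the auxiliary function $W$ and a second application of the jump relations.
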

\begin{proof}
  By the equivalence Proposition~\ref{EquivalenceCouplageCostabeln=0}, $\widetilde{U} \in \mH^1_\loc(\RR^d)$ defined by \eqref{eq:UtildeC} satisfies the transmission problem \eqref{eq:bvp} with $n=0$, which is well posed, so $\widetilde{U}=0$. 
  Since $\widetilde{U}|_{\Omega_\Sigma} = U$, we get $U=0$. 
  Denoting $\fru = (u,p)$, we then have $u = \Tr_{\dir}(\fru) = \gamma_{\dir}^{\Sigma}(U) = 0$ because $(U,\fru)\in \XSigmaGamma$. Moreover, since $\widetilde{U}|_{\Omega_0} = \mathsf{G}_{\kappa_0}^0(\fru)$, we obtain $\mathsf{G}_{\kappa_0}^0(\fru)(\bx) = 0$, that is $\mathsf{SL}_{\kappa_0}^0 (p) (\bx) = 0$ for $\bx \in \Omega_0$. 
  Therefore $\gamma_\dir^0 \mathsf{SL}_{\kappa_0}^0 (p) = 0$, which implies $p=0$ given $\kappa_0 \notin \mathfrak{S}(\Delta,\Omega_\Sigma)$ (see \cite[Theorem 3.9.1]{SaSw:book:2011}). 
\end{proof}

We refer to \cite{HiMe:sfembem:2006} for a combined field integral equation FEM-BEM formulation immune to spurious resonances.

\section{Single-trace FEM-BEM formulation}
\label{sec:STF}

\noindent 
In this section we shall revisit the analysis presented in the previous section, this time considering multi-domain configurations ($n\geq 1$, with potential cross-points) instead of a simple two-domain setting. This will lead to a first coupling variational formulation for the transmission problem~\eqref{eq:bvp} in the targeted multi-domain configuration. We combine a volume variational formulation in $\Omega_\Sigma$ with the boundary integral formulation on $\Gamma$ called Single-Trace Formulation (STF), first analyzed in \cite{Pet:STF:1989}. The Costabel coupling lends itself well to match the STF since it is based on the full set of Calder\'on identities, from which the STF arises.  In \cite[§4]{ClHi:impenetrable:2015} the STF was revisited and adapted to the case with an impenetrable part represented by the subdomain $\Omega_\Sigma$. The present analysis, where $\Omega_\Sigma$ is a heterogeneous part, bears several similarities to the analysis in \cite{ClHi:impenetrable:2015}.

As in the previous section, let us start with a function $U$ that is a unique solution to the transmission problem~\eqref{eq:bvp}.
We are going to reformulate this transmission problem in terms of the pair
\begin{equation}
  \begin{aligned}
    & (U\vert_{\Omega_{\Sigma}}, \fru)\in \XSigmaGamma\\
    & \text{where}\quad \fru = \gamma(U) = (\gamma^{0}(U),\dots,\gamma^{n}(U)).
  \end{aligned}
\end{equation}
Here, except for the Neumann condition through $\Sigma$ that writes $\gamma^{\Sigma}_{\neu}(U) = \Tr_\neu(\fru)$, the transmission conditions shall be enforced strongly by the choice of $\XSigmaGamma$ as variational space. As in Section~\ref{sec:2costabel}, pick an arbitrary test pair $(V,\frv)\in \XSigmaGamma$, and apply Green's formula in $\Omega_\Sigma$. Again, we obtain the following classical variational identity:
\begin{equation}\label{VolumeVariational1FormSec7}
  \begin{array}{l}
    a_{\Sigma}(U,V) - \langle \gamma_{\neu}^{\Sigma}(U),
    \gamma_{\dir}^{\Sigma}(V)\rangle_{\Sigma} = F_{\Sigma}(V)\\[10pt]
    \text{where}\;\;a_{\Sigma}(U,V)\coloneqq\int_{\Omega_{\Sigma}}
    (\nabla U\cdot\nabla V-\kappa_{\Sigma}^2(\bx)UV) \, d\bx\\
    \hspace{1.45cm} F_{\Sigma}(V)\coloneqq\int_{\Omega_{\Sigma}}fV\,d\bx.
  \end{array}
\end{equation}
Next, we rewrite the boundary term as in \eqref{rewriteBoundaryTermSigma}, except that, before applying the polarity property~\eqref{PolarityOperatorTr} to the term $-\lbr \Tr(\fru),\theta(\Tr(\frv))\rbr_{\Sigma}$, we need to introduce a multi-domain analogue of the operator $\theta$: $\Theta(\frv) \coloneqq (\theta(\frv_0),\dots,\theta(\frv_n))$ for $\frv = (\frv_0,\dots,\frv_n)\in \mbH(\Gamma)$. Noting that  $\theta(\Tr(\frv)) = \Tr(\Theta(\frv))$, we can write:
\begin{equation}\label{VolumeVariational1FormSec7-2}
  \begin{aligned}
    -\langle \gamma_{\neu}^{\Sigma}(U), \gamma_{\dir}^{\Sigma}(V)\rangle_{\Sigma}
    & = -\langle \Tr_{\neu}(\fru),\Tr_{\dir}(\frv)\rangle_{\Sigma}\\
    & = -\lbr \Tr(\fru),\theta(\Tr(\frv))\rbr_{\Sigma}/2 + \lbr \Tr(\fru),\Tr(\frv)\rbr_{\Sigma}/2 \\
    & = +\lbr \fru,\Theta(\frv)\rbr_{\Gamma}/2 + \lbr \Tr(\fru),\Tr(\frv)\rbr_{\Sigma}/2.
  \end{aligned}
\end{equation}
Plugging \eqref{VolumeVariational1FormSec7-2} into \eqref{VolumeVariational1FormSec7} we obtain
\begin{equation}\label{VolumeVariational2FormSec7}
    a_{\Sigma}(U,V)
    + \lbr \fru,\Theta(\frv)\rbr_{\Gamma}/2
    + \lbr \Tr(\fru),\Tr(\frv)\rbr_{\Sigma}/2
    = F_{\Sigma}(V).
\end{equation}
Following for $\Omega_0$ the same argumentation as in Section \ref{sec:2costabel}, we have that 
$\gamma^0\mathsf{G}_{\kappa_0}^{0} (\gamma^{0}(U_\inc)) = \gamma^0\mathsf{G}_{\kappa_0}^{0}
(\gamma^{0}_c(U_\inc))=0$, and 
$\gamma^{0}(U-U_\inc) = \gamma^0\mathsf{G}_{\kappa_0}^{0}(\gamma^{0}(U-U_\inc)) =
\gamma^0\mathsf{G}_{\kappa_0}^{0}\gamma^{0}(U)$.   
Hence $\gamma^{0}(U) = \gamma^0\mathsf{G}_{\kappa_0}^{0}\gamma^{0}(U) +\gamma^{0}(U_\inc)$, 
which by \eqref{eq:Akj12} also rewrites 
$\gamma^{0}(U)/2 = \mathsf{A}_{\kappa_0}^{0}\gamma^{0}(U) +\gamma^{0}(U_\inc)$.
Moreover, since $U$ verifies the Helmholtz equation with constant wavenumber $\kappa_j$ in $\Omega_j$, $j=1,\dots,n$, the representation formula \eqref{eq:intRep} yields 
$\gamma^{j}(U)=\gamma^{j}\mathsf{G}_{\kappa_j}^{j}(\gamma^{j}(U))$, 
that is, by \eqref{eq:Akj12}, $\gamma^{j}(U)/2 = \mathsf{A}_{\kappa_j}^{j}\gamma^{j}(U)$.
With the notation $\fru = \gamma(U) = (\gamma^{0}(U),\dots,\gamma^{n}(U))$, we have obtained
\begin{equation}\label{ExteriorCalderonSec7}
  \begin{aligned}
    & \fru/2 = \mathsf{A}(\fru) + \fru^\inc\\ 
    & \text{where}\;\;\mathsf{A} \coloneqq \mathrm{diag}(\mathsf{A}^{0}_{\kappa_0},\dots,\mathsf{A}^{n}_{\kappa_n})\\
    & \textcolor{white}{\text{where}}\;\;
    \fru^\inc \coloneqq (\gamma^0(U_\inc), 0, \dots,0).
  \end{aligned}
\end{equation}
We draw the attention of the reader to the strong analogy between \eqref{ExteriorCalderonSec7} and \eqref{ExteriorCalderonSec6}, the essential difference being that we are now dealing with multiple subdomains, i.e. $\Omega_0,\dots,\Omega_n$ instead of only $\Omega_0$. 
Now, plugging \eqref{ExteriorCalderonSec7} into the second term in the left-hand side of \eqref{VolumeVariational2FormSec7} leads to the \emph{single-trace FEM-BEM formulation}:
\begin{empheq}[box=\fbox]{equation}
  \label{eq:STF}
  \begin{aligned}
    & \text{Find } (U,\fru)\in \XSigmaGamma\text{ such that }\\
    & a_{\Sigma}(U,V)
    + \lbr \mathsf{A}(\fru),\Theta(\frv)\rbr_{\Gamma}
    + \lbr \Tr(\fru),\Tr(\frv)\rbr_{\Sigma}/2\\
    & = F_{\Sigma}(V) - \lbr \fru^\inc,\Theta(\frv)\rbr_{\Gamma}
    \quad \forall (V,\frv) \in \XSigmaGamma.
  \end{aligned}
\end{empheq}
Noticing the strong the similarities between \eqref{eq:STF} and \eqref{VarIdCostabel2}, we have just derived a generalization of the Costabel coupling~\eqref{VarIdCostabel2} to multi-domain settings.
The expanded expression for \eqref{eq:STF} reads: 
\begin{equation*}
  \begin{aligned}
    & \text{Find } (U,\fru)\in \XSigmaGamma\text{ such that }\\
    & \int_{\Omega_{\Sigma}}
    (\nabla U\cdot\nabla V-\kappa_{\Sigma}^2(\bx)UV) \,d\bx 
    + \sum_{j=0}^n \lbr \mathsf{A}^{j}_{\kappa_j}(\fru_j),\theta(\frv_j)\rbr_{\Gamma_j}
    + \lbr \Tr(\fru),\Tr(\frv)\rbr_{\Sigma}/2\\
    & = \int_{\Omega_{\Sigma}}fV\,d\bx \,- \lbr \gamma^0(U_\inc),\theta(\frv_0)\rbr_{\Gamma_0}
    \qquad \forall (V,\frv) \in \XSigmaGamma.
  \end{aligned}
\end{equation*}
Note that in this first multi-domain formulation the transmission conditions are imposed in strong form inside the function space $\XSigmaGamma$. Starting from \eqref{eq:STF}, a more flexible formulation will be designed in Section~\ref{sec:MTF}.

The link between the single-trace FEM-BEM formulation \eqref{eq:STF} and the transmission problem \eqref{eq:bvp} is examined in the following proposition. 
\begin{proposition}[Equivalence]
\label{pr:equivSTF}
If $\widetilde{U} \in \mH^1_\loc(\RR^d)$ solves \eqref{eq:bvp}, 
then the pair $(U,\fru) = ( \widetilde{U}|_{\Omega_\Sigma}, \gamma(\widetilde{U}) )$ solves \eqref{eq:STF}.
If $(U,\fru)\in \XSigmaGamma$ solves \eqref{eq:STF}, then the solution to \eqref{eq:bvp} is given by
\begin{equation}
\label{eq:UtildeSTF}
\begin{split}
\widetilde{U} (\bx) & \coloneqq U(\bx) \quad \text{ for } \bx \in \Omega_\Sigma, \\
\widetilde{U} (\bx) & \coloneqq \mathsf{G}_{\kappa_j}^j(\fru_j)(\bx) +
U_\inc(\bx)1_{\Omega_0}(\bx) \quad \text{ for } \bx \in \Omega_j, j=0,\dots,n.
\end{split}
\end{equation}
\end{proposition}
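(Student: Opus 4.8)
The plan is to follow the two-domain argument of Proposition~\ref{EquivalenceCouplageCostabeln=0}, replacing the elementary polarity identity used there by the \emph{modified} polarity identity \eqref{PolarityOperatorTr} and invoking the characterization of the transmission conditions in Lemma~\ref{ReformulationTransmissionConditions}. A small but essential observation used repeatedly is that $\Theta$ maps $\mbX(\Gamma)$ bijectively onto itself — if $\frv\in\mbX(\Gamma)$ has liftings $V,\bq$ then $\Theta(\frv)$ has liftings $-V,\bq$ — and satisfies $\Tr\circ\Theta=\theta\circ\Tr$ on $\mbX(\Gamma)$; this is what makes \eqref{PolarityOperatorTr} applicable to terms of the form $\lbr \fru,\Theta(\frv)\rbr_\Gamma$.

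\emph{First implication.} This is essentially the content of the derivation of \eqref{eq:STF}. If $\widetilde U$ solves \eqref{eq:bvp} then $\widetilde U\in\mH^1_\loc(\Delta,\RR^d)$, so by Lemma~\ref{ReformulationTransmissionConditions} we have $\gamma(\widetilde U)\in\mbX(\Gamma)$ and $\Tr(\gamma(\widetilde U))=\gamma^\Sigma(\widetilde U)$; in particular $\Tr_\dir(\gamma(\widetilde U))=\gamma^\Sigma_\dir(\widetilde U|_{\Omega_\Sigma})$, so $(U,\fru)=(\widetilde U|_{\Omega_\Sigma},\gamma(\widetilde U))\in\XSigmaGamma$. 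Re-running \eqref{VolumeVariational1FormSec7}--\eqref{eq:STF} — Green's formula in $\Omega_\Sigma$, the boundary-term rewriting \eqref{VolumeVariational1FormSec7-2} (legitimate because $\gamma^\Sigma_\neu(\widetilde U)=\Tr_\neu(\fru)$), and the Calderón relations \eqref{ExteriorCalderonSec7}, which follow from the representation formulas \eqref{eq:intRep}--\eqref{eq:extRep} applied to $\widetilde U-U_\inc$ in $\Omega_0$ and to $\widetilde U$ in $\Omega_j$, $j\ge 1$, together with \eqref{eq:Akj12} — shows that $(U,\fru)$ solves \eqref{eq:STF}.

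\emph{Converse.} Let $(U,\fru)\in\XSigmaGamma$ solve \eqref{eq:STF} and define $\widetilde U$ by \eqref{eq:UtildeSTF}. Then $\widetilde U|_{\Omega_j}=\mathsf{G}^j_{\kappa_j}(\fru_j)+1_{\Omega_0}U_\inc$ satisfies the homogeneous Helmholtz equation with wavenumber $\kappa_j$ in $\Omega_j$, and $(\widetilde U-U_\inc)|_{\Omega_0}=\mathsf{G}^0_{\kappa_0}(\fru_0)$ is $\kappa_0$-outgoing radiating. Taking $(V,0)$ with $V\in\mH^1_0(\Omega_\Sigma)$ as test pair in \eqref{eq:STF} gives $a_\Sigma(U,V)=F_\Sigma(V)$, hence $\widetilde U|_{\Omega_\Sigma}=U$ satisfies $\Delta\widetilde U+\kappa_\Sigma^2\widetilde U=-f$ in $\Omega_\Sigma$ and $\gamma^\Sigma_\neu(\widetilde U)$ is well-defined. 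From \eqref{eq:UtildeSTF} and \eqref{eq:Akj12} one gets the Calderón-type relation $\gamma(\widetilde U)=\mathsf{A}(\fru)+\fru/2+\fru^\inc$. It then remains to establish the transmission conditions, for which — all the local $\mH^1_\loc(\Delta,\cdot)$ regularity being in place — Lemma~\ref{ReformulationTransmissionConditions} requires showing $\lbr\gamma(\widetilde U),\frv\rbr_\Gamma+\lbr\gamma^\Sigma(\widetilde U),\Tr(\frv)\rbr_\Sigma=0$ for all $\frv\in\mbX(\Gamma)$; once this holds, $\widetilde U$ solves \eqref{eq:bvp} and, by its unique solvability, is its solution.

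To prove that identity, for a generic $(V,\frv)\in\XSigmaGamma$ I would integrate by parts in $\Omega_\Sigma$ to get $a_\Sigma(\widetilde U,V)-\langle\gamma^\Sigma_\neu(\widetilde U),\gamma^\Sigma_\dir(V)\rangle_\Sigma=F_\Sigma(V)$, subtract \eqref{eq:STF}, substitute $\mathsf{A}(\fru)+\fru^\inc=\gamma(\widetilde U)-\fru/2$, and simplify using: \eqref{PolarityOperatorTr} applied to $\lbr\fru,\Theta(\frv)\rbr_\Gamma$, the identities \eqref{eq:thetaIdentitybis3} and \eqref{eq:thetaIdentitybis} on $\Sigma$, and the relations $\Tr_\dir(\fru)=\gamma^\Sigma_\dir(\widetilde U)$ and $\gamma^\Sigma_\dir(V)=\Tr_\dir(\frv)$ coming from membership in $\XSigmaGamma$. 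This should collapse to $\lbr\gamma(\widetilde U),\Theta(\frv)\rbr_\Gamma+\lbr\gamma^\Sigma(\widetilde U),\theta(\Tr(\frv))\rbr_\Sigma=0$ for all $\frv\in\mbX(\Gamma)$, which is exactly \eqref{eq:ReformulationTransmissionConditionsbis} for $\widetilde U$ after the change of test function $\frv\mapsto\Theta(\frv)$ (valid since $\Theta(\mbX(\Gamma))=\mbX(\Gamma)$ and $\Tr(\Theta(\frv))=\theta(\Tr(\frv))$). I expect the main obstacle to be precisely this computation: carrying the $\Sigma$-localized residual of the modified polarity identity correctly through the cancellations, and recognizing what is left as the variational characterization of the transmission conditions; the remaining parts (Helmholtz equations in each subdomain, the radiation condition, the Calderón-type relation) are direct transcriptions of the two-domain proof.
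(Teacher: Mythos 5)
Your proposal is correct and follows essentially the same route as the paper's proof: rerunning the derivation for the direct implication, and for the converse establishing the Calder\'on-type relation $\gamma(\widetilde U)=\mathsf{A}(\fru)+\fru/2+\fru^{\inc}$, then collapsing the tested identity via the modified polarity identity \eqref{PolarityOperatorTr} and the $\theta$-identities to the variational characterization \eqref{eq:ReformulationTransmissionConditionsbis} with test function $\Theta(\frv)$. The computation you flag as the main obstacle is exactly the one carried out in the paper, including the final observation that $\Theta$ is an automorphism of $\mbX(\Gamma)$ with $\Tr\circ\Theta=\theta\circ\Tr$.
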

\begin{proof}
  We will follow closely the proof of Proposition \ref{EquivalenceCouplageCostabeln=0} established for the case $n=0$, except that we now have multiple subdomains $\Omega_j$. By similar arguments as in the beginning of that proof, it remains only to show that $\widetilde{U}$ given by \eqref{eq:UtildeSTF} complies with the transmission conditions of \eqref{eq:bvp}. For that we will use their characterization given by Lemma \ref{ReformulationTransmissionConditions}. 

  Considering an arbitrary test pair $(V,\frv)\in \XSigmaGamma$, and applying Green's formula in $\Omega_\Sigma$ leads to 
  \begin{equation}\label{eq:multidomainequiv1}
    a_\Sigma(U,V) 
    - \braket{\gamma_{\neu}^\Sigma \widetilde{U}, \gamma_{\dir}^\Sigma V}_\Sigma 
    = F_\Sigma(V) \quad
    \forall \, V \in \mH^1(\Omega_\Sigma).
  \end{equation}
  On the other hand, by applying the trace operator $\gamma^{j}$ on the second line of \eqref{eq:UtildeSTF} and using \eqref{eq:Akj12}, we get 
  $\gamma^{0}(\widetilde{U}) = \mathsf{A}^{0}_{\kappa_0}(\fru_0) + \fru_0/2 + \gamma^{0}(U_\inc)$ and 
  $\gamma^{j}(\widetilde{U}) = \mathsf{A}^{j}_{\kappa_j}(\fru_j) + \fru_j/2$ for $j=1\dots n$, that is, in compact notation, 
  $\gamma(\widetilde{U}) = \mathsf{A}(\fru) + \fru/2 + \fru^{\inc}$.
  Now we plug this and \eqref{eq:multidomainequiv1} into \eqref{eq:STF}, so we obtain
  \begin{equation}\label{eqmultistftransmission1}
    \begin{aligned}
      \braket{\gamma_{\neu}^\Sigma \widetilde{U}, \gamma_{\dir}^\Sigma V}_\Sigma +
      \lbr \gamma(\widetilde{U}), \Theta(\frv)\rbr_{\Gamma} 
      - \lbr \fru, \Theta(\frv)\rbr_{\Gamma}/2
      + \lbr \Tr(\fru),\Tr(\frv)\rbr_{\Sigma}/2 = 0\\
      \text{for all}\;(V,\frv)\in \XSigmaGamma.
    \end{aligned}
  \end{equation}
  By the polarity identity \eqref{PolarityOperatorTr} and \eqref{eq:thetaIdentitybis} we can write 
  \begin{equation*}
    \begin{split}
    - \lbr \fru, \Theta(\frv)\rbr_{\Gamma}/2 + \lbr \Tr(\fru),\Tr(\frv)\rbr_{\Sigma}/2 & = 
    \lbr \Tr(\fru),\Tr(\Theta(\frv))\rbr_{\Sigma}/2 + \lbr \Tr(\fru),\Tr(\frv)\rbr_{\Sigma}/2 \\
    & = \braket{\Tr_\dir(\fru),\Tr_\neu(\frv)}_\Sigma,
    \end{split}
  \end{equation*}
  so \eqref{eqmultistftransmission1} becomes 
  \begin{equation*}%\label{eqmultistftransmission1bis}
    \braket{\gamma_{\neu}^\Sigma \widetilde{U}, \gamma_{\dir}^\Sigma V}_\Sigma 
      + \braket{\Tr_\dir(\fru),\Tr_\neu(\frv)}_\Sigma
      + \lbr \gamma(\widetilde{U}), \Theta(\frv)\rbr_{\Gamma}  
       = 0 \quad
      \text{for all}\;(V,\frv)\in \XSigmaGamma.
  \end{equation*}
  Moreover, since $(U,\fru) \in \XSigmaGamma$ and $\widetilde{U}\vert_{\Omega_{\Sigma}} = U $, we have $\Tr_{\dir}(\fru) = \gamma_{\dir}^{\Sigma}(U) = \gamma_{\dir}^{\Sigma}(\widetilde{U})$, and also $\gamma^{\Sigma}_{\dir}(V) = \Tr_{\dir}(\frv)$ because $(V,\frv)\in \XSigmaGamma$. 
  Therefore, by \eqref{eq:thetaIdentitybis3} and $\theta \circ \Tr = \Tr \circ \Theta$, we conclude that 
  \begin{equation*}%\label{eqmultistftransmission1tris}
      \lbr \gamma^\Sigma (\widetilde{U}), \Tr(\Theta(\frv)) \rbr_\Sigma 
      + \lbr \gamma(\widetilde{U}), \Theta(\frv)\rbr_{\Gamma}  
       = 0 \quad 
      \text{for all}\;\frv\in \mbX(\Gamma).
  \end{equation*}
  Thanks to the variational characterization \eqref{eq:ReformulationTransmissionConditionsbis}, since $\Theta$ is an automorphism, we conclude that $\widetilde{U}$ satisfies the transmission conditions of Problem \eqref{eq:bvp}.
\end{proof}

The bilinear form $a_{\Sigma}(\cdot,\cdot)$ satisfies a G\r{a}rding inequality,
as well as $\lbr \mathsf{A}(\cdot),\Theta(\cdot)\rbr_{\Gamma}$, see \cite[§4.1]{Pet:STF:1989} and
\cite[Proposition 4.2]{ClHi:impenetrable:2015}. In addition
we have $\Re\{\lbr \Tr(\frv),\Tr(\overline{\frv})\rbr_{\Sigma}\} = 0$.
From these remarks we conclude that 
$\mathsf{a}_\textup{STF}\colon\XSigmaGamma \times \XSigmaGamma \to \CC$
defined as the bilinear form on the left-hand side of \eqref{eq:STF} satisfies a
\emph{G\r{a}rding inequality}.

\begin{proposition}[G\r{a}rding inequality] 
\label{pr:gardingSTF}
There exist a compact bilinear form $\mathcal{K} : \XSigmaGamma\times \XSigmaGamma \to \CC$
and a constant $\beta>0$ such that 
\begin{equation*}
\Re \left \{ \mathsf{a}_\textup{STF} \bigl( (V,\frv),(\overline{V},\overline{\frv}) \bigr)  + 
\mathcal{K}\bigl( (V,\frv),(\overline{V},\overline{\frv}) \bigr)  \right\}
\ge \,\beta\, (\lVert V \rVert_{\mH^1(\Omega_\Sigma)}^2 + \lVert \frv \rVert_{\mbH(\Gamma)}^2)
\end{equation*}
for all $(V,\frv) \in \XSigmaGamma$. 
\end{proposition}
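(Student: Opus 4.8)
The plan is to split the bilinear form $\mathsf{a}_\textup{STF}$ on the left-hand side of \eqref{eq:STF} into its three additive summands --- the volume form $a_\Sigma(U,V)$, the boundary-integral form $\lbr\mathsf{A}(\fru),\Theta(\frv)\rbr_\Gamma$, and the interface form $\lbr\Tr(\fru),\Tr(\frv)\rbr_\Sigma/2$ --- and to estimate the real part of each after inserting the conjugate test pair $(\overline V,\overline\frv)$. The compact perturbation $\mathcal{K}$ will be assembled as a sum of one compact bilinear form handling the volume term and one handling the boundary-integral term; the interface term will need none.

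\textbf{Volume term.} Since $\nabla V\cdot\nabla\overline V = \lvert\nabla V\rvert^2$ and $\kappa_\Sigma$ is real-valued, $\Re\{a_\Sigma(V,\overline V)\} = \lVert\nabla V\rVert_{\mL^2(\Omega_\Sigma)}^2 - \int_{\Omega_\Sigma}\kappa_\Sigma^2\lvert V\rvert^2\,d\bx$. Setting $\mathcal{K}_\Sigma((U,\fru),(V,\frv)) \coloneqq (1+\lVert\kappa_\Sigma\rVert_{\mL^\infty(\Omega_\Sigma)}^2)\int_{\Omega_\Sigma}UV\,d\bx$, one gets $\Re\{a_\Sigma(V,\overline V)\} + \mathcal{K}_\Sigma((V,\frv),(\overline V,\overline\frv)) \ge \lVert\nabla V\rVert_{\mL^2(\Omega_\Sigma)}^2 + \lVert V\rVert_{\mL^2(\Omega_\Sigma)}^2 = \lVert V\rVert_{\mH^1(\Omega_\Sigma)}^2$. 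Because $\Omega_\Sigma$ is bounded and Lipschitz, the embedding $\mH^1(\Omega_\Sigma)\hookrightarrow\mL^2(\Omega_\Sigma)$ is compact (Rellich), so $\mathcal{K}_\Sigma$ is a compact bilinear form on $\XSigmaGamma$ (it is the pull-back along the bounded map $(U,\fru)\mapsto U$ of the $\mL^2(\Omega_\Sigma)$ pairing precomposed with that compact embedding).

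\textbf{Boundary-integral term.} Using $\mathsf{A} = \mathrm{diag}(\mathsf{A}^0_{\kappa_0},\dots,\mathsf{A}^n_{\kappa_n})$, the definition of $\Theta$, and the additivity $\lbr\cdot,\cdot\rbr_\Gamma = \sum_j\lbr\cdot,\cdot\rbr_{\Gamma_j}$, we have $\lbr\mathsf{A}(\frv),\Theta(\overline\frv)\rbr_\Gamma = \sum_{j=0}^n\lbr\mathsf{A}^j_{\kappa_j}(\frv_j),\theta(\overline{\frv_j})\rbr_{\Gamma_j}$. Proposition~\ref{pr:gardingAj}, applied to each $\Omega_j$, supplies compact operators $\mathcal{K}^j\colon\mbH(\Gamma_j)\to\mbH(\Gamma_j)$ and constants $\alpha_j>0$. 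I set $\mathcal{K}_\Gamma((U,\fru),(V,\frv)) \coloneqq \sum_{j=0}^n\lbr\mathcal{K}^j(\fru_j),\theta(\frv_j)\rbr_{\Gamma_j}$, which is a compact bilinear form on $\XSigmaGamma$ (a finite sum of bilinear forms, each a compact operator composed with a bounded one, pulled back along the bounded projection $(U,\fru)\mapsto\fru$). Summing the inequalities of Proposition~\ref{pr:gardingAj} over $j$ and putting $\alpha\coloneqq\min_{0\le j\le n}\alpha_j>0$ gives $\Re\{\lbr\mathsf{A}(\frv),\Theta(\overline\frv)\rbr_\Gamma + \mathcal{K}_\Gamma((V,\frv),(\overline V,\overline\frv))\} \ge \alpha\lVert\frv\rVert_{\mbH(\Gamma)}^2$.

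\textbf{Interface term and conclusion.} Writing $\Tr(\frv) = (v_\Sigma,q_\Sigma)$ and recalling that $\lbr\cdot,\cdot\rbr_\Sigma$ is defined in \eqref{eq:skewpairingloc} from the $\mL^2(\Sigma)$-based duality pairing $\langle\cdot,\cdot\rangle_\Sigma$, one has $\lbr\Tr(\frv),\Tr(\overline\frv)\rbr_\Sigma = \langle v_\Sigma,\overline{q_\Sigma}\rangle_\Sigma - \langle q_\Sigma,\overline{v_\Sigma}\rangle_\Sigma = \langle v_\Sigma,\overline{q_\Sigma}\rangle_\Sigma - \overline{\langle v_\Sigma,\overline{q_\Sigma}\rangle_\Sigma}$, which is purely imaginary; hence $\Re\{\lbr\Tr(\frv),\Tr(\overline\frv)\rbr_\Sigma\} = 0$, as already noted before the statement. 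Adding the three estimates with $\mathcal{K}\coloneqq\mathcal{K}_\Sigma+\mathcal{K}_\Gamma$ (compact) and $\beta\coloneqq\min(1,\alpha)>0$ yields the claimed G\r{a}rding inequality. The proof is an assembly of ingredients that are already in place, so no step is genuinely hard; the only things demanding care are checking that $\mathcal{K}_\Sigma$ and $\mathcal{K}_\Gamma$ are compact bilinear forms in the appropriate sense --- Rellich compactness for the former, and the fact that restriction to the closed subspace $\XSigmaGamma$ preserves compactness for the latter --- and keeping track of the bilinear (not sesquilinear) nature of the pairings $\lbr\cdot,\cdot\rbr_\Gamma$, $\lbr\cdot,\cdot\rbr_\Sigma$ once the conjugate is plugged into the test slot.
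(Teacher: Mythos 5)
Your proposal is correct and follows essentially the same route as the paper, which justifies the statement by exactly this three-way split: the G\r{a}rding inequality for $a_\Sigma$ (Rellich), the G\r{a}rding inequality for $\lbr \mathsf{A}(\cdot),\Theta(\cdot)\rbr_{\Gamma}$ obtained blockwise from Proposition~\ref{pr:gardingAj}, and the observation that $\Re\{\lbr \Tr(\frv),\Tr(\overline{\frv})\rbr_{\Sigma}\}=0$. The only cosmetic slip is calling $\langle\cdot,\cdot\rangle_\Sigma$ an $\mL^2(\Sigma)$-based pairing (it is the $\mH^{1/2}$--$\mH^{-1/2}$ duality), but your purely-imaginary argument uses only that it is the bilinear extension of a real pairing, so the conclusion stands.
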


As a consequence, the operator induced by $\mathsf{a}_\textup{STF}$ is of Fredholm type with index $0$ (see \cite[Theorem 2.33]{McLean:book:2000}), that is, formulation \eqref{eq:STF} has a unique solution for all $f \in \mL^2(\Omega_\Sigma)$, $U_\inc \in \mH^1_\loc(\RR^d)$ if and only if for $F_{\Sigma} \equiv 0$, $\fru^\inc=0$ it only has the trivial solution.  
Other important consequences of the G\r{a}rding inequality are, again \emph{in the case of injectivity} (see \cite[Theorems 4.2.9, 4.2.8]{SaSw:book:2011}): stability of the variational formulation \eqref{eq:STF} in the sense of an inf-sup condition; and, for Galerkin equations discretizing \eqref{eq:STF}, the validity of a discrete inf-sup condition, which implies well-posedness for the Galerkin equations and a quasi-optimal convergence of the Galerkin solutions to the exact solution. %(note that the dense sequence $(S_h)_{0<h<1}$ of finite-dimensional subspaces of $\mbX(\Gamma)$ need to satisfy $\Theta(S_h) = S_h$). 

\subsection{Spurious resonances}
Unfortunately, like the classical Costabel coupling, the single-trace FEM-BEM formulation \eqref{eq:STF} may be affected by the {spurious resonances} phenomenon, 
that is, the associated operator may be not injective, whereas the transmission problem \eqref{eq:bvp} is always well-posed. 
Here we examine in which situations the spurious resonances phenomenon occurs. 
The following proposition identifies the injectivity condition, which depends on the wavenumbers \emph{and} on the geometric configuration. This condition turns out to be the same as in \cite[Theorem 4.8]{ClHi:impenetrable:2015}, which dealt with a partially impenetrable composite medium.

\begin{proposition}[Injectivity condition]
\label{pr:injSTF}
Let $(U,\fru) \in \XSigmaGamma$  solve formulation \eqref{eq:STF} with $F_{\Sigma} \equiv 0$, $\fru^\inc=0$. Then  $U=0$.  
We also have $\fru=0$ if the following additional condition is satisfied:
\begin{equation}
  \label{eq:injCondSTF}
  \Sigma \not \subset \Gamma_j \; \text{ or } \;
    \kappa_j \notin \mathfrak{S}(\Delta,\Omega_\Sigma) \quad 
    \text{for all } j=0,\dots,n.
\end{equation}
In the case where Condition \eqref{eq:injCondSTF} does not hold, there exists $\fru\in\mbX(\Gamma)\setminus\{0\}$ such that
$(0,\fru)\in \XSigmaGamma$ solves \eqref{eq:STF} with $F_{\Sigma} \equiv 0$, $\fru^\inc=0$.
\end{proposition}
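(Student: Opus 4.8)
The plan is first to dispose of the claim $U=0$, which is immediate: by the equivalence Proposition~\ref{pr:equivSTF} the function $\widetilde U$ defined from $(U,\fru)$ through \eqref{eq:UtildeSTF} solves \eqref{eq:bvp} with $f\equiv 0$ and $U_\inc=0$, hence $\widetilde U=0$ by uniqueness for \eqref{InitBVP}, so $U=\widetilde U|_{\Omega_\Sigma}=0$. Next I would extract from $\widetilde U=0$, using the identity $\gamma(\widetilde U)=\mathsf{A}(\fru)+\fru/2+\fru^\inc$ established inside the proof of Proposition~\ref{pr:equivSTF} (here $\fru^\inc=0$), that $(\mathsf{A}+\Id/2)\fru=0$, i.e.\ componentwise $\gamma^j\mathsf{G}^j_{\kappa_j}(\fru_j)=0$ by \eqref{eq:Akj12}. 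Setting $W_j\coloneqq\mathsf{G}^j_{\kappa_j}(\fru_j)$, the representation formula \eqref{eq:intRep} gives $W_j=0$ in $\Omega_j$, while on $\RR^d\setminus\overline\Omega_j$ the function $W_j$ is a $\kappa_j$-radiating solution of the homogeneous Helmholtz equation, and the jump relation \eqref{eq:jumprels} yields $\fru_j=-\gamma^j_c(W_j)$. Finally $(U,\fru)\in\XSigmaGamma$ and $U=0$ force $\Tr_\dir(\fru)=\gamma^\Sigma_\dir(U)=0$. The task then reduces to showing that, under \eqref{eq:injCondSTF}, all $W_j\equiv 0$, hence $\fru=0$; this mirrors \cite[Theorem~4.8]{ClHi:impenetrable:2015}.

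The core of the argument is two-stage. Stage one is an energy/Rellich identity: applying Green's first identity to $W_j$ on $B_R\setminus\overline\Omega_j$ and letting $R\to\infty$ (using the radiation condition for $j\ge1$) gives $\Im\langle p_j,\overline{u_j}\rangle_{\Gamma_j}=\kappa_j\lim_{R}\int_{\partial B_R}|W_j|^2\,d\sigma\ge 0$ for $j\ge1$, and $\Im\langle p_0,\overline{u_0}\rangle_{\Gamma_0}=0$. On the other hand $\fru,\overline\fru\in\mbX(\Gamma)$, so expanding $\lbr\fru,\overline\fru\rbr_\Gamma$ by \eqref{eq:skewpairingloc} and using $\overline{\langle p_j,\overline{u_j}\rangle}=\langle u_j,\overline{p_j}\rangle$ shows $\sum_j\Im\langle p_j,\overline{u_j}\rangle_{\Gamma_j}=\tfrac{\imagi}{2}\lbr\fru,\overline\fru\rbr_\Gamma$; the modified polarity identity \eqref{PolarityOperatorTr} together with $\Tr_\dir(\fru)=0$ gives $\lbr\fru,\overline\fru\rbr_\Gamma=-\lbr\Tr(\fru),\overline{\Tr(\fru)}\rbr_\Sigma=0$. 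Hence every nonnegative summand vanishes, so for $j\ge1$ Rellich's lemma forces $W_j$ to vanish near infinity, hence on the whole unbounded connected component of $\RR^d\setminus\overline\Omega_j$, which contains $\Omega_0$.

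Stage two is a propagation (``peeling'') argument through the subdomain adjacency graph. Each $W_j$ lies in $\mH^1_\loc(\Delta,\cdot)$ on both sides of $\Gamma_j$, hence is continuous across every interface not contained in $\Gamma_j$; combining this with unique continuation for $-\Delta-\kappa_j^2$ (a Helmholtz solution vanishing on one subdomain has zero Cauchy data on a shared interface, hence vanishes on the adjacent subdomain) and with the single-trace relations $u_j=u_k$, $p_j=-p_k$ on $\Gamma_j\cap\Gamma_k$, one propagates the vanishing of the $W_j$ from $\Omega_0$ inward through all homogeneous subdomains. The only region not reachable in this way from a neighbouring zero set is $\Omega_\Sigma$ when it is adjacent to a single subdomain $\Omega_{j_0}$, i.e.\ when $\Sigma\subset\Gamma_{j_0}$ (which can hold for at most one index $j_0$, since $\Omega_\Sigma$ and $\Omega_{j_0}$ occupy the two sides of $\Sigma$ at a generic point): there $W_{j_0}|_{\Omega_\Sigma}$ solves $-\Delta W-\kappa_{j_0}^2W=0$ with $\gamma^\Sigma_\dir(W_{j_0})=-u_{j_0}|_\Sigma=0$, which forces $W_{j_0}|_{\Omega_\Sigma}=0$ exactly when $\kappa_{j_0}\notin\mathfrak{S}(\Delta,\Omega_\Sigma)$; in every other case ($\Sigma$ split among several subdomains) $\Omega_\Sigma$ is reached through a neighbour and $W_j|_{\Omega_\Sigma}$ has zero Cauchy data on a genuine open piece of $\Sigma$, hence vanishes. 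Thus under \eqref{eq:injCondSTF} all $W_j\equiv 0$ and $\fru=0$. I expect this peeling step — with its bookkeeping of connected components, enclosure, and cross-points in the Lipschitz partition — to be the main technical obstacle, and the place where the assumptions on the geometry must be used carefully.

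For the last assertion, when \eqref{eq:injCondSTF} fails one constructs the spurious solution explicitly, generalizing Example~\ref{ex:resCostabel}. Take the (unique) $j_0$ with $\Sigma\subset\Gamma_{j_0}$ and $\kappa_{j_0}\in\mathfrak{S}(\Delta,\Omega_\Sigma)$, and pick $W\in\mH^1(\Omega_\Sigma)\setminus\{0\}$ with $-\Delta W=\kappa_{j_0}^2W$ in $\Omega_\Sigma$ and $W|_\Sigma=0$; since $\Sigma\subset\Gamma_{j_0}$ implies $\RR^d\setminus\overline\Omega_{j_0}=\Omega_\Sigma\sqcup\bigl((\RR^d\setminus\overline\Omega_{j_0})\setminus\overline\Omega_\Sigma\bigr)$, extending $W$ by $0$ outside $\overline\Omega_\Sigma$ produces a $\kappa_{j_0}$-radiating Helmholtz solution on $\RR^d\setminus\overline\Omega_{j_0}$. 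Using Remark~\ref{rem:construct}, set $\fru_k=\gamma^k(W)$ for $k\ne j_0$ and $\fru_{j_0}=\gamma^{j_0}_c(W)$, so $\fru\in\mbX(\Gamma)$; since $W$ vanishes on every $\Omega_k$ with $k\ne j_0$ one has $\fru_k=0$ there and $\fru_{j_0}\ne 0$ (its Neumann component on $\Sigma$ is $\gamma^\Sigma_\neu(W)\ne 0$, as $W$ would otherwise vanish), so $\fru\ne 0$, while $\Tr_\dir(\fru)=\gamma^\Sigma_\dir(W)=0$ gives $(0,\fru)\in\XSigmaGamma$. That $(0,\fru)$ solves \eqref{eq:STF} with $F_\Sigma\equiv0$, $\fru^\inc=0$ is then a direct check: only the $j_0$ term of $\lbr\mathsf{A}(\fru),\Theta(\frv)\rbr_\Gamma$ survives, the exterior representation formula \eqref{eq:extRep} and \eqref{eq:Akj12bis} give $\mathsf{A}^{j_0}_{\kappa_{j_0}}(\fru_{j_0})=-\fru_{j_0}/2$, hence $\lbr\mathsf{A}(\fru),\Theta(\frv)\rbr_\Gamma=-\tfrac12\lbr\fru,\Theta(\frv)\rbr_\Gamma$, and combining \eqref{PolarityOperatorTr}, the relation $\Tr\circ\Theta=\theta\circ\Tr$, \eqref{eq:thetaIdentitybis} and $\Tr_\dir(\fru)=0$, the left-hand side of \eqref{eq:STF} collapses to $\langle\Tr_\dir(\fru),\Tr_\neu(\frv)\rangle_\Sigma=0$.
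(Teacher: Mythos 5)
Your first and third parts match the paper's proof almost verbatim: $U=0$ follows from the equivalence with the well-posed transmission problem, and your spurious mode (eigenfunction extended by zero, traces assigned via Remark~\ref{rem:construct}, verification via \eqref{eq:extRep}, \eqref{eq:Akj12bis}, \eqref{PolarityOperatorTr} and \eqref{eq:thetaIdentitybis}) is exactly the construction in the paper, including the correct observation that $\Omega_\Sigma$ is an entire connected component of $\RR^d\setminus\overline\Omega_{j_0}$ so that the zero extension is a genuine radiating Helmholtz solution there. For the middle part you diverge: the paper extracts only the variational identity $\lbr\mathsf{A}(\fru),\frv\rbr_\Gamma=0$ for test tuples with $\Tr_\dir(\frv)=0$ and then delegates everything to \cite[Lemmas 4.5--4.6, Cor.~4.7, Thm.~4.8]{ClHi:impenetrable:2015}, whereas you start from the stronger pointwise consequence $(\mathsf{A}+\Id/2)\fru=0$ of $\widetilde U=0$ and try to reprove the cited results. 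Your ``stage one'' is correct and complete: $\fru_j=-\gamma^j_c(W_j)$, the sign computation $\Im\langle p_j,\overline{u_j}\rangle_{\Gamma_j}\ge 0$ for $j\ge 1$, the identity $\sum_j\Im\langle p_j,\overline{u_j}\rangle_{\Gamma_j}=\tfrac{\imagi}{2}\lbr\fru,\overline{\fru}\rbr_\Gamma$, and its vanishing via \eqref{PolarityOperatorTr} and $\Tr_\dir(\fru)=0$ all check out, and Rellich then kills each $W_j$, $j\ge 1$, on the unbounded component of $\RR^d\setminus\overline\Omega_j$.

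The gap is your ``stage two''. Killing $\fru_j$ requires $W_j\equiv 0$ on \emph{every} connected component of $\RR^d\setminus\overline\Omega_j$, and Rellich says nothing about the bounded components (the holes of $\Omega_j$, each a union of closures of other subdomains and possibly $\overline\Omega_\Sigma$); for $j=0$ nothing at all has been established outside $\Omega_0$. The sentence ``one propagates the vanishing of the $W_j$ from $\Omega_0$ inward'' is not a proof: the actual argument is an induction on nesting depth in which, for each hole $H$ of $\Omega_j$, one must show that $W_j|_H$ has vanishing Cauchy data on a positive-measure portion of $\partial H$, using the single-trace relations to transfer the already-established vanishing of $W_k$ for the subdomains $\Omega_k\subset H$ that touch $\Gamma_j$, and then conclude by unique continuation on the connected set $H$ — and the terminal case is precisely a hole equal to $\Omega_\Sigma$, where only Dirichlet data are available. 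Moreover, your dichotomy (``either $\Sigma\subset\Gamma_{j_0}$ or $\Omega_\Sigma$ is reached through a neighbour'') silently relies on the standing hypothesis that $\RR^d\setminus\overline\Omega_\Sigma$ is connected — without it, $\Omega_\Sigma$ could enclose a subdomain and the boundary of a hole need not contain any interface between two homogeneous subdomains — yet you never invoke it. You flag this step yourself as the main obstacle; as written it is exactly the content of \cite[Lemmas 4.5--4.6]{ClHi:impenetrable:2015} and remains unproved in your proposal, so either carry out the induction in full or cite those results as the paper does.
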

\begin{proof}
% $\mbX_0(\Gamma)\coloneqq \set{\fru\in \mbX(\Gamma) | \Tr_{\dir}(\fru) = 0}$
By the equivalence Proposition~\ref{pr:equivSTF}, the function $\widetilde U$ defined in \eqref{eq:UtildeSTF} solves the homogeneous transmission problem \eqref{eq:bvp}, which is well-posed, so $\widetilde U = 0$. In particular, $U=\widetilde{U}|_{\Omega_\Sigma} =0$, and $\Tr_\dir(\fru) =\gamma_\dir^\Sigma U = 0$.  
Employing test functions $(V,\frv) \in \XSigmaGamma$ with $\gamma_\dir^\Sigma V = \Tr_\dir(\frv) = 0$ in formulation \eqref{eq:STF} with $F_{\Sigma} \equiv 0$, $\fru^\inc=0$, we obtain that $\fru$ satisfies 
\[
[\mathsf{A}(\fru), \Theta(\frv)] = 0, \quad \forall \, \frv \in \mbX(\Gamma) \text{ with } \Tr_{\dir}(\frv) = 0, 
\]
i.e.~$\fru \in \mbX(\Gamma)$ satisfies $\Tr_{\dir}(\fru) = 0$ and $[\mathsf{A}(\fru), \frv] = 0, \forall \, \frv \in \mbX(\Gamma)$ with $\Tr_{\dir}(\frv) = 0$, which is exactly the setting of \cite[Lemma 4.5, Lemma 4.6]{ClHi:impenetrable:2015}. As a consequence also \cite[Corollary 4.7]{ClHi:impenetrable:2015} holds true: if $\Sigma \not\subset{\Gamma_j}$ for all $j=0,\dots,n$, then for any choice of $\kappa_j > 0$ we have $\fru=0$. 
We also obtain that, if $\Sigma \subset \Gamma_j$ for a $j \in \{0,\dots,n\}$, then $\kappa_j \notin \mathfrak{S}(\Delta,\Omega_\Sigma)$ implies $\fru=0$, thanks to the reasoning in the third bullet in the proof of \cite[Theorem 4.8]{ClHi:impenetrable:2015}, which relies on \cite[Lemma 4.5, Lemma 4.6]{ClHi:impenetrable:2015}. 

Next, assuming that Condition \eqref{eq:injCondSTF} does not hold i.e.~
$\Sigma \subset \Gamma_i$ and $\kappa_i \in \mathfrak{S}(\Delta,\Omega_\Sigma)$
for a certain $i \in \{0,\dots,n\}$,
we construct $\fru\neq 0$ such that $(0,\fru)$ solves 
\eqref{eq:STF} with $F_{\Sigma} \equiv 0$, $\fru^\inc=0$.
Since $\Sigma \subset \Gamma_i$, by the geometric considerations in the first bullet in the proof of \cite[Theorem 4.8]{ClHi:impenetrable:2015}, we get that $\Omega_\Sigma$ is exactly one bounded connected component of $\RR^d\backslash\overline\Omega_i$, and in particular $\Omega_\Sigma$ is completely separated from the other subdomains $\Omega_j$, $j\ne i$:
\begin{equation}
\label{eq:4.10impPaper}
\overline\Omega_\Sigma \; \cap \bigcup_{j=0, j\ne i}^n \overline \Omega_j = \emptyset.
\end{equation}
Since $\kappa_i \in \mathfrak{S}(\Delta,\Omega_\Sigma)$, there exists $W \in \mH^1(\Omega_\Sigma)\backslash\{0\}$ such that  $-\Delta W - \kappa_i^2 W=0$ in $\Omega_\Sigma$ and $W=0$ on $\Sigma$. %Note that $\gamma_\neu^\Sigma W \ne 0$. 
We consider $U^*=0 \in  \mH^1(\Omega_\Sigma)$, $u_i =0 \in \mH^{1/2}(\Gamma_i)$, and $p_i \in \mH^{-1/2}(\Gamma_i)$ with $p_i=0$ on $\Gamma_i \backslash \Sigma$ and $p_i = -\gamma_\neu^\Sigma W$ on $\Sigma$. 
We set $\fru_i^*= (u_i,p_i)$ and $\fru_j^* = (0,0)$ for $j\ne i, j=0,\dots,n$, thus by construction and \eqref{eq:4.10impPaper}, we have $\fru^* \in \mbX(\Gamma)$ and $\Tr_\dir(\fru^*) = u_i = 0 = \gamma_\dir^\Sigma U^*$, that is $(U^*,\fru^*) \in \XSigmaGamma$. 
If we evaluate the left-hand side of formulation \eqref{eq:STF} in $(U^*,\fru^*)$ we get: given any $(V,\frv) \in \XSigmaGamma$
\[
\begin{split}
&\left[\mathsf{A}_{\kappa_i}^i (\fru_i^*), \theta(\frv_i) \right]_{\Gamma_i} -  \frac{1}{2}\Braket{\Tr_\neu(\fru^*),\Tr_\dir(\frv)}_\Sigma = \\
&\left[\gamma^i \mathsf{G}_{\kappa_i}^i (\fru_i^*), \theta(\frv_i) \right]_{\Gamma_i} - \frac{1}{2} \left[\fru_i^*, \theta(\frv_i) \right]_{\Gamma_i} -  \frac{1}{2}\Braket{\Tr_\neu(\fru^*),\Tr_\dir(\frv)}_\Sigma = \\
  & \left[\gamma^i \mathsf{SL}_{\kappa_i}^i (p_i), \theta(\frv_i) \right]_{\Gamma_i} -
  \frac{1}{2} \braket{p_i, v_i}_{\Gamma_i} + \frac{1}{2} \Braket{p_i,v_i}_\Sigma,
\end{split}
\]
where we have used \eqref{eq:Akj12}, and  \eqref{eq:4.10impPaper} to write $\Tr_\dir(\frv) = v_i$, $\Tr_\neu(\fru^*) = -p_i$.
Now, the last two terms cancel each other out since by construction $p_i=0$ on $\Gamma_i \backslash \Sigma$. 
For the same reason in the first term $\mathsf{SL}_{\kappa_i}^i (p_i) = \mathsf{SL}_{\kappa_i}^\Sigma (p_i)$. 
Moreover, by the representation formula~\eqref{eq:intRep} on $\Omega_\Sigma$, for $\bx \in \RR^d\backslash\overline\Omega_\Sigma$ we have
\[
0 = \mathsf{G}^\Sigma_{\kappa_i}(\gamma^\Sigma W)(\bx) =  \mathsf{SL}_{\kappa_i}^\Sigma(\gamma_\neu^\Sigma W)(\bx) =  - \mathsf{SL}_{\kappa_i}^\Sigma (p_i)(\bx),
\]
 therefore $\gamma^i \mathsf{SL}_{\kappa_i}^i (p_i) = 0$ and $(U^*,\fru^*)$ is a non-trivial solution to formulation \eqref{eq:STF}. 
\end{proof}

Note that Corollary~\ref{cor:injCondCostabel} for the classical Costabel coupling is a particular case of the previous proposition, where $\Sigma \subset \Gamma_0$ (actually $\Sigma = \Gamma_0$). 
In the multi-domain configuration, surprising situations can arise, as shown in the next example.
\begin{figure}
\centering
\includegraphics[width=0.45\linewidth]{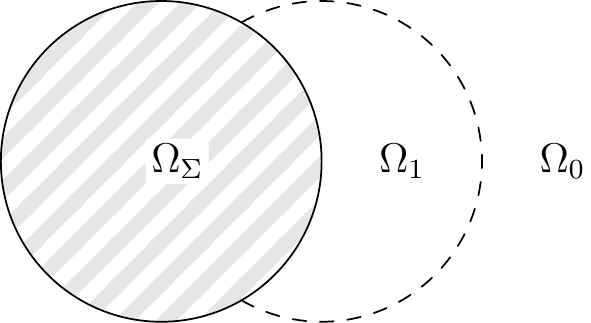}
\caption{Situation without spurious resonances.}
\label{fig:surprising}
\end{figure}
\begin{example}
Consider the transmission problem \eqref{eq:bvp} with $n=1$, i.e. $\RR^d = \overline{\Omega}_0 \cup \overline{\Omega}_1 \cup \overline{\Omega}_\Sigma$, but suppose that $\kappa_0 = \kappa_1$ so that the interface $\Gamma_0 \cap \Gamma_1$ is ``artificial''.  
In fact, the material configuration is the same as in the classical Costabel coupling, which is affected by spurious resonances if $\kappa_0 \in \mathfrak{S}(\Delta,\Omega_\Sigma)$. On the contrary, if we assume that the $(d-1)$-dimensional Hausdorff measure of $\Sigma \cap \Gamma_0$ and $\Sigma \cap \Gamma_1$ is strictly positive as in Figure~\ref{fig:surprising}, so that $\Sigma \not \subset \Gamma_1$ and $\Sigma \not \subset \Gamma_0$, then, no matter which is the value of $\kappa_0$, the corresponding single-trace FEM-BEM formulation \eqref{eq:STF} does not have spurious resonances! 
\end{example}

\section{Single-trace combined field FEM-BEM formulation}
\label{sec:STFcomb}

\noindent 
We have shown that the single-trace FEM-BEM formulation \eqref{eq:STF} is affected by spurious resonances when $\Sigma \subset \Gamma_i$ and $\kappa_i \in \mathfrak{S}(\Delta,\Omega_\Sigma)$ for a certain $i \in \{0,\dots,n\}$. 
As a remedy, we modify the boundary integral formulation on $\Gamma$ by adapting the approach of Combined Field Integral Equations (CFIE), first introduced in \cite{BuMi:CFIE:1971} for direct integral equations. The basic idea behind the CFIE approach is that Helmholtz boundary value problems with Robin (also called impedance) boundary conditions are always uniquely solvable, in contrast to interior pure Dirichlet (or pure Neumann) problems.  
The classical CFIEs thus rely on complex combinations of Dirichlet and Neumann traces, but neglecting the fact that they belong to different function spaces. Here, we adopt {regularized} CFIEs (see e.g.~\cite{BuHi:rCFIE:2005}), in which suitable compact operators map between Dirichlet and Neumann traces. 

For the transmission problem \eqref{eq:bvp} with $n=0$, a variational formulation based on regularized CFIEs and the Costabel coupling was proposed in \cite{HiMe:sfembem:2006}. Here, to extend to the multi-domain case this coupling formulation immune to spurious resonances, we adopt a procedure inspired by \cite[§5]{ClHi:impenetrable:2015} (where $\Omega_\Sigma$ represented an impenetrable part of the medium). 
% Note that we will not restrict ourselves to geometries and wavenumbers that entail spurious resonances, because if $\Sigma$ is largely contained in some $\Gamma_i$ except for a small section and $\kappa_i^2$ is close to an interior Dirichlet eigenvalue on $\Omega_\Sigma$, discretizations of \eqref{eq:STF} may suffer from ill-conditioning. Thus, the combined field formulation may be also employed to improve numerical stability.   

\subsection{Regularizing operator and trace transformation operator}

The main step to obtain a combined field formulation that fixes \eqref{eq:STF} is to pick test functions satisfying generalized Robin conditions on $\Sigma$. These  conditions are based on a linear \emph{regularizing operator} $\mathsf{M} \colon \mH^{-1/2}(\Sigma) \to \mH^{+1/2}(\Sigma)$ that satisfies 
\begin{subequations}
\begin{align}
& \mathsf{M} \text{ is compact}, \label{eq:regopA}\\
& \Im\{\braket{\mathsf{M}\varphi, \overline{\varphi}}_\Sigma\} > 0 \quad \forall \, \varphi \in  \mH^{-1/2}(\Sigma) \backslash \{0\}. \label{eq:regopB}
\end{align}
\end{subequations}
For instance, if $\widetilde{\mathsf{M}}$ is any second order strongly coercive real symmetric surface differential operator on $\Sigma$, then $\mathsf{M} = \imagi \widetilde{\mathsf{M}}$ matches the two conditions above.
\begin{example}
  A concrete choice for such an operator was proposed in \cite[§4]{BuHi:rCFIE:2005}: $\mathsf{M} =  \imagi (-\Delta_\Sigma + \Id_\Sigma)^{-1} \colon \mH^{-1}(\Sigma) \to \mH^{1}(\Sigma)$, where $\Delta_\Sigma$ denotes the Laplace-Beltrami operator on $\Sigma$. 
  In this case, compactness of $\mathsf{M} : \mH^{-1/2}(\Sigma) \to \mH^{1/2}(\Sigma)$ follows from the continuity of $\mathsf{M} \colon \mH^{-1}(\Sigma) \to \mH^{1}(\Sigma)$ and the compact embeddings $\mH^{-1/2}(\Sigma) \subset \mH^{-1}(\Sigma)$ and $\mH^{1}(\Sigma) \subset \mH^{1/2}(\Sigma)$. 
  % The variational definition of this operator reads:
  % \[
  % \text{for } \varphi \in \mathrm{H}^{-1}(\Sigma), \text{ define } \mathsf{M}\varphi \in \mathrm{H}^{1}(\Sigma) \colon \quad 
  % \mathsf{d}_\Sigma(\mathsf{M}\varphi, v_\Sigma) = \imagi \braket{v_\Sigma, \varphi}_\Sigma \quad \forall \, v_\Sigma \in \mathrm{H}^{1}(\Sigma),
  % \]
  % where, denoting by $\textup{\textbf{grad}}_\Sigma$ the surface gradient on $\Sigma$, 
  % \[
  % \mathsf{d}_\Sigma(z,v) \coloneqq \int_\Sigma \bigl (\textup{\textbf{grad}}_\Sigma z \cdot \textup{\textbf{grad}}_\Sigma v + zv \bigr) d\sigma, \quad z,v \in \mathrm{H}^{1}(\Sigma). 
  % \]
  % So, applying $\mathsf{M}$ means assembling a sparse matrix on the surface $\Sigma$ and applying a sparse direct solver.
  Note that to avoid evaluations of $\mathsf{M}$ in the resulting combined field formulation~\eqref{eq:CSTF}, one can reformulate \eqref{eq:CSTF} as a mixed variational formulation with auxiliary variables like in \cite[§5.4]{ClHi:impenetrable:2015}.
\end{example}
\noindent Invoking the duality of the spaces $\mH^{1/2}(\Sigma)$ and $\mH^{-1/2}(\Sigma)$, we can also define the adjoint regularizing operator $\mathsf{M}^* \colon \mH^{-1/2}(\Sigma) \to \mH^{+1/2}(\Sigma)$ by 
\[
\braket{\mathsf{M}^*p,q}_\Sigma \coloneqq \braket{\mathsf{M}q,p}_\Sigma \quad \text{for all } p,q \in \mH^{-1/2}(\Sigma).
\]
Note that $\mathsf{M}^*$ satisfies properties \eqref{eq:regopA}-\eqref{eq:regopB} if and only if $\mathsf{M}$ does. 
Now, given a regularizing operator $\mathsf{M}$, we define the subspace of $\mH^{1}(\Omega_{\Sigma})\times \mbX(\Gamma)$ satisfying generalized Robin conditions on $\Sigma$: 
\begin{equation}\label{DefEspaceRobin}
  \XSigmaGammaM \coloneqq
    \set{ (V,\frv) \in \mH^{1}(\Omega_{\Sigma})\times \mbX(\Gamma) | 
     \Tr_\dir(\frv) =
    \mathsf{M} \Tr_\neu(\frv) + \gamma^{\Sigma}_{\dir}(V) }.
\end{equation}
Please note the relationship between the space above and $\XSigmaGamma$ defined in \eqref{eq:XsigmaGamma}, whose elements satisfy instead Dirichlet conditions on $\Sigma$. In fact, as shown in the lemma below, the space $\XSigmaGammaM$ can be obtained as the image of the space $\XSigmaGamma$ through a trace transformation operator. Its definition involves the regularizing operator $\mathsf{M}$, and a bounded extension operator $\mathsf{E}_\Sigma \colon \mH^{1/2}(\Sigma) \to \mH^1(\RR^d)$ that provides a right inverse of the trace operator $\gamma_\dir^\Sigma$ (see e.g.~\cite[Lemma 3.36]{McLean:book:2000}). 
Then, we define the \emph{trace transformation operator} 
\begin{equation}
  \label{eq:R-C}
  \begin{aligned}
    & \mathsf{R} \colon \mH^{1}(\Omega_{\Sigma})\times \mbX(\Gamma)\to \mH^{1}(\Omega_{\Sigma})\times \mbX(\Gamma)\\
    & \mathsf{R} (V,\frv):= (V,\frv + \mathsf{C}(\frv))\\
    & \text{with } \mathsf{C}(\frv) \coloneqq (\gamma_\dir^j
    \circ \mathsf{E}_\Sigma \circ \mathsf{M} \circ \Tr_\neu(\frv), \, 0)_{j=0}^n,
  \end{aligned}
\end{equation}
where $\mathsf{C} \colon \mbX(\Gamma) \to \mbX(\Gamma)$ inherits compactness from $\mathsf{M}$. 
Since $\mathsf{C}^2 = 0$, we have $\mathsf{R}^{-1} (V,\frv) = (V,\frv - \mathsf{C}(\frv))$, and $\mathsf{R}$ is an isomorphism. 
We can prove the following lemma, which is a variant of \cite[Lemma 5.2]{ClHi:impenetrable:2015}. 
\begin{lemma}[Trace transformation]
\label{lem:traceTrasf}
$\mathsf{R}(\XSigmaGamma) = \XSigmaGammaM$. 
\end{lemma}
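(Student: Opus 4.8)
The plan is to prove the set equality $\mathsf{R}(\XSigmaGamma) = \XSigmaGammaM$ by a double inclusion, exploiting the fact that $\mathsf{R}$ is an isomorphism with explicit inverse $\mathsf{R}^{-1}(V,\frv) = (V,\frv - \mathsf{C}(\frv))$. The crux in both directions is to track what happens to the two trace components $\Tr_\dir$ and $\Tr_\neu$ under the map $\frv \mapsto \frv + \mathsf{C}(\frv)$. First I would record the key observations about $\mathsf{C}$: since $\mathsf{C}(\frv) = (\gamma_\dir^j \circ \mathsf{E}_\Sigma \circ \mathsf{M} \circ \Tr_\neu(\frv),\,0)_{j=0}^n$ has zero Neumann components, we have $\Tr_\neu(\mathsf{C}(\frv)) = 0$, hence $\Tr_\neu(\frv + \mathsf{C}(\frv)) = \Tr_\neu(\frv)$; and the Dirichlet components of $\mathsf{C}(\frv)$ are exactly the traces on each $\Gamma_j$ of the single function $\mathsf{E}_\Sigma \circ \mathsf{M} \circ \Tr_\neu(\frv) \in \mH^1(\RR^d)$, so by the construction of $\Tr_\dir$ from a volume lifting (Proposition~\ref{DefinitionTraceOnSigma}) we get $\Tr_\dir(\mathsf{C}(\frv)) = \gamma_\dir^\Sigma(\mathsf{E}_\Sigma \circ \mathsf{M} \circ \Tr_\neu(\frv)) = \mathsf{M}\,\Tr_\neu(\frv)$, using that $\mathsf{E}_\Sigma$ is a right inverse of $\gamma_\dir^\Sigma$. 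Consequently $\Tr_\dir(\frv + \mathsf{C}(\frv)) = \Tr_\dir(\frv) + \mathsf{M}\,\Tr_\neu(\frv)$.

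For the inclusion $\mathsf{R}(\XSigmaGamma) \subseteq \XSigmaGammaM$, take $(V,\frv) \in \XSigmaGamma$, so $\Tr_\dir(\frv) = \gamma^\Sigma_\dir(V)$, and set $(V,\frw) = \mathsf{R}(V,\frv)$ with $\frw = \frv + \mathsf{C}(\frv) \in \mbX(\Gamma)$. Using the two identities above,
\[
\Tr_\dir(\frw) = \Tr_\dir(\frv) + \mathsf{M}\,\Tr_\neu(\frv) = \gamma^\Sigma_\dir(V) + \mathsf{M}\,\Tr_\neu(\frw),
\]
which is exactly the generalized Robin condition defining $\XSigmaGammaM$, so $(V,\frw) \in \XSigmaGammaM$. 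For the reverse inclusion $\XSigmaGammaM \subseteq \mathsf{R}(\XSigmaGamma)$, take $(V,\frw) \in \XSigmaGammaM$ and set $(V,\frv) = \mathsf{R}^{-1}(V,\frw) = (V,\frw - \mathsf{C}(\frw))$; it suffices to check $(V,\frv) \in \XSigmaGamma$, i.e.~$\Tr_\dir(\frv) = \gamma^\Sigma_\dir(V)$. Applying the same bookkeeping to $\mathsf{C}(\frw)$ (again $\Tr_\neu(\frw - \mathsf{C}(\frw)) = \Tr_\neu(\frw)$ and $\Tr_\dir(\mathsf{C}(\frw)) = \mathsf{M}\,\Tr_\neu(\frw)$) gives $\Tr_\dir(\frv) = \Tr_\dir(\frw) - \mathsf{M}\,\Tr_\neu(\frw)$, which equals $\gamma^\Sigma_\dir(V)$ precisely because $(V,\frw)$ satisfies the Robin condition. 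Hence $(V,\frw) = \mathsf{R}(V,\frv) \in \mathsf{R}(\XSigmaGamma)$, completing the proof.

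The one point that needs a little care — and which I expect to be the main (modest) obstacle — is the justification that $\Tr_\dir(\mathsf{C}(\frv)) = \mathsf{M}\,\Tr_\neu(\frv)$: one must verify that $\mathsf{C}(\frv)$ is a legitimate element of $\mbX(\Gamma)$ (it has zero Neumann components, which are trivially compatible, and its Dirichlet components arise from restricting the single $\mH^1(\RR^d)$-function $\mathsf{E}_\Sigma\mathsf{M}\Tr_\neu(\frv)$, so it lies in $\mbX(\Gamma)$), and then invoke the well-definedness part of Proposition~\ref{DefinitionTraceOnSigma}, namely that $\Tr$ applied to a single-trace tuple coming from a volume lifting $W$ returns $\gamma^\Sigma(W)$ — here with Neumann part ignored and the Dirichlet part being $\gamma^\Sigma_\dir(\mathsf{E}_\Sigma\mathsf{M}\Tr_\neu(\frv)) = \mathsf{M}\,\Tr_\neu(\frv)$. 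Everything else is a direct, essentially algebraic, manipulation using $\mathsf{C}^2 = 0$ and the explicit form of $\mathsf{R}^{-1}$.
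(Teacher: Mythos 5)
Your proof is correct and follows essentially the same route as the paper's: both directions rest on the identities $\Tr_\dir\mathsf{C}=\mathsf{M}\Tr_\neu$ (via $\Tr_\dir\circ\gamma\circ\mathsf{E}_\Sigma=\Id$ on $\mH^{1/2}(\Sigma)$) and $\Tr_\neu\mathsf{C}=0$, with the reverse inclusion handled through the explicit inverse $\mathsf{R}^{-1}(V,\frv)=(V,\frv-\mathsf{C}(\frv))$. Your extra care in verifying that $\mathsf{C}(\frv)\in\mbX(\Gamma)$ and that $\Tr$ of a tuple lifted from a single $\mH^1(\RR^d)$-function returns its trace on $\Sigma$ is a welcome elaboration of a step the paper leaves implicit, but it is not a different argument.
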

\begin{proof}
Recalling the definitions of $\Tr$ in \eqref{eq:defT}, of $\gamma$ in \eqref{GlobalTraceOperator}, and of $\mathsf{E}_\Sigma$ above, we have that the operator $\Tr_\dir \circ \gamma \circ \mathsf{E}_\Sigma$ is the identity on $\mH^{1/2}(\Sigma)$, hence $\Tr_\dir \mathsf{C} = \mathsf{M} \Tr_\neu$. Note also that $\Tr_\neu \mathsf{C} = 0$. 
Therefore, if $(V,\frv) \in \XSigmaGamma$ we have 
$\Tr_\dir(\frv + \mathsf{C}(\frv)) 
= \gamma^{\Sigma}_{\dir}(V) + \mathsf{M} \Tr_\neu(\frv)
= \gamma^{\Sigma}_{\dir}(V) + \mathsf{M} \Tr_\neu(\frv + \mathsf{C}(\frv))$, 
which shows that $\mathsf{R}(\XSigmaGamma) \subset \XSigmaGammaM$. 
Now let  $(V,\frv) \in \XSigmaGammaM$, then 
$\Tr_\dir(\frv - \mathsf{C}(\frv)) =
\mathsf{M} \Tr_\neu(\frv) + \gamma^{\Sigma}_{\dir}(V) -
\mathsf{M} \Tr_\neu(\frv) = \gamma^{\Sigma}_{\dir}(V)$. 
Hence $\mathsf{R}^{-1}(\XSigmaGammaM) \subset \XSigmaGamma$.
\end{proof}

\subsection{The formulation}
In order to obtain the new combined field formulation, we proceed in a manner similar to Section~\ref{sec:STF}, this time choosing test pairs $(V',\frv')$ in $\XSigmaGammaM$ instead of $\XSigmaGamma$.
Again the transmission problem \eqref{eq:bvp} with solution $U\in \mH^{1}_{\loc}(\RR^d)$ will be reformulated as a coupled problem with solution
\begin{equation}
  \begin{aligned}
    & (U\vert_{\Omega_{\Sigma}}, \fru)\in \XSigmaGamma\\
    & \text{where}\quad \fru = \gamma(U) = (\gamma^{0}(U),\dots,\gamma^{n}(U)).
  \end{aligned}
\end{equation}
For $(V',\frv')\in \XSigmaGammaM$, applying Green's formula in $\Omega_{\Sigma}$ leads to $a_{\Sigma}(U,V')- \langle \gamma_{\neu}^{\Sigma}(U),\gamma_{\dir}^{\Sigma}(V') \rangle_{\Sigma} = F_{\Sigma}(V')$, as in Section~\ref{sec:STF}. 
Next, we transform the boundary term following steps similar to \eqref{VolumeVariational1FormSec7-2}, but with an extra term since here $\gamma_{\dir}^{\Sigma}(V') = \Tr_{\dir}(\frv')-\mathsf{M} \Tr_\neu(\frv')$:   
\begin{equation*}
  \begin{aligned}
     - \langle \gamma_{\neu}^{\Sigma}(U),\gamma_{\dir}^{\Sigma}(V') \rangle_{\Sigma}
    & = -\langle \Tr_{\neu}(\fru),\Tr_{\dir}(\frv')\rangle_{\Sigma} +
    \langle \Tr_{\neu}(\fru),\mathsf{M}\Tr_{\neu}(\frv') \rangle_{\Sigma} \\
    & = \lbr \fru,\Theta(\frv')\rbr_{\Gamma}/2 + \lbr \Tr(\fru),\Tr(\frv')\rbr_{\Sigma}/2 +
    \langle \Tr_{\neu}(\fru),\mathsf{M}\Tr_{\neu}(\frv') \rangle_{\Sigma},
  \end{aligned}
\end{equation*}
that is, by the boundary integral representations in the subdomains $\Omega_0,\dots,\Omega_n$ summarized by \eqref{ExteriorCalderonSec7}, 
\begin{equation}\label{eq:boundarytermCFIE}
  \begin{aligned}
    & - \langle \gamma_{\neu}^{\Sigma}(U),\gamma_{\dir}^{\Sigma}(V') \rangle_{\Sigma}\\
    & = \lbr \mathsf{A} (\fru),\Theta(\frv')\rbr_{\Gamma} + \lbr\fru^\inc,\Theta(\frv')\rbr_{\Gamma}
    + \lbr \Tr(\fru),\Tr(\frv')\rbr_{\Sigma}/2 +
    \langle \Tr_{\neu}(\fru),\mathsf{M}\Tr_{\neu}(\frv') \rangle_{\Sigma}.
  \end{aligned}
\end{equation}
Now, according to the parametrization of $\XSigmaGammaM$ in Lemma \ref{lem:traceTrasf}, we have 
$(V',\frv') = \mathsf{R}(V,\frv) = (V,(\Id+\mathsf{C})\frv)$ for $(V,\frv)\in \XSigmaGamma$,  
and this representation can be injected into \eqref{eq:boundarytermCFIE}:
\begin{equation*}%\label{eq:boundarytermCFIE}
  \begin{aligned}
    - \langle \gamma_{\neu}^{\Sigma}(U),\gamma_{\dir}^{\Sigma}(V) \rangle_{\Sigma}
    & = \lbr \mathsf{A} (\fru),\Theta(\frv)\rbr_{\Gamma} + \lbr \mathsf{A} (\fru),\Theta\mathsf{C}(\frv)\rbr_{\Gamma}
    + \lbr\fru^\inc,\Theta(\Id+\mathsf{C})\frv)\rbr_{\Gamma} \\
    &+ \lbr \Tr(\fru),\Tr(\frv)\rbr_{\Sigma}/2 + \lbr \Tr(\fru),\Tr\mathsf{C}(\frv)\rbr_{\Sigma}/2 
    + \langle \Tr_{\neu}(\fru),\Tr_{\dir}\mathsf{C}(\frv) \rangle_{\Sigma}, 
  \end{aligned}
\end{equation*}
where for the last term we have used $\mathsf{M} \Tr_\neu = \Tr_\dir \mathsf{C}$ and $\Tr_\neu \mathsf{C} = 0$. 
Moreover, by \eqref{eq:thetaIdentity} and \eqref{PolarityOperatorTr} we can rewrite the sum of the last two terms in the equation above as
\[
  \lbr \Tr(\fru), \Tr\mathsf{C}(\frv)\rbr_{\Sigma}/2 
  + \langle \Tr_{\neu}(\fru),\Tr_{\dir}\mathsf{C}(\frv) \rangle_{\Sigma}  
  = \lbr \Tr(\fru), \theta \Tr\mathsf{C}(\frv)\rbr_{\Sigma}/2 
  = - \lbr \fru, \Theta\mathsf{C}(\frv)\rbr_{\Gamma}/2.
\]
%% \marginpar{\em Attention $\Theta \mathsf{C} \ne \mathsf{C} \Theta$ ! $\Theta \mathsf{C}(\frv) = - \mathsf{C}(\frv)$, but $\mathsf{C}\Theta(\frv) = \mathsf{C}(\frv)$ (l'ancienne version était formulée avec $\mathsf{C}\Theta(\frv)$)}
In conclusion, summing up and defining the source term $\widetilde{\fru}^\inc \in \mbH(\Gamma)$ 
and the bilinear form $\mathsf{c} \colon \mbX(\Gamma) \times  \mbX(\Gamma) \to \CC$ 
\begin{equation}\label{eq:cBilForm}
  \begin{aligned}
    & [\widetilde{\fru}^\inc, \frv]_{\Gamma}
    \coloneqq [\fru^\inc, \Theta(\Id+\mathsf{C})\frv]_{\Gamma}, \\
    & \mathsf{c}(\frw, \frv)
    \coloneqq [(\mathsf{A}-\Id/2)\frw, \Theta\mathsf{C}\frv]_{\Gamma},
  \end{aligned}
\end{equation}
we obtain the formulation 
\begin{empheq}[box=\fbox]{equation}
  \label{eq:CSTF}
  \begin{split}
    &\text{Find } (U,\fru)\in \XSigmaGamma \text{ such that} \\
    & a_\Sigma(U,V) + [\mathsf{A}(\fru), \Theta(\frv)]_{\Gamma}
    + \mathsf{c}(\fru, \frv) +\left[\Tr(\fru),\Tr(\frv)\right]_{\Sigma}/2 \\
    & = F_\Sigma(V)  - [\widetilde{\fru}^\inc, \frv]_{\Gamma}
    \quad \forall \, (V,\frv)\in\XSigmaGamma,
  \end{split}
\end{empheq}
which we dub \emph{single-trace combined field FEM-BEM formulation}
because the test pairs that we have considered for its derivation
comply with an impedance condition on $\Sigma$, see
\eqref{DefEspaceRobin}.

Formulation \eqref{eq:CSTF} differs from \eqref{eq:STF} by terms
involving the operator $\mathsf{C}$ only, which is compact. Hence, a
\emph{G\r{a}rding inequality} analogue to
Proposition~\ref{pr:gardingSTF} also holds for \eqref{eq:CSTF}. The
additional benefit of using \eqref{DefEspaceRobin} is to eliminate the
spurious resonance phenomenon and to yield systematic unique
solvability. To prove this, we start by establishing an intermediate
lemma.

\begin{lemma}\label{IntermediateLemma}
  Let $ (U,\fru)\in \XSigmaGamma$ solve formulation \eqref{eq:CSTF}
  with $F_{\Sigma} \equiv 0$, $\tilde{\fru}^\inc=0$. Then we have
  \begin{equation}\label{IntermediateLemmaIdentityToEstablish}
    \langle \gamma_{\neu}^{\Sigma}(U)-\Tr_{\neu}(\fru), \gamma_{\dir}^{\Sigma}(V)\rangle_{\Sigma} +
      \lbr(\mathsf{A}-\Id/2)\fru,\Theta(\frv)\rbr_{\Gamma} = 0 \quad
      \forall \, (V,\frv)\in \XSigmaGammaM.
  \end{equation}
\end{lemma}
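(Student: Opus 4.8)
The plan is to retrace the derivation of \eqref{eq:CSTF} in reverse, but this time \emph{keeping} $\gamma^{\Sigma}_{\neu}(U)$ and $\fru$ unrelated (in the derivation of \eqref{eq:CSTF} they were tied by the Neumann transmission condition $\gamma^{\Sigma}_{\neu}(U)=\Tr_{\neu}(\fru)$ and by the Calder\'on identities \eqref{ExteriorCalderonSec7}), so that what comes out is the stated identity rather than \eqref{ExteriorCalderonSec7}. First I would check that $U\in\mH^1(\Delta,\Omega_\Sigma)$, which is needed for $\gamma^{\Sigma}_{\neu}(U)$ to make sense: testing \eqref{eq:CSTF} with $F_\Sigma\equiv0$, $\widetilde{\fru}^\inc=0$ against $(V,0)$ with $V\in\mH^1_0(\Omega_\Sigma)$ --- a legitimate element of $\XSigmaGamma$ for which all skeleton and $\Sigma$ terms vanish because $\Theta(0)=\mathsf{C}(0)=\Tr(0)=0$ --- gives $a_\Sigma(U,V)=0$ for all such $V$, hence $-\Delta U=\kappa_\Sigma^2 U\in\mL^2(\Omega_\Sigma)$. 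Green's formula in $\Omega_\Sigma$ then yields $a_\Sigma(U,V)=\langle\gamma^{\Sigma}_{\neu}(U),\gamma^{\Sigma}_{\dir}(V)\rangle_\Sigma$ for every $V\in\mH^1(\Omega_\Sigma)$.

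Substituting this into \eqref{eq:CSTF} with zero data, and writing $\mathsf{A}=(\mathsf{A}-\Id/2)+\Id/2$ together with $\mathsf{c}(\fru,\frv)=[(\mathsf{A}-\Id/2)\fru,\Theta\mathsf{C}(\frv)]_\Gamma$, the boundary integral part regroups by linearity of $\Theta$ and of the pairing as
\[
[\mathsf{A}(\fru),\Theta(\frv)]_\Gamma+\mathsf{c}(\fru,\frv)=[(\mathsf{A}-\Id/2)\fru,\Theta(\frv+\mathsf{C}\frv)]_\Gamma+[\fru,\Theta(\frv)]_\Gamma/2 .
\]
Setting $(V',\frv')\coloneqq\mathsf{R}(V,\frv)=(V,\frv+\mathsf{C}\frv)$, which by Lemma~\ref{lem:traceTrasf} lies in $\XSigmaGammaM$, the first term on the right is exactly $[(\mathsf{A}-\Id/2)\fru,\Theta(\frv')]_\Gamma$, the term appearing in the claim.

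It then remains to collapse $[\fru,\Theta(\frv)]_\Gamma/2+[\Tr(\fru),\Tr(\frv)]_\Sigma/2$ onto a single pairing on $\Sigma$. Since $(U,\fru),(V,\frv)\in\XSigmaGamma$ we have $\fru,\frv\in\mbX(\Gamma)$; because $\Theta$ maps $\mbX(\Gamma)$ into itself and $\Tr\circ\Theta=\theta\circ\Tr$, the modified polarity identity \eqref{PolarityOperatorTr} applies to $\fru$ and $\Theta(\frv)$ and gives $[\fru,\Theta(\frv)]_\Gamma=-[\Tr(\fru),\theta(\Tr(\frv))]_\Sigma$. Hence the leftover equals $-\bigl([\Tr(\fru),\theta(\Tr(\frv))]_\Sigma-[\Tr(\fru),\Tr(\frv)]_\Sigma\bigr)/2$, which by identity \eqref{eq:thetaIdentity} read on $\Sigma$ is $-\langle\Tr_{\neu}(\fru),\Tr_{\dir}(\frv)\rangle_\Sigma$. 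Finally $(V,\frv)\in\XSigmaGamma$ forces $\Tr_{\dir}(\frv)=\gamma^{\Sigma}_{\dir}(V)=\gamma^{\Sigma}_{\dir}(V')$, so \eqref{eq:CSTF} has turned into
\[
\langle\gamma^{\Sigma}_{\neu}(U)-\Tr_{\neu}(\fru),\gamma^{\Sigma}_{\dir}(V')\rangle_\Sigma+[(\mathsf{A}-\Id/2)\fru,\Theta(\frv')]_\Gamma=0 .
\]
Since $\mathsf{R}$ is a bijection from $\XSigmaGamma$ onto $\XSigmaGammaM$ (Lemma~\ref{lem:traceTrasf}), as $(V,\frv)$ runs over $\XSigmaGamma$ the pair $(V',\frv')$ runs over all of $\XSigmaGammaM$, which is precisely \eqref{IntermediateLemmaIdentityToEstablish}.

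I do not expect a substantive obstacle: the core of the argument is algebraic bookkeeping with the skew pairings $[\,\cdot\,,\,\cdot\,]_\Gamma$, $[\,\cdot\,,\,\cdot\,]_\Sigma$ and with the operators $\theta,\Theta,\Tr,\mathsf{C},\mathsf{R}$. The two points to handle with a little care are (i) the preliminary regularity $U\in\mH^1(\Delta,\Omega_\Sigma)$, without which $\gamma^{\Sigma}_{\neu}(U)$ in the statement is not even defined, and (ii) applying \eqref{PolarityOperatorTr} to the pair $\bigl(\fru,\Theta(\frv)\bigr)$ rather than to $(\fru,\frv)$, which uses that $\Theta$ preserves $\mbX(\Gamma)$ and intertwines with $\Tr$ via $\theta$.
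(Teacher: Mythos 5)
Your proposal is correct and follows essentially the same route as the paper's proof: test with $V\in\mH^1_0(\Omega_\Sigma)$ to recover the Helmholtz equation in $\Omega_\Sigma$ and hence Green's formula, regroup the boundary-integral part as $[(\mathsf{A}-\Id/2)\fru,\Theta(\Id+\mathsf{C})\frv]_\Gamma+[\fru,\Theta(\frv)]_\Gamma/2$, collapse the leftover via the modified polarity identity and \eqref{eq:thetaIdentity} into $-\langle\Tr_\neu(\fru),\Tr_\dir(\frv)\rangle_\Sigma$, and conclude with Lemma~\ref{lem:traceTrasf}. All steps and signs check out.
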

\begin{proof}
  The proof essentially consists in rewinding the derivation of \eqref{eq:CSTF}
  in reverse order. First of all, observe that for $V\in \mH^{1}_{0}(\Omega_{\Sigma})$ we have $(V,0)\in \XSigmaGamma$. With this choice of test pairs
  we obtain $a_{\Sigma}(U,V) = 0$ for all $V\in\mH^1_0(\Omega_\Sigma)$, 
  which leads to $\Delta U + \kappa_{\Sigma}^{2}U = 0$ in $\Omega_{\Sigma}$.
  As a consequence we have $a_{\Sigma}(U,V) = \langle \gamma_{\neu}^{\Sigma}(U),
  \gamma_{\dir}^{\Sigma}(V)\rangle_{\Sigma}$ for any $V\in \mH^{1}(\Omega_{\Sigma})$.
  Coming back to \eqref{eq:CSTF} with homogeneous right-hand side, we obtain 
  \begin{equation*}
    0 = \langle \gamma_{\neu}^{\Sigma}(U),\gamma_{\dir}^{\Sigma}(V)\rangle_{\Sigma}
      + [\mathsf{A}(\fru), \Theta(\frv)]_{\Gamma} + \mathsf{c}(\fru, \frv)
      + \left[\Tr(\fru),\Tr(\frv)\right]_{\Sigma}/2 \quad
      \forall \, (V,\frv)\in \XSigmaGamma.
  \end{equation*}
  Next plugging the definition of $\mathsf{c}$ provided by \eqref{eq:cBilForm} into the expression
  above, for a given $(V,\frv)\in \XSigmaGamma$ we obtain 
  \begin{equation}\label{LemmaIntermediateResultInjectivityCFIE}
    0 = \langle \gamma_{\neu}^{\Sigma}(U),\gamma_{\dir}^{\Sigma}(V)\rangle_{\Sigma}
       + \lbr(\mathsf{A}-\Id/2)\fru,\Theta(\Id+\mathsf{C})\frv\rbr_{\Gamma} 
       + \left[\fru,\Theta(\frv)\right]_{\Gamma}/2 + \left[\Tr(\fru),\Tr(\frv)\right]_{\Sigma}/2.
  \end{equation}
  Next, since $\fru,\frv\in \mbX(\Gamma)$, we have $\left[\fru,\Theta(\frv)\right]_{\Gamma}
  = -\left[\Tr(\fru),\theta\Tr(\frv)\right]_{\Sigma}$. By \eqref{eq:thetaIdentity},
  we conclude that $\left[\fru,\Theta(\frv)\right]_{\Gamma} + \left[\Tr(\fru),\Tr(\frv)\right]_{\Sigma}
  = - 2\langle \Tr_{\neu}(\fru),\Tr_{\dir}(\frv)\rangle_{\Sigma}$. In addition, we have $\Tr_{\dir}(\frv) =
  \gamma_{\dir}^{\Sigma}(V)$ since  $(V,\frv)\in \XSigmaGamma$. Plugging this into
  \eqref{LemmaIntermediateResultInjectivityCFIE} leads to the identity 
  \begin{equation*}
    0 = \langle \gamma_{\neu}^{\Sigma}(U)-\Tr_{\neu}(\fru),\gamma_{\dir}^{\Sigma}(V)\rangle_{\Sigma}
      + \lbr(\mathsf{A}-\Id/2)\fru,\Theta(\Id+\mathsf{C})\frv\rbr_{\Gamma} \quad 
      \forall (V,\frv)\in \XSigmaGamma.
  \end{equation*}
  To finish the proof there only remains to apply Lemma \ref{lem:traceTrasf}
\end{proof}

\begin{proposition}[Injectivity]
  Let $ (U,\fru)\in \XSigmaGamma$ solve formulation \eqref{eq:CSTF}
  with $F_{\Sigma}\equiv 0$, $\tilde{\fru}^\inc=0$. Then $U=0$, $\fru=0$.
\end{proposition}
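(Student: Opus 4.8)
I would start from the identity of Lemma~\ref{IntermediateLemma} and squeeze it with two successive choices of test pairs, then invoke Proposition~\ref{RadiationConditionConsequence} twice. Write $\psi\coloneqq\gamma^{\Sigma}_{\neu}(U)-\Tr_{\neu}(\fru)$ and $\mathfrak{g}\coloneqq(\mathsf{A}-\Id/2)\fru\in\mbH(\Gamma)$, so Lemma~\ref{IntermediateLemma} reads $\langle\psi,\gamma^{\Sigma}_{\dir}(V)\rangle_{\Sigma}+\lbr\mathfrak{g},\Theta(\frv)\rbr_{\Gamma}=0$ for all $(V,\frv)\in\XSigmaGammaM$. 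First, adding to $\frv$ any $\frv'\in\mbX(\Gamma)$ with $\Tr(\frv')=0$ keeps $(V,\frv)$ in $\XSigmaGammaM$ and does not change $\gamma^{\Sigma}_{\dir}(V)$, so $\lbr\mathfrak{g},\Theta(\frv')\rbr_{\Gamma}=0$ for all such $\frv'$; since $\Theta$ maps $\mbX(\Gamma)\cap\ker\Tr$ onto itself, Lemma~\ref{LemmaModifiedPolarity} yields $\mathfrak{g}\in\mbX(\Gamma)$. Second, rewriting $\lbr\mathfrak{g},\Theta(\frv)\rbr_{\Gamma}$ via the modified polarity identity~\eqref{PolarityOperatorTr} and substituting $\gamma^{\Sigma}_{\dir}(V)=\Tr_{\dir}(\frv)-\mathsf{M}\Tr_{\neu}(\frv)$, and letting $\Tr_{\dir}(\frv)$ and $\Tr_{\neu}(\frv)$ vary independently over $\mH^{1/2}(\Sigma)$ and $\mH^{-1/2}(\Sigma)$ (legitimate since $\Tr$ and $\gamma^{\Sigma}_{\dir}$ are surjective), I would read off $\Tr_{\neu}(\mathfrak{g})=\psi$ and $\Tr_{\dir}(\mathfrak{g})=-\mathsf{M}^{*}\psi$.

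The main obstacle, which is the heart of the argument, is to show $\psi=0$. From $(\mathsf{A}^{j}_{\kappa_{j}})^{2}=\Id/4$ (see~\eqref{eq:CaldId}) one gets $(\mathsf{A}^{j}_{\kappa_{j}}-\Id/2)^{2}=-(\mathsf{A}^{j}_{\kappa_{j}}-\Id/2)$, hence $(\mathsf{A}^{j}_{\kappa_{j}}+\Id/2)\mathfrak{g}_{j}=0$, so $\mathsf{A}^{j}_{\kappa_{j}}\mathfrak{g}_{j}=-\mathfrak{g}_{j}/2$ and $[\mathsf{A}^{j}_{\kappa_{j}}(\mathfrak{g}_{j}),\overline{\mathfrak{g}_{j}}]_{\Gamma_{j}}=-\tfrac12[\mathfrak{g}_{j},\overline{\mathfrak{g}_{j}}]_{\Gamma_{j}}$, a purely imaginary quantity. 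By Proposition~\ref{RadiationConditionConsequence} its imaginary part is $\ge 0$, so $\Im\{[\mathfrak{g}_{j},\overline{\mathfrak{g}_{j}}]_{\Gamma_{j}}\}\le 0$ and, summing over $j$, $\Im\{[\mathfrak{g},\overline{\mathfrak{g}}]_{\Gamma}\}\le 0$. On the other hand $\mathfrak{g},\overline{\mathfrak{g}}\in\mbX(\Gamma)$, so the modified polarity identity together with $\Tr(\mathfrak{g})=(-\mathsf{M}^{*}\psi,\psi)$ gives $[\mathfrak{g},\overline{\mathfrak{g}}]_{\Gamma}=-[\Tr(\mathfrak{g}),\overline{\Tr(\mathfrak{g})}]_{\Sigma}=2\imagi\,\Im\langle\mathsf{M}^{*}\psi,\overline{\psi}\rangle_{\Sigma}$. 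Comparing, $\Im\langle\mathsf{M}^{*}\psi,\overline{\psi}\rangle_{\Sigma}\le 0$, which is incompatible with property~\eqref{eq:regopB} for $\mathsf{M}^{*}$ unless $\psi=0$. Hence $\psi=0$, i.e.\ $\gamma^{\Sigma}_{\neu}(U)=\Tr_{\neu}(\fru)$, and $\Tr(\mathfrak{g})=0$.

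I would then reconstruct $\widetilde U$ as in~\eqref{eq:UtildeSTF}: $\widetilde U=U$ on $\Omega_{\Sigma}$ and $\widetilde U=\mathsf{G}^{j}_{\kappa_{j}}(\fru_{j})$ on $\Omega_{j}$. Testing \eqref{eq:CSTF} against $(V,0)$ with $V\in\mH^{1}_{0}(\Omega_{\Sigma})$ gives $-\Delta U-\kappa_{\Sigma}^{2}U=0$ in $\Omega_{\Sigma}$ (already observed in the proof of Lemma~\ref{IntermediateLemma}); the potentials solve the homogeneous Helmholtz equation in each $\Omega_{j}$ and $\mathsf{G}^{0}_{\kappa_{0}}(\fru_{0})$ is $\kappa_{0}$-outgoing radiating; and by~\eqref{eq:Akj12}, $\gamma(\widetilde U)=(\mathsf{A}+\Id/2)\fru=\fru+\mathfrak{g}\in\mbX(\Gamma)$ with $\Tr(\gamma(\widetilde U))=\Tr(\fru)+\Tr(\mathfrak{g})=\Tr(\fru)=\gamma^{\Sigma}(U)$, using $(U,\fru)\in\XSigmaGamma$ and $\psi=0$. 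By Lemma~\ref{ReformulationTransmissionConditions}, $\widetilde U$ satisfies the transmission conditions of \eqref{eq:bvp}, so $\widetilde U$ solves \eqref{eq:bvp} with $f\equiv 0$ and $U_{\inc}=0$ (equivalently $F_{\Sigma}\equiv 0$ and $\widetilde{\fru}^{\inc}=0$, since $\Theta(\Id+\mathsf{C})$ is an automorphism of $\mbX(\Gamma)$ and $\fru^{\inc}=(\gamma^{0}(U_{\inc}),0,\dots,0)$). By uniqueness for \eqref{eq:bvp}, $\widetilde U=0$, whence $U=0$; moreover $\gamma(\widetilde U)=\fru+\mathfrak{g}=0$ gives $\Tr(\fru)=-\Tr(\mathfrak{g})=0$, and $\mathsf{G}^{j}_{\kappa_{j}}(\fru_{j})=0$ in $\Omega_{j}$ gives $(\mathsf{A}^{j}_{\kappa_{j}}+\Id/2)\fru_{j}=0$ after taking interior traces.

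To finish with $\fru=0$ I would rerun the computation of the second paragraph with $\fru_{j}$ in place of $\mathfrak{g}_{j}$ (now $\mathsf{A}^{j}_{\kappa_{j}}\fru_{j}=-\fru_{j}/2$): Proposition~\ref{RadiationConditionConsequence} gives $\Im\{[\fru_{j},\overline{\fru_{j}}]_{\Gamma_{j}}\}\le 0$, while $\fru\in\mbX(\Gamma)$ with $\Tr(\fru)=0$ gives $[\fru,\overline{\fru}]_{\Gamma}=-[\Tr(\fru),\overline{\Tr(\fru)}]_{\Sigma}=0$; being purely imaginary and summing to $0$, each term vanishes, i.e.\ $\Im\{[\mathsf{A}^{j}_{\kappa_{j}}(\fru_{j}),\overline{\fru_{j}}]_{\Gamma_{j}}\}=0$. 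Inspecting the proof of Proposition~\ref{RadiationConditionConsequence}, this imaginary part is a positive multiple of the squared $\mL^{2}$-norm of the far-field pattern of the radiating potential $\mathsf{G}^{j}_{\kappa_{j}}(\fru_{j})$ in $\RR^{d}\setminus\overline{\Omega}_{j}$; hence that pattern vanishes and, by Rellich's lemma and unique continuation, $\mathsf{G}^{j}_{\kappa_{j}}(\fru_{j})\equiv 0$ on $\RR^{d}\setminus\overline{\Omega}_{j}$ for each bounded $\Omega_{j}$ ($j\ge 1$); combined with $\mathsf{G}^{j}_{\kappa_{j}}(\fru_{j})=0$ in $\Omega_{j}$, the jump relation~\eqref{eq:jumprels} yields $\fru_{j}=0$. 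Then $\fru=(\fru_{0},0,\dots,0)\in\mbX(\Gamma)$ with $\Tr(\fru)=0$: any liftings $V\in\mH^{1}(\RR^{d})$, $\bq\in\mH(\mrm{div},\RR^{d})$ defining $\fru$ must have $V$ and $\bn_{0}\cdot\bq$ vanish on every $\Gamma_{k}$ with $k\ge 1$ and on $\Sigma$, hence on all of $\Gamma_{0}$ (each point of $\Gamma_{0}$ lies on some $\Gamma_{k}$, $k\ge 1$, or on $\Sigma$, with opposite normals across the shared interface), so $\fru_{0}=0$ and $\fru=0$. The delicate point is the Rellich step, where propagating the vanishing of $\mathsf{G}^{j}_{\kappa_{j}}(\fru_{j})$ up to $\Gamma_{j}$ uses connectedness of $\RR^{d}\setminus\overline{\Omega}_{j}$ (or, on a bounded component, the already-established identity $(\mathsf{A}^{j}_{\kappa_{j}}+\Id/2)\fru_{j}=0$).
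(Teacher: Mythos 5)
Your argument is correct, and essentially identical to the paper's, up to and including the conclusions $U=0$, $\Tr(\fru)=0$ and $(\mathsf{A}+\Id/2)\fru=0$: you use Lemma~\ref{IntermediateLemma}, Lemma~\ref{LemmaModifiedPolarity} to place $\mathfrak{g}=(\mathsf{A}-\Id/2)\fru$ in $\mbX(\Gamma)$, the algebra $(\mathsf{A}+\Id/2)(\mathsf{A}-\Id/2)=0$ together with Proposition~\ref{RadiationConditionConsequence} and property~\eqref{eq:regopB} of $\mathsf{M}^{*}$ to kill $\Tr(\mathfrak{g})$, and then Lemma~\ref{ReformulationTransmissionConditions} and well-posedness of \eqref{eq:bvp}. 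Reading off $\Tr_{\neu}(\mathfrak{g})=\psi$ and $\Tr_{\dir}(\mathfrak{g})=-\mathsf{M}^{*}\psi$ in one stroke, by letting $\Tr_{\dir}(\frv)$ and $\Tr_{\neu}(\frv)$ vary independently, is a legitimate (and slightly cleaner) variant of the paper's two-stage use of Lemma~\ref{IntermediateLemma}; the observation $(\mathsf{A}+\Id/2)\fru=\fru+\mathfrak{g}\in\mbX(\Gamma)$ also bypasses one appeal to Lemma~\ref{LemmaModifiedPolarity}.

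The gap is in your final step, where you try to deduce $\fru_j=0$ for $j\geq 1$ by a per-subdomain Rellich argument instead of invoking, as the paper does, the known injectivity of the homogeneous single-trace system ($\fru\in\mbX(\Gamma)$, $\Tr(\fru)=0$, $\lbr\mathsf{A}(\fru),\frv\rbr_{\Gamma}=0$ for all $\frv\in\mbX(\Gamma)$) from \cite{Pet:STF:1989} and \cite{ClHi:MTF:2013}. Nothing in the standing hypotheses prevents $\RR^{d}\setminus\overline{\Omega}_{j}$ from being disconnected for some $j\geq1$ (e.g.\ an annular $\Omega_{j}$ enclosing other subdomains: each $\Omega_{j}$ is only assumed connected, and only $\RR^{d}\setminus\overline{\Omega}_{\Sigma}$ is assumed connected). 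In that case Rellich's lemma and unique continuation give $\mathsf{G}^{j}_{\kappa_{j}}(\fru_{j})=0$ only on the \emph{unbounded} component of $\RR^{d}\setminus\overline{\Omega}_{j}$; on a bounded component the potential solves the Helmholtz equation with no radiation condition, and the only boundary data you control there is the exterior trace $\gamma^{j}_{c}\mathsf{G}^{j}_{\kappa_{j}}(\fru_{j})=(\mathsf{A}^{j}_{\kappa_{j}}-\Id/2)\fru_{j}=-\fru_{j}$, i.e.\ precisely the unknown. Your proposed patch, the identity $(\mathsf{A}^{j}_{\kappa_{j}}+\Id/2)\fru_{j}=0$, constrains only the \emph{interior} traces of the potential, i.e.\ its behaviour inside $\Omega_{j}$, and gives no information on the bounded exterior components; so the parenthetical does not close the argument. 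Concluding $\fru=0$ in this nested situation genuinely requires the global argument that couples all subdomains through the single-trace constraints on $\mbX(\Gamma)$ (this is what the cited results provide). As written, your proof is complete only under the extra assumption that $\RR^{d}\setminus\overline{\Omega}_{j}$ is connected for every $j\geq1$.
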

\begin{proof}
  Consider the space $\mbX_{\mathsf{M}}(\Gamma):=\Set{\frv\in \mbX(\Gamma) | 
  \Tr_{\dir}(\frv) = \mathsf{M}\Tr_{\neu}(\frv)}$ and observe that
  we have $(0,\frv)\in \XSigmaGammaM$ for any $\frv
  \in \mbX_{\mathsf{M}}(\Gamma)$. As a consequence we can apply Lemma
  \ref{IntermediateLemma} above choosing $(V,\frv) = (0,\frv)$
  with $\frv\in \mbX_{\mathsf{M}}(\Gamma)$, so we obtain that
  \begin{equation*}
    \lbr(\mathsf{A}-\Id/2)\fru,\Theta(\frv)\rbr_{\Gamma} = 0\quad
    \forall \, \frv\in \mbX_{\mathsf{M}}(\Gamma).
  \end{equation*}
  Let us denote $\frw := (\mathsf{A}-\Id/2)\fru$. Considering
  any $\frv\in \mbX(\Gamma)$ such that $\Tr(\frv) = 0$,
  we have $\Theta(\frv) \in \mbX(\Gamma)$ with $\Tr(\Theta(\frv)) = 0$
  so that $\Theta(\frv)\in \mbX_{\mathsf{M}}(\Gamma)$ and
  $\lbr \frw,\frv\rbr_{\Gamma} = \lbr(\mathsf{A}-\Id/2)\fru,
  \Theta\circ\Theta(\frv)\rbr_{\Gamma} = 0$. Applying Lemma \ref{LemmaModifiedPolarity},
  we conclude that  $\frw\in \mbX(\Gamma)$. So, by \eqref{PolarityOperatorTr}, 
  $\lbr \Tr(\frw), \theta\Tr(\frv)\rbr_{\Sigma} = 0\; \forall
  \frv\in \mbX_{\mathsf{M}}(\Gamma)$, that is $\langle
  \Tr_{\dir}(\frw) + \mathsf{M}^* \Tr_{\neu}(\frw), \Tr_{\neu}(\frv)
  \rangle_\Sigma = 0$ $\forall \frv\in \mbX_{\mathsf{M}}(\Gamma)$,
  which implies $\Tr_{\dir}(\frw) = -\mathsf{M}^*\Tr_{\neu}(\frw)$, as
  $\Tr_{\neu}$ is surjective.  From this and by \eqref{eq:regopB} we
  conclude that
  \begin{equation*}
    \begin{aligned}
      0\leq
      & \;2\Im\{\langle \mathsf{M}^*\Tr_{\neu}(\frw),\Tr_{\neu}(\overline{\frw})\rangle_{\Sigma}\}\\
      & = -2\Im\{\langle\Tr_{\dir}(\frw),\Tr_{\neu}(\overline{\frw})\rangle_{\Sigma}\} \\
      & = -\Im\{\lbr\Tr(\frw),\Tr(\overline{\frw})\rbr_{\Sigma}\}  =
      \Im\{\lbr \frw,\overline{\frw}\rbr_{\Gamma}\}.
    \end{aligned}
  \end{equation*}
  Moreover, by construction, since $\mathsf{A}^2 = \Id/4$, we have $(\mathsf{A}+\Id/2)\frw =
  (\mathsf{A}+\Id/2)(\mathsf{A}-\Id/2)\fru = 0$, so we can write  
  $\lbr \frw,\overline{\frw}\rbr_{\Gamma}/2 = -\lbr \mathsf{A}(\frw),\overline{\frw}\rbr_{\Gamma}$.
  Therefore, we deduce that 
  $0\leq \Im\{\langle \mathsf{M}^*\Tr_{\neu}(\frw),\Tr_{\neu}(\overline{\frw})\rangle_{\Sigma}\}
  = - \Im \lbr \mathsf{A}(\frw),\overline{\frw}\rbr_{\Gamma}\leq 0$ by applying
  Proposition \ref{RadiationConditionConsequence} for the last inequality. Hence, we obtain 
  $\Im\{\langle \mathsf{M}^*\Tr_{\neu}(\frw),\Tr_{\neu}(\overline{\frw})\rangle_{\Sigma}\} = 0$.
  Next \eqref{eq:regopB} yields $\Tr_{\neu}(\frw) = 0$ and, since $\Tr_{\dir}(\frw) = -\mathsf{M}^*\Tr_{\neu}(\frw)$, 
  we finally obtain $\Tr(\frw) = 0$. This implies that  $\lbr\frw,\Theta(\frv)\rbr_{\Gamma} =
  -\lbr\Tr(\frw),\theta\Tr(\frv)\rbr_{\Sigma} = 0\;\forall \frv\in\mbX(\Gamma)$, which rewrites
  \begin{equation*}
    \lbr(\mathsf{A}-\Id/2)\fru,\frv\rbr_{\Gamma} = 0\quad \forall \, \frv\in\mbX(\Gamma).
  \end{equation*}
  Therefore, the second term in \eqref{IntermediateLemmaIdentityToEstablish} vanishes for all $(V,\frv)\in \XSigmaGammaM$,
  and by Lemma~\ref{IntermediateLemma} we conclude that $\gamma_{\neu}^{\Sigma}(U) = \Tr_{\neu}(\fru)$, 
  which implies $\gamma^{\Sigma}(U) = \Tr(\fru)$ since $(U,\fru)\in \XSigmaGamma$
  by assumption. On the other hand we have
  \begin{equation*}
    \begin{aligned}
      0
      & = \lbr(\mathsf{A}-\Id/2)\fru,\frv\rbr_{\Gamma}
      = \lbr(\mathsf{A}+\Id/2)\fru,\frv\rbr_{\Gamma} - \lbr \fru,\frv\rbr_{\Gamma}\\
      & = \lbr(\mathsf{A}+\Id/2)\fru,\frv\rbr_{\Gamma} + \lbr \Tr(\fru),\Tr(\frv)\rbr_{\Sigma}
       = \lbr(\mathsf{A}+\Id/2)\fru,\frv\rbr_{\Gamma}\\
      & \hspace{0.5cm} \text{for all $\frv\in\mbX(\Gamma)$ such that $\Tr(\frv) = 0$.}
    \end{aligned}
  \end{equation*}
  From this last equality, applying Lemma \ref{LemmaModifiedPolarity},
  we obtain that $(\mathsf{A}+\Id/2)\fru\in\mbX(\Gamma)$.
  Moreover, since we established that $\Tr(\fru) = \gamma^{\Sigma}(U)$
  and $\Tr(\frw) = \Tr((\mathsf{A}-\Id /2)\fru) = 0$, we obtain
  \begin{equation*}
    \Tr((\mathsf{A}+\Id/2)\fru) = \Tr(\frw) + \Tr(\fru) = \gamma^{\Sigma}(U).
  \end{equation*}
  Finally, let us define $\widetilde{U}\in \mL^{2}_{\loc}(\RR^d)$ by
  $\widetilde{U}(\bx) = U(\bx)$ for $\bx\in\Omega_{\Sigma}$, and
  $\widetilde{U}(\bx) = \mathsf{G}^j_{\kappa_j}(\fru_j)(\bx)$ for
  $\bx\in \Omega_j, j = 0,\dots,n$. By construction we have
  \begin{equation*}
    \begin{aligned}
      & \Delta \widetilde{U} + \kappa_{\Sigma}^{2}\widetilde{U} = 0\quad \text{in}\;\Omega_{\Sigma}\\
      & \Delta \widetilde{U} + \kappa_{j}^{2}\widetilde{U} = 0\quad \text{in}\;\Omega_{j}\;\forall j=0\dots n\\
      & \text{$\widetilde{U}$ is $\kappa_0$-outgoing radiating.}
    \end{aligned}
  \end{equation*}
  Let us prove that $\widetilde{U}$ satisfies the Neumann and Dirichlet transmission conditions
  through the skeleton of the subdomain partition. Using equation \eqref{eq:Akj12}, we have established that
  $\gamma(\widetilde{U}) =
  (\gamma^{j}\mathsf{G}^j_{\kappa_j}(\fru_j))_{j=0\dots n} =
  (\mathsf{A}+\Id/2)\fru \in\mbX(\Gamma)$ on the one hand, and
  \begin{equation*}
    \Tr(\gamma(\widetilde{U})) = \Tr((\mathsf{A}+\Id/2)\fru) =
    \gamma^{\Sigma}(U) = \gamma^{\Sigma}(\widetilde{U}).
  \end{equation*}
  Hence, by Lemma \ref{ReformulationTransmissionConditions}
  we see that $\widetilde{U}$ is solution to the transmission problem
  \eqref{eq:bvp} with zero right-hand side.  Since this boundary value
  problem admits a unique solution $\widetilde{U} \equiv 0$, we get
  $U = 0$, $\Tr(\fru) = \gamma^{\Sigma}(U) = 0$ and $(\mathsf{A}+\Id/2)\fru = 0$,
  which implies in particular
  \begin{equation*}
    \begin{aligned}
      & \fru\in \mbX(\Gamma)\;\text{with}\;\Tr(\fru) = 0\\
      & \text{and}\;\;\lbr \mathsf{A}(\fru),\frv\rbr_\Gamma = 0
      \;\forall \frv\in \mbX(\Gamma).
    \end{aligned}
  \end{equation*}
  According to \cite[Thm.~4.1]{Pet:STF:1989} or \cite[Prop.~A.1]{ClHi:MTF:2013},
  the homogeneous formulation above has a unique solution, hence finally $\fru = 0$. 
\end{proof}

\section{Multi-trace FEM-BEM formulation}
\label{sec:MTF}

\begin{figure}
\centering
\includegraphics[width=0.9\linewidth]{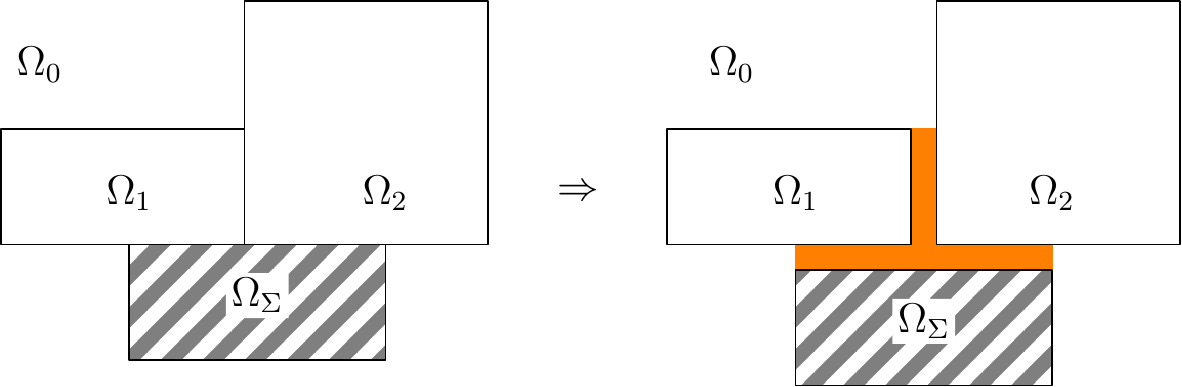}
\caption{Illustration of the gap idea (the gap is highlighted in orange)}.
\label{fig:gap}
\end{figure}
\noindent 
Single-trace formulations are not very flexible because the spaces $\XSigmaGamma$ and $\mbX(\Gamma)$ contain the transmission conditions in strong form, which constitutes an obstacle to operator preconditioning \cite{MR3202533}. Multi-trace formulations are designed to tackle this issue. 

As in \cite[§5]{ClHi:MTF:2013} and \cite[§6]{ClHi:impenetrable:2015}, the heuristic idea is to act as if the single-trace FEM-BEM formulation~\eqref{eq:STF} were applied to gap configurations with vanishing gap, see Figure~\ref{fig:gap}: the subdomains $\Omega_\Sigma$, $\Omega_j$, $j=1,\dots,n$, are torn apart and an (infinitely) thin gap, filled with the same propagation medium as $\Omega_0$, is introduced, so that all bounded subdomains are isolated from each other. Although the geometric limit process can not be rigorously described, the gap idea is useful to get a first insight about the properties satisfied by the multi-trace formulation based on those of the single-trace formulation (like Propositions~\ref{pr:gardingMTF} and \ref{pr:injMTF}).

In the gap setting (Figure~\ref{fig:gap}, right), the boundary of $\Omega_0$ can be partitioned as $\Gamma_0 = \cup_{j=1}^n \Gamma_j \,\cup \, \Sigma$, and the sigle-trace space $\XSigmaGamma$ is isomorphic to the space $\mH^1(\Omega_\Sigma) \times \widehat{\mbH}(\Gamma)$, where the \emph{multi-trace space} $\widehat{\mbH}(\Gamma)$, introduced in \cite[§6.1]{ClHi:impenetrable:2015}, is defined as 
\begin{equation}\label{eq:MTspaceNeu}
\widehat{\mbH}(\Gamma) \coloneqq \mbH(\Gamma_1) \times \cdots \times \mbH(\Gamma_n) \times \mH^{-1/2}(\Sigma).
\end{equation}
The isomorphism is given by the map $s \colon (U,\fru) \mapsto \left ( U, (\fru_1, \dots, \fru_n, \Tr_\neu(\fru) ) \right)$, whose inverse, in the gap setting, is the map $t \colon \left (U, (\hat{\fru}_1,\dots,\hat{\fru}_n, p_\Sigma) \right) \mapsto \left (U, (\tilde{\fru}_0, \hat{\fru}_1,\dots,\hat{\fru}_n) \right )$, where 
\[
\tilde{\fru}_0 (\bx) \coloneqq  
\begin{cases}
\phi(\hat{\fru}_j)(\bx), & \bx \in \Gamma_j, \,j=1,\dots,n, \\
\phi({\gamma_\dir^\Sigma U},{p_\Sigma})(\bx), & \bx \in \Sigma.
\end{cases}
\qquad \text{ with } \phi(u,p) \coloneqq (u,-p).
\]
The multi-trace space $\widehat{\mbH}(\Gamma)$ differs from the multi-trace space ${\mbH}(\Gamma)$ defined in \eqref{MultiTraceSpace} since it does not contain any contribution on $\Gamma_0$. Instead, it includes Neumann traces on $\Sigma$. 
It will enter the functional framework for the global multi-trace formulation, also for general geometrical settings, such as in Figure~\ref{fig:gap}, left. 
Note that the unknown traces are doubled on each interface that separates two (bounded) subdomains, hence the attribute \emph{multi-trace}. 
We equip the space $\widehat{\mbH}(\Gamma)$ with the standard norm of the Cartesian product:  
\begin{equation*}
  \lVert \hat{\frv} \rVert_{\widehat{\mbH}(\Gamma)}^2 \coloneqq 
  \sum_{j=1}^n \lVert \hat{\frv}_j\rVert_{\mbH(\Gamma_j)}^{2} 
    + \lVert q_\Sigma \rVert_{\mH^{-1/2}(\Sigma)}^2, \quad 
  \text{for } \hat{\frv} = (\hat{\frv}_1,\dots,\hat{\frv}_n, q_\Sigma) \in \widehat{\mbH}(\Gamma).
\end{equation*}
The dual space of $\widehat{\mbH}(\Gamma)$ is the space $\widecheck{\mbH}(\Gamma) \coloneqq  \mbH(\Gamma_1) \times \cdots \times \mbH(\Gamma_n) \times \mH^{1/2}(\Sigma)$, with respect to the duality pairing 
\begin{equation}\label{eq:pairingHatCheck}
\llbracket \check{\fru}, \hat{\frv} \rrbracket \coloneqq  
\sum_{j=1}^n[\check{\fru}_j,\hat{\frv}_j]_{\Gamma_j} 
+ \braket{u_\Sigma,q_\Sigma}_\Sigma, 
\end{equation}
for $\check{\fru}= (\check{\fru}_1,\dots,\check{\fru}_n, u_\Sigma) \in \widecheck{\mbH}(\Gamma)$, $\hat{\frv}= (\hat{\frv}_1,\dots,\hat{\frv}_n,q_\Sigma) \in \widehat{\mbH}(\Gamma)$. 

For notational convenience it is useful to introduce also a multi-trace space that includes both Dirichlet and Neumann traces on $\Sigma$, but no components on $\Gamma_0$: 
\begin{equation}\label{eq:MTspaceFullSigma}
  \wdoublehat{\mbH}(\Gamma) \coloneqq  \mbH(\Gamma_1) \times \cdots \times \mbH(\Gamma_n) \times \mbH(\Sigma),
\end{equation} 
with the skew-symmetric duality pairing 
\begin{equation}\label{eq:pairingFullSigma}
  \lBrace \doublehat{\fru}, \doublehat{\frv} \rBrace \coloneqq 
  \sum_{j=1}^n [\doublehat{\fru}_j,\doublehat{\frv}_j]_{\Gamma_j} 
  + [\doublehat{\fru}_\Sigma,\doublehat{\frv}_\Sigma]_\Sigma, 
\end{equation}
for $\doublehat{\fru}= (\doublehat{\fru}_1,\dots,\doublehat{\fru}_n,\doublehat{\fru}_\Sigma)$, $\doublehat{\frv}= (\doublehat{\frv}_1,\dots,\doublehat{\frv}_n,\doublehat{\frv}_\Sigma) \in \wdoublehat{\mbH}(\Gamma)$.

\subsection{Derivation of the formulation}

The multi-trace FEM-BEM formulation can be seen as the single-trace FEM-BEM formulation \eqref{eq:STF} applied to gap configurations with vanishing gap. However, it is difficult to study the vanishing gap limit with a rigorous mathematical argument. Following the idea in \cite[§8]{ClHi:MTF:2013}, the multi-trace formulation is rather obtained by trying to eliminate from the single-trace formulation \eqref{eq:STF} all the contributions on $\Gamma_0$. Essentially this is achieved by exploiting repeatedly the modified polarity identity \eqref{PolarityOperatorTr} and the variational characterization of transmission conditions \eqref{eq:ReformulationTransmissionConditionsbis}. 
%property \eqref{eq:charXtilde} of the space $\widetilde{\mbX}(\Gamma)$ and its consequence \eqref{eq:pairingT}. 

We first reshape the right-hand side of formulation \eqref{eq:STF}, more precisely the term $-[\fru^\inc,\Theta(\frv)]_\Gamma$, where $\fru^\inc = (\gamma^0U_\inc, 0, \dots,0)$.
%Remember that here we took $\fru^\inc \coloneqq (\gamma^0U_\inc, 0, \dots,0)$, but a more general $\fru^\inc$ belonging to the image of $\Id/2 - \mathsf{A}$ can be treated analogously. 
% Since $(\gamma^0 U_\inc, \dots, \gamma^n U_\inc, \gamma^\Sigma U_\inc) \in \widetilde{\mbX}(\Gamma)$, we can apply property \eqref{eq:charXtilde} with Lemma~\ref{lem:XTXtilde} 
Since $U_\inc \in \mH_{\loc}(\Delta,\RR^d)$, we can apply \eqref{eq:ReformulationTransmissionConditionsbis} 
to write, for $\frv \in \mbX(\Gamma)$ (from which $\Theta(\frv) \in \mbX(\Gamma)$), 
\begin{equation}
\label{eq:rhsMTF0}
\begin{split}
-[\fru^\inc,\Theta(\frv)]_\Gamma &= 
-[\gamma^0 U_\inc,\theta(\frv_0)]_{\Gamma_0} + [\gamma(U_\inc), \Theta(\frv)]_\Gamma + [\gamma^\Sigma U_\inc, \Tr(\Theta(\frv))]_\Sigma \\
&= \sum_{j=1}^n [\gamma^j U_\inc, \theta(\frv_j)]_{\Gamma_j} + \left [\gamma^\Sigma U_\inc, \theta \Tr(\frv)\right]_\Sigma.
\end{split}
\end{equation}

Next, we focus on the left-hand side of formulation \eqref{eq:STF}. By \eqref{PolarityOperatorTr} we write 
\begin{equation}
\label{eq:MTF0}
\begin{split}
& [\mathsf{A}(\fru), \Theta(\frv)]_\Gamma = 
[\mathsf{A}(\fru), \Theta(\frv)]_\Gamma + \bigl( [\fru, \Theta(\frv)]_\Gamma +  [\Tr(\fru), \Tr(\Theta(\frv))]_\Sigma \bigr)/2\\
& = \sum_{j=0}^n [(\mathsf{A}_{\kappa_j}^j + {\Id}/{2})\fru_j, \theta(\frv_j)]_{\Gamma_j} +   [\Tr(\fru), \Tr(\Theta(\frv))]_\Sigma/2 \\
& = [\gamma^0 \mathsf{G}_{\kappa_0}^0 (\fru_0), \theta(\frv_0)]_{\Gamma_0} + \sum_{j=1}^n [(\mathsf{A}_{\kappa_j}^j + {\Id}/{2})\fru_j, \theta(\frv_j)]_{\Gamma_j} +   [\Tr(\fru), \Tr(\Theta(\frv))]_\Sigma/2,
\end{split}
\end{equation}
where we have brought out the term with contributions on $\Gamma_0$ that needs to be rewritten, and applied \eqref{eq:Akj12}. 
Now, since $(\fru, \Tr(\fru)) \in \widetilde{\mbX}(\Gamma)$ (see definition \eqref{eq:defXtilde}), \cite[Lemma 8.1]{ClHi:MTF:2013} yields 
\[
\sum_{j=0}^n \mathsf{G}_{\kappa_0}^j(\fru_j)(\bx) + \mathsf{G}_{\kappa_0}^\Sigma(\Tr(\fru))(\bx)=0  \quad \text{ for } \bx  \in \Omega_0, 
\]
thus, taking interior traces on $\Gamma_0$ and testing against $ \theta(\frv_0)$, we get 
\begin{equation}
\label{eq:MTF1}
[\gamma^0 \mathsf{G}_{\kappa_0}^0 (\fru_0), \theta(\frv_0)]_{\Gamma_0} = 
- \sum_{j=1}^n [\gamma^0 \mathsf{G}_{\kappa_0}^j (\fru_j), \theta(\frv_0)]_{\Gamma_0} 
- [\gamma^0 \mathsf{G}_{\kappa_0}^\Sigma (\Tr(\fru)), \theta(\frv_0)]_{\Gamma_0}. 
\end{equation}
We wish to examine each term on the right-hand side of \eqref{eq:MTF1}. To this purpose, take an arbitrary $j=1,\dots,n$ and follow the procedure described in Remark~\ref{rem:construct} to construct the element $\widetilde{\frw} = (\frw, \frw_\Sigma) = (\frw_0, \dots, \frw_n, \frw_\Sigma) \in \widetilde{\mbX}(\Gamma)$ defined as
\[
\frw_q \coloneqq \gamma^q \mathsf{G}_{\kappa_0}^j(\fru_j) \; \text{ if } q \ne j, \qquad 
\frw_j \coloneqq \gamma_c^j \mathsf{G}_{\kappa_0}^j(\fru_j), \qquad
\frw_\Sigma \coloneqq \Tr(\frw) = \gamma^\Sigma \mathsf{G}_{\kappa_0}^j(\fru_j).
\]
So, by \eqref{PolarityOperatorTr} we have, for $\frv \in \mbX(\Gamma)$, $[\frw, \Theta(\frv)]_\Gamma +  [\frw_\Sigma, \Tr(\Theta(\frv))]_\Sigma = 0$, that is, after splitting, 
\[
\begin{split}
[\gamma^0 \mathsf{G}_{\kappa_0}^j (\fru_j), \theta(\frv_0)]_{\Gamma_0} = 
&- \sum_{q=1, q \ne j}^n [\gamma^q \mathsf{G}_{\kappa_0}^j (\fru_j), \theta(\frv_q)]_{\Gamma_q} \\
& - [\gamma_c^j \mathsf{G}_{\kappa_0}^j (\fru_j), \theta(\frv_j)]_{\Gamma_j} 
- [\gamma^\Sigma \mathsf{G}_{\kappa_0}^j (\fru_j), \Tr(\Theta(\frv))]_\Sigma,
\end{split}
\]
and in a similar way, using again the construction in Remark~\ref{rem:construct}, we obtain 
\[
[\gamma^0 \mathsf{G}_{\kappa_0}^\Sigma (\Tr(\fru)), \theta(\frv_0)]_{\Gamma_0} = 
- \sum_{q=1}^n [\gamma^q \mathsf{G}_{\kappa_0}^\Sigma (\Tr(\fru)), \theta(\frv_q)]_{\Gamma_q}
- [\gamma_c^\Sigma \mathsf{G}_{\kappa_0}^\Sigma (\Tr(\fru)),  \Tr(\Theta(\frv))]_\Sigma.
\]
Then, substituting the last two expressions in \eqref{eq:MTF1} we get 
\[
\begin{split}
[\gamma^0 \mathsf{G}_{\kappa_0}^0 (\fru_0), \theta(\frv_0)]_{\Gamma_0}  = 
\sum_{j=1}^n \biggl (\sum_{q=1, q \ne j}^n [\gamma^q \mathsf{G}_{\kappa_0}^j (\fru_j), \theta(\frv_q)]_{\Gamma_q} 
+ [\gamma^\Sigma \mathsf{G}_{\kappa_0}^j (\fru_j), \Tr(\Theta(\frv))]_\Sigma \biggr ) \\
 + \sum_{q=1}^n [\gamma^q \mathsf{G}_{\kappa_0}^\Sigma (\Tr(\fru)), \theta(\frv_q)]_{\Gamma_q} 
 +  \sum_{j=1}^n [\gamma_c^j \mathsf{G}_{\kappa_0}^j (\fru_j), \theta(\frv_j)]_{\Gamma_j} + [\gamma_c^\Sigma \mathsf{G}_{\kappa_0}^\Sigma (\Tr(\fru)),  \Tr(\Theta(\frv))]_\Sigma. 
\end{split}
\]
Finally, we plug the equation above into the initial rewriting \eqref{eq:MTF0} and, recalling that by \eqref{eq:Akj12bis} $\gamma_c^j \mathsf{G}_{\kappa_0}^j = \mathsf{A}_{\kappa_0}^j - \Id/2$, we obtain
\[
\begin{split}
[\mathsf{A}(\fru), \Theta(\frv)]_\Gamma & = 
\sum_{j=1}^n [(\mathsf{A}_{\kappa_j}^j + {\Id}/{2})\fru_j, \theta(\frv_j)]_{\Gamma_j} +  [\Tr(\fru), \Tr(\Theta(\frv))]_\Sigma/2 \\
& + \sum_{j=1}^n [(\mathsf{A}_{\kappa_0}^j - {\Id}/{2})\fru_j, \theta(\frv_j)]_{\Gamma_j} 
+ [(\mathsf{A}_{\kappa_0}^\Sigma - {\Id}/{2})\Tr(\fru), \Tr(\Theta(\frv))]_\Sigma \\
& + \sum_{j=1}^n \biggl (\sum_{q=1, q \ne j}^n [\gamma^q \mathsf{G}_{\kappa_0}^j (\fru_j), \theta(\frv_q)]_{\Gamma_q} 
+ [\gamma^\Sigma \mathsf{G}_{\kappa_0}^j (\fru_j), \Tr(\Theta(\frv))]_\Sigma \biggr ) \\
& + \sum_{q=1}^n [\gamma^q \mathsf{G}_{\kappa_0}^\Sigma (\Tr(\fru)), \theta(\frv_q)]_{\Gamma_q},
\end{split}
\]
that is, simplifying,
\begin{equation}
\label{eq:rewriteA}
\begin{split}
[\mathsf{A}(\fru), \Theta(\frv)]_\Gamma & = 
\sum_{j=1}^n [(\mathsf{A}_{\kappa_j}^j + \mathsf{A}_{\kappa_0}^j)\fru_j, \theta(\frv_j)]_{\Gamma_j}  
+ [\mathsf{A}_{\kappa_0}^\Sigma \Tr(\fru), \theta\Tr(\frv) ]_\Sigma \\
& + \sum_{j=1}^n \biggl (\sum_{q=1, q \ne j}^n [\gamma^q \mathsf{G}_{\kappa_0}^j (\fru_j), \theta(\frv_q)]_{\Gamma_q} 
+ [\gamma^\Sigma \mathsf{G}_{\kappa_0}^j (\fru_j),\theta\Tr(\frv)]_\Sigma \biggr ) \\
& + \sum_{q=1}^n [\gamma^q \mathsf{G}_{\kappa_0}^\Sigma (\Tr(\fru)), \theta(\frv_q) ]_{\Gamma_q}.\\
\end{split}
\end{equation}

To sum up, if we define the continuous linear operator $\wdoublehat{\mathsf{A}} \colon \wdoublehat{\mbH}(\Gamma) \to \wdoublehat{\mbH}(\Gamma)$ as
\begin{equation}
\label{eq:defAhathat}
\wdoublehat{\mathsf{A}} \coloneqq
\begin{bmatrix}
\mathsf{A}_{\kappa_1}^1 + \mathsf{A}_{\kappa_0}^1 & \gamma^1\mathsf{G}_{\kappa_0}^2 & \dots &  \gamma^1\mathsf{G}_{\kappa_0}^n &  \gamma^1\mathsf{G}_{\kappa_0}^\Sigma \\
\gamma^2\mathsf{G}_{\kappa_0}^1 & \mathsf{A}_{\kappa_2}^2 + \mathsf{A}_{\kappa_0}^2 &   &  \gamma^2\mathsf{G}_{\kappa_0}^n &  \gamma^2\mathsf{G}_{\kappa_0}^\Sigma \\
\vdots & & \ddots & & \vdots \\
\gamma^n \mathsf{G}_{\kappa_0}^1 & \gamma^n\mathsf{G}_{\kappa_0}^2 & & \mathsf{A}_{\kappa_n}^n + \mathsf{A}_{\kappa_0}^n  &  \gamma^n\mathsf{G}_{\kappa_0}^\Sigma \\
\gamma^\Sigma \mathsf{G}_{\kappa_0}^1 & \gamma^\Sigma\mathsf{G}_{\kappa_0}^2 & \dots & \gamma^\Sigma\mathsf{G}_{\kappa_0}^n & \mathsf{A}_{\kappa_0}^\Sigma
\end{bmatrix}
\end{equation}
and for compact notation we set 
\begin{gather*}
\doublehat{\fru} \coloneqq (\fru_1, \dots, \fru_n, (\gamma_\dir^\Sigma U, \Tr_\neu(\fru))), \quad
\doublehat{\frv} \coloneqq (\frv_1, \dots, \frv_n, (\gamma_\dir^\Sigma V, \Tr_\neu(\frv))), \\
\doublehat{\mathfrak{f}} \coloneqq (\gamma^1 U_\inc, \dots, \gamma^n U_\inc, \gamma^\Sigma U_\inc), 
\end{gather*}
using the transformed expressions \eqref{eq:rewriteA} and \eqref{eq:rhsMTF0}, where we additionally replace  $\Tr_\dir(\fru) = \gamma_\dir^\Sigma U$, $\Tr_\dir(\frv) = \gamma_\dir^\Sigma V$, we have found that the single-trace FEM-BEM formulation \eqref{eq:STF} is equivalent to 
\begin{equation*}
\begin{split}
& \text{find } (U,\fru)\in \XSigmaGamma \text{ such that} \\
&  a_\Sigma(U,V) + \lBrace \wdoublehat{\mathsf{A}}(\doublehat{\fru}), \Theta(\doublehat{\frv}) \rBrace
+\frac{1}{2} \left[ \left(\gamma_\dir^\Sigma U, \Tr_\neu(\fru)\right), (\gamma_\dir^\Sigma V, \Tr_\neu(\frv)) \right]_\Sigma \\
&  = F_\Sigma(V)  + \lBrace \doublehat{\mathfrak{f}},\Theta(\doublehat{\frv}) \rBrace \quad \forall \, (V,\frv) \in \XSigmaGamma.
\end{split}
\end{equation*}
This new expression does not have any contributions on $\Gamma_0$, except for $\Tr_\neu(\fru)$, $\Tr_\neu(\frv) \in \mH^{-1/2}(\Sigma)$ that in particular depend on $p_0, q_0$. 
In the spirit of \cite[§9]{ClHi:MTF:2013} and the discussion at the beginning of this section, we now replace the function space $\XSigmaGamma$ by the space with decoupled traces $\mH^1(\Omega_\Sigma) \times \widehat{\mbH}(\Gamma)$, which is a more flexible functional setting.   
In particular, we replace $\Tr_\neu(\fru), \Tr_\neu(\frv)$ by some $p_\Sigma, q_\Sigma \in  \mH^{-1/2}(\Sigma)$. 
Then, we define the \emph{global multi-trace FEM-BEM formulation} 
\begin{empheq}[box=\fbox]{equation}
\label{eq:MTF}
\begin{aligned}
&
\begin{split}
& \text{find } (U,\hat{\fru}) \in \mH^1(\Omega_\Sigma) \times \widehat{\mbH}(\Gamma), \, \hat{\fru} = (\hat{\fru}_1,\dots,\hat{\fru}_n, p_\Sigma), \text{ such that} \\
& \quad a_\Sigma(U,V) + \lBrace \wdoublehat{\mathsf{A}}(\doublehat{\fru}), \Theta(\doublehat{\frv}) \rBrace
+\frac{1}{2} \left[ \left(\gamma_\dir^\Sigma U, p_\Sigma\right), \left(\gamma_\dir^\Sigma V, q_\Sigma\right) \right]_\Sigma \\
& \quad = F_\Sigma(V)  + \lBrace \doublehat{\mathfrak{f}},\Theta(\doublehat{\frv}) \rBrace \quad \forall \, (V,\hat{\frv}) \in \mH^1(\Omega_\Sigma) \times \widehat{\mbH}(\Gamma), \, \hat{\frv} = (\hat{\frv}_1,\dots,\hat{\frv}_n, q_\Sigma)
\end{split}
\\
&
\begin{split}
\text{where } 
& \doublehat{\fru} \coloneqq (\hat{\fru}_1, \dots, \hat{\fru}_n, (\gamma_\dir^\Sigma U, p_\Sigma)), \quad
\doublehat{\frv} \coloneqq (\hat{\frv}_1, \dots, \hat{\frv}_n, (\gamma_\dir^\Sigma V, q_\Sigma)), \\ 
&  \doublehat{\mathfrak{f}} \coloneqq (\gamma^1 U_\inc, \dots, \gamma^n U_\inc, \gamma^\Sigma U_\inc).
\end{split}
\end{aligned}
\end{empheq}
Note that $\wdoublehat{\mathsf{A}}$, defined in \eqref{eq:defAhathat}, is a full-matrix operator with off-diagonal terms $\gamma^q  \mathsf{G}_{\kappa_0}^j$, $\gamma^\Sigma  \mathsf{G}_{\kappa_0}^j$, $\gamma^q  \mathsf{G}_{\kappa_0}^\Sigma$ that couple all subdomains with all other subdomains, hence the attribute \emph{global}. 
The attribute \emph{multi-trace} comes from the fact that the unknown traces are doubled on each interface that separates two (bounded) subdomains. 

The expanded expression for the multi-trace FEM-BEM formulation \eqref{eq:MTF} reads: 
find $(U,\hat{\fru}) \in \mH^1(\Omega_\Sigma) \times \widehat{\mbH}(\Gamma), \, \hat{\fru} = (\hat{\fru}_1,\dots,\hat{\fru}_n, p_\Sigma)$, such that 
\begin{equation}
\label{eq:MTFexp}
\begin{split}
& a_\Sigma(U,V) + 
\sum_{j=1}^n [(\mathsf{A}_{\kappa_j}^j + \mathsf{A}_{\kappa_0}^j)\hat{\fru}_j, \theta(\hat{\frv}_j)]_{\Gamma_j}  
+ \left [\mathsf{A}_{\kappa_0}^\Sigma \left(\gamma_\dir^\Sigma U, p_\Sigma \right), \theta\left(\gamma_\dir^\Sigma V, q_\Sigma\right)\right ]_\Sigma \\
& + \sum_{j=1}^n \biggl(\sum_{q=1, q \ne j}^n [\gamma^q \mathsf{G}_{\kappa_0}^j (\hat{\fru}_j), \theta(\hat{\frv}_q)]_{\Gamma_q} 
+ \left[\gamma^\Sigma \mathsf{G}_{\kappa_0}^j (\hat{\fru}_j),\theta\left(\gamma_\dir^\Sigma V,q_\Sigma\right) \right]_\Sigma \biggr) \\
& + \sum_{q=1}^n \left [\gamma^q \mathsf{G}_{\kappa_0}^\Sigma \left(\gamma_\dir^\Sigma U, p_\Sigma\right), \theta(\hat{\frv}_q) \right ]_{\Gamma_q}
+\frac{1}{2} \left[ \left(\gamma_\dir^\Sigma U, p_\Sigma \right), \left(\gamma_\dir^\Sigma V, q_\Sigma\right) \right]_\Sigma \\
&  = F_\Sigma(V)  +
\sum_{j=1}^n [\gamma^j U_\inc, \theta(\hat{\frv}_j)]_{\Gamma_j} + \left[\gamma^\Sigma U_\inc, \theta\left(\gamma_\dir^\Sigma V, q_\Sigma\right) \right]_\Sigma
\end{split}
\end{equation}
for all $(V,\hat{\frv}) \in \mH^1(\Omega_\Sigma) \times \widehat{\mbH}(\Gamma), \,\hat{\frv} = (\hat{\frv}_1,\dots,\hat{\frv}_n, q_\Sigma)$.

\begin{remark} 
Note that in the case $n=0$ of a single (heterogeneous) scatterer the multi-trace formulation \eqref{eq:MTF} reduces to the Costabel coupling \eqref{VarIdCostabel2}, just as the single-trace formulation \eqref{eq:STF}. 
Indeed, in this case $\RR^d = \overline\Omega_0 \cup \overline\Omega_\Sigma$ and %$\Gamma = \Gamma_0 = \Sigma$
$\widehat{\mbH}(\Gamma) = \mH^{-1/2}(\Sigma)$.
Then the expanded expression \eqref{eq:MTFexp} simply becomes: 
find $(U, p_\Sigma) \in \mH^{1}(\Omega_{\Sigma})\times \mH^{-1/2}(\Sigma)$ such that
\begin{multline*}
a_\Sigma(U,V) +  \left [\mathsf{A}_{\kappa_0}^\Sigma \left(\gamma_\dir^\Sigma U, p_\Sigma\right),\left(-\gamma_\dir^\Sigma V, q_\Sigma\right) \right ]_\Sigma 
+\frac{1}{2} \left[ \left(\gamma_\dir^\Sigma U, p_\Sigma\right), \left(\gamma_\dir^\Sigma V, q_\Sigma\right) \right]_\Sigma =\\
F_\Sigma(V)  + \left[\gamma^\Sigma U_\inc, \left(-\gamma_\dir^\Sigma V, q_\Sigma\right) \right]_\Sigma 
 \quad \forall \, (V, q_\Sigma) \in  \mH^{1}(\Omega_{\Sigma})\times \mH^{-1/2}(\Sigma). 
\end{multline*}
Moreover, set $\fru = (u_0, p_0) = (\gamma_\dir^\Sigma U, -p_\Sigma)$ and $\frv = (v_0, q_0) = (\gamma_\dir^\Sigma V, -q_\Sigma)$, so that $(U,\fru)\in \XSigmaGamma$ and $(V,\frv)\in \XSigmaGamma$. Note that $\Tr(\fru) = (\gamma_\dir^\Sigma U, p_\Sigma)$, $\Tr(\frv) = (\gamma_\dir^\Sigma V, q_\Sigma)$, and $\gamma^\Sigma U_\inc = (\gamma_\dir^0 U_\inc, - \gamma_\neu^0 U_\inc)$. We can also write 
\[
\mathsf{A}_{\kappa_0}^\Sigma \left(\gamma_\dir^\Sigma U, p_\Sigma\right) = 
\mathsf{A}_{\kappa_0}^\Sigma \left(u_0, -p_0\right) = 
\left( \{\gamma_\dir^0\} , -\{\gamma_\neu^0 \}\right) \circ \left(-\mathsf{G}_{\kappa_0}^0 \left(u_0, p_0\right) \right) =
\left( -\{\gamma_\dir^0\}, \{\gamma_\neu^0 \}\right)  \circ \mathsf{G}_{\kappa_0}^0 (\fru), 
\]
so we get: find $(U,\fru)\in \XSigmaGamma$ such that
\begin{multline*}
a_\Sigma(U,V) +  \left [\left( -\{\gamma_\dir^0\} , \{\gamma_\neu^0 \}\right)  \circ \mathsf{G}_{\kappa_0}^0 (\fru),\left(-v_0, -q_0\right) \right ]_{\Gamma} 
+\frac{1}{2} \lbr \Tr(\fru),\Tr(\frv)\rbr_{\Sigma} =\\
F_\Sigma(V)  + \left[\left(\gamma_\dir^0 U_\inc, - \gamma_\neu^0 U_\inc\right), \left(-v_0, -q_0\right) \right]_{\Gamma} 
 \quad \forall \, (V,\frv)\in \XSigmaGamma, 
\end{multline*}
and also the signs turn out to agree with those in formulation \eqref{VarIdCostabel2}. 
% Moreover $p_\Sigma = -p_0$, $q_\Sigma = -q_0$ for a $p_0, q_0 \in \mH^{-1/2}(\Gamma_0)$, $\gamma^\Sigma U_\inc = (\gamma_\dir^0 U_\inc, - \gamma_\neu^0 U_\inc)$ and
% \[
% \mathsf{A}_{\kappa_0}^\Sigma \binom{\gamma_\dir^\Sigma U}{-p_0} = 
% %\{ \gamma^\Sigma\} \circ \mathsf{G}_{\kappa_0}^\Sigma \binom{\gamma_\dir^\Sigma U}{-p_0} =  
% \binom{ \{\gamma_\dir^0\} }{-\{\gamma_\neu^0 \}} \circ \left (-\mathsf{G}_{\kappa_0}^0 \binom{\gamma_\dir^\Sigma U}{p_0} \right) =
% \binom{ -\{\gamma_\dir^0\} }{\{\gamma_\neu^0 \}}  \circ \mathsf{G}_{\kappa_0}^0 \binom{\gamma_\dir^\Sigma U}{p_0}, 
% \]
% so we get: find $U \in \mH^1(\Omega_\Sigma)$, $p_0 \in \mH^{-1/2}(\Gamma_0)$ such that
% \begin{multline*}
% a_\Sigma(U,V) +  \left [\binom{ -\{\gamma_\dir^0\} }{\{\gamma_\neu^0 \}}  \circ \mathsf{G}_{\kappa_0}^0 \binom{\gamma_\dir^\Sigma U}{p_0},\binom{-\gamma_\dir^\Sigma V}{-q_0} \right ]_0 
% +\frac{1}{2} \left[ \binom{\gamma_\dir^\Sigma U}{-p_0}, \binom{\gamma_\dir^\Sigma V}{-q_0} \right]_0 =\\
% F_\Sigma(V)  + \left [\binom{\gamma_\dir^0 U_\inc}{- \gamma_\neu^0 U_\inc}, \binom{-\gamma_\dir^\Sigma V}{-q_0}\right]_0 
%  \quad \forall \, V \in \mH^1(\Omega_\Sigma), q_0 \in  \mH^{-1/2}(\Gamma_0), 
% \end{multline*}
% and also the signs turn out to agree with those in formulation \eqref{VarIdCostabel2}. 
\end{remark}

\subsection{Properties of the multi-trace FEM-BEM formulation}

The relationship between the multi-trace FEM-BEM formulation \eqref{eq:MTF} and the transmission problem \eqref{eq:bvp} is examined in the following proposition. 
\begin{proposition}[Link with the transmission problem]
\label{pr:linkMTF}
If $(U,\hat{\fru}) \in \mH^1(\Omega_\Sigma) \times \widehat{\mbH}(\Gamma)$, $\hat{\fru} = (\hat{\fru}_1,\dots,\hat{\fru}_n, p_\Sigma) \in \widehat{\mbH}(\Gamma)$, solve \eqref{eq:MTF}, then the solution to \eqref{eq:bvp} is given by
\begin{equation}
\label{eq:UtildeMTF}
\begin{split}
\widetilde{U} (\bx) & \coloneqq U(\bx) \quad \text{ for } \bx \in \Omega_\Sigma, \\
\widetilde{U} (\bx) & \coloneqq \biggl(U_\inc - \mathsf{G}_{\kappa_0}^\Sigma\left(\gamma_\dir^\Sigma U, p_\Sigma\right)  - \sum_{j=1}^n \mathsf{G}_{\kappa_0}^j(\hat{\fru}_j) \biggr) (\bx) \quad \text{ for } \bx \in \Omega_0, \\
\widetilde{U} (\bx) & \coloneqq \mathsf{G}_{\kappa_j}^j(\hat{\fru}_j)(\bx) \quad \text{ for } \bx \in \Omega_j, j=1,\dots,n.
\end{split}
\end{equation}
\end{proposition}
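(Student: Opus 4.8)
The plan is to show that the function $\widetilde{U}$ defined by \eqref{eq:UtildeMTF} solves the transmission problem \eqref{eq:bvp}; since that problem is well posed, this identifies $\widetilde{U}$ as its unique solution. First I would dispose of the elementary ingredients as in the opening lines of the proof of Proposition~\ref{pr:equivSTF}. Testing \eqref{eq:MTF} with $(V,\hat{\frv})=(V,0)$ for $V\in\mH^{1}_{0}(\Omega_{\Sigma})$ — so that $\doublehat{\frv}=0$ — gives $a_{\Sigma}(U,V)=F_{\Sigma}(V)$, hence $\Delta U+\kappa_{\Sigma}^{2}U=-f$ in $\Omega_{\Sigma}$; in particular $U\in\mH^{1}(\Delta,\Omega_{\Sigma})$, the interior trace $\gamma^{\Sigma}(U)$ is well defined, and Green's formula yields $a_{\Sigma}(U,V)-\langle\gamma_{\neu}^{\Sigma}(U),\gamma_{\dir}^{\Sigma}(V)\rangle_{\Sigma}=F_{\Sigma}(V)$ for every $V\in\mH^{1}(\Omega_{\Sigma})$. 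Moreover the potentials entering \eqref{eq:UtildeMTF} solve the homogeneous Helmholtz equation with the expected wavenumbers in each $\Omega_{j}$, and $\widetilde{U}-U_{\inc}=-\mathsf{G}_{\kappa_{0}}^{\Sigma}(\gamma_{\dir}^{\Sigma}U,p_{\Sigma})-\sum_{j=1}^{n}\mathsf{G}_{\kappa_{0}}^{j}(\hat{\fru}_{j})$ is $\kappa_{0}$-outgoing radiating in $\Omega_{0}$. Thus everything in \eqref{eq:bvp} except the transmission conditions already holds, and $\widetilde{U}$ has exactly the regularity needed to invoke Lemma~\ref{ReformulationTransmissionConditions}.

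The core of the proof is to check the variational characterization \eqref{eq:ReformulationTransmissionConditionsbis} of the transmission conditions for $\widetilde{U}$, i.e. $[\gamma(\widetilde{U}),\Theta(\frv)]_{\Gamma}+[\gamma^{\Sigma}(\widetilde{U}),\theta(\Tr(\frv))]_{\Sigma}=0$ for all $\frv\in\mbX(\Gamma)$ (an equivalent formulation, since $\Theta$ is an automorphism of $\mbX(\Gamma)$ and $\Tr\circ\Theta=\theta\circ\Tr$). From \eqref{eq:UtildeMTF} and \eqref{eq:Akj12} one has $\gamma^{j}(\widetilde{U})=(\mathsf{A}_{\kappa_{j}}^{j}+\Id/2)\hat{\fru}_{j}$ for $j=1,\dots,n$, $\gamma^{\Sigma}(\widetilde{U})=\gamma^{\Sigma}(U)=(\gamma_{\dir}^{\Sigma}U,\gamma_{\neu}^{\Sigma}U)$, and $\gamma^{0}(\widetilde{U})=\gamma^{0}(U_{\inc})-\sum_{j=1}^{n}\gamma^{0}\mathsf{G}_{\kappa_{0}}^{j}(\hat{\fru}_{j})-\gamma^{0}\mathsf{G}_{\kappa_{0}}^{\Sigma}(\gamma_{\dir}^{\Sigma}U,p_{\Sigma})$, the last of which still carries $\Gamma_{0}$-contributions that must be eliminated. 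For this I would reuse verbatim the computations made when deriving \eqref{eq:MTF}: apply \eqref{eq:ReformulationTransmissionConditionsbis} to $U_{\inc}$ (as in \eqref{eq:rhsMTF0}), and for each $j\geq 1$ build, via Remark~\ref{rem:construct} applied to $V=\mathsf{G}_{\kappa_{0}}^{j}(\hat{\fru}_{j})$, the element of $\widetilde{\mbX}(\Gamma)$ whose components are $\gamma^{0}\mathsf{G}_{\kappa_{0}}^{j}(\hat{\fru}_{j})$ on $\Gamma_{0}$, $\gamma^{q}\mathsf{G}_{\kappa_{0}}^{j}(\hat{\fru}_{j})$ on $\Gamma_{q}$ ($q\neq j$), $(\mathsf{A}_{\kappa_{0}}^{j}-\Id/2)\hat{\fru}_{j}$ on $\Gamma_{j}$, and $\gamma^{\Sigma}\mathsf{G}_{\kappa_{0}}^{j}(\hat{\fru}_{j})$ on $\Sigma$; likewise for $V=\mathsf{G}_{\kappa_{0}}^{\Sigma}(\gamma_{\dir}^{\Sigma}U,p_{\Sigma})$ taken in the exterior of $\Omega_{\Sigma}$. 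The resulting instances of the polarity identity \eqref{PolarityOperatorTr} are precisely the manipulations \eqref{eq:MTF1}--\eqref{eq:rewriteA}; feeding them into $[\gamma^{0}(\widetilde{U}),\theta(\frv_{0})]_{\Gamma_{0}}$ collapses every $\Gamma_{0}$-term and, recognising the block structure \eqref{eq:defAhathat}, produces
\[
[\gamma(\widetilde{U}),\Theta(\frv)]_{\Gamma}
= \lBrace\wdoublehat{\mathsf{A}}(\doublehat{\fru}),\Theta(\doublehat{\frv})\rBrace
- \tfrac12\bigl[(\gamma_{\dir}^{\Sigma}U,p_{\Sigma}),\theta(\Tr(\frv))\bigr]_{\Sigma}
- \lBrace\doublehat{\mathfrak{f}},\Theta(\doublehat{\frv})\rBrace,
\]
where $\doublehat{\frv}$ is the $\wdoublehat{\mbH}(\Gamma)$-test tuple obtained from $\frv\in\mbX(\Gamma)$ by keeping $(\frv_{1},\dots,\frv_{n})$ and $\Tr(\frv)$ (and $V$ is any $\mH^{1}(\Omega_{\Sigma})$-lift of $\Tr_{\dir}(\frv)$), and $\doublehat{\fru}$, $\doublehat{\mathfrak{f}}$ are as in \eqref{eq:MTF}; the extra $-\tfrac12[\cdots]_{\Sigma}$ term arises because the $(\Sigma,\Sigma)$-block of $\wdoublehat{\mathsf{A}}$ is $\mathsf{A}_{\kappa_{0}}^{\Sigma}$ while the $\Sigma$-rewrite produces $\mathsf{A}_{\kappa_{0}}^{\Sigma}-\Id/2$ via \eqref{eq:Akj12bis}.

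To finish, I would combine this with \eqref{eq:MTF} itself. Eliminating $a_{\Sigma}(U,V)-F_{\Sigma}(V)=\langle\gamma_{\neu}^{\Sigma}U,\gamma_{\dir}^{\Sigma}V\rangle_{\Sigma}$ between the Green identity of the first paragraph and \eqref{eq:MTF} expresses $\langle\gamma_{\neu}^{\Sigma}U,\gamma_{\dir}^{\Sigma}V\rangle_{\Sigma}$ in terms of $\lBrace\wdoublehat{\mathsf{A}}(\doublehat{\fru}),\Theta(\doublehat{\frv})\rBrace$, $\lBrace\doublehat{\mathfrak{f}},\Theta(\doublehat{\frv})\rBrace$ and $[(\gamma_{\dir}^{\Sigma}U,p_{\Sigma}),\Tr(\frv)]_{\Sigma}$ (using $(\gamma_{\dir}^{\Sigma}V,q_{\Sigma})=\Tr(\frv)$ for the chosen test tuple). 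Expanding $[\gamma^{\Sigma}(\widetilde{U}),\theta(\Tr(\frv))]_{\Sigma}$ by \eqref{eq:thetaIdentitybis3}, substituting, and summing with the displayed identity, the $\lBrace\wdoublehat{\mathsf{A}}\rBrace$- and $\lBrace\doublehat{\mathfrak{f}}\rBrace$-terms cancel pairwise, leaving only $\Sigma$-terms in $(\gamma_{\dir}^{\Sigma}U,p_{\Sigma})$ and $\Tr(\frv)$ which vanish by \eqref{eq:thetaIdentitybis}. Hence \eqref{eq:ReformulationTransmissionConditionsbis} holds, so by Lemma~\ref{ReformulationTransmissionConditions} $\widetilde{U}$ satisfies the transmission conditions of \eqref{eq:bvp}, and combined with the first paragraph $\widetilde{U}$ solves \eqref{eq:bvp}.

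I expect the main obstacle to be organisational rather than conceptual: the delicate point is the mismatch between the space $\mH^{1}(\Omega_{\Sigma})\times\widehat{\mbH}(\Gamma)$ on which \eqref{eq:MTF} is posed and the single-trace space $\mbX(\Gamma)$ used in the transmission-condition test. The bridge is the elimination of the $\Gamma_{0}$-contribution $[\gamma^{0}(\widetilde{U}),\theta(\frv_{0})]_{\Gamma_{0}}$ by the $\widetilde{\mbX}(\Gamma)$-polarity identities of Remark~\ref{rem:construct} (cf.\ \cite[Lemma~8.1]{ClHi:MTF:2013}), which is exactly what makes the surviving expression depend on $\frv$ only through its $\Gamma_{q}$ ($q\geq1$) and $\Sigma$ data, so that it can be matched with a legitimate test tuple of \eqref{eq:MTF}. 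The remaining care lies in the $\theta/\Theta$ sign bookkeeping and in tracking which Neumann datum on $\Sigma$ ($\gamma_{\neu}^{\Sigma}U$, $p_{\Sigma}$, or $q_{\Sigma}=\Tr_{\neu}(\frv)$) enters each term.
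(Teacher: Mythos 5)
Your proposal is correct and follows essentially the same route as the paper's proof: the same preliminary reductions, the same choice of test pair (a lift $V$ of $\Tr_\dir(\frv)$ together with $\hat{\frv}=(\frv_1,\dots,\frv_n,\Tr_\neu(\frv))$), Green's formula, \eqref{eq:Akj12}--\eqref{eq:Akj12bis}, and the three polarity identities from Remark~\ref{rem:construct} to eliminate the $\Gamma_0$-contributions, before concluding via Lemma~\ref{ReformulationTransmissionConditions}. The only difference is organizational — you compute $[\gamma(\widetilde{U}),\Theta(\frv)]_\Gamma$ forward and match it against \eqref{eq:MTF}, whereas the paper starts from the tested formulation and rewinds it to the target identity — and your sign bookkeeping, including the extra $-\tfrac12[(\gamma_\dir^\Sigma U,p_\Sigma),\theta(\Tr(\frv))]_\Sigma$ term, checks out.
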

\begin{proof}
First of all, $(\widetilde{U}-U_\inc)|_{\Omega_0} = - \mathsf{G}_{\kappa_0}^\Sigma({\gamma_\dir^\Sigma U},{p_\Sigma})  - \sum_{j=1}^n \mathsf{G}_{\kappa_0}^j(\hat{\fru}_j)$ is $\kappa_0$-outgoing radiating in $\Omega_0$,  see e.g.~\cite[Theorem 3.2]{CoKr:bookIEM:1983}.
It is also clear that $\widetilde{U}$ satisfies the Helmholtz equation in $\Omega_j$, $j=1,\dots,n$ since it is satisfied by the potentials (see  e.g.~\cite[§2.4]{CoKr:bookIEM:1983}), and in $\Omega_0$ since it is also satisfied by $U_\inc$ by definition. 
By testing \eqref{eq:MTF} with $V \in \mH^1_0(\Omega_\Sigma)$, $\hat{\frv}=0$, we get $a_\Sigma(U,V) = a_\Sigma(\widetilde{U},V) = F_\Sigma(V)$, so $\widetilde{U}$ satisfies the Helmholtz equation also in $\Omega_\Sigma$. 
The property that remains to be verified is the transmission conditions:  
% that is $(\gamma^0 \widetilde U, \dots, \gamma^n \widetilde U, \gamma^\Sigma \widetilde U) \in \widetilde{\mbX}(\Gamma)$. 
% To this purpose, we use the characterization \eqref{eq:charXtilde} of $\widetilde{\mbX}(\Gamma)$, together with Lemma~\ref{lem:XTXtilde} telling that $\widetilde{\mbX}(\Gamma) = \set{(\frv,\Tr(\frv)) | \frv \in \mbX(\Gamma)}$: thus it is sufficient to show that for all $\frv \in \mbX(\Gamma)$ we have
% $
% \sum_{j=0}^n [\gamma^j \widetilde{U}, \frv_j]_j + [\gamma^\Sigma \widetilde{U}, \Tr(\frv)]_\Sigma = 0,
% $
by characterization~\eqref{eq:ReformulationTransmissionConditionsbis} it is sufficient to show that for all $\frv \in \mbX(\Gamma)$ we have $\lbr \gamma(\widetilde{U}), \frv\rbr_{\Gamma} + \lbr \gamma^\Sigma (\widetilde{U}), \Tr(\frv) \rbr_\Sigma = 0$, 
i.e., by definition of $\widetilde{U}$, 
%\begin{multline}
\begin{equation}
\label{eq:aim}
\biggl[ \gamma^0 U_\inc - \gamma^0 \mathsf{G}_{\kappa_0}^\Sigma\left(\gamma_\dir^\Sigma U, p_\Sigma \right)  - \sum_{j=1}^n \gamma^0 \mathsf{G}_{\kappa_0}^j(\hat{\fru}_j), \frv_0 \biggr]_{\Gamma_0} 
+ \sum_{j=1}^n [\gamma^j  \mathsf{G}_{\kappa_j}^j(\hat{\fru}_j), \frv_j]_{\Gamma_j} + [\gamma^\Sigma U, \Tr(\frv)]_\Sigma = 0.
\end{equation}
%\end{multline}

We fix an arbitrary $\frv \in \mbX(\Gamma)$ and denote $\frv_* \coloneqq (\frv_1, \dots, \frv_n, \Tr_\neu(\frv)) \in \widehat{\mbH}(\Gamma)$. 
Since $\widetilde{U}|_{\Omega_\Sigma} = U$ satisfies the Helmholtz equation in $\Omega_\Sigma$, integrating by parts we get
% $a_\Sigma(\widetilde{U},V) - \braket{\gamma_\dir^\Sigma V, \gamma_\neu^\Sigma \widetilde{U}}_\Sigma = F_\Sigma(V)$, for all $V \in \mH^1(\Omega_\Sigma),$
% so
\begin{equation}
\label{eq:FEMaux}
a_\Sigma({U},V) - F_\Sigma(V) = \braket{\gamma_\neu^\Sigma {U},\gamma_\dir^\Sigma V}_\Sigma \quad \forall \, V \in \mH^1(\Omega_\Sigma).
\end{equation}
Moreover, by \eqref{eq:Akj12}-\eqref{eq:Akj12bis}, we have
\begin{equation}
\label{eq:BEMaux}
\mathsf{A}_{\kappa_j}^j + \mathsf{A}_{\kappa_0}^j  = \gamma^j  \mathsf{G}_{\kappa_j}^j + \gamma_c^j  \mathsf{G}_{\kappa_0}^j, \; j=1,\dots,n, \qquad
\mathsf{A}_{\kappa_0}^\Sigma =  \gamma_c^\Sigma  \mathsf{G}_{\kappa_0}^\Sigma + \Id/2.
\end{equation}
Thus, if we test formulation~\eqref{eq:MTF} (or its expanded form \eqref{eq:MTFexp}) with $V$ satisfying $\gamma_\dir^\Sigma V = \Tr_\dir(\frv)$ and with $\hat{\frv}= \frv_*$, using \eqref{eq:FEMaux}-\eqref{eq:BEMaux}, we obtain 
\begin{equation}
\label{eq:tested}
\begin{split}
0 &= \braket{\gamma_\neu^\Sigma {U}, \Tr_\dir(\frv)}_\Sigma 
+ \sum_{j=1}^n \bigl ([ \gamma^j  \mathsf{G}_{\kappa_j}^j(\hat{\fru}_j), \theta(\frv_j)]_{\Gamma_j}  + [ \gamma_c^j  \mathsf{G}_{\kappa_0}^j(\hat{\fru}_j), \theta(\frv_j)]_{\Gamma_j} \bigr ) \\
& + \left [\gamma_c^\Sigma  \mathsf{G}_{\kappa_0}^\Sigma \left(\gamma_\dir^\Sigma U, p_\Sigma\right), \Tr(\Theta(\frv)) \right ]_\Sigma 
+ \frac{1}{2} \left [ \left(\gamma_\dir^\Sigma U, p_\Sigma\right), \Tr(\Theta(\frv)) \right ]_\Sigma\\
& + \sum_{j=1}^n \biggl (\sum_{q=1, q \ne j}^n [\gamma^q \mathsf{G}_{\kappa_0}^j (\hat{\fru}_j), \theta(\frv_q)]_{\Gamma_q} 
+  [\gamma^\Sigma \mathsf{G}_{\kappa_0}^j (\hat{\fru}_j), \Tr(\Theta(\frv)) ]_\Sigma \biggr ) \\
& + \sum_{q=1}^n \left [\gamma^q \mathsf{G}_{\kappa_0}^\Sigma \left(\gamma_\dir^\Sigma U, p_\Sigma\right), \theta(\frv_q) \right ]_{\Gamma_q} 
+\frac{1}{2} \left[ \left(\gamma_\dir^\Sigma U, p_\Sigma\right), \Tr(\frv) \right]_\Sigma  \\
&- \sum_{j=1}^n [\gamma^j U_\inc, \theta(\frv_j)]_{\Gamma_j} - \left[\gamma^\Sigma U_\inc, \Tr(\Theta(\frv)) \right]_\Sigma,
\end{split}
\end{equation}
that is, gathering terms conveniently, $0 = t_1 + t_2 + t_3 + t_4 + t_5$, where
\[
\begin{split}
& t_1 = \sum_{j=1}^n [ \gamma^j  \mathsf{G}_{\kappa_j}^j(\hat{\fru}_j), \theta(\frv_j)]_{\Gamma_j}\\
& t_2 = \braket{\gamma_\neu^\Sigma {U},\Tr_\dir(\frv)}_\Sigma
+ \frac{1}{2} \left [ \left(\gamma_\dir^\Sigma U, p_\Sigma\right), \Tr(\Theta(\frv)) \right ]_\Sigma 
+\frac{1}{2} \left[ \left(\gamma_\dir^\Sigma U, p_\Sigma\right), \Tr(\frv) \right]_\Sigma\\
& t_3 = \sum_{j=1}^n \biggl ( [ \gamma_c^j  \mathsf{G}_{\kappa_0}^j(\hat{\fru}_j), \theta(\frv_j)]_{\Gamma_j} 
+ \! \! \! \sum_{q=1, q \ne j}^n [\gamma^q \mathsf{G}_{\kappa_0}^j (\hat{\fru}_j), \theta(\frv_q)]_{\Gamma_q} 
+  [\gamma^\Sigma \mathsf{G}_{\kappa_0}^j (\hat{\fru}_j), \Tr(\Theta(\frv)) ]_\Sigma \biggr )\\
&  t_4 = \left [\gamma_c^\Sigma  \mathsf{G}_{\kappa_0}^\Sigma \left(\gamma_\dir^\Sigma U, p_\Sigma \right), \Tr(\Theta(\frv)) \right ]_\Sigma
+ \sum_{q=1}^n \left [\gamma^q \mathsf{G}_{\kappa_0}^\Sigma \left(\gamma_\dir^\Sigma U, p_\Sigma\right), \theta(\frv_q) \right ]_{\Gamma_q}\\
& t_5 = - \sum_{j=1}^n [\gamma^j U_\inc, \theta(\frv_j)]_{\Gamma_j} - \left[\gamma^\Sigma U_\inc, \Tr(\Theta(\frv)) \right]_\Sigma.
\end{split}
\]
First of all, note that the term $t_2$ simplifies into $[\gamma^\Sigma U, \Tr(\Theta(\frv))]_\Sigma$, which is exactly the last term of the sought equation \eqref{eq:aim}, but with $\frv$ replaced by $\Theta(\frv)$. 
In order to treat the other terms we will employ the polarity identity \eqref{PolarityOperatorTr} and the procedure described in Remark~\ref{rem:construct} three times. 
%property \eqref{eq:charXtilde} with Lemma~\ref{lem:XTXtilde} 
First, for a given $j=1,\dots,n$, we have $(\gamma^0 \mathsf{G}_{\kappa_0}^j(\hat{\fru}_j), \dots, \gamma_c^j \mathsf{G}_{\kappa_0}^j(\hat{\fru}_j), \dots, \gamma^n \mathsf{G}_{\kappa_0}^j(\hat{\fru}_j),\gamma^\Sigma \mathsf{G}_{\kappa_0}^j (\hat{\fru}_j)) \in \widetilde{\mbX}(\Gamma)$, thus 
\begin{equation}
\label{eq:first}
\begin{split}
&[\gamma_c^j \mathsf{G}_{\kappa_0}^j(\hat{\fru}_j), \theta(\frv_j)]_{\Gamma_j} + \! \sum_{q=1, q \ne j}^n [\gamma^q \mathsf{G}_{\kappa_0}^j (\hat{\fru}_j), \theta(\frv_q)]_{\Gamma_q}  + [\gamma^\Sigma \mathsf{G}_{\kappa_0}^j (\hat{\fru}_j), \Tr(\Theta(\frv)) ]_\Sigma \\
& \quad = - [\gamma^0 \mathsf{G}_{\kappa_0}^j(\hat{\fru}_j), \theta(\frv_0)]_{\Gamma_0}.
\end{split}
\end{equation}
Second, we have $(\gamma^0 \mathsf{G}_{\kappa_0}^\Sigma ({\gamma_\dir^\Sigma U},{p_\Sigma}), \dots, \gamma^n \mathsf{G}_{\kappa_0}^\Sigma ({\gamma_\dir^\Sigma U},{p_\Sigma}), \gamma_c^\Sigma \mathsf{G}_{\kappa_0}^\Sigma ({\gamma_\dir^\Sigma U},{p_\Sigma})) \in \widetilde{\mbX}(\Gamma)$, thus 
\begin{equation}
\label{eq:second}
\begin{split}
& \sum_{q=1}^n \left [\gamma^q \mathsf{G}_{\kappa_0}^\Sigma \left(\gamma_\dir^\Sigma U, p_\Sigma\right), \theta(\frv_q) \right ]_{\Gamma_q} + 
\left [\gamma_c^\Sigma  \mathsf{G}_{\kappa_0}^\Sigma \left(\gamma_\dir^\Sigma U, p_\Sigma \right), \Tr(\Theta(\frv)) \right ]_\Sigma \\
& \quad = - \left[\gamma^0  \mathsf{G}_{\kappa_0}^\Sigma \left(\gamma_\dir^\Sigma U, p_\Sigma \right),  \theta(\frv_0) \right]_{\Gamma_0}.
\end{split}
\end{equation}
Third, we have $(\gamma^0 U_\inc, \dots, \gamma^n U_\inc, \gamma^\Sigma U_\inc) \in \widetilde{\mbX}(\Gamma)$, thus
\begin{equation}
\label{eq:third}
\sum_{j=1}^n [\gamma^j U_\inc, \theta(\frv_j)]_{\Gamma_j} + \left[\gamma^\Sigma U_\inc, \Tr(\Theta(\frv)) \right]_\Sigma = - [\gamma^0 U_\inc, \theta(\frv_0)]_{\Gamma_0}.
\end{equation}
Now, use \eqref{eq:first} summed over $j=1,\dots,n$, \eqref{eq:second}, \eqref{eq:third} to replace respectively $t_3, t_4, t_5$, therefore \eqref{eq:tested} becomes
\[
\begin{split}
0 = &\sum_{j=1}^n [ \gamma^j  \mathsf{G}_{\kappa_j}^j(\hat{\fru}_j), \theta(\frv_j)]_{\Gamma_j} 
- \sum_{j=1}^n [\gamma^0 \mathsf{G}_{\kappa_0}^j(\hat{\fru}_j), \theta(\frv_0)]_{\Gamma_0}
 - \left[\gamma^0  \mathsf{G}_{\kappa_0}^\Sigma \left(\gamma_\dir^\Sigma U, p_\Sigma \right),  \theta(\frv_0) \right]_{\Gamma_0} \\
&+ [\gamma^0 U_\inc, \theta(\frv_0)]_{\Gamma_0} +
[\gamma^\Sigma U, \Tr(\Theta(\frv))]_\Sigma,
\end{split}
\]
that is exactly the sought equation \eqref{eq:aim}, but with $\frv$ replaced by $\Theta(\frv)$, which is not a problem since $\Theta$ is an automorphism. 
\end{proof}

As suggested by the gap idea, also the multi-trace FEM-BEM formulation \eqref{eq:MTF} satisfies a \emph{G\r{a}rding inequality}:
\begin{proposition}[G\r{a}rding inequality] 
\label{pr:gardingMTF}
Let $\mathsf{a}_\textup{MTF} : (\mH^1(\Omega_\Sigma) \times \widehat{\mbH}(\Gamma)) \times (\mH^1(\Omega_\Sigma) \times \widehat{\mbH}(\Gamma)) \to \CC$ designate the bilinear form on the left-hand side of \eqref{eq:MTF}.
There exist a compact bilinear form $\mathcal{K} \colon (\mH^1(\Omega_\Sigma) \times \widehat{\mbH}(\Gamma)) \times (\mH^1(\Omega_\Sigma) \times \widehat{\mbH}(\Gamma)) \to \CC$ and a constant $\beta>0$ such that 
\begin{equation*}
\Re \bigl \{ \mathsf{a}_\textup{MTF} \bigl( (V,\hat{\frv}),(\overline{V},\overline{\hat{\frv}}) \bigr)  + 
\mathcal{K}\bigl( (V,\hat{\frv}),(\overline{V},\overline{\hat{\frv}}) \bigr)  \bigr \}
\ge \beta (\lVert V \rVert_{\mH^1(\Omega_\Sigma)}^2 + \lVert \hat{\frv} \rVert_{\widehat{\mbH}(\Gamma)}^2)
\end{equation*}
for all $(V,\hat{\frv}) \in \mH^1(\Omega_\Sigma) \times \widehat{\mbH}(\Gamma)$. 
\end{proposition}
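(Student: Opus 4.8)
The plan is to evaluate $\mathsf{a}_\textup{MTF}$ on the ``diagonal'' argument $\bigl((V,\hat{\frv}),(\overline{V},\overline{\hat{\frv}})\bigr)$ and to split it into its three natural contributions: the volume form $a_\Sigma(V,\overline{V})$, the boundary form $\lBrace \wdoublehat{\mathsf{A}}(\doublehat{\frv}),\Theta(\overline{\doublehat{\frv}})\rBrace$, and the localized pairing $\tfrac12[(\gamma_\dir^\Sigma V,q_\Sigma),(\overline{\gamma_\dir^\Sigma V},\overline{q_\Sigma})]_\Sigma$, where $\doublehat{\frv} = (\hat{\frv}_1,\dots,\hat{\frv}_n,(\gamma_\dir^\Sigma V,q_\Sigma))$. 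I would establish a coercivity-up-to-compact estimate for each contribution separately and then recombine the compact remainders into a single compact bilinear form $\mathcal{K}$ on $\mH^1(\Omega_\Sigma)\times\widehat{\mbH}(\Gamma)$.

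For the volume form, since $\kappa_\Sigma$ is a bounded positive function, $\Re\{a_\Sigma(V,\overline{V})\} = \lVert\nabla V\rVert_{\mL^2(\Omega_\Sigma)}^2 - \int_{\Omega_\Sigma}\kappa_\Sigma^2|V|^2\,d\bx \ge \lVert\nabla V\rVert_{\mL^2(\Omega_\Sigma)}^2 - C\lVert V\rVert_{\mL^2(\Omega_\Sigma)}^2$, so that adding the bilinear form $(V,W)\mapsto(C+1)\int_{\Omega_\Sigma}VW\,d\bx$ — which is compact on $\mH^1(\Omega_\Sigma)$ by Rellich's theorem, $\Omega_\Sigma$ being bounded and Lipschitz — restores coercivity with respect to $\lVert V\rVert_{\mH^1(\Omega_\Sigma)}^2$. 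For the localized pairing, using the definition of $[\cdot,\cdot]_\Sigma$ together with the conjugation symmetry of the duality pairing on $\Sigma$, the quantity $[(\gamma_\dir^\Sigma V,q_\Sigma),(\overline{\gamma_\dir^\Sigma V},\overline{q_\Sigma})]_\Sigma$ is purely imaginary; hence, exactly as in the remark used for the single-trace formulations (``$\Re\{[\Tr(\frv),\Tr(\overline{\frv})]_\Sigma\}=0$''), it contributes nothing to the real part and can simply be dropped.

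The heart of the matter is the boundary form: I would prove that $\wdoublehat{\mathsf{A}}$ satisfies a generalized G\r{a}rding inequality on $\wdoublehat{\mbH}(\Gamma)$, namely that there exist a compact operator $\wdoublehat{\mathsf{K}}\colon\wdoublehat{\mbH}(\Gamma)\to\wdoublehat{\mbH}(\Gamma)$ and $\alpha>0$ with $\Re\{\lBrace(\wdoublehat{\mathsf{A}}+\wdoublehat{\mathsf{K}})\doublehat{\frv},\Theta(\overline{\doublehat{\frv}})\rBrace\}\ge\alpha\lVert\doublehat{\frv}\rVert_{\wdoublehat{\mbH}(\Gamma)}^2$. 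Its block-diagonal part is $\mathrm{diag}(\mathsf{A}_{\kappa_1}^1+\mathsf{A}_{\kappa_0}^1,\dots,\mathsf{A}_{\kappa_n}^n+\mathsf{A}_{\kappa_0}^n,\mathsf{A}_{\kappa_0}^\Sigma)$, and here I would invoke Proposition~\ref{pr:gardingAj}: each of the operators $\mathsf{A}_{\kappa_j}^j$, $\mathsf{A}_{\kappa_0}^j$ ($1\le j\le n$) and $\mathsf{A}_{\kappa_0}^\Sigma$ satisfies the generalized G\r{a}rding inequality with the \emph{same} twist $\theta$, and a finite sum of such operators does too; this already gives coercivity up to a compact perturbation in the full norm $\lVert\doublehat{\frv}\rVert_{\wdoublehat{\mbH}(\Gamma)}^2$. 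The off-diagonal coupling operators $\gamma^q\mathsf{G}_{\kappa_0}^j$ ($q\ne j$), $\gamma^\Sigma\mathsf{G}_{\kappa_0}^j$ and $\gamma^q\mathsf{G}_{\kappa_0}^\Sigma$ — all built from $\kappa_0$-potentials — are then treated exactly as the coupling terms of the local multi-trace operator in \cite[§6]{ClHi:impenetrable:2015} and \cite[§9--10]{ClHi:MTF:2013}; this is the rigorous substitute for the vanishing-gap heuristic, and it shows that these terms do not spoil the inequality.

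Finally I would assemble the pieces: the pull-back of $\wdoublehat{\mathsf{K}}$ through the bounded map $(V,\hat{\frv})\mapsto\doublehat{\frv}$, added to the volume remainder, yields a compact bilinear form $\mathcal{K}$ on $\mH^1(\Omega_\Sigma)\times\widehat{\mbH}(\Gamma)$; and since $\lVert\doublehat{\frv}\rVert_{\wdoublehat{\mbH}(\Gamma)}^2 = \lVert\hat{\frv}\rVert_{\widehat{\mbH}(\Gamma)}^2 + \lVert\gamma_\dir^\Sigma V\rVert_{\mH^{1/2}(\Sigma)}^2 \ge \lVert\hat{\frv}\rVert_{\widehat{\mbH}(\Gamma)}^2$, the three estimates combine into the claimed bound with $\beta=\min(1,\alpha)$. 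The main obstacle is the off-diagonal part of the boundary estimate: in contrast with the single-trace case — and with the gap configuration, where the coupling blocks are genuinely smoothing — the operators $\gamma^q\mathsf{G}_{\kappa_0}^j$ are \emph{not} compact as soon as $\Omega_q$ and $\Omega_j$ share an interface of positive surface measure, which is the generic situation in the presence of cross-points; so the G\r{a}rding inequality for $\wdoublehat{\mathsf{A}}$ is genuinely stronger than a ``coercive plus compact'' decomposition and must rest on the structural properties of multi-trace spaces.
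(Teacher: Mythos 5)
Your overall skeleton is the paper's: split $\mathsf{a}_\textup{MTF}$ into the volume form, the boundary form built on $\wdoublehat{\mathsf{A}}$, and the $\Sigma$-pairing; handle $a_\Sigma$ by Rellich, observe that $[(\gamma_\dir^\Sigma V,q_\Sigma),(\overline{\gamma_\dir^\Sigma V},\overline{q_\Sigma})]_\Sigma$ is purely imaginary, and reduce everything to a generalized G\r{a}rding inequality for $\wdoublehat{\mathsf{A}}$ with the twist $\Theta$. Up to that point the argument is sound, and the final assembly (pull-back of the compact operator through $(V,\hat{\frv})\mapsto\doublehat{\frv}$, the identity $\lVert\doublehat{\frv}\rVert^2 = \lVert\hat{\frv}\rVert_{\widehat{\mbH}(\Gamma)}^2 + \lVert\gamma_\dir^\Sigma V\rVert_{\mH^{1/2}(\Sigma)}^2$) is fine.

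The gap is in your mechanism for the boundary form. You propose to get the inequality from the block-diagonal part $\mathrm{diag}(\mathsf{A}_{\kappa_1}^1+\mathsf{A}_{\kappa_0}^1,\dots,\mathsf{A}_{\kappa_0}^\Sigma)$ via Proposition~\ref{pr:gardingAj}, and then to argue that the off-diagonal couplings $\gamma^q\mathsf{G}_{\kappa_0}^j$ ``do not spoil the inequality.'' But, as you yourself note in your last paragraph, those off-diagonal operators are \emph{not} compact when $\Gamma_q\cap\Gamma_j$ has positive measure (the generic cross-point situation), and they carry no sign. A G\r{a}rding-coercive diagonal plus a non-compact, non-sign-definite off-diagonal perturbation does not yield a G\r{a}rding inequality; so the decomposition you set up cannot be closed, and the appeal to the references does not rescue it, because that is not how they argue. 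The actual proof (the paper's, following \cite[Prop.~6.3]{ClHi:impenetrable:2015}) does not separate diagonal from off-diagonal at all: one sets all wavenumbers to $\imagi$, writes the \emph{entire} quadratic form $\Re\lBrace\wdoublehat{\mathsf{A}}(\doublehat{\frv}),\Theta(\overline{\doublehat{\frv}})\rBrace$ — diagonal and coupling terms together — as a sum of $\mH^1$-norms of the associated potentials over the subdomains (exactly as in the proof of Proposition~\ref{pr:gardingAjAppendix}, where the cross terms are indispensable to forming those squares), obtaining \emph{strict} coercivity at $\kappa=\imagi$ with no compact remainder; the compact perturbation only appears afterwards, when the wavenumbers are changed back to $\kappa_0,\dots,\kappa_n$ (\cite[Lemma 3.9.8]{SaSw:book:2011}). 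You should replace your diagonal-plus-perturbation step by this global computation (or by a direct citation of it), at which point the rest of your argument goes through.
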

\begin{proof}
We need to examine
\[
\mathsf{a}_\textup{MTF} \bigl( (V,\hat{\frv}),(\overline{V},\overline{\hat{\frv}})  \bigr)  =
a_\Sigma(V,\overline{V}) + \lBrace \wdoublehat{\mathsf{A}}(\doublehat{\frv}), \Theta(\overline{\doublehat{\frv}}) \rBrace
+\frac{1}{2} \left[ \left(\gamma_\dir^\Sigma V, q_\Sigma\right), \left(\gamma_\dir^\Sigma \overline{V}, \overline{q}_\Sigma\right) \right]_\Sigma, 
\]
where $\doublehat{\frv} \coloneqq (\hat{\frv}_1, \dots, \hat{\frv}_n, (\gamma_\dir^\Sigma V, q_\Sigma))$.
As already mentioned, $a_\Sigma$ satisfies a G\r{a}rding inequality as in \cite[Lemma 3.2]{Meury:thesis:2007}, and
\[
\Re \left \{ \left[ \left(\gamma_\dir^\Sigma V, q_\Sigma\right), \left(\gamma_\dir^\Sigma \overline{V}, \overline{q}_\Sigma\right) \right]_\Sigma \right \} = 0.
\]
For the remaining term $\lBrace \wdoublehat{\mathsf{A}}(\doublehat{\frv}), \Theta(\overline{\doublehat{\frv}}) \rBrace$, we proceed exactly as in the proof of \cite[Proposition 6.3]{ClHi:impenetrable:2015}, except that $\hat{\frv}_{n+1} \coloneqq ({\gamma_\dir^\Sigma V},{q_\Sigma})$ in the present case. Indeed, note that the first concise equality at the beginning of the proof of \cite[Proposition 6.3]{ClHi:impenetrable:2015} fits exactly the expression of $\wdoublehat{\mathsf{A}}$. Thus, we obtain that, for the case $\kappa_0 = \dots = \kappa_n = \imagi$, there exists $\tilde{\beta} > 0$ such that
\[
\Re \lBrace \wdoublehat{\mathsf{A}}(\doublehat{\frv}), \Theta(\overline{\doublehat{\frv}}) \rBrace \ge
\tilde{\beta} \sum_{j=1}^{n+1} \lVert \hat{\frv}_j \rVert_{\mbH(\Gamma_j)}^2 =
\tilde{\beta} \Bigl (\lVert \hat{\frv} \rVert_{\widehat{\mbH}(\Gamma)}^2) + \lVert \gamma_\dir^\Sigma V \rVert_{\mH^{1/2}(\Sigma)}^2 \Bigr),
\]
which leads to the desired conclusion since a change of the wavenumbers $\kappa_0, \dots, \kappa_n$ only induces a compact perturbation of the integral operators appearing in  $\wdoublehat{\mathsf{A}}$ (see e.g. \cite[Lemma 3.9.8]{SaSw:book:2011}).
\end{proof}

Again, in the case of injectivity all the nice consequences recalled below Proposition~\ref{pr:gardingSTF} would follow from the G\r{a}rding inequality. Hence, in the following proposition we examine the injectivity condition for the multi-trace FEM-BEM formulation \eqref{eq:MTF}. Note that the gap configuration falls exactly within the case $\Sigma \subset \Gamma_0$, in which spurious resonances affect the single-trace FEM-BEM formulation \eqref{eq:STF} if $\kappa_0 \in \mathfrak{S}(\Delta,\Omega_\Sigma)$ (recall Proposition~\ref{pr:injSTF}), so the following result is not surprising. 
\begin{proposition}[Injectivity condition]
\label{pr:injMTF}
Let $(U,\hat{\fru}) \in \mH^1(\Omega_\Sigma) \times \widehat{\mbH}(\Gamma)$ solves formulation \eqref{eq:MTF} with $F_\Sigma \equiv 0$, $\doublehat{\mathfrak{f}}=0$. 
Then $U=0$. 
We also have $\hat{\fru}=0$ if $\kappa_0 \notin \mathfrak{S}(\Delta,\Omega_\Sigma)$. 
If $\kappa_0 \in \mathfrak{S}(\Delta,\Omega_\Sigma)$, there exists $\hat{\fru}\in\widehat{\mbH}(\Gamma)\setminus\{0\}$ such that $(0,\hat{\fru})\in \mH^1(\Omega_\Sigma) \times \widehat{\mbH}(\Gamma)$ solves \eqref{eq:MTF} with $F_\Sigma \equiv 0$, $\doublehat{\mathfrak{f}}=0$.
\end{proposition}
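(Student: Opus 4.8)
The plan is to mimic the structure of Proposition~\ref{pr:injSTF}, relying heavily on the equivalence Proposition~\ref{pr:linkMTF}, and then port the spurious-resonance analysis of the single-trace case (which corresponds to the gap configuration $\Sigma \subset \Gamma_0$) into the multi-trace setting.

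\medskip
\textbf{Step 1: $U=0$.} Let $(U,\hat{\fru})$ solve \eqref{eq:MTF} with $F_\Sigma\equiv 0$, $\doublehat{\mathfrak{f}}=0$. By Proposition~\ref{pr:linkMTF}, the function $\widetilde U$ defined by \eqref{eq:UtildeMTF} solves the homogeneous transmission problem \eqref{eq:bvp}, which is uniquely solvable, hence $\widetilde U \equiv 0$. In particular $U = \widetilde U|_{\Omega_\Sigma} = 0$, and then also $\gamma_\dir^\Sigma U = 0$.

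\medskip
\textbf{Step 2: the injective case $\kappa_0 \notin \mathfrak{S}(\Delta,\Omega_\Sigma)$.} From $\widetilde U \equiv 0$ and \eqref{eq:UtildeMTF}, on $\Omega_j$ with $j=1,\dots,n$ we get $\mathsf{G}_{\kappa_j}^j(\hat{\fru}_j)=0$. Taking the jump relation \eqref{eq:jumprels}, $[\gamma^j]\mathsf{G}_{\kappa_j}^j(\hat{\fru}_j) = \hat{\fru}_j$, so $\hat{\fru}_j = 0$ for $j=1,\dots,n$. It remains to show $p_\Sigma = 0$. On $\Omega_0$, \eqref{eq:UtildeMTF} with $\widetilde U\equiv 0$, $\hat{\fru}_j = 0$ and $\gamma_\dir^\Sigma U = 0$ gives $0 = U_\inc - \mathsf{G}_{\kappa_0}^\Sigma(0,p_\Sigma) = U_\inc - \mathsf{SL}_{\kappa_0}^\Sigma(p_\Sigma)$ in $\Omega_0$. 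But here $U_\inc = 0$? No --- we only assumed $\doublehat{\mathfrak{f}}=0$, i.e.\ $\gamma^j U_\inc = 0$ for $j=1,\dots,n$ and $\gamma^\Sigma U_\inc = 0$; this does \emph{not} force $U_\inc\equiv 0$ globally. However, one can argue that in \eqref{eq:UtildeMTF} the relevant identity is $\widetilde U|_{\Omega_0} = 0$, and since $\widetilde U$ solves the homogeneous problem, one re-examines: with $F_\Sigma \equiv 0$ and $\doublehat{\mathfrak f}=0$ the natural choice is to take $U_\inc$ with $\gamma^\Sigma U_\inc = 0$; then the representation formula gives $\mathsf{G}_{\kappa_0}^\Sigma(\gamma_c^\Sigma U_\inc) = 0$ on $\Omega_0$, and one concludes $\mathsf{SL}_{\kappa_0}^\Sigma(\gamma_\dir^\Sigma U - p_\Sigma')=\dots$. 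Cleaner: since $\widetilde U|_{\Omega_0} = 0$, taking $\gamma^0_\dir$ of the representation \eqref{eq:UtildeMTF} and using that $\gamma^\Sigma U_\inc = 0$ together with the exterior representation formula \eqref{eq:extRep} applied to $U_\inc$ in $\RR^d\setminus\overline\Omega_\Sigma$ (so $\mathsf{G}_{\kappa_0}^\Sigma(\gamma^\Sigma U_\inc)=0$ there), we obtain $\mathsf{G}_{\kappa_0}^\Sigma(\gamma_\dir^\Sigma U, p_\Sigma)=0$ in $\Omega_0$, i.e.\ $\mathsf{SL}_{\kappa_0}^\Sigma(p_\Sigma)=0$ in $\Omega_0$. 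Hence $\gamma^\Sigma_\dir \mathsf{SL}_{\kappa_0}^\Sigma(p_\Sigma) = 0$, which by \cite[Theorem 3.9.1]{SaSw:book:2011} forces $p_\Sigma = 0$ since $\kappa_0^2$ is not an interior Dirichlet eigenvalue of $-\Delta$ on $\Omega_\Sigma$. Thus $\hat{\fru}=0$.

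\medskip
\textbf{Step 3: the resonant case $\kappa_0 \in \mathfrak{S}(\Delta,\Omega_\Sigma)$: construction of a non-trivial kernel element.} Pick $W \in \mH^1(\Omega_\Sigma)\setminus\{0\}$ with $-\Delta W - \kappa_0^2 W = 0$ in $\Omega_\Sigma$ and $W = 0$ on $\Sigma$, so $\gamma^\Sigma_\dir(W)=0$. Set $U^* = 0$, $\hat{\fru}^*_j = 0$ for $j=1,\dots,n$, and $p_\Sigma^* = \gamma^\Sigma_\neu(W) \ne 0$ (nonzero by uniqueness for the interior Dirichlet problem, since $W\ne0$). Then $\doublehat{\fru}^* = (0,\dots,0,(0,p_\Sigma^*))$. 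I would verify directly that $(U^*,\hat{\fru}^*)$ solves \eqref{eq:MTF} with zero data by plugging into the expanded form \eqref{eq:MTFexp}: all terms involving $\hat{\fru}_j^*$ vanish; the surviving terms are $[\mathsf{A}_{\kappa_0}^\Sigma(0,p_\Sigma^*), \theta(\gamma_\dir^\Sigma V, q_\Sigma)]_\Sigma$, the $\tfrac12[(0,p_\Sigma^*),(\gamma_\dir^\Sigma V, q_\Sigma)]_\Sigma$ term, and the coupling terms $[\gamma^q\mathsf{G}_{\kappa_0}^\Sigma(0,p_\Sigma^*),\theta(\hat{\frv}_q)]_{\Gamma_q}$. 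The key computation is that $\mathsf{G}_{\kappa_0}^\Sigma(0,p_\Sigma^*) = \mathsf{SL}_{\kappa_0}^\Sigma(\gamma^\Sigma_\neu W)$, and by the interior representation formula \eqref{eq:intRep} applied to $W$ in $\Omega_\Sigma$ together with $\gamma^\Sigma_\dir W = 0$, we have $\mathsf{G}_{\kappa_0}^\Sigma(\gamma^\Sigma W)(\bx) = \mathsf{SL}_{\kappa_0}^\Sigma(\gamma^\Sigma_\neu W)(\bx) = 1_{\Omega_\Sigma}(\bx) W(\bx)$, hence $\mathsf{G}_{\kappa_0}^\Sigma(0,p_\Sigma^*) = 0$ on $\RR^d\setminus\overline\Omega_\Sigma$. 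Therefore $\gamma^q\mathsf{G}_{\kappa_0}^\Sigma(0,p_\Sigma^*) = 0$ for all $q=1,\dots,n$ (these traces live on $\Gamma_q \subset \RR^d\setminus\Omega_\Sigma$ in the relevant geometry, or more carefully: $\mathsf{G}_{\kappa_0}^\Sigma$ vanishes on $\Omega_q$), and similarly $\gamma_c^\Sigma\mathsf{G}_{\kappa_0}^\Sigma(0,p_\Sigma^*) = 0$, so by \eqref{eq:Akj12bis}, $\mathsf{A}_{\kappa_0}^\Sigma(0,p_\Sigma^*) = \gamma_c^\Sigma\mathsf{G}_{\kappa_0}^\Sigma(0,p_\Sigma^*) + (0,p_\Sigma^*)/2 = (0,p_\Sigma^*)/2$. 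Substituting, the surviving terms collapse: $[(0,p_\Sigma^*)/2,\theta(\gamma_\dir^\Sigma V,q_\Sigma)]_\Sigma + \tfrac12[(0,p_\Sigma^*),(\gamma_\dir^\Sigma V,q_\Sigma)]_\Sigma$, and using \eqref{eq:thetaIdentitybis} this equals $\langle 0, q_\Sigma\rangle_\Sigma = 0$ (because the Dirichlet component of $(0,p_\Sigma^*)$ is zero). Hence $(U^*,\hat{\fru}^*)$ is a non-trivial kernel element.

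\medskip
\textbf{Anticipated main obstacle.} The delicate point is Step~3's bookkeeping: one must be careful that $\gamma^q \mathsf{G}_{\kappa_0}^\Sigma(0,p_\Sigma^*)$ and $\gamma^\Sigma\mathsf{G}_{\kappa_0}^j(\cdots)$-type coupling terms genuinely vanish, which hinges on $\mathsf{G}_{\kappa_0}^\Sigma(0,p_\Sigma^*)$ being identically zero outside $\overline\Omega_\Sigma$ --- this is exactly the consequence of the representation formula when $\gamma^\Sigma_\dir W = 0$, and it is the crucial mechanism, just as in Example~\ref{ex:resCostabel} and the construction in Proposition~\ref{pr:injSTF}. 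A second subtlety is that in the multi-trace formulation the Dirichlet trace on $\Sigma$ is tied to $U$ via $\gamma_\dir^\Sigma U$ rather than being a free unknown, so one must confirm $(U^*,\hat{\fru}^*)$ is admissible, which is immediate since $U^* = 0$ and the $\Sigma$-Dirichlet data $\gamma_\dir^\Sigma U^* = 0$ is consistent with $W=0$ on $\Sigma$. The cancellation in the final line, using the $\theta$-identities \eqref{eq:thetaIdentity}--\eqref{eq:thetaIdentitybis}, should then be routine.
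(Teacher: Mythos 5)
Your Steps 1 and 3 essentially reproduce the paper's argument: the reduction $U=0$ via Proposition~\ref{pr:linkMTF} and well-posedness of \eqref{eq:bvp}, and the resonant construction with $p_\Sigma^*=\gamma_\neu^\Sigma W$ (equivalently, a nonzero element of $\ker(\gamma_\dir^\Sigma\mathsf{SL}_{\kappa_0}^\Sigma)$, which is what the paper uses), together with the key fact that $\mathsf{SL}_{\kappa_0}^\Sigma(p_\Sigma^*)$ vanishes identically on $\RR^d\setminus\overline{\Omega}_\Sigma$, so that all coupling terms drop out, $\mathsf{A}_{\kappa_0}^\Sigma(0,p_\Sigma^*)=(0,p_\Sigma^*/2)$ by \eqref{eq:Akj12bis}, and the two surviving $\Sigma$-terms cancel via \eqref{eq:thetaIdentitybis}. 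That part is correct.

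Step 2, however, contains a genuine error. From $\widetilde{U}\equiv 0$ you only learn that $\mathsf{G}_{\kappa_j}^j(\hat{\fru}_j)=0$ \emph{inside} $\Omega_j$, i.e.\ $\gamma^j\mathsf{G}_{\kappa_j}^j(\hat{\fru}_j)=(\mathsf{A}_{\kappa_j}^j+\Id/2)\hat{\fru}_j=0$. The jump relation \eqref{eq:jumprels} then yields $\hat{\fru}_j=-\gamma_c^j\mathsf{G}_{\kappa_j}^j(\hat{\fru}_j)$, not $\hat{\fru}_j=0$: the kernel of the interior Calder\'on projector is the infinite-dimensional space of exterior Cauchy data on $\Gamma_j$, so vanishing of the potential in $\Omega_j$ alone does not determine $\hat{\fru}_j$. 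More fundamentally, the single piece of information ``$\widetilde{U}=0$'' can never suffice here, since the map $\hat{\fru}\mapsto\widetilde{U}$ has a large kernel; the missing constraints must come from the variational equation itself. This is exactly why the paper, after establishing $U=0$ and $\gamma_\dir^\Sigma U=0$, tests \eqref{eq:MTF} with $V\in\mH_0^1(\Omega_\Sigma)$ and arbitrary $\hat{\frv}$ to reduce the problem to $\hat{\fru}\in\ker(\widehat{\mathsf{A}})$ for the multi-trace operator of \cite[Eq.~(6.3)]{ClHi:impenetrable:2015}, and then invokes the injectivity result \cite[Prop.~6.4]{ClHi:impenetrable:2015}; that is where the hypothesis $\kappa_0\notin\mathfrak{S}(\Delta,\Omega_\Sigma)$ actually does its work for \emph{all} components of $\hat{\fru}$, not only for $p_\Sigma$. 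Two secondary issues in the same step: even granting $\hat{\fru}_j=0$, passing from $\mathsf{SL}_{\kappa_0}^\Sigma(p_\Sigma)=0$ in $\Omega_0$ to $\gamma_\dir^\Sigma\mathsf{SL}_{\kappa_0}^\Sigma(p_\Sigma)=0$ on all of $\Sigma$ requires unique continuation across the connected set $\RR^d\setminus\overline{\Omega}_\Sigma$ when $\Sigma\not\subset\Gamma_0$; and your discussion of $U_\inc$ should simply record that $\gamma^\Sigma U_\inc=0$ forces $U_\inc\equiv 0$ (representation formula in $\Omega_\Sigma$ plus analyticity), rather than appealing to a ``natural choice''.
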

\begin{proof}
By Proposition~\ref{pr:linkMTF}, the function $\widetilde U$ defined in \eqref{eq:UtildeMTF} solves the homogeneous transmission problem \eqref{eq:bvp}, which is well-posed, so $\widetilde U = 0$. In particular, $U=\widetilde{U}|_{\Omega_\Sigma} =0$ and $\gamma_\dir^\Sigma U = 0$. 
Therefore, if we test formulation \eqref{eq:MTF} with $F_\Sigma \equiv 0$, $\doublehat{\mathfrak{f}}=0$ using test functions $V \in \mH_0^1(\Omega_\Sigma)$ (and $\hat{\frv} = (\hat{\frv}_1,\dots,\hat{\frv}_n, q_\Sigma)\in \widehat{\mbH}(\Gamma)$), we obtain 
\[
\lBrace \wdoublehat{\mathsf{A}}(\doublehat{\fru}), \Theta(\doublehat{\frv}) \rBrace = 0, \text{ with }
\doublehat{\fru} = (\hat{\fru}_1, \dots, \hat{\fru}_n, (0, p_\Sigma)), \quad
\doublehat{\frv} = (\hat{\frv}_1, \dots, \hat{\frv}_n, (0, q_\Sigma)), 
\]
which reduces to $\llbracket \widehat{\mathsf{A}}(\hat{\fru}), \Theta(\hat{\frv})\rrbracket = 0$, where $\widehat{\mathsf{A}}$ is the operator defined in \cite[Equation~(6.3)]{ClHi:impenetrable:2015}. Then $\hat{\fru} \in \ker(\widehat{\mathsf{A}})$ and, by  \cite[Proposition 6.4]{ClHi:impenetrable:2015}, if $\kappa_0 \notin \mathfrak{S}(\Delta,\Omega_\Sigma)$ we get $\hat{\fru} =  0$.
%By \cite[Proposition 6.4]{ClHi:impenetrable:2015}, $\ker(\widehat{\mathsf{A}}) = \set{(0,\dots,0,p) | p \in \ker(\gamma_\dir^\Sigma \mathsf{SL}_{\kappa_0}^\Sigma)}$, so $\hat{\fru} = (0,\dots, 0, p_\Sigma)$ with $p_\Sigma \in \ker(\gamma_\dir^\Sigma \mathsf{SL}_{\kappa_0}^\Sigma)$, and if $\kappa_0 \notin \mathfrak{S}(\Delta,\Omega_\Sigma)$, we have that $p_\Sigma = 0$ (see \cite[Theorem 3.9.1]{SaSw:book:2011})), so $\hat{\fru} =  0$.

Now we show that $\kappa_0 \notin \mathfrak{S}(\Delta,\Omega_\Sigma)$ is also a necessary condition. If $\kappa_0 \in \mathfrak{S}(\Delta,\Omega_\Sigma)$, by  \cite[Theorem 3.9.1]{SaSw:book:2011} we know that $\ker(\gamma_\dir^\Sigma \mathsf{SL}_{\kappa_0}^\Sigma) \ne \{0\}$, and we consider $p \in \ker(\gamma_\dir^\Sigma \mathsf{SL}_{\kappa_0}^\Sigma) \backslash \{0\}$. As $\gamma_\dir^\Sigma \mathsf{SL}_{\kappa_0}^\Sigma(p) = 0$, by jump relations \eqref{eq:jumprels} we have $\gamma_{\dir,c}^\Sigma \mathsf{SL}_{\kappa_0}^\Sigma(p) = 0$, and, since the exterior Helmholtz boundary value problem is well-posed, we get $\mathsf{SL}_{\kappa_0}^\Sigma(p)(\bx) = 0$ for $\bx \in \RR^d \backslash \Omega_\Sigma$. 
Therefore, $\gamma^q \mathsf{SL}_{\kappa_0}^\Sigma(p) = 0$ for all $q = 1,\dots, n$ and $\gamma_{\neu,c}^\Sigma \mathsf{SL}_{\kappa_0}^\Sigma(p) = 0$.
In particular, using \eqref{eq:Akj12bis},
\[
\mathsf{A}_{\kappa_0}^\Sigma \left(0, p\right) = \gamma_c^\Sigma \mathsf{SL}_{\kappa_0}^\Sigma(p) +  \left(0, p\right) /2  = \left(0, p/2\right).
\] 
Then, if we evaluate the left-hand side of formulation \eqref{eq:MTF} in $U^*=0$, $\hat{\fru}^* = (0, \dots, 0, p)$ we have 
\[
\sum_{q=1}^n  [\gamma^q \mathsf{SL}_{\kappa_0}^\Sigma (p), \theta(\hat{\frv}_q)  ]_{\Gamma_q} +
\left [ \left(0, p/2\right), \left(-\gamma_\dir^\Sigma V, q_\Sigma \right) \right ]_\Sigma + \frac{1}{2} \left [ \left(0, p\right), \left(\gamma_\dir^\Sigma V, q_\Sigma\right) \right ]_\Sigma = 0,
\]
for all $(V,\hat{\frv}) \in \mH^1(\Omega_\Sigma) \times \widehat{\mbH}(\Gamma)$, and we have found a non-trivial solution.
\end{proof}
Comparing the injectivity conditions in Propositon~\ref{pr:injSTF} and Proposition~\ref{pr:injMTF}, we see that in the case $\Sigma \subset \Gamma_0$, if the single-trace formulation \eqref{eq:STF} suffer from spurious resonances then so does the multi-trace formulation \eqref{eq:MTF}. On the other hand, in the case $\Sigma \not \subset \Gamma_0$, there are wavenumbers $\kappa_0$ for which the multi-trace formulation \eqref{eq:MTF} breaks down, while the single-trace formulation \eqref{eq:STF} remains injective. 
If the single-trace formulation \eqref{eq:STF} fails to be injective because $\Sigma \subset \Gamma_1$ and $\kappa_1 \in \mathfrak{S}(\Delta,\Omega_\Sigma)$, but $\kappa_0 \notin \mathfrak{S}(\Delta,\Omega_\Sigma)$, the multi-trace formulation \eqref{eq:MTF} is instead well-posed. 
Note that we could write a multi-trace formulation based on another subdomain than $\Omega_0$, say $\Omega_i$, loosely speaking by filling the gap with the same medium as $\Omega_i$.

\section{Multi-trace combined field FEM-BEM formulation}
\label{sec:MTFcomb}

\noindent 
We have shown that the multi-trace FEM-BEM formulation \eqref{eq:MTF} is affected by spurious resonances when $\kappa_0 \in \mathfrak{S}(\Delta,\Omega_\Sigma)$. 
Again, as a remedy, we adapt the approach of combined field integral equations. More precisely, as the standard multi-trace FEM-BEM formulation \eqref{eq:MTF} was obtained by manipulating the standard single-trace FEM-BEM formulation \eqref{eq:STF}, similarly we will obtain a combined field multi-trace FEM-BEM formulation by manipulating the combined field single-trace FEM-BEM formulation \eqref{eq:CSTF}. 
Since the difference between \eqref{eq:STF} and \eqref{eq:CSTF} lies only in the compact bilinear form $\mathsf{c}$ and in the right-hand side with $\widetilde{\fru}^\inc$ defined in \eqref{eq:cBilForm}, we just need to elaborate these terms. 

As in \cite[§6.4]{ClHi:impenetrable:2015} we first derive the formulation \emph{in the gap setting}, and we look for 
\[
\wdoublehat{\mathsf{c}} \colon (\mH^1(\Omega_\Sigma) \times \widehat{\mbH}(\Gamma))^2 \to \CC 
\quad \text{such that}  \quad \wdoublehat{\mathsf{c}} ((U,\hat{\fru}),(V,\hat{\frv})) = \mathsf{c}(\fru, \frv)
\]
where $(U,\hat{\fru})$, $(V,\hat{\frv}) \in \mH^1(\Omega_\Sigma) \times \widehat{\mbH}(\Gamma)$ correspond respectively to 
$(U,\fru)$, $(V,\frv) \in \XSigmaGamma$ under the isomorphism defined at the beginning of §\ref{sec:MTF}. 
Observe that in the gap setting, where $\Sigma \subset \partial\Omega_0$, the extension operator $\mathsf{E}_\Sigma$ can be picked to map into functions whose support is inside $\Omega_0 \cup \overline{\Omega}_\Sigma$, so that $\gamma_\dir^j \circ \mathsf{E}_\Sigma = 0$ for $j \ne 0$ and the operator $\mathsf{C}$ in \eqref{eq:R-C}, essentially, maps into $\mH^{1/2}(\Sigma)$.  
Then, applying also \eqref{eq:Akj12bis}, 
%\marginpar{\em j'ai pris en compte la nouvelle définition de $\mathsf{c}$ (position de $\Theta$)... donc les signes ont changé...}
\begin{equation*}
\mathsf{c}(\fru, \frv) = \sum_{j=0}^n \left[\gamma_c^j \mathsf{G}_{\kappa_j}^j (\fru_j), \theta(\mathsf{C}\frv)_j\right]_{\Gamma_j} 
= \left[\gamma_c^0 \mathsf{G}_{\kappa_0}^0 (\fru_0), \theta(\mathsf{C}\frv)_0\right]_{\Gamma_0} 
= - \Braket{\gamma_\neu^\Sigma \mathsf{G}_{\kappa_0}^0(\fru_0), \mathsf{M}\Tr_\neu(\frv)}_\Sigma,
\end{equation*}
where we have used the definition of $\mathsf{C}$, $\gamma_{\neu,c}^0 = - \gamma_\neu^\Sigma$, $\gamma_{\dir}^0 = \gamma_\dir^\Sigma$, $\gamma_\dir^\Sigma \circ \mathsf{E}_\Sigma = \Id$. 
Moreover, since $(U,\fru) \in \XSigmaGamma$, in the gap setting $\fru_0$ equals $\phi(\fru_j)$ on each $\Gamma_j$, $j=1,\dots,n$, and equals $\phi(\gamma_\dir^\Sigma U, \Tr_\neu(\fru))$ on $\Sigma$, that reflects exactly the isomorphism defined at the beginning of §\ref{sec:MTF}. 
This implies 
$\mathsf{G}_{\kappa_0}^0(\fru_0) = - \mathsf{G}_{\kappa_0}^\Sigma(\gamma_\dir^\Sigma U, \Tr_\neu(\fru)) - \sum_{j=1}^n \mathsf{G}_{\kappa_0}^j (\fru_j)$. 
Therefore, for $\hat{\fru} = (\hat{\fru}_1,\dots,\hat{\fru}_n, p_\Sigma)  \in \widehat{\mbH}(\Gamma)$, $\hat{\frv} = (\hat{\frv}_1,\dots,\hat{\frv}_n, q_\Sigma)  \in \widehat{\mbH}(\Gamma)$, we get
\[
\wdoublehat{\mathsf{c}} ((U,\hat{\fru}),(V,\hat{\frv})) \coloneqq  
 \Braket{\mathsf{M}^*\gamma_\neu^\Sigma \mathsf{G}_{\kappa_0}^\Sigma \left(\gamma_\dir^\Sigma U, p_\Sigma \right), q_\Sigma}_\Sigma 
+ \sum_{j=1}^n \Braket{\mathsf{M}^*\gamma_\neu^\Sigma \mathsf{G}_{\kappa_0}^j (\hat{\fru}_j), q_\Sigma}_\Sigma.
\]
Now, summing the term in \eqref{eq:MTF} that derives from $[\mathsf{A}(\fru), \Theta(\frv)]_\Gamma$ we write
\[
\lBrace \wdoublehat{\mathsf{A}}(\doublehat{\fru}), \Theta(\doublehat{\frv}) \rBrace + \wdoublehat{\mathsf{c}} ((U,\hat{\fru}),(V,\hat{\frv})) 
= \lBrace \wdoublehat{\mathsf{A}}_\mathsf{M}(\doublehat{\fru}), \Theta(\doublehat{\frv}) \rBrace
\]
where
\[
\doublehat{\fru} \coloneqq (\hat{\fru}_1, \dots, \hat{\fru}_n, (\gamma_\dir^\Sigma U, p_\Sigma)), \quad
\doublehat{\frv} \coloneqq (\hat{\frv}_1, \dots, \hat{\frv}_n, (\gamma_\dir^\Sigma V, q_\Sigma)), \qquad \text{and}
\]
\begin{equation*}
%\label{eq:defAhathatM}
\wdoublehat{\mathsf{A}}_\mathsf{M} \coloneqq
\begin{bmatrix}
\mathsf{A}_{\kappa_1}^1 + \mathsf{A}_{\kappa_0}^1 & \gamma^1\mathsf{G}_{\kappa_0}^2 & \dots &  \gamma^1\mathsf{G}_{\kappa_0}^n &  \gamma^1\mathsf{G}_{\kappa_0}^\Sigma \\
\gamma^2\mathsf{G}_{\kappa_0}^1 & \mathsf{A}_{\kappa_2}^2 + \mathsf{A}_{\kappa_0}^2 &   &  \gamma^2\mathsf{G}_{\kappa_0}^n &  \gamma^2\mathsf{G}_{\kappa_0}^\Sigma \\
\vdots & & \ddots & & \vdots \\
\gamma^n \mathsf{G}_{\kappa_0}^1 & \gamma^n\mathsf{G}_{\kappa_0}^2 & & \mathsf{A}_{\kappa_n}^n + \mathsf{A}_{\kappa_0}^n  &  \gamma^n\mathsf{G}_{\kappa_0}^\Sigma \\
\binom{\gamma_\dir^\Sigma + \mathsf{M}^*\gamma_\neu^\Sigma}{\gamma_\neu^\Sigma} \mathsf{G}_{\kappa_0}^1 & \binom{\gamma_\dir^\Sigma + \mathsf{M}^*\gamma_\neu^\Sigma}{\gamma_\neu^\Sigma}\mathsf{G}_{\kappa_0}^2 & \dots & \binom{\gamma_\dir^\Sigma + \mathsf{M}^*\gamma_\neu^\Sigma}{\gamma_\neu^\Sigma}\mathsf{G}_{\kappa_0}^n & 
\mathsf{A}_{\kappa_0}^\Sigma + \binom{\mathsf{M}^*\gamma_\neu^\Sigma}{0}\mathsf{G}_{\kappa_0}^\Sigma
\end{bmatrix}.
\end{equation*}
Note that $\wdoublehat{\mathsf{A}}_\mathsf{M}$ differs from $\wdoublehat{\mathsf{A}}$ only in the Dirichlet traces on $\Sigma$ in the last line. 

In a similar way, for the right-hand side $-[\widetilde{\fru}^\inc, \frv]_\Gamma = -[\fru^\inc, \Theta(\Id+\mathsf{C})(\frv)]_\Gamma$, we get
\[
[\fru^\inc, \Theta\mathsf{C}(\frv)]_\Gamma = 
 [\gamma^0 U_\inc, \theta(\mathsf{C}\frv)_0]_{\Gamma_0} = 
 - \braket{\gamma_\neu^\Sigma U_\inc, \mathsf{M}\Tr_\neu(\frv)}_\Sigma =
 - \braket{\mathsf{M}^* \gamma_\neu^\Sigma U_\inc, \Tr_\neu(\frv)}_\Sigma
\]
and combining with the term in \eqref{eq:MTF} that derives from $-[\fru^\inc, \Theta(\frv)]_\Gamma$ we write
\[
\doublehat{\mathfrak{f}}_\mathsf{M} \coloneqq \begin{pmatrix}\gamma^1 U_\inc, \dots, \gamma^n U_\inc, \binom{\gamma_\dir^\Sigma + \mathsf{M}^*\gamma_\neu^\Sigma}{\gamma_\neu^\Sigma} U_\inc \end{pmatrix}.
\]
In conclusion, we define the \emph{global multi-trace combined field FEM-BEM formulation} 
\begin{empheq}[box=\fbox]{equation}
\label{eq:CMTF}
\begin{aligned}
&
\begin{split}
& \text{find } (U,\hat{\fru}) \in \mH^1(\Omega_\Sigma) \times \widehat{\mbH}(\Gamma), \, \hat{\fru} = (\hat{\fru}_1,\dots,\hat{\fru}_n, p_\Sigma), \text{ such that} \\
& a_\Sigma(U,V) + \lBrace \wdoublehat{\mathsf{A}}_\mathsf{M}(\doublehat{\fru}), \Theta(\doublehat{\frv}) \rBrace
+\frac{1}{2} \left[ \left(\gamma_\dir^\Sigma U, p_\Sigma\right), \left(\gamma_\dir^\Sigma V, q_\Sigma \right) \right]_\Sigma \\
&  = F_\Sigma(V)  + \lBrace \doublehat{\mathfrak{f}}_\mathsf{M},\Theta(\doublehat{\frv}) \rBrace \quad \forall (V,\hat{\frv}) \in \mH^1(\Omega_\Sigma) \times \widehat{\mbH}(\Gamma),\hat{\frv} = (\hat{\frv}_1,\dots,\hat{\frv}_n, q_\Sigma)
\end{split}
\\
&
\text{where } 
\doublehat{\fru} \coloneqq (\hat{\fru}_1, \dots, \hat{\fru}_n, (\gamma_\dir^\Sigma U, p_\Sigma)), \quad
\doublehat{\frv} \coloneqq (\hat{\frv}_1, \dots, \hat{\frv}_n, (\gamma_\dir^\Sigma V, q_\Sigma)).
\end{aligned}
\end{empheq}
Even if we have derived this formulation in the gap setting, it is
still valid in a general geometric configuration such as
Figure~\ref{fig:gap}, left. This will be justified in what follows.
We first show which is the relationship of its solutions with the
solutions to the standard multi-trace FEM-BEM formulation
\eqref{eq:MTF}.

\begin{proposition}
\label{prop:MTF-CMTF}
A solution to the combined field multi-trace FEM-BEM formulation
\eqref{eq:CMTF} is also a solution to the standard multi-trace FEM-BEM
formulation \eqref{eq:MTF}.
\end{proposition}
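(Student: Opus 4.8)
The plan is to exploit that \eqref{eq:CMTF} is obtained from \eqref{eq:MTF} by a \emph{compact modification supported entirely on the Neumann-on-$\Sigma$ slot of the test functions}. Comparing $\wdoublehat{\mathsf{A}}_\mathsf{M}$ with $\wdoublehat{\mathsf{A}}$, and $\doublehat{\mathfrak{f}}_\mathsf{M}$ with $\doublehat{\mathfrak{f}}$, the only difference lies in the Dirichlet component of the $\Sigma$-row, where $\mathsf{M}^*\gamma_\neu^\Sigma\mathsf{G}_{\kappa_0}^{(\cdot)}$ (resp.\ $\mathsf{M}^*\gamma_\neu^\Sigma U_\inc$) has been added. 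Writing $W \coloneqq \sum_{j=1}^n \mathsf{G}_{\kappa_0}^j(\hat{\fru}_j) + \mathsf{G}_{\kappa_0}^\Sigma(\gamma_\dir^\Sigma U, p_\Sigma)$, a direct computation with $\theta(v,q)=(-v,q)$ and the skew pairing gives, for any test pair $(V,\hat{\frv})$ whose Neumann-on-$\Sigma$ component is $q_\Sigma$,
\[
\lBrace(\wdoublehat{\mathsf{A}}_\mathsf{M}-\wdoublehat{\mathsf{A}})(\doublehat{\fru}),\Theta(\doublehat{\frv})\rBrace = \langle \mathsf{M}^*\gamma_\neu^\Sigma W, q_\Sigma\rangle_\Sigma, \qquad
\lBrace \doublehat{\mathfrak{f}}_\mathsf{M}-\doublehat{\mathfrak{f}},\Theta(\doublehat{\frv})\rBrace = \langle \mathsf{M}^*\gamma_\neu^\Sigma U_\inc, q_\Sigma\rangle_\Sigma.
\]
Subtracting \eqref{eq:MTF} from \eqref{eq:CMTF} at the given solution $(U,\hat{\fru})$, one sees that $(U,\hat{\fru})$ satisfies \eqref{eq:MTF} up to the single residual $-\langle\mathsf{M}^*\gamma_\neu^\Sigma(W-U_\inc),q_\Sigma\rangle_\Sigma$, tested over all $q_\Sigma\in\mH^{-1/2}(\Sigma)$. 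Since $\mathsf{M}^*$ is injective (it inherits property \eqref{eq:regopB} from $\mathsf{M}$), it therefore suffices to prove that $\gamma_\neu^\Sigma(W-U_\inc)=0$.

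To this end I would set $\phi\coloneqq W-U_\inc$, regarded as a function on $\Omega_\Sigma$. Since none of $\Gamma_1,\dots,\Gamma_n,\Sigma$ crosses the interior of $\Omega_\Sigma$, each potential composing $W$ restricts to $\mH^1(\Omega_\Sigma)$ and solves there the homogeneous Helmholtz equation with wavenumber $\kappa_0$ (\emph{not} $\kappa_\Sigma$); the same holds for $U_\inc$, so $\phi\in\mH^1(\Omega_\Sigma)$ with $-\Delta\phi-\kappa_0^2\phi=0$ in $\Omega_\Sigma$. Next I would insert the admissible test pair $(V,\hat{\frv})=(0,(0,\dots,0,q_\Sigma))$ into \eqref{eq:CMTF}: the volume form $a_\Sigma$ and the source drop out, the term $\tfrac12[(\gamma_\dir^\Sigma U,p_\Sigma),(0,q_\Sigma)]_\Sigma$ yields $\tfrac12\langle\gamma_\dir^\Sigma U,q_\Sigma\rangle_\Sigma$, and using \eqref{eq:Akj12} to write $\mathsf{A}_{\kappa_0}^\Sigma=\gamma^\Sigma\mathsf{G}_{\kappa_0}^\Sigma-\Id/2$ the Dirichlet part of the $\Sigma$-row collapses to $\gamma_\dir^\Sigma W+\mathsf{M}^*\gamma_\neu^\Sigma W-\tfrac12\gamma_\dir^\Sigma U$. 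Matching with the right-hand side and cancelling $q_\Sigma$ produces an impedance condition
\[
\gamma_\dir^\Sigma\phi = -\,\mathsf{M}^*\gamma_\neu^\Sigma\phi \qquad\text{on }\Sigma.
\]

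The last step is the standard impedance energy argument. Green's first identity on $\Omega_\Sigma$ applied to $\phi$, together with $-\Delta\phi=\kappa_0^2\phi$ and $\kappa_0\in\RR$, gives $\langle\gamma_\neu^\Sigma\phi,\gamma_\dir^\Sigma\overline{\phi}\rangle_\Sigma=\lVert\nabla\phi\rVert_{\mL^2(\Omega_\Sigma)}^2-\kappa_0^2\lVert\phi\rVert_{\mL^2(\Omega_\Sigma)}^2\in\RR$. Substituting the impedance condition ($\gamma_\dir^\Sigma\overline{\phi}=-\overline{\mathsf{M}^*\gamma_\neu^\Sigma\phi}$) and using that the $\mH^{\pm1/2}(\Sigma)$ duality pairing is the non-conjugated bilinear extension of the $\mL^2$ product, one obtains $\Im\{\langle\mathsf{M}^*\gamma_\neu^\Sigma\phi,\overline{\gamma_\neu^\Sigma\phi}\rangle_\Sigma\}=0$; property \eqref{eq:regopB} for $\mathsf{M}^*$ then forces $\gamma_\neu^\Sigma\phi=0$, which is exactly what was required, so the residual vanishes identically and $(U,\hat{\fru})$ solves \eqref{eq:MTF}. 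I expect this energy step to be the main obstacle: one has to remember that $\phi$ solves the Helmholtz equation with the \emph{exterior} wavenumber $\kappa_0$ — which is precisely what makes the impedance problem uniquely solvable — and to track the complex conjugations carefully through the bilinear (not sesquilinear) pairing so that the realness of the Dirichlet energy lands on $\Im\{\langle\mathsf{M}^*\varphi,\overline{\varphi}\rangle_\Sigma\}$, where \eqref{eq:regopB} can be invoked.
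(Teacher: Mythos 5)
Your proposal is correct and follows essentially the same route as the paper: test \eqref{eq:CMTF} with $(V,\hat{\frv})=(0,(0,\dots,0,q_\Sigma))$, use the jump relation \eqref{eq:Akj12} to turn the $\Sigma$-row into the generalized impedance condition $\gamma_\dir^\Sigma\phi=-\mathsf{M}^*\gamma_\neu^\Sigma\phi$ for $\phi=W-U_\inc$ (the paper's $W$ is just $-\phi$), and then apply Green's formula plus property \eqref{eq:regopB} to get $\gamma_\neu^\Sigma\phi=0$, which kills the only term by which \eqref{eq:CMTF} and \eqref{eq:MTF} differ. Your packaging of that difference as a single residual $-\langle\mathsf{M}^*\gamma_\neu^\Sigma\phi,q_\Sigma\rangle_\Sigma$ is a minor (and clean) cosmetic variation on the paper's conclusion via $\gamma_\dir^\Sigma W=0$; the remark about injectivity of $\mathsf{M}^*$ is not even needed since you prove the stronger statement $\gamma_\neu^\Sigma\phi=0$ directly.
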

\begin{proof}
Let $(U, \hat{\fru})$ be a solution to formulation
\eqref{eq:CMTF}. Then, if we take test functions $V=0$, $\hat{\frv} =
(0,\dots,0,q_\Sigma)$ with some $q_\Sigma \in \mH^{-1/2}(\Sigma)$
(thus $\gamma_\dir^\Sigma V = 0$, $\doublehat{\frv} =
(0,\dots,0,(0,q_\Sigma))$), it yields
\begin{multline*}
\Braket{\sum_{j=1}^n (\gamma_\dir^\Sigma
  +\mathsf{M}^*\gamma_\neu^\Sigma) \mathsf{G}_{\kappa_0}^j
  (\hat{\fru}_j) + (\{\gamma_\dir^\Sigma\}
  +\mathsf{M}^*\gamma_\neu^\Sigma)\mathsf{G}_{\kappa_0}^\Sigma
  \left(\gamma_\dir^\Sigma U, p_\Sigma\right), q_\Sigma}_\Sigma +
\frac{1}{2} \Braket{\gamma_\dir^\Sigma U, q_\Sigma}_\Sigma \\ =
\Braket{(\gamma_\dir^\Sigma +\mathsf{M}^*\gamma_\neu^\Sigma) U_\inc,
  q_\Sigma}_\Sigma \qquad \forall \, q_\Sigma \in \mH^{-1/2}(\Sigma),
\end{multline*}
and, since 
\[
\Braket{\{\gamma_\dir^\Sigma\}\mathsf{G}_{\kappa_0}^\Sigma
  \left(\gamma_\dir^\Sigma U, p_\Sigma\right), q_\Sigma}_\Sigma = \Braket{
  \gamma_\dir^\Sigma \mathsf{G}_{\kappa_0}^\Sigma
  \left(\gamma_\dir^\Sigma U, p_\Sigma\right), q_\Sigma}_\Sigma -
\frac{1}{2} \Braket{\gamma_\dir^\Sigma U, q_\Sigma}_\Sigma,
\]
we obtain
\[
\Braket{(\gamma_\dir^\Sigma +\mathsf{M}^*\gamma_\neu^\Sigma) \biggl( U_\inc -\sum_{j=1}^n \mathsf{G}_{\kappa_0}^j (\hat{\fru}_j) - \mathsf{G}_{\kappa_0}^\Sigma \left(\gamma_\dir^\Sigma U, p_\Sigma\right) \biggr), q_\Sigma}_\Sigma = 0
 \quad \forall \, q_\Sigma \in \mH^{-1/2}(\Sigma).
\]
Therefore, if we introduce
\begin{equation}\label{FctIntermediaireW}
  W \coloneqq U_\inc -\sum_{j=1}^n \mathsf{G}_{\kappa_0}^j (\hat{\fru}_j)
  - \mathsf{G}_{\kappa_0}^\Sigma \left(\gamma_\dir^\Sigma U, p_\Sigma\right),
\end{equation}
this means $\gamma_\dir^\Sigma W = -\mathsf{M}^* \gamma_\neu^\Sigma W$. 
Moreover, $W$ solves $-\Delta W - \kappa_0^2 W = 0$ in $\Omega_\Sigma$, so by Green's formula 
\[
\int_{\Omega_\Sigma} (|\nabla W|^2 - \kappa_0^2 |W|^2) d\bx =
-\Braket{\gamma_\neu^\Sigma \overline{W},
  \mathsf{M}^*\gamma_\neu^\Sigma W}_\Sigma,
\]
and taking the imaginary part, since $\kappa_0 \in \RR$, we obtain $0
= - \Im\{ \braket{\mathsf{M}\gamma_\neu^\Sigma \overline{W},
  \gamma_\neu^\Sigma W}_\Sigma \}$, that implies $\gamma_\neu^\Sigma W
= 0$ by property \eqref{eq:regopB} of $\mathsf{M}$.
As a consequence $\gamma_\dir^\Sigma W =
-\mathsf{M}^*\gamma_\neu^\Sigma W = 0$.  The conclusion
$\gamma_\dir^\Sigma W = 0$ finishes the proof because, looking at the
definition of $W$, this corresponds exactly to the equation in
formulation \eqref{eq:MTF} associated with the Dirichlet component of
the last line of $\wdoublehat{\mathsf{A}}$ and
$\doublehat{\mathfrak{f}}$, which represents the only difference
between formulations \eqref{eq:MTF} and \eqref{eq:CMTF}.
\end{proof}
A corollary of this proposition is that if $(U, \hat{\fru})$ satisfies
formulation \eqref{eq:CMTF}, then the unique solution to the
transmission problem \eqref{eq:bvp} is given by $\widetilde{U}$ in
\eqref{eq:UtildeMTF}. This justifies considering formulation
\eqref{eq:CMTF} for \emph{general geometric settings}.

Moreover, by the compactness of $\mathsf{M}$, the block operator
$\wdoublehat{\mathsf{A}}_\mathsf{M}$ is a compact perturbation of
$\wdoublehat{\mathsf{A}}$, so a \emph{G\r{a}rding inequality} analogue
to Proposition~\ref{pr:gardingMTF} still holds, and the induced
operator is of Fredholm type with index $0$. Therefore, in the case of
injectivity, all the good properties recalled below
Proposition~\ref{pr:gardingSTF} follow.  As desired, the combined
field formulation \eqref{eq:CMTF} is immune to spurious resonances for
any choice of the positive wavenumbers $\kappa_j$:
\begin{proposition}[Injectivity]
Let $(U,\hat{\fru}) \in \mH^1(\Omega_\Sigma) \times
\widehat{\mbH}(\Gamma)$ solve formulation \eqref{eq:CMTF} with
$F_\Sigma \equiv 0$, $\doublehat{\mathfrak{f}}_\mathsf{M}=0$. Then
$U=0$, $\hat{\fru} = 0$.
\end{proposition}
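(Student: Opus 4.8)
The plan is to reduce first to the transmission problem---which kills $U$ and, via unique continuation, the incident field---and then to exploit that, in contrast to $\wdoublehat{\mathsf{A}}$, the $\Sigma$-block of $\wdoublehat{\mathsf{A}}_\mathsf{M}$ carries a \emph{homogeneous impedance} condition on $\Sigma$, which is never resonant. Concretely, I would first invoke Proposition~\ref{prop:MTF-CMTF}: the solution $(U,\hat{\fru})$ of \eqref{eq:CMTF} with $F_\Sigma\equiv 0$, $\doublehat{\mathfrak{f}}_\mathsf{M}=0$ also solves the homogeneous formulation \eqref{eq:MTF}. Since the vanishing of the last component of $\doublehat{\mathfrak{f}}_\mathsf{M}$ forces $\gamma^\Sigma U_\inc=0$, hence $U_\inc\equiv 0$ by unique continuation, \eqref{eq:UtildeMTF} defines a solution $\widetilde U$ of the \emph{homogeneous} transmission problem \eqref{eq:bvp}; the latter being uniquely solvable, $\widetilde U\equiv 0$, so $U=0$ and $\gamma_\dir^\Sigma U=0$. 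I record two consequences: from $\widetilde U|_{\Omega_j}=\mathsf{G}_{\kappa_j}^j(\hat{\fru}_j)=0$ and \eqref{eq:Akj12}, $(\mathsf{A}_{\kappa_j}^j+\Id/2)\hat{\fru}_j=0$ for $j=1,\dots,n$; and, with $\gamma_\dir^\Sigma U=0$, the identity $\widetilde U|_{\Omega_0}=0$ reads that $\Phi:=\mathsf{SL}_{\kappa_0}^\Sigma(p_\Sigma)+\sum_{j=1}^n\mathsf{G}_{\kappa_0}^j(\hat{\fru}_j)$ vanishes in $\Omega_0$.

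Next, observe that the function $W$ of \eqref{FctIntermediaireW} equals $-\Phi$ here, and solves $-\Delta W-\kappa_0^2W=0$ in $\Omega_\Sigma$. The proof of Proposition~\ref{prop:MTF-CMTF} shows---from the $\Sigma$-Dirichlet row of \eqref{eq:CMTF}, a Green identity in $\Omega_\Sigma$, and property~\eqref{eq:regopB} of $\mathsf{M}$---that $\gamma^\Sigma W=0$, i.e.\ $\gamma^\Sigma\Phi=0$; combined with $\Phi=0$ in $\Omega_0$ and the representation formula~\eqref{eq:intRep} applied in $\Omega_\Sigma$, this yields $\Phi=0$ in $\Omega_\Sigma$ as well. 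To squeeze out $p_\Sigma$, I would then test the homogeneous \eqref{eq:CMTF} with $\hat{\frv}=0$ and $V$ arbitrary: using $U=0$, $\gamma_\dir^\Sigma U=0$, the jump relations \eqref{eq:jumprels}, and \eqref{eq:Akj12bis}, the $\Sigma$-Neumann row collapses to $\gamma_\neu^\Sigma\Phi=p_\Sigma$, and since $\gamma_\neu^\Sigma\Phi=-\gamma_\neu^\Sigma W=0$ we get $p_\Sigma=0$.

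It remains to obtain $\hat{\fru}_j=0$. With $p_\Sigma=0$, the pair $(0,\hat{\fru})$, $\hat{\fru}=(\hat{\fru}_1,\dots,\hat{\fru}_n,0)$, solves the homogeneous \eqref{eq:MTF}; testing with $V\in\mH^1_0(\Omega_\Sigma)$ as in the proof of Proposition~\ref{pr:injMTF} gives $\widehat{\mathsf{A}}(\hat{\fru})=0$, and since by \cite[Prop.~6.4]{ClHi:impenetrable:2015} (and its proof) every element of $\ker(\widehat{\mathsf{A}})$ has vanishing $\mbH(\Gamma_j)$-components and $\mH^{-1/2}(\Sigma)$-component lying in $\ker(\gamma_\dir^\Sigma\mathsf{SL}_{\kappa_0}^\Sigma)$, we conclude $\hat{\fru}=0$. (Alternatively, one glues the potentials $\mathsf{G}_{\kappa_j}^j(\hat{\fru}_j)$ and $-\Phi$, together with the relations $(\mathsf{A}_{\kappa_j}^j+\Id/2)\hat{\fru}_j=0$ and $\Phi=0$ in $\Omega_0\cup\Omega_\Sigma$, into a single-trace tuple annihilated by the standard single-trace operator, and invokes \cite[Thm.~4.1]{Pet:STF:1989}, equivalently \cite[Prop.~A.1]{ClHi:MTF:2013}.) Hence $U=0$ and $\hat{\fru}=0$.

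The step I expect to be the main obstacle is the second one: turning the ``gap-derived'' $\Sigma$-rows of $\wdoublehat{\mathsf{A}}_\mathsf{M}$ into the genuine identities $\gamma_\dir^\Sigma\Phi=-\mathsf{M}^*\gamma_\neu^\Sigma\Phi$ and $\gamma_\neu^\Sigma\Phi=p_\Sigma$ in a \emph{general} (non-gap) geometry requires careful bookkeeping of interior versus exterior traces on $\Sigma$ and of the orientation of $\bn_\Sigma$ relative to the subdomain normals through the jump relations; the subsequent bootstrapping to $\hat{\fru}_j=0$ is then just a matter of quoting the precise description of $\ker(\widehat{\mathsf{A}})$.
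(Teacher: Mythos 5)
Your proof is correct in its overall architecture and shares its first half with the paper's: both start from Proposition~\ref{prop:MTF-CMTF} (and its proof) to obtain $U=0$, $\gamma_\dir^\Sigma U=0$, and $\gamma_\neu^\Sigma W=0$ for the auxiliary potential $W$ of \eqref{FctIntermediaireW}. Where you diverge is in how $\hat{\fru}=0$ is extracted. The paper observes that $\gamma_\neu^\Sigma W=0$ makes all the $\mathsf{M}^*$ terms in the last row of $\wdoublehat{\mathsf{A}}_{\mathsf{M}}$ vanish on $\doublehat{\fru}$, so that $\wdoublehat{\mathsf{A}}_{+\mathsf{M}}(\doublehat{\fru})=\wdoublehat{\mathsf{A}}_{-\mathsf{M}}(\doublehat{\fru})$; testing with $V\in\mH^1_0(\Omega_\Sigma)$ then reduces everything to $\llbracket\widehat{\mathsf{A}}_{\mathsf{M}}(\hat{\fru}),\Theta(\hat{\frv})\rrbracket=0$ for the regularized operator of \cite[Eq.~(6.21)]{ClHi:impenetrable:2015}, whose \emph{unconditional} injectivity (\cite[Prop.~6.7]{ClHi:impenetrable:2015}) finishes the proof in one stroke. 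You instead kill $p_\Sigma$ by hand: your computation that the Neumann component of the $\Sigma$-row yields $\gamma_\neu^\Sigma\Phi=p_\Sigma$ (hence $p_\Sigma=0$ since $\Phi=-W$) is correct, and, contrary to your stated worry, requires no gap-geometry bookkeeping, because \eqref{eq:CMTF} is \emph{defined} by the explicit matrix $\wdoublehat{\mathsf{A}}_{\mathsf{M}}$ and the computation is purely algebraic from \eqref{eq:Akj12}--\eqref{eq:Akj12bis} and the pairing \eqref{eq:pairingFullSigma}. You then fall back on the unregularized operator $\widehat{\mathsf{A}}$ and a description of its kernel. This last step is the one place where you lean on the reference beyond its quoted statement: the paper cites \cite[Prop.~6.4]{ClHi:impenetrable:2015} only for the conditional implication $\kappa_0\notin\mathfrak{S}(\Delta,\Omega_\Sigma)\Rightarrow\hat{\fru}=0$, whereas you need the finer fact that every element of $\ker(\widehat{\mathsf{A}})$ has vanishing $\mbH(\Gamma_j)$-components. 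That is indeed what the standard multi-trace kernel analysis produces, and your parenthetical fallback via \cite[Thm~4.1]{Pet:STF:1989} would patch it in exactly the way the paper concludes its single-trace combined-field injectivity proof, so I regard this as a quotable detail rather than a gap; still, the paper's route through $\widehat{\mathsf{A}}_{\mathsf{M}}$ sidesteps the issue entirely and is the cleaner of the two.
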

\begin{proof}
  Since $\doublehat{\mathfrak{f}}_\mathsf{M}=0$ we have
  $U_\inc = 0$. As a consequence proceeding as in the
  beginning of the proof of Proposition \ref{prop:MTF-CMTF} leads to
  considering 
  $W \coloneqq -\sum_{j=1}^n \mathsf{G}_{\kappa_0}^j (\hat{\fru}_j) -
  \mathsf{G}_{\kappa_0}^\Sigma(\gamma_\dir^\Sigma U,p_\Sigma)$
  and, following the same argumentation as above, this function satisfies
  $\gamma_{\neu}^\Sigma(W) = 0$. According to the definition of
  $\wdoublehat{\mathsf{A}}_{\mathsf{M}}$, this implies
  \begin{equation*}
    \wdoublehat{\mathsf{A}}_{+\mathsf{M}}(\doublehat{\fru}) =
    \wdoublehat{\mathsf{A}}(\doublehat{\fru}) =
    \wdoublehat{\mathsf{A}}_{-\mathsf{M}}(\doublehat{\fru})
  \end{equation*}
  since the terms involving $\mathsf{M}^*$ in the last row of the
  definition of $\wdoublehat{\mathsf{A}}_{\mathsf{M}}$ vanish. 
  Next, by Proposition \ref{prop:MTF-CMTF}, $(U, \hat{\fru})$ solves also
  formulation \eqref{eq:MTF}, so by Proposition \ref{pr:injMTF} we get
  $U=0$ and $\gamma_\dir^\Sigma U = 0$. Now, if we test formulation
  \eqref{eq:CMTF} (with $F_\Sigma \equiv
  0$, $\doublehat{\mathfrak{f}}_\mathsf{M}=0$) using test functions $V
  \in \mH_0^1(\Omega_\Sigma)$ (and $\hat{\frv} =
  (\hat{\frv}_1,\dots,\hat{\frv}_n, q_\Sigma)\in
  \widehat{\mbH}(\Gamma)$), we obtain $\lBrace \wdoublehat{\mathsf{A}}_\mathsf{M}(\doublehat{\fru}),
    \Theta(\doublehat{\frv}) \rBrace = 0$ hence 
  \begin{equation*}
    \lBrace \wdoublehat{\mathsf{A}}_{-\mathsf{M}}(\doublehat{\fru}),
    \Theta(\doublehat{\frv}) \rBrace = 0
  \end{equation*}
  with $\doublehat{\fru} = (\hat{\fru}_1, \dots, \hat{\fru}_n, (0, p_\Sigma))$,
  $\doublehat{\frv} = (\hat{\frv}_1, \dots, \hat{\frv}_n, (0, q_\Sigma))$.
  Note that this reduces to $\llbracket \widehat{\mathsf{A}}_\mathsf{M}(\hat{\fru}), \Theta(\hat{\frv})\rrbracket = 0$ for all $\hat{\frv} \in
  \widehat{\mbH}(\Gamma)$, where $\widehat{\mathsf{A}}_\mathsf{M}$ is
  defined in \cite[Equation~(6.21)]{ClHi:impenetrable:2015} and is
  injective by \cite[Proposition 6.7]{ClHi:impenetrable:2015}. Then
  $\hat{\fru} = 0$.
\end{proof}

\smallskip
\begin{appendix}

\section{Properties of the block boundary integral operator $\mathsf{A}_\kappa^\Omega$}
\label{app:operatorAomega}

\noindent We prove here two useful properties of the boundary integral operator $\mathsf{A}_\kappa^\Omega$ in \eqref{DefPMCHWTOp} since we could not find detailed proofs in the literature.  
\begin{proposition}[Generalized G\r{a}rding inequality]
\label{pr:gardingAjAppendix}
Set $\theta(v, q) \coloneqq (-v, q)$.
Let $\Omega$ be a generic Lipschitz domain that is either bounded or such that $\RR^d\setminus\overline{\Omega}$ is bounded. Then, there exist a compact operator $\mathcal{K}\colon \mbH(\partial \Omega) \to \mbH(\partial \Omega)$ and a constant $\alpha>0$ such that for all $ \fru \in \mbH(\partial \Omega)$ we have
\[
\Re \left \{ [(\mathsf{A}_\kappa^\Omega + \mathcal{K})\fru, \theta(\overline{\fru})]_{\partial\Omega} \right \}
\ge \alpha \lVert \fru \rVert_{\mbH(\partial \Omega)}^2.
\] 
\end{proposition}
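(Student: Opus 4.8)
The plan is to establish the generalized G\r{a}rding inequality by exploiting the structure of $\mathsf{A}_\kappa^\Omega$ as $\{\gamma^\Omega\}\circ\mathsf{G}_\kappa^\Omega$ together with the representation formula and an integration-by-parts (Green's) identity on both sides of $\partial\Omega$. The key observation is that the pairing $[(\mathsf{A}_\kappa^\Omega)\fru,\theta(\overline{\fru})]_{\partial\Omega}$ can be related, via the jump relations \eqref{eq:jumprels} and the definitions \eqref{eq:Akj12}--\eqref{eq:Akj12bis}, to a volume integral of $|\nabla \mathsf{G}_\kappa^\Omega(\fru)|^2$ type over a truncated domain. Concretely, writing $W = \mathsf{G}_\kappa^\Omega(\fru)$ and using $[\gamma^\Omega]W = \fru$, $\{\gamma^\Omega\}W = \mathsf{A}_\kappa^\Omega\fru$, one uses identity \eqref{eq:thetaIdentitybis3} to write $[\mathsf{A}_\kappa^\Omega(\fru),\theta(\overline\fru)]_{\partial\Omega} = \langle \{\gamma_{\dir}^\Omega\}W,\overline{[\gamma_{\neu}^\Omega]W}\rangle_{\partial\Omega} + \langle \{\gamma_{\neu}^\Omega\}W,\overline{[\gamma_{\dir}^\Omega]W}\rangle_{\partial\Omega}$, and this combination of average/jump of Dirichlet and Neumann data is exactly what appears when one applies Green's formula separately in $\Omega$ (or a bounded truncation of it) and in $\RR^d\setminus\overline\Omega$, then subtracts.

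The concrete steps I would carry out are as follows. First, set $W = \mathsf{G}_\kappa^\Omega(\fru)$, which by \cite[Thm.~3.1.16]{SaSw:book:2011} lies in $\mH^1_\loc(\Delta,\overline\Omega)\times\mH^1_\loc(\Delta,\RR^d\setminus\Omega)$ and solves $-\Delta W - \kappa^2 W = 0$ on both sides, and is $\kappa$-outgoing radiating. Second, fix a large ball $B_\rho$ containing $\partial\Omega$ and apply Green's first identity to $W$ on $\Omega\cap B_\rho$ (the interior piece) and on $B_\rho\setminus\overline\Omega$ (the exterior piece near $\partial\Omega$), producing $\int |\nabla W|^2 - \kappa^2\int|W|^2$ terms plus boundary contributions on $\partial\Omega$ and on $\partial B_\rho$. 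Third, combine these two identities so that the $\partial\Omega$ contributions assemble into $[\mathsf{A}_\kappa^\Omega(\fru),\theta(\overline\fru)]_{\partial\Omega}$ via \eqref{eq:thetaIdentitybis3} and the jump relations. Fourth, take the real part: the $\partial B_\rho$ terms, after letting $\rho\to\infty$, either vanish or have a sign controlled by the radiation condition (and in any case the relevant far-field contribution is nonnegative — this is essentially the same mechanism as in Proposition~\ref{RadiationConditionConsequence}); one is left with $\Re[\mathsf{A}_\kappa^\Omega(\fru),\theta(\overline\fru)]_{\partial\Omega} \ge \int_{\RR^d}|\nabla W|^2\,d\bx - \kappa^2\int_{\RR^d}|W|^2\,d\bx$ up to controllable terms. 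Fifth, the Dirichlet gradient term controls $\Vert\fru\Vert_{\mbH(\partial\Omega)}^2$ up to a compact remainder: since $[\gamma^\Omega]W = \fru$ and the trace map from $\{W : \nabla W\in\mL^2, -\Delta W = \kappa^2 W\}$ to $\mbH(\partial\Omega)$ has the $\mL^2$-gradient norm equivalent to the graph norm modulo the compact $\mL^2$-term (Rellich), we get $\int|\nabla W|^2 \ge c\Vert\fru\Vert_{\mbH(\partial\Omega)}^2 - $ (compact in $\fru$). Sixth, absorb the $\kappa^2\Vert W\Vert_{\mL^2}^2$ and all lower-order remainders into a single compact operator $\mathcal{K}$ on $\mbH(\partial\Omega)$, using that $\fru\mapsto \Vert\mathsf{G}_\kappa^\Omega(\fru)\Vert_{\mL^2(\omega)}$ for bounded $\omega$ factors through a compact embedding.

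The main obstacle I anticipate is the careful bookkeeping of the boundary terms at $\partial B_\rho$ as $\rho\to\infty$, i.e.~showing that the far-field contribution does not spoil the lower bound — this is where the Sommerfeld radiation condition enters and where sign control is delicate, since $\Re$ of the far-field flux term is not obviously favorable without invoking the positivity of the far-field energy (the Rellich-type identity / positivity of the capacity operator, as in \cite[Thm.~5.3.5]{Nedelec:book:2001} or Proposition~\ref{RadiationConditionConsequence2}). A secondary technical point is justifying that the map $\fru\mapsto \mathsf{G}_\kappa^\Omega(\fru)|_\omega$ into $\mL^2(\omega)$ (for $\omega$ a bounded neighborhood of $\partial\Omega$) is compact, so that all the $\mL^2$-norm remainders genuinely define a compact operator $\mathcal{K}$; this follows from the continuity $\mbH(\partial\Omega)\to \mH^1(\omega)$ of the potential and the compact Rellich embedding $\mH^1(\omega)\hookrightarrow\mL^2(\omega)$, but needs to be stated cleanly. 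Everything else — the algebra with $\theta$, the jump relations, and Green's formula — is routine given the identities \eqref{eq:thetaIdentitybis3}, \eqref{eq:jumprels}, \eqref{eq:Akj12}, \eqref{eq:Akj12bis} already recorded in the text. Alternatively, and perhaps more cleanly, one can simply cite \cite[Thm.~3.9]{Pet:STF:1989} for the structural statement and present the above as the pedagogically transparent reconstruction, which is precisely the stated intent ("the proof of this result is instructive, so we include it").
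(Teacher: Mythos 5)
Your overall architecture (write $W=\mathsf{G}_\kappa^\Omega(\fru)$, split the pairing via jumps and averages, apply Green's formula on both sides of $\partial\Omega$, recover $\Vert\fru\Vert_{\mbH(\partial\Omega)}^2$ from trace continuity and the jump relations) is the same as the paper's. But there is a genuine gap at the heart of your plan: you work directly with the real wavenumber $\kappa>0$ and try to control the far-field terms via the Sommerfeld radiation condition. For real $\kappa$ the outgoing potential $W$ decays only like $\vert\bx\vert^{-(d-1)/2}$, so $\int_{\RR^d\setminus\overline{\Omega}}\vert\nabla W\vert^2\,d\bx$ and $\kappa^2\int_{\RR^d\setminus\overline{\Omega}}\vert W\vert^2\,d\bx$ are both \emph{infinite}; the inequality you announce in your fourth step, $\Re\,[\mathsf{A}_\kappa^\Omega(\fru),\theta(\overline{\fru})]_{\partial\Omega}\ge\int\vert\nabla W\vert^2-\kappa^2\int\vert W\vert^2$ ``up to controllable terms,'' is not meaningful as written. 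Truncating at $B_\rho$ does not repair this: the two divergent volume integrals do not separately converge as $\rho\to\infty$, the $\partial B_\rho$ flux term does not vanish, and even formally the negative term $-\kappa^2\Vert W\Vert^2_{\mL^2(\RR^d\setminus\overline{\Omega})}$ lives on an \emph{unbounded} domain, so it cannot be declared compact by the Rellich argument you invoke (which, as you yourself note, only applies on bounded $\omega$). So the sign and compactness bookkeeping you flag as ``the main obstacle'' is not merely delicate --- it fails for real $\kappa$.

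The missing idea is the reduction to a purely imaginary wavenumber. The paper first replaces $\kappa$ by $\imagi$, which is legitimate because changing the wavenumber perturbs $\mathsf{A}_\kappa^\Omega$ only by a compact operator (\cite[Lemma 3.9.8]{SaSw:book:2011}); this single step is where the compact operator $\mathcal{K}$ comes from. For $\kappa=\imagi$ the fundamental solution decays exponentially, $\psi=\mathsf{G}_{\imagi}^\Omega(\fru)$ belongs to $\mH^1$ globally with $\Delta\psi=\psi$, there are no far-field boundary terms, and the Green identities turn the real part of the pairing into the manifestly coercive quantity $\Vert\psi\Vert^2_{\mH^1(\Omega)}+\Vert\psi\Vert^2_{\mH^1(\RR^d\setminus\Omega)}$ --- a sum of squares with no negative $\mL^2$ term to absorb. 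The remainder of your argument (bounding $\Vert\fru\Vert^2_{\mbH(\partial\Omega)}$ by these $\mH^1$ norms through trace continuity and $[\gamma^\Omega]\psi=\fru$) then goes through exactly as you describe. Without that reduction, your proof as proposed does not close.
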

\begin{proof}
Since a change of the wavenumber $\kappa$ only induces a compact perturbation of $\mathsf{A}_\kappa^\Omega$ \cite[Lemma 3.9.8]{SaSw:book:2011}, it suffices to prove the result for the case $\kappa = \imagi$, where $\imagi = \sqrt{-1}$. 
Set $\psi \coloneqq \mathsf{G}_\kappa^\Omega (\fru)$, then we write $\mathsf{A}_\kappa^\Omega (\fru) = \{\gamma^\Omega\} \, \psi$ and by the jump relations \eqref{eq:jumprels} we have $\fru = [\gamma^\Omega] \, \psi$. Therefore 
\[
\left[\mathsf{A}_\kappa^\Omega (\fru), \theta(\overline{\fru})\right]_{\partial\Omega} = \left[ \{\gamma^\Omega\} \, \psi, \theta [\gamma^\Omega] \, \overline{\psi} \right]_{\partial\Omega} =
\frac{1}{2} \left[ (\gamma^\Omega+\gamma_c^\Omega) \psi, \theta (\gamma^\Omega-\gamma_c^\Omega) \overline{\psi} \right]_{\partial\Omega} = m_1 + m_2
\]
where
\begin{align*}
&m_1 = \frac{1}{2} \left[ \gamma^\Omega \psi, \theta \gamma^\Omega \overline{\psi} \right]_{\partial\Omega} -  \frac{1}{2} \left[ \gamma_c^\Omega \psi, \theta \gamma_c^\Omega \overline{\psi} \right]_{\partial\Omega}, \\
&m_2 = \frac{1}{2} \left[ \gamma_c^\Omega \psi, \theta \gamma^\Omega \overline{\psi} \right]_{\partial\Omega} -  \frac{1}{2} \left[ \gamma^\Omega \psi, \theta \gamma_c^\Omega \overline{\psi} \right]_{\partial\Omega}. 
\end{align*}
We have $\Re (m_2)=0$, indeed
\begin{equation*}
%\begin{split}  
%& 
\Re \left \{ [ \gamma_c^\Omega \psi, \theta \gamma^\Omega \overline{\psi} ]_{\partial\Omega} \right \} = \Re \left \{ \overline{[ \gamma_c^\Omega \psi, \theta \gamma^\Omega \overline{\psi} ]}_{\partial\Omega}  \right \} %\\ &
= \Re \left \{ [ \gamma_c^\Omega \overline{\psi}, \theta \gamma^\Omega {\psi} ]_{\partial\Omega} \right \} = \Re \left \{ [ \gamma^\Omega {\psi}, \theta \gamma_c^\Omega \overline{\psi} ]_{\partial\Omega} \right \}, 
%\end{split}
\end{equation*}
where the last equality is an application of the property $[\fru, \theta(\frv)]_{\partial\Omega} = [\frv, \theta(\fru)]_{\partial\Omega}$ for $\fru, \frv \in \mbH(\partial \Omega)$.
To deal with $\Re (m_1)$, observe that we have $\Re \left \{ [\frv, \theta(\overline{\frv})]_{\partial\Omega}  \right \} = \Re  \{ \braket{v,\overline{q}}_{\partial\Omega} + \braket{\overline{v},q}_{\partial\Omega}  \} = 2 \Re  \{ \braket{v,\overline{q}}_{\partial\Omega} \}$ for $\frv = (v,q) \in \mbH(\partial \Omega)$. Thus
\begin{equation*}
%\begin{split}  
% & 
\frac{1}{2} \Re \left \{ \left[ \gamma^\Omega \psi, \theta \gamma^\Omega \overline{\psi} \right]_{\partial\Omega} \right \} =  \Re \left \{ \braket{\gamma_\dir^\Omega \psi,\gamma_\neu^\Omega  \overline{\psi}}_{\partial\Omega} \right \} %\\
% & 
 = \Re \biggl \{ \int_{\Omega} (\lvert \nabla \psi \rvert^2 + \psi \Delta \overline{\psi}) d\bx \biggr \} = \lVert \psi \rVert _{\mH^1(\Omega)}^2,
%\end{split}
\end{equation*}
where we integrated by parts and lastly used the fact that $\psi$ is a solution to the Helmholtz equation with $\kappa= \imagi$, so that $\Delta \psi = \psi$. 
Similarly, we get 
\[
 -\frac{1}{2} \Re \left \{ \left[ \gamma_c^\Omega \psi, \theta \gamma_c^\Omega \overline{\psi} \right]_{\partial\Omega} \right \} = \lVert \psi \rVert _{\mH^1(\RR^d\backslash\Omega)}^2,
\]
therefore
\[
\Re \left \{ \left[\mathsf{A}_\kappa^\Omega (\fru), \theta(\overline{\fru})\right]_{\partial\Omega}  \right \} = \lVert \psi \rVert _{\mH^1(\Omega)}^2 + \lVert \psi \rVert _{\mH^1(\RR^d\backslash\Omega)}^2.
\]
Now, note that 
\[
\lVert \psi \rVert _{\mH^1(\Delta, \Omega)}^2 = \lVert \psi \rVert _{\mH^1(\Omega)}^2  + \lVert \Delta \psi \rVert _{\mL^2(\Omega)}^2 = \lVert \psi \rVert _{\mH^1(\Omega)}^2  + \lVert \psi \rVert _{\mL^2(\Omega)}^2 \le 2 \lVert \psi \rVert _{\mH^1(\Omega)}^2,
\]
and by the continuity of the trace operators, there exists $C>0$ such that 
\begin{align*}
&  \lVert \gamma_\dir^\Omega V \rVert_{\mH^{1/2}(\partial \Omega)}^2 +  \lVert \gamma_\neu^\Omega V \rVert_{\mH^{-1/2}(\partial \Omega)}^2 
\le C \lVert V \rVert _{\mH^1(\Delta, \Omega)}^2  \quad \forall \, V \in \mH^1(\Delta, \Omega), \\
&  \lVert \gamma_{\dir,c}^\Omega V \rVert_{\mH^{1/2}(\partial \Omega)}^2 +  \lVert \gamma_{\neu,c}^\Omega V \rVert_{\mH^{-1/2}(\partial \Omega)}^2 
\le C \lVert V \rVert _{\mH^1(\Delta, \RR^d\backslash\Omega)}^2  \quad \forall \, V \in \mH^1(\Delta, \RR^d\backslash\Omega).
\end{align*}
Therefore
\begin{align*}
& \Re \left \{ \left[\mathsf{A}_\kappa^\Omega (\fru), \theta(\overline{\fru})\right]_{\partial\Omega}  \right \} = \lVert \psi \rVert _{\mH^1(\Omega)}^2 + \lVert \psi \rVert _{\mH^1(\RR^d\backslash\Omega)}^2 \\
& \ge \frac{1}{2C} \left(   \lVert \gamma_\dir^\Omega \psi \rVert_{\mH^{1/2}(\partial \Omega)}^2 + \lVert \gamma_{\dir,c}^\Omega \psi \rVert_{\mH^{1/2}(\partial \Omega)}^2 +  \lVert \gamma_\neu^\Omega \psi \rVert_{\mH^{-1/2}(\partial \Omega)}^2   +  \lVert \gamma_{\neu,c}^\Omega \psi \rVert_{\mH^{-1/2}(\partial \Omega)}^2  \right) \\
& \ge \frac{1}{4C} \left(  \lVert (\gamma_{\dir}^\Omega - \gamma_{\dir,c}^\Omega) \psi \rVert_{\mH^{1/2}(\partial \Omega)}^2  +  \lVert (\gamma_{\neu}^\Omega - \gamma_{\neu,c}^\Omega) \psi \rVert_{\mH^{-1/2}(\partial \Omega)}^2  \right) 
= \frac{1}{4C} \lVert \fru \rVert_{\mbH(\partial \Omega)}^2,
\end{align*}
where we used the triangular inequality and the jump relations \eqref{eq:jumprels}. 
\end{proof}

\begin{proposition}\label{RadiationConditionConsequence2}
  Assume that either $\Omega\subset \RR^d$ is bounded or $\RR^d\setminus \Omega$ is bounded.
  Then for all $\fru\in \mbH(\partial\Omega)$, we have
  $\Im \{[\mathsf{A}_\kappa^{\Omega}(\fru),\overline{\fru}]_{\partial\Omega}\}\geq 0$.
\end{proposition}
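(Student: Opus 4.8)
The plan is to reduce the statement to a computation involving the potential $\psi \coloneqq \mathsf{G}_\kappa^\Omega(\fru)$ and then invoke the Sommerfeld radiation condition on a large sphere. First I would set $\psi \coloneqq \mathsf{G}_\kappa^\Omega(\fru)$, so that by the jump relations \eqref{eq:jumprels} we have $\fru = [\gamma^\Omega]\psi$, and by definition $\mathsf{A}_\kappa^\Omega(\fru) = \{\gamma^\Omega\}\psi$. The quantity $[\mathsf{A}_\kappa^\Omega(\fru),\overline{\fru}]_{\partial\Omega}$ can then be expanded, using the skew-symmetric pairing \eqref{eq:skewpairingloc}, in terms of interior and exterior Dirichlet and Neumann traces of $\psi$. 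Writing $\fru = (u,p) = ([\gamma_\dir^\Omega]\psi, [\gamma_\neu^\Omega]\psi)$ and $\mathsf{A}_\kappa^\Omega(\fru) = (\{\gamma_\dir^\Omega\}\psi,\{\gamma_\neu^\Omega\}\psi)$, I would expand
\[
[\mathsf{A}_\kappa^\Omega(\fru),\overline{\fru}]_{\partial\Omega} =
\langle \{\gamma_\dir^\Omega\}\psi, \overline{[\gamma_\neu^\Omega]\psi}\rangle_{\partial\Omega}
- \langle \{\gamma_\neu^\Omega\}\psi, \overline{[\gamma_\dir^\Omega]\psi}\rangle_{\partial\Omega}.
\]

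Next I would recognize that each of the four cross terms (interior-interior, interior-exterior, exterior-interior, exterior-exterior) is, by Green's formula, an integral of $|\nabla\psi|^2 - \kappa^2|\psi|^2$ over the respective region, up to a boundary term at infinity in the exterior case. More precisely, for the interior part $\langle \gamma_\dir^\Omega\psi, \overline{\gamma_\neu^\Omega\psi}\rangle_{\partial\Omega} = \int_\Omega (|\nabla\psi|^2 - \kappa^2|\psi|^2)\,d\bx$ when $\Omega$ is bounded (and $\kappa^2$ real so this is real), while for the exterior part one integrates over $B_\rho \setminus \overline\Omega$ and lets $\rho \to \infty$, picking up the term $\int_{\partial B_\rho} \gamma_\dir\psi\,\overline{\partial_\rho\psi}\,d\sigma_\rho$. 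The key point is that the cross terms combine so that only the terms that cannot be absorbed into a real $\mrm{H}^1$-type quantity survive in the imaginary part, and these come precisely from the boundary-at-infinity contribution. Using the Sommerfeld radiation condition $\partial_\rho\psi - \imagi\kappa\psi \to 0$ in $\mL^2(\partial B_\rho)$, one gets $\int_{\partial B_\rho}\gamma_\dir\psi\,\overline{\partial_\rho\psi}\,d\sigma_\rho = \imagi\kappa\int_{\partial B_\rho}|\psi|^2\,d\sigma_\rho + o(1)$, whose real part vanishes in the limit and whose imaginary part is $\kappa\int_{\partial B_\rho}|\psi|^2\,d\sigma_\rho \geq 0$. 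Taking imaginary parts throughout and passing to the limit yields the claimed sign.

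The main obstacle I anticipate is bookkeeping: carefully justifying the Green's formula on the truncated exterior domain $B_\rho\setminus\overline\Omega$ with the correct orientation of normals (the normal on $\partial\Omega$ points toward the exterior of $\Omega$, i.e.\ \emph{into} the truncated domain, so a sign flip occurs), and tracking which of the several terms are automatically real and drop out of the imaginary part. One must also treat the two cases ($\Omega$ bounded vs.\ $\RR^d\setminus\Omega$ bounded) with the roles of ``interior'' and ``exterior'' swapped; in both cases the unbounded region is the one governed by the radiation condition, and that region contributes the nonnegative boundary-at-infinity term while the bounded region contributes a real (hence imaginary-part-free) quantity. A secondary technical point is that $\psi$ is only known a priori to lie in $\mH^1_\loc(\Delta,\cdot)$ on each side, so Green's identities must be applied in that generalized sense, using the duality pairing between $\mH^{1/2}$ and $\mH^{-1/2}$; this is standard but should be invoked explicitly. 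I would structure the proof as: (1) reduce to $\psi = \mathsf{G}_\kappa^\Omega(\fru)$ and expand the pairing; (2) apply Green's formula on bounded pieces to identify real terms; (3) handle the exterior via truncation and the radiation condition; (4) take imaginary parts and conclude.
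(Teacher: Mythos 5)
Your overall strategy is the paper's: set $\psi\coloneqq\mathsf{G}_\kappa^{\Omega}(\fru)$, write $\mathsf{A}_\kappa^{\Omega}(\fru)=\{\gamma^{\Omega}\}\psi$ and $\fru=\lbr\gamma^{\Omega}\rbr\psi$ via the jump relations, expand the pairing into interior--interior, exterior--exterior and mixed contributions, discard everything except the term coming from the unbounded component, and finish with the Sommerfeld condition on a large sphere. However, there is a genuine gap in your step (2). You assert that \emph{each} of the four cross terms is, by Green's formula, an integral of $|\nabla\psi|^2-\kappa^2|\psi|^2$ over ``the respective region''. That is only true for the two one-sided terms $\lbr\gamma^{\Omega}\psi,\gamma^{\Omega}\overline{\psi}\rbr_{\partial\Omega}$ and $\lbr\gamma_c^{\Omega}\psi,\gamma_c^{\Omega}\overline{\psi}\rbr_{\partial\Omega}$, where both traces are taken from the same side of $\partial\Omega$. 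The two mixed terms pair an interior trace of $\psi$ with an exterior trace of $\overline{\psi}$ (and vice versa); there is no single domain on which a Green identity converts such a pairing into a volume integral, so the mechanism you invoke fails precisely for the terms that need it most. The correct way to dispose of them --- and this is what the paper does --- is purely algebraic: by skew-symmetry and conjugation, $-\lbr\gamma^{\Omega}\psi,\gamma_c^{\Omega}\overline{\psi}\rbr_{\partial\Omega}=\lbr\gamma_c^{\Omega}\overline{\psi},\gamma^{\Omega}\psi\rbr_{\partial\Omega}=\overline{\lbr\gamma_c^{\Omega}\psi,\gamma^{\Omega}\overline{\psi}\rbr}_{\partial\Omega}$, so the two mixed terms sum to $2\Re\{\lbr\gamma_c^{\Omega}\psi,\gamma^{\Omega}\overline{\psi}\rbr_{\partial\Omega}\}$, a real number, while each one-sided term $\lbr\frv,\overline{\frv}\rbr_{\partial\Omega}$ is purely imaginary. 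Once you substitute this argument for the erroneous Green's-formula claim, your outline is sound.

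Two further points on the last step. First, from $\lim_{\rho\to\infty}\int_{\partial B_\rho}|\partial_\rho\psi-\imagi\kappa\psi|^2\,d\sigma_\rho=0$ you cannot directly deduce $\int_{\partial B_\rho}\psi\,\overline{\partial_\rho\psi}\,d\sigma_\rho=\imagi\kappa\int_{\partial B_\rho}|\psi|^2\,d\sigma_\rho+o(1)$ without first establishing that $\|\psi\|_{\mL^2(\partial B_\rho)}$ stays bounded as $\rho\to\infty$ (a Rellich-type estimate needing its own argument); the paper's proof avoids this entirely by completing the square, obtaining $-\imagi\kappa\lbr\gamma^{\Omega}\psi,\gamma^{\Omega}\overline{\psi}\rbr_{\partial\Omega}\geq-\int_{\partial B_\rho}|\partial_\rho\psi-\imagi\kappa\psi|^2\,d\sigma_\rho$ with a $\rho$-independent left-hand side. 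Second, your displayed asymptotics has the wrong sign: $\overline{\partial_\rho\psi}\approx-\imagi\kappa\overline{\psi}$ gives $\int\psi\,\overline{\partial_\rho\psi}\approx-\imagi\kappa\int|\psi|^2$, so the sign of the surviving imaginary part must be recovered from the orientation bookkeeping you defer. Finally, note that for bounded $\Omega$ the paper does not rerun the computation with roles swapped but reduces to the unbounded case via the identity $\mathsf{A}_\kappa^{\Omega}=-\theta\circ\mathsf{A}_\kappa^{\Omega^{c}}\circ\theta$; your symmetric rerun would also work but duplicates effort.
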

\begin{proof}
  
  Assume first that $\RR^d\setminus \Omega$ is bounded, pick an arbitrary
  $\fru\in \mbH(\partial\Omega)$ and set $\psi(\bx):=\mathsf{G}_\kappa^{\Omega} (\fru)(\bx)$.
  We have $\lbr \gamma^{\Omega}(\psi)\rbr = \fru$ according to the jump formula \eqref{eq:jumprels}
  and, on the other hand, $\{ \gamma^{\Omega}(\psi)\} = \mathsf{A}_\kappa^{\Omega}(\fru)$ according to definition
  \eqref{DefPMCHWTOp}. As a consequence, developing the expression 
  $2\lbr \mathsf{A}_\kappa^{\Omega}(\fru),\overline{\fru}\rbr_{\partial\Omega} =
  \lbr \gamma^{\Omega}(\psi)+\gamma^{\Omega}_{c}(\psi),
  \gamma^{\Omega}(\overline{\psi})-\gamma^{\Omega}_{c}(\overline{\psi}) \rbr_{\partial\Omega}$,
  yields
  \begin{equation}
    \begin{aligned}
      2\lbr \mathsf{A}_\kappa^{\Omega}(\fru),\overline{\fru}\rbr_{\partial\Omega}
      & =  \lbr \gamma^{\Omega}(\psi),\gamma^{\Omega}(\overline{\psi})\rbr_{\partial\Omega}
      - \lbr \gamma_c^{\Omega}(\psi),\gamma_c^{\Omega}(\overline{\psi})\rbr_{\partial\Omega}\\
      & \,+  \lbr \gamma_c^{\Omega}(\psi),\gamma^{\Omega}(\overline{\psi})\rbr_{\partial\Omega}
      - \lbr \gamma^{\Omega}(\psi),\gamma_c^{\Omega}(\overline{\psi})\rbr_{\partial\Omega}\\
      & =  \lbr \gamma^{\Omega}(\psi),\gamma^{\Omega}(\overline{\psi})\rbr_{\partial\Omega}
      - \lbr \gamma_c^{\Omega}(\psi),\gamma_c^{\Omega}(\overline{\psi})\rbr_{\partial\Omega}\\
      & \,+ 2 \Re\{\lbr \gamma_c^{\Omega}(\psi),\gamma^{\Omega}(\overline{\psi})\rbr_{\partial\Omega}\}.
    \end{aligned}
  \end{equation}
  Next observe that each of the first two terms in the right-hand side above takes the form
  $\lbr\frv,\overline{\frv}\rbr_{\partial\Omega}$ and satisfies
  $\overline{\lbr\frv,\overline{\frv}\rbr}\!\,_{\partial\Omega} =
  \lbr\overline{\frv},\frv\rbr_{\partial\Omega} =
  -\lbr\frv,\overline{\frv}\rbr_{\partial\Omega}$ which means that they are
  pure imaginary numbers, i.e.~$\imagi\lbr\frv,\overline{\frv}\rbr_{\partial\Omega}\in\RR$.
  As a consequence 
  \begin{equation}\label{DecompositionImaginaryPart}
      2\imagi \Im\{\lbr \mathsf{A}_\kappa^{\Omega}(\fru),\overline{\fru}\rbr_{\partial\Omega}\} =
       +\lbr \gamma^{\Omega}(\psi),\gamma^{\Omega}(\overline{\psi})\rbr_{\partial\Omega}
       -\lbr \gamma_c^{\Omega}(\psi),\gamma_c^{\Omega}(\overline{\psi})\rbr_{\partial\Omega}.
  \end{equation}
  We examine each term in the right-hand side of this identity.
  Both $\psi$ and $\overline{\psi}$ satisfy a homogeneous Helmholtz
  equation in $\RR^d\setminus\overline{\Omega}$ and, since it is a
  bounded domain, we can apply Green's formula in
  $\RR^d\setminus\overline{\Omega}$. This implies that 
  the second term in the right-hand side of
  \eqref{DecompositionImaginaryPart} vanishes: 
  \begin{equation*}
    \begin{aligned}
      \lbr \gamma_c^{\Omega}(\psi),\gamma_c^{\Omega}(\overline{\psi})\rbr_{\partial\Omega}
      & = \int_{\partial\Omega}\gamma_{\dir,c}^{\Omega}(\psi)\gamma_{\neu,c}^{\Omega}(\overline{\psi}) -
      \gamma_{\neu,c}^{\Omega}(\psi)\gamma_{\dir,c}^{\Omega}(\overline{\psi}) d\sigma\\
      & = \int_{\RR^d\setminus\overline{\Omega}} \psi(\Delta\overline{\psi}+\kappa^{2}\overline{\psi}) - \overline{\psi}(\Delta\psi+\kappa^{2}\psi) \, d\bx = 0.
    \end{aligned}
  \end{equation*}
  To study the first term in \eqref{DecompositionImaginaryPart}
  choose $\rho>0$ large enough to have $\RR^{d}\setminus\Omega\subset \mrm{B}_{\rho}$, where $\mrm{B}_{\rho}$ is the open ball of center $0$ and radius $\rho$. 
  Applying Green's formula in $\Omega\cap\mrm{B}_{\rho}$ gives
  \begin{equation*}
    \begin{aligned}  
      \lbr \gamma^{\Omega}(\psi),\gamma^{\Omega}(\overline{\psi})\rbr_{\partial\Omega}
      &  = \int_{\partial\Omega}\gamma_{\dir}^{\Omega}(\psi)\gamma_{\neu}^{\Omega}(\overline{\psi}) -
      \gamma_{\neu}^{\Omega}(\psi)\gamma_{\dir}^{\Omega}(\overline{\psi})\;d\sigma \\
      & = \int_{\Omega\cap\mrm{B}_\rho} 
      \psi(\Delta\overline{\psi}+\kappa^{2}\overline{\psi})
      - \overline{\psi}(\Delta\psi+\kappa^{2}\psi)\, d\bx
       \, +\int_{\partial \mrm{B}_\rho} \overline{\psi}\partial_{\rho}\psi - \psi\partial_{\rho}\overline{\psi}\;d\sigma_{\rho}
    \end{aligned}
  \end{equation*}
  where $\partial_{\rho}\psi$ is the Neumann trace on $\partial
  \mrm{B}_{\rho}$ and $d\sigma_\rho$ is the surface measure on
  $\partial \mrm{B}_{\rho}$.  The volume terms vanish because
  $\Delta\psi+\kappa^{2}\psi = 0$ in $\Omega$. Multiplying
  this identity by $-\imagi\kappa$ then leads to 
  \begin{equation*}
    \begin{aligned}
      -\imagi \kappa  \lbr \gamma^{\Omega}(\psi),\gamma^{\Omega}(\overline{\psi})\rbr_{\partial\Omega}
      & =  \int_{\partial \mrm{B}_\rho} \overline{\imagi\kappa\psi}\,\partial_{\rho}\psi + \imagi\kappa\psi\,\partial_{\rho}\overline{\psi}\;d\sigma_\rho
      = 2\Re\{\int_{\partial \mrm{B}_\rho}\imagi\kappa\psi\,\partial_{\rho}\overline{\psi}\;d\sigma\}\\
      & = -\int_{\partial \mrm{B}_\rho}\vert \partial_{\rho}\psi-i\kappa \psi\vert^{2} d\sigma +
      \int_{\partial \mrm{B}_\rho}\vert \partial_{\rho}\psi\vert^{2}+ \kappa^{2} \vert\psi\vert^{2} d\sigma\\
       &\geq -\int_{\partial \mrm{B}_\rho}\vert \partial_{\rho}\psi-i\kappa \psi\vert^{2} d\sigma. 
    \end{aligned}
  \end{equation*}
  Since this inequality must hold for any $\rho>0$ large enough, we can pass
  to the limit $\rho\to +\infty$ and, by the Sommerfeld radiation
  condition satisfied by $\psi = \mathsf{G}_\kappa^{\Omega} (\fru)$, we finally
  conclude that $\Im\{\lbr \mathsf{A}_\kappa^{\Omega}(\fru),\overline{\fru}\rbr_{\partial\Omega}\} = 
  -\imagi  \lbr \gamma^{\Omega}(\psi),\gamma^{\Omega}(\overline{\psi})\rbr_{\partial\Omega}/2\in\lbr 0,+\infty)$.

  To conclude the proof, let us consider the case where $\Omega$ is bounded, and denote $\Omega^{c}:=\RR^d\setminus\overline{\Omega}$.
  Because $\bn_{\Omega^c} = -\bn_{\Omega}$, we conclude that $\gamma^{\Omega^c} = -\theta\circ\gamma_c^{\Omega}$,
  $\gamma^{\Omega^c}_c = -\theta\circ\gamma^{\Omega}$, and 
  $\mathsf{G}_\kappa^{\Omega^c} = \mathsf{G}_\kappa^{\Omega}\circ\theta$, and hence
  $\mathsf{A}_\kappa^{\Omega} = -\theta\circ\mathsf{A}_\kappa^{\Omega^c}\circ\theta$. The domain $\Omega^{c}$
  is unbounded, so we can apply the first part of the present proof, which finally yields 
  \begin{equation*}
      \Im \{\lbr \mathsf{A}_\kappa^{\Omega}(\fru),\overline{\fru}\rbr_{\partial\Omega}\}
       = - \Im \{\lbr \theta \circ\mathsf{A}_\kappa^{\Omega^c}\circ\theta(\fru),\overline{\fru}\rbr_{\partial\Omega}\}
       = + \Im \{\lbr\mathsf{A}_\kappa^{\Omega^c}\circ\theta(\fru),\overline{\theta(\fru)}\rbr_{\partial\Omega}\}\geq 0.
  \end{equation*}  
\end{proof}

\end{appendix}

%    Bibliographies can be prepared with BibTeX using amsplain,
%    amsalpha, or (for "historical" overviews) natbib style.
\bibliographystyle{amsplain}
\bibliography{paper_fembem}

\end{document}